\newcommand{\sld}[1][]{  \frac{\delta^2{#1}}{\delta m^2}  }
\newcommand{\muinit}{\mu_{\textrm{\rm init}}}
\newcommand{\rvlaw}[1][]{  {\cL}{{(#1)}}  }
\newcommand{\nlaw}[1][]{  \mu^N_{{#1}}  }
\newcommand{\ld}[1][]{ \frac{\delta{#1}}{\delta m}  }
\DeclarePairedDelimiterX{\inp}[2]{\langle}{\rangle}{#1, #2}
\newcommand{\intrd}[1][]{ \int_{\bT^d}  }
\newcommand{\hregb}[2]{\textcolor{red}{(\text{Reg}-$b$-(${#1,#2}$))}}
\newcommand{\hintb}[2]{\textcolor{red}{(\text{Reg}-$b$-(${#1,#2}$))}}
\newcommand{\hlipb}[2]{\textcolor{red}{(\text{Lip}-$b$-(${#1,#2}$))}}
\newcommand{\hintphi}[2]{\textcolor{red}{(\text{Reg}-$\Phi$-(${#1,#2}$))}}
\newcommand{\hherg}[6]{\textcolor{red}{(\text{Erg}-({#1,#2})-[{#4,#5}])}}
\newcommand{\CLinear}[3]{\textcolor{red}{\text{Linear}-[#1,#2,#3]}}
\newcommand{\CSource}[3]{\textcolor{red}{\text{Source}-[#1,#2,#3]}}
\newcommand{\hergcoeff}[3]{\textcolor{red}{(\text{Erg}-({#1,#2}))}}
\newcommand{\ahergcoeff}[3]{\textcolor{red}{(\text{Erg}-({#1,#2}))}}
\newcommand{\hlocalcoeff}[3]{\textcolor{red}{(\text{Local}-({#1,#2,#3}))}}
\newcommand{\hergl}[4]{\textcolor{red}{(\text{Local}-({#1,#2,#3})-[{#4}])}}
\newcommand{\hergo}{\textcolor{red}{(\text{Erg})}}
\newcommand{\hergol}{\textcolor{red}{(\text{Local})}}
\newcommand{\ahergo}{\textcolor{red}{(\text{Erg})}}
\newcommand{\bE}{\mathbb{E}}
\newcommand{\bN}{\mathbb{N}}
\newcommand{\bR}{\mathbb{R}}
\newcommand{\bT}{\mathbb{T}}
\newcommand{\cL}{\mathcal{L}}
\newcommand{\cP}{\mathcal{P}}
\newcommand{\cU}{\mathcal{U}}
\newcommand{\cV}{\mathcal{V}}
\def\ud{\mathrm{d}}
\def \eps {\epsilon}
 \def\i{\bot}
\def\R{\mathbb{R}}
\def\one{{\mathbbm 1}}
 \renewcommand{\i}{{\mathrm{i}}}
\DeclareFontFamily{OMX}{MnSymbolE}{}
\DeclareSymbolFont{MnLargeSymbols}{OMX}{MnSymbolE}{m}{n}
\DeclareFontShape{OMX}{MnSymbolE}{m}{n}{
    <-6>  MnSymbolE5
   <6-7>  MnSymbolE6
   <7-8>  MnSymbolE7
   <8-9>  MnSymbolE8
   <9-10> MnSymbolE9
  <10-12> MnSymbolE10
  <12->   MnSymbolE12
}{}
\DeclareFontShape{OMX}{MnSymbolE}{b}{n}{
    <-6>  MnSymbolE-Bold5
   <6-7>  MnSymbolE-Bold6
   <7-8>  MnSymbolE-Bold7
   <8-9>  MnSymbolE-Bold8
   <9-10> MnSymbolE-Bold9
  <10-12> MnSymbolE-Bold10
  <12->   MnSymbolE-Bold12
}{}
\let\llangle\@undefined
\let\rrangle\@undefined
\DeclareMathDelimiter{\llangle}{\mathopen}%
                     {MnLargeSymbols}{'164}{MnLargeSymbols}{'164}
\DeclareMathDelimiter{\rrangle}{\mathclose}%
                     {MnLargeSymbols}{'171}{MnLargeSymbols}{'171}
\newcommand{\lev}{\left\langle}
\newcommand{\rev}{\right\rangle}
\newcounter{daggerfootnote}
\newcounter{starfootnote}
\theoremstyle{plain}
\newtheorem{theorem}{Theorem}[section]
\newtheorem{lemma}[theorem]{Lemma}
\newtheorem{proposition}[theorem]{Proposition}
\newtheorem{corollary}[theorem]{Corollary}
\theoremstyle{definition}
\newtheorem{definition}[theorem]{Definition}
\theoremstyle{definition}
\newtheorem{remark}[theorem]{Remark}
\newtheorem*{theorem*}{Theorem (Main result)}
\numberwithin{equation}{section}
\numberwithin{figure}{section}
\begin{document}
\begin{frontmatter}\title{Uniform in time weak propagation of chaos on the torus}
\runtitle{Uniform in time weak propagation of chaos on the torus}

\begin{aug}
\author[A]{Fran\c{c}ois Delarue}
\and
\author[B]{Alvin Tse}
\address[A]{Universit\'e C\^ote d'Azur, CNRS, Laboratoire J.A.Dieudonn\'{e},
Parc Valrose,
France-06108 NICE Cedex 2,
 francois.delarue@univ-cotedazur.fr}

\address[B]{Universit\'{e} Paris-Est, Cermics (ENPC), INRIA, F-77455 Marne-la-Vall\'{e}e, France, alvin.tse@enpc.fr \\
Hong Kong University of Science and Technology, Clear Water Bay, Hong Kong, alvintse@ust.hk}
\end{aug}



\begin{abstract}
 We address the long time behaviour of weakly interacting diffusive particle systems on the $d$-dimensional torus. Our main result is to show that, under certain mild regularity conditions, the weak error between the empirical distribution of the particle system and the limiting theoretical law (governed by a 
Fokker-Planck equation) is of the order ${\mathcal O}(1/N)$, uniform in time on $[0,\infty)$, where $N $ is the number of particles in the interacting diffusion. This comprises 
Fokker-Planck equations with a globally attracting invariant measure for which 
the linearisation at the invariant measure 
enjoys appropriate ergodic properties. 
 Our approach relies on a systematic analysis of the long-time behaviour of the derivatives of the semigroup generated by the Fokker-Planck equation. This strategy is flexible enough to cover a wider broad of situations, including 
 the super-critical Kuramoto model, for which the corresponding Fokker-Planck equation has several invariant measures. 
 \end{abstract}

\begin{keyword}[class=MSC]
\kwd[Primary ]{60F99}
\kwd{60K35}
\kwd[; secondary ]{35Q84}
\kwd{82C31}
\end{keyword}

\begin{keyword}
\kwd{Uniform in time propagation of chaos}
\kwd{Weakly interacting particle system}
\kwd{Weak error}
\kwd{McKean Vlasov equation}
\kwd{Fokker-Planck equation}
\end{keyword}

\end{frontmatter}

\section{Introduction} 

In this paper, we are concerned with 
the large size and the large time behaviour of 
a weakly interacting particle system 
with toroidal data. Denoting by $N$ the number of particles, the system has the following generic form 
\begin{equation} 
\begin{cases}
       Y^{i,N}_t= \eta^i + \int_0^t  b \big( Y^{i,N}_s, \mu^{N}_s \big)  \, \ud s +  W^i_t, \quad i \in \{1,\cdots,N\}, \quad t \geq 0, 
       \\
               \mu^{N}_s := \frac{1}{N} \sum_{i=1}^N \delta_{Y^{i,N}_s},
               \end{cases} \label{eq particles}
\end{equation}
where $b$ is an $\bR^d$-valued function defined on $\bT^d \times \cP(\bT^d)$, 
$\bT^d:={\mathbb R}^d/ {\mathbb Z}^d$ denoting the $d$-dimensional torus and $\cP(\bT^d)$ the space of probability measures on $\bT^d$, which we equip (unless specified differently) with the ${\mathcal W}_{1}$-Wasserstein distance 
\begin{equation}
\label{eq:W1:Wasserstein}
{\mathcal W}_{1}(\mu,\nu) = \inf_{\pi} \biggl\{ \int_{\bT^d \times \bT^d} d_{\bT^d}(x,y) \ud \pi(x,y) \biggr\},
\end{equation}
the infimum being over all the probability measures $\pi$ on the product space 
$\bT^d \times \bT^d$ that have $\mu$ and $\nu$ as respective marginal measures. 
In \eqref{eq particles}, $W^i$, $i=1,\cdots,N$, are independent $d$-dimensional Brownian motions and  $\eta^i,$ $i=1,\cdots,N$, are $N$ $\bR^d$-valued random variables, with the two tuples 
$(\eta^{1},\cdots,\eta^{N})$ and $(W^1,\cdots,W^N)$ being independent. 
Most of the time, the random variables $\eta^i$, $i=1,\cdots,N$,
are also assumed to be independent and identically distributed (I.I.D.) with a common law ${\mu_{\text{init}}}$, but this might not be the case in some of our results (in those cases we emphasise it very clearly). In physical applications, this type of processes arises when we consider interacting particle systems with periodic boundary conditions. (See, for example, \cite{villani2006hypocoercive, constantin,kuramoto81,kuramoto88}.)
\subsection{State of the art}

It is well-known that as the population size $N$ grows to infinity, \eqref{eq particles}, when subjected to I.I.D. initial conditions $\eta^1,\cdots,\eta^N$, behaves like the following McKean-Vlasov SDE 
    \begin{equation} 
      X_t  = \eta + \int_0^t b\bigl(X_s, \rvlaw[X_s]\bigr) \, \ud s +   W_t, \quad t \geq 0\ ; \quad  
              \rvlaw[\eta]:= \text{Law}(\eta) = {\mu_{\text{init}}},  \label{eq:MVSDE} \end{equation}
where $(\eta,W)$ is a copy of $(\eta^1,W^1)$. Existence and uniqueness of a solution to 
   both \eqref{eq particles} and \eqref{eq:MVSDE}
   is known if $b$ is globally bounded and merely Lipschitz continuous in the measure argument with respect to the total variation distance  (see \cite{jourdain97,lacker18,mishuraverten} and the references therein). 
Moreover, the flow of marginal laws $(m(t \, ; {\mu_{\text{init}}}):=\rvlaw[X_{t}])_{t \geq 0}$ satisfies (at least in a distributional sense) the \textit{nonlinear} Fokker-Planck equation:
\begin{equation}
 \label{eq: forward eqn }
\partial_{t} m(t \, ; \mu) = \frac12 \Delta m(t \, ; \mu) - \textrm{\rm div} \Bigl[ m(t \, ; \mu) b\bigl(\cdot,m(t \, ; \mu)\bigr) \Bigr], \quad t \geq 0 \ ; \quad m(0 \, ; \mu) =\mu.
\end{equation}

Asymptotically, any finite subset
of particles becomes independent of each other. This phenomenon is known as \emph{propagation of chaos}. Precisely, 
on any finite time interval $[0,T]$ and, for any fixed $k \in \bN$,
$ (Y^{1,N}, \ldots, Y^{k,N}) \implies (X^1, \ldots, X^k)$,  as $N \to \infty,$ 
where $\{X^i \}_{i \in \bN}$ are i.i.d. copies of \eqref{eq:MVSDE} and `\hspace{-4pt}$\implies$\hspace{-4pt}' denotes weak convergence on the space $C([0,T], (\bT^d)^k)$.   The main reference in this direction is \cite{sznitman1991topics}, where 
propagation of chaos is proved by means of a coupling argument. 
The proof works for a jointly Lipschitz drift $b$ (w.r.t. ${\mathcal W}_1$ in the measure argument) 
and yields a quantitative convergence estimate which we describe in the next paragraph. Another result from \cite{sznitman1991topics} asserts that propagation of chaos  is equivalent to weak convergence of the measure-valued random variables $(\mu^N_{t})_{0 \le t \le T}$ to the limiting laws $(\cL(X_{t}))_{0 \le t \le T}$. This paves the way for another approach 
consisting in proving tightness of $(\pi^N_{t} := \rvlaw[ \mu^N_{t}] \in \mathcal P (\mathcal P (\bT^d)))_{0 \le t \le T}$, see \cite{gartner1988mckean,meleard1996asymptotic,sznitman1991topics}. 
\vskip 4pt 

\textit{Errors in finite time.}
The quantitative analysis of propagation of chaos can be carried out in various ways, whether the geometry of the state space is Euclidean or toroidal (it is only when it comes to the long time behaviour of 
\eqref{eq particles} 
and
\eqref{eq:MVSDE}
that compactness of the torus makes a substantial difference.)
 For instance, for a given metric on the space of probability measures, one may simply estimate the distance between the marginal law of $Y^{1,N}_t$ and the measure   
${\mathcal L}(X_t)$, for a fixed $t$ in some finite interval $[0,T]$. More generally, one may compare the joint law of the $k$ first particles $(Y^{1,N}_t,\cdots,Y_t^{k,N})$ with the product measure 
${\mathcal L}(X_t)^{\otimes k}$. Another approach is to compare the empirical measure $\mu^N_t$ with 
${\mathcal L}(X_t)$. For sure, one may also address the supremum of any of these distances over $t \in [0,T]$, which is very similar to what is done in the analysis of the strong error 
for discretisation schemes of SDEs. 
For instance, in the case where $b$ depends on the measure component linearly, i.e., is of the form $b(x, \mu) := \int_{\bR^d} B(x,y) \, \mu(\ud y),$ with $B$ being Lipschitz continuous in both variables, it  follows from a simple calculation 
(\cite{sznitman1991topics}) that $\sup_{t \in [0,T]} {\mathcal W}_2(\cL(Y^{1,N}_t), \cL(X_t))={\mathcal O}(N^{-1/2})$, 
where here and throughout 
${\mathcal O}(\cdot)$ stands for the big ${\mathcal O}$ Landau notation. 
This result has been improved in several contributions. Notably, 
the $1$-Wasserstein distance between ${\mathcal L}(Y_t^{1,N},\cdots,Y_t^{k,N})$ 
and 
${\mathcal L}(X_t)^{\otimes k}$
has been shown to be ${\mathcal O}((k/N)^2)$
in the recent work \cite{lacker2022hierarchies}, 
the proof relying on the analysis of the relative entropy between both laws and on the so-called BBGKY hierarchy. 
With similar tools, 
quantitative estimates are established for models with singular interactions
in \cite{Jabin}. Models with singular interactions have been also treated by means of the modulated energy method, in which the metric used for studying the convergence is adapted to the form 
of the interactions, see for instance \cite{Serfaty2017}. We refer to \cite{BJW} for combinations of relative entropy and modulated energy methods. 
When $b$ has a general (but regular) measure dependence, the rate of convergence deteriorates with the dimension $d$, since it is then needed to estimate the Wasserstein distance between the empirical law of I.I.D. samples and the limiting measure. This follows from results such as \cite{dereich2013constructive} or \cite{fournier2015rate} in which the dimension explicitly shows up. Dimension-free rates may be retrieved in this more general setting by requiring a strong form of smoothness of the drift $b$ with respect to the measure argument
(see \cite[Lemma 5.10]{delarue2019}
and \cite{szpruch2019antithetic}). 

The rate of convergence can be also addressed by testing the statistical distribution of the empirical measure against real-valued functions 
defined on ${\mathcal P}(\bT^d)$, i.e., by estimating quantities of the form
\begin{equation}
\label{eq:intro:weak}
\Big| \bE[ \Phi(\mu^N_t)] -  \Phi\bigl({\mathcal L}(X_t)) \bigr) \Big|,  
\end{equation}
where $\Phi:\cP(\bT^d) \to \bR$ is a test functional chosen within a suitable class. 
For instance, $\Phi(\cdot) = {\mathcal W}_2(\cdot, {\mathcal L}(X_t))$ may be one such test functional, but more regularity may be required on $\Phi$ to obtain relevant bounds. 
Accordingly, 
\eqref{eq:intro:weak} should be understood as a weak error for the law of $\bar \mu^N_t$ when acting on a given class of test functionals $\Phi$. 
This direction of research has been introduced in independent works \cite{bencheikh2019bias,kolokoltsov2010nonlinear,mischler2013kac,mischler2015new}, for various forms of test functionals $\Phi$. Among others, $\Phi$ is a linear function in 
\cite{bencheikh2019bias}, i.e. $\Phi(\mu):=\int_{\bT^d}F(x)\mu(\ud x)$ for some function $F:\bT^d \rightarrow \bR$; $\Phi$ is a polynomial function in 
\cite{mischler2013kac,mischler2015new}, i.e. a product of linear functions; 
and, $\Phi$ is a quite general nonlinear function in 
\cite{kolokoltsov2010nonlinear}. Under appropriate smoothness conditions on the test functional and on the coefficients of \eqref{eq particles}, 
this gives a rate of convergence of ${\mathcal O}(1/N)$, plus the error due to the approximation of the functional of the initial law (as for the latter, see \cite[Lem. 4.6]{mischler2015new} for a dimension-dependent estimate and \cite[Th. 2.11]{chassagneux2019weak} for an ${\mathcal O}(1/N)$ bound). The key idea of the analysis (highlighted in  
\cite[Th. 9.2.1]{kolokoltsov2010nonlinear}
and
\cite[Th. 6.1]{mischler2015new}) is to work with a semigroup that acts on the space of functions of measures (expounded in the next section). A similar idea has been used in 
\cite{cardaliaguet2019master} in order to study the convergence problem for mean field games, up to the difference that the equation for the semigroup then becomes a nonlinear equation. 
Also, another 
recent work \cite{chassagneux2019weak} provides an extension of \cite{bencheikh2019bias,kolokoltsov2010nonlinear,mischler2015new} in the form 
of a weak error expansion
and the companion works
\cite{,MR4377993,CHAUDRUDERAYNAL20221} address cases with coefficients having lower H\"older regularity in the spatial variable. 
\vskip 4pt

\textit{Long time analysis.}
Propagation of chaos is said to be uniform whenever the quantitative estimates (of propagation of chaos) are uniform in time. This problem is hence more challenging as it involves two large parameters, $N$ and $t$. It is thus related to the long time behaviour of the McKean-Vlasov and non-linear Fokker-Planck equations 
\eqref{eq:MVSDE} and \eqref{eq: forward eqn } themselves. 

Actually, the ergodic analysis of McKean-Vlasov and corresponding nonlinear Fokker-Planck equations has been an intense topic of research on its own for more than twenty years. In the earlier papers 
\cite{benachourI,benachourII,benedetto,bolley,cattiaux,mccann}, convergence towards a (unique) invariant measure has been mostly studied for drifts of the form $b(x,\mu) = - \nabla V(x) - 
\int \nabla W(x-y) \mu( \ud y)$, with confinement and interaction potentials 
$V$ and $W$ satisfying suitable convexity properties, $W$ being symmetric.
In this framework, 
a key conceptual feature is that the Fokker-Planck equation \eqref{eq: forward eqn } can be regarded as a gradient flow on the space of probability measures.
However, as demonstrated 
in 
\cite{herrmann},  
uniqueness of the invariant measure may be easily lost under a small modification of the shape of the potentials, which obviously raises challenging questions about the long-time behaviour of the Fokker-Planck equation. In fact, regardless of the precise form of the drift $b$, a possible strategy to force uniqueness and in turn to get convergence towards the hence unique invariant measure is to assume that the mean field interaction is small enough (see for instance 
\cite{bogachev,butkovsky,eberle}). 
When convergence towards a unique invariant measure is no longer true, a case-by-case stability analysis of all the existing stationary solutions may be carried out, depending on the shape of the dynamics. 
We refer for instance to  
\cite{bertini:giacomin:pakdaman,giacomin:pakdaman:khashayar:pellegrin}
for results on the super-critical (toro\"idal) Kuramoto model, which we revisit in Section 
\ref{se:kuramoto}. More examples may be found in \cite{carrillo:gvalani:pavliotis:schlichting,degond,tugaut}.
 
Uniform in time propagation of chaos
is even more challenging. To wit, it may fail even in cases where there exists a globally attracting invariant measure to the Fokker-Planck equation \eqref{eq: forward eqn }. In the Euclidean setting, 
a well-known example by now may be found in \cite{MR1970276}. Therein, the center of mass of the $N$ particles in \eqref{eq particles} is shown to behave (for a suitable choice of $b$)  like a Brownian motion of intensity of $1/\sqrt{N}$, which becomes macroscopic in size for time 
$t$ larger than $N$. 
That said, several positive results have already been proven under relevant conditions. 
For instance, the same example as in 
\cite{MR1970276}, but with an additional confining convex potential, 
is shown to satisfy 
 $\sup_{t \geq 0} {\mathcal W}_1(\cL(Y^{1,N}_t), \cL(X_t))={\mathcal O}(N^{-1/2})$, see 
 the earlier work
 \cite{MR1847094}
 together with the recent contribution 
 \cite{lacker2023sharp} in which the bound is improved into ${\mathcal O}(N^{-1})$. 
In this example, $b$ takes the aforementioned special form  
$b(x,\mu) = - \nabla V(x) - 
\int \nabla W(x-y) \mu(\ud y)$, 
with $V$ and $W$ satisfying strong convexity conditions.
In \cite{guillin2022systems,MR4564418}, the potential $W$ is even allowed to be singular, with the uniform in time estimates being possibly established under weaker distances and with weaker rates. 
In 
\cite{chen2023uniformintime}, the gradient flow structure is addressed in a more systematic manner, but assuming the potential on top of the gradient to be convex with respect to the measure argument in a functional sense. 
Outside the convex regime, quantitative bounds may be obtained under relevant conditions that force $W$ to be small enough, see  \cite{durmus2018elementary,salem2018gradient}. 
From a different perspective,  close to \cite{Jabin}, 
the authors of
\cite{guillin2021uniform}
have addressed on the torus the case when $\nabla W$ is replaced by a possibly singular divergence free vector field. 
Lastly, 
for uniform propagation of chaos when $b$ is a general drift with a small enough McKean-Vlasov dependence, 
we refer again to 
\cite{lacker2023sharp}, 
in which  
the authors extend 
\cite{lacker2022hierarchies}
and show 
that
$\sup_{t \geq 0} {\mathcal W}_1(\cL(Y^{1,N}_t,\cdots,Y_t^{k,N}), \cL(X_t)^{\otimes k})$ is  ${\mathcal O}((k/N)^2)$, 
and to 
 \cite{arnaudon2020}, in which  
 the error 
\eqref{eq:intro:weak}
is shown to be ${\mathcal O}(N^{-1} + N^{-1/d} \exp(-\lambda t))$, for some $\lambda >0$.

\subsection{Our contribution.}
Our contribution here is to address the weak error 
\eqref{eq:intro:weak}
when $\Phi$ is a general (smooth enough) test functional and 
to provide an (almost) ${\mathcal O}(N^{-1})$ bound for it under suitable generic assumptions on the long time behaviour of
\eqref{eq:intro:weak}, with a particular emphasis on the role of the invariant measures to \eqref{eq:intro:weak}. 

The first main statement in this regard is Theorem 
\ref{thm main result:2} 
below, which asserts among others that such a bound holds true for a bounded drift $b$ that is sufficiently smooth in the measure argument 
under the following two requirements: $(a)$ the equation 
\eqref{eq:intro:weak} has a (hence unique) globally attracting invariant measure $\nu_\infty$; $(b)$ the linearised 
version of  
\eqref{eq:intro:weak}
at 
$\nu_\infty$
 has ergodic properties, which are spelled out in the introduction of 
Section 
\ref{se:3}, see 
\hergo. 
We refer (for a tiny example) to 
\cite{cormier2023stability,dolbeaut,mischler2013kac,mischler2015new}
for
earlier (and related) uses 
of the linearised version of the nonlinear Fokker-Planck equation.   
We provide several examples of applications: $(i)$ when $b$ is a general function with a small enough 
dependence with respect to the measure argument $\mu$; $(ii)$ 
when 
 $b$ derives from an $H$-stable (periodic symmetric) potential, see \cite{ruelle}, i.e., all the Fourier coefficients of $W$ are non-negative;
 $(iii)$
 when $b$ is divergence-free in $x$. 
Obviously, 
similar cases have been already addressed, although in somewhat different contexts, in the aforementioned references 
(see \cite{durmus2018elementary,arnaudon2020} 
for $(i)$, 
\cite{chen2023uniformintime}
for $(ii)$, which is somehow the analogue of the convex case treated therein but on the torus, 
\cite{guillin2021uniform}
for $(iii)$). 
While this may seem a fair criticism of our work, it is interesting to note that our statement covers all three cases. 

What is more, our tools allow us to take the analysis a step further and to obtain local results under locally attracting properties of the invariant measures.
For instance, we obtain metastability bounds (i.e., quantitative propagation of chaos over time interval of any polynomial length in $N$), see
again 
Theorem \ref{thm main result:2} (together with 
Theorem \ref{prop:3:18} for a refined version), when 
$\nu_\infty$ in
the former condition $(a)$ is 
simply assumed to be an 
invariant measure, possibly not globally attracting (in fact, it becomes locally attracting under $(b)$). 
Moreover, in Theorem \ref{main:thm:kuramoto}, we provide a quantitative in time estimate for one specific model, namely the super-critical Kuramoto model, with an infinite set of invariant measures. 
It corresponds to $b(x,\mu)= - 
\int \nabla W(x-y) \mu( \ud y)$ with 
$W(x)=-2 \pi \kappa \cos(2 \pi x)$ for $\kappa>1$.
This case is challenging and does not fit any of the aforementioned situations $(i)$, $(ii)$ or $(iii)$. 
The invariant measures are the Lebesgue measure, which is unstable, and a collection of non-trivial measures obtained by rotating a common density profile on the torus, see 
\cite{bertini:giacomin:pakdaman,giacomin:pakdaman:khashayar:pellegrin}. Uniform propagation of chaos then fails, see 
\cite{bertini:giacomin:poquet} and the best result that has been proven so far is due to \cite{coppini2019long}: it says that, with high probability, the empirical distribution of 
\eqref{eq particles} stays close to the collection of non-trivial invariant 
measures up to times that are subexponential in $N$. 
Here, we prove that the weak error 
\eqref{eq:intro:weak}
is of order ${\mathcal O}(N^{-1})$, uniformly in time, for functionals $\Phi$ that are rotation invariant (see Definition \ref{def:rotation:invariant:function}) and for initial distributions $\mu_{\text{init}}$ that are at a positive distance from the Lebesgue measure. This result is new and, although it does not directly follow from our main statement Theorem \ref{thm main result:2} , it follows from the same approach. 

In all our results, the function $\Phi$ is a sufficiently smooth `nonlinear' functional on ${\mathcal P}(\bT^d)$. To make it clear,  $\Phi$ has two `linear functional' (or `flat') derivatives with respect to the measure argument (see Section \ref{se:2} for the details) that are H\"older continuous in the spatial variables (the derivatives of $\Phi$ at a measure $\mu$ are functions on the torus; H\"older continuity is thus interpreted in terms of the standard distance on the torus). 
When specialising to a linear function $\Phi(\mu)=\int_{\bT^d} F(x) \mu(\ud x)$, this says that 
$F$ has to be merely H\"older continuous. In contrast, $F$ is required to be twice differentiable (with bounded derivatives) in 
\cite{arnaudon2020}. Our strategy of allowing such weaker conditions combines two main ingredients. First, we exploit in a systematic manner the smoothing effect of the Laplace operator in \eqref{eq: forward eqn } (using Schauder's estimates, very like in the finite time analysis carried out in \cite{MR4377993,CHAUDRUDERAYNAL20221}); Second, 
we use a mollification method for $\Phi$, introduced in the first arXiv version \cite{DelarueTse-arXiv} of this work and then studied in a systematic manner in \cite{cecchin2022weak}, that permits to work with a smoother $\Phi$, provided that the resulting bounds for the weak error only depend on the regularity properties of the unmollified $\Phi$.
In the end, a typical instance of nonlinear function is: 
$$\Phi(\mu)= \| \mu - \mu_{0} \|^2_{-(d/2+\varepsilon),2}, \quad \mu \in {\mathcal P}(\bT^d),$$ 
for some $\varepsilon>0$, where
$\mu_{0}$ is a fixed `target' probability measure on $\bT^d$
and  $\| \cdot \|_{-(d/2+\varepsilon),2}$ is the norm on the dual of the standard Sobolev space $H^{d/2+\varepsilon}(\bT^d)$ (see 
Proposition \ref{prop:4:phi:norm:-d}). We find it very useful, especially when $\mu_{0}$ is chosen 
as $m(t \, ; \mu)$ in \eqref{eq: forward eqn } or as $m(\infty \, ; \mu):=\lim_{t\nearrow \infty} m(t \, ; \mu)$. In particular, this permits to retrieve a dimension-free bound, at the price of working with a weaker distance than ${\mathcal W}_{1}$.
We refer to 
\cite{MR4564418} for a similar occurrence of this distance in related questions and 
to \cite{daudin2023optimal} for further mollification arguments that would permit to retrieve dimension-dependent rates in ${\mathcal W}_1$-distance.  
\vskip 4pt

\textit{Strategy of proof.}
Our approach is heavily based on the so-called \textit{master equation} satisfied by the semigroup generated by the McKean-Vlasov equation \eqref{eq:MVSDE}. Equivalently, the latter is the semigroup $(\textsf{P}_{t})_{t \geq 0}$ generated by the (deterministic) Fokker-Planck equation \eqref{eq: forward eqn }, whose action on a (bounded and measurable) test functional $\Phi  : {\mathcal P}(\bT^d) \rightarrow {\mathbb R}$
 reads:
\begin{equation}
\label{eq:intro:semigroup}
{\sf P}_{t} \Phi : {\mathcal P}(\bT^d) \ni \mu \mapsto \Phi\bigl(m(t \, ; \mu)\bigr), \quad t \geq 0,
\end{equation} 
where $(m(t \, ; \mu))_{t \geq 0}$ solves \eqref{eq: forward eqn }.

A key fact is that $(\textsf{P}_{t} \Phi)_{t \geq 0}$ is a classical solution of the aforementioned master equation
whenever $\Phi$ is smooth enough. In PDE theory, the trajectories $(m(t \, ; \mu))_{t\geq 0}$ should be regarded as the characteristics of the master equation. A systematic analysis of the smoothness of 
$(\textsf{P}_{t} \Phi)_{t \geq 0}$ is provided in \cite{buckdahn2017mean,CHAUDRUDERAYNAL20221,tse2019higher}  (see also 
\cite{cardaliaguet2019master} for a similar study in a nonlinear setting). The analysis of the weak error is then carried out in two steps. The first one is to `test' $(\textsf{P}_{t} \Phi)_{t \geq 0}$ onto the empirical distribution of 
the $N$-particle system; the resulting bound is shown to be ${\mathcal O}(1/N)$, with the leading constant
in the symbol ${\mathcal O}(\cdot)$ depending on the bounds for the derivatives of order 1 and 2
of $(\textsf{P}_{t} \Phi)_{t \geq 0}$. The second step is to provide uniform-in-time bounds for the derivatives of $(\textsf{P}_{t})_{t \geq 0}$, which is the main challenge in the proof. 

The derivatives of $(\textsf{P}_{t} \Phi)_{t \geq 0}$ may be explicitly computed by linearising the Fokker-Planck equation \eqref{eq: forward eqn }. This is a well-known fact in PDE theory: The derivatives of the solution of a transport equation 
can be expressed in terms of the derivatives of the corresponding characteristics with respect to the initial point. 
In our setting, the derivatives of the characteristics are indeed obtained by linearising the Fokker-Planck equation with respect to the measure argument. See for instance 
the results exposed in 
Subsection \ref{subse:second:order:spatial:derivatives}, which are mostly borrowed from 
\cite{tse2019higher}. 
Thus, not only does the long-time behaviour of the 
Fokker-Planck equation matter in our analysis, but also 
the long-time asymptotics of the linearised Fokker-Planck equation are important, which explains the formulation of our main 
Theorem 
\ref{thm main result:2}. 

Of course, this approach is reminiscent of the approach initiated in 
\cite{mischler2013kac,mischler2015new}. In particular, the proof of 
property (A4) in both papers is also based upon the linearised version of 
\eqref{eq: forward eqn }, but computed along any solutions. Here, Theorem 
 \ref{thm main result:2} is just formulated in terms of the properties of the linearised version of 
\eqref{eq: forward eqn } 
 at `the' (or `an' when uniqueness does not hold) invariant measure. In this respect, it is fair to say that
\cite[Sec. 6]{mischler2013kac} contains a similar use to ours of the existence
of a globally attracting invariant measure. One of our contribution is to make clear the 
role of the latter and, in turn, to extend the analysis to locally attracting invariant measures. Generally speaking, 
 the linearisation is of a richer structure (and thus of an easier study) at the invariant measure and this plays a key role in the examples that are treated below. For instance, our analysis of Kuramoto's model (as stated in Theorem 
 \ref{main:thm:kuramoto}, which is clearly out of reach of the results obtained in 
\cite{mischler2013kac,mischler2015new}) mostly relies on the properties of the linearised equations at each of the non-trivial invariant measures. From a different perspective,
the metastability properties provided by  
Theorems \ref{thm main result:2}
and
\ref{prop:3:18}
(which are no longer addressed in 
\cite{mischler2013kac,mischler2015new}) 
are also stated in terms of 
the linearised equation at a stable equilibrium, without any further global constraints.  
In the same vein, we believe that the application of Theorem 
\ref{thm main result:2}
to gradient and conservative systems provided 
in Subsection
\ref{subse:examples} is also greatly simplified by the fact the linearised equation is just computed at the invariant measure. 
On another matter, it is worth mentioning that the constraints we impose on the drift $b$ are weaker than those required in 
\cite{mischler2015new}. This requires an additional substantial effort to handle the small time singularities of the 
solutions to the various linearised equations under study. 
For instance, we here allow the drift to be merely bounded in the spatial position: to the best of our knowledge (and apart from the 
case by case singular examples
treated in 
 \cite{guillin2021uniform,guillin2022systems,MR4564418}), this is 
 something new. 
By comparison, the drift is assumed to be at least H\"older continuous in 
 \cite{MR4377993,CHAUDRUDERAYNAL20221}, in which propagation of chaos is just addressed
 in finite time. 

\vskip 4pt

\textit{Further prospects.} We are confident that similar results could be obtained in other cases, {including cases with a non-constant diffusion coefficient or defined in the Euclidean setting, provided that a suitable form of confining drift is added to the dynamics.
%
%
%
\subsection{Organisation and notations}

 We start the paper by reviewing the theory of differential calculus in Wasserstein spaces and the master equation in Section \ref{se:2}. We also provide the key semi-group expansion that serves as bounding the weak error (see Lemma 
\ref{lem:general:lemma}). Subsequently, Section \ref{se:3} is dedicated to the statement and the proof of 
Theorem \ref{thm main result:2}, in which  
we derive uniform in time estimates for 
\eqref{eq:intro:weak}
from general `ergodic' properties 
of 
\eqref{eq: forward eqn }
and its linearised version. In Section  \ref{se:kuramoto}, we explore the special case related to the Kuramoto model in which the associated Fokker-Planck equation does not have a unique invariant measure (see Theorem 
\ref{main:thm:kuramoto}).  
\vskip 4pt

\textit{Useful Notations.}
 The scalar product between two vectors $a,b \in \bR^d$ is denoted by $a \cdot b$. For each $i \in \bR^d$, $e_i$ denotes the vector with $1$ in the $i$th component and 0 elsewhere. For any vector $x \in \bR^d$, $x_i$ denotes the $i$th component of $x$. For $a, b \in \bR$, $a \vee b$ denotes $\max \{a,b \}$ and $a \wedge b $ denotes $\min \{a, b \}$.
 The set ${\mathbb N}$ is the set of integers, including $\{0\}$. 
 For any real $s$, we call $\lfloor s \rfloor$ the floor part of $s$.
 For two probability measures $\mu$ and $\nu$ on $\bT^d$, we call 
 $\textrm{\rm dist}_{\textrm{\rm TV}}(\mu,\nu) = \sup_{\| f \|_{\infty} \leq 1} \int_{\bT^d}
 f(x) d (\mu-\nu)$ the total variation distance between $\mu$ and $\nu$, where 
 $\| f \|_{\infty}$
 is the essential sup norm of 
 $f :\bT^d  \rightarrow \bR$. 
 For $z \in {\mathbb C}$, $\overline z$ is the complex conjugate of $z$. Moreover, $\i$ is the complex number such that $\i^2=-1$. 

\section{Main method of proof in this paper}
\label{se:2}
In this section, we introduce the main ingredients needed in our approach.
The space 
$\cP(\bT^d)$ is equipped with the ${\mathcal W}_{1}$ distance, where we recall that
convergence for ${\mathcal W}_{1}$ is equivalent to weak convergence on ${\mathcal P}(\bT^d)$. 
Moreover, the vector field $b$ is assumed to be at least bounded and measurable on 
$\bT^d \times {\mathcal P}(\bT^d)$. As a result, 
existence and uniqueness hold for \eqref{eq:MVSDE}
and the marginal law of the solution depends on the initial condition 
only through the statistical distribution of the latter. 
Then, we call $( m(t \, ; \mu) )_{t \geq 0}$ the evolving law of the process $X$ in \eqref{eq:MVSDE} when starting at law $\mu$; it solves 
\eqref{eq: forward eqn } in a distributional sense.

\subsection{Master equation}
Our framework of analysis relies on 
the so-called master equation for the semigroup $(\textsf{P}_{t} \Phi)_{t \geq 0}$ 
defined in 
\eqref{eq:intro:semigroup}. This requires 
a notion of differentiation w.r.t. measures in $\cP(\bT^d)$, called \emph{linear functional derivatives, } see \cite{cardaliaguet2019master,carmona2017probabilistic,chassagneux2019weak,delarue2019,wang}. A function 
$\cV : \cP(\bT^d) \rightarrow \R$ is said to 
have directional derivatives at some $m \in {\mathcal P}({\mathbb T}^d)$ if
there exists a bounded (measurable) function 
 $\frac{\delta \cV}{\delta m}(m,\cdot):  \bT^d \to \bR$
 such that, for any $m' \in {\mathcal P}( {\mathbb T}^d)$,
 \begin{equation*}
 \lim_{\epsilon \searrow 0} 
 {\mathcal V}\bigl( \epsilon m' + (1-\epsilon) m \bigr) =
 \int_{{\mathbb T}^d} 
 \frac{\delta \cV}{\delta m}(m,y)  \bigl( m'- m \bigr)(\ud y). 
 \end{equation*}
The function ${\mathcal V}$ is said to be 
 continuously differentiable if  
 it has directional derivatives at any $m \in {\mathcal P}({\mathbb T}^d)$ and the 
 resulting function 
 $\frac{\delta \cV}{\delta m}: \cP(\bT^d) \times \bT^d \to \bR$
 is (jointly) continuous, in which case it satisfies, 
 \begin{align}\label{eq de first order deriv}
\cV(m')- \cV(m) = \int_0^1 \int_{\bT^d} \frac{\delta \cV}{\delta m}\bigl( (1-s)m + sm',y\bigr) \,  (m'-m)(\ud y) \, \ud s, 
\quad 
m, m' \in \cP(\bT^d). 
 \end{align}
The function 
$\frac{\delta \cV}{\delta m}$
is said to be the \emph{linear functional derivative} of $\cV: \cP(\bT^d) \to \bR$. 
It is uniquely defined up to an additive constant, which is fixed by the convention 
\begin{equation}
    \intrd   \frac{\delta  \cV}{\delta m}(m,y)  m( \ud y) = 0.
    \label{eq: normalisation linear functional deriatives:p=1}
\end{equation} 
{If $[\delta \cV/\delta m](m,\cdot)$ is differentiable (w.r.t $y$), then 
we let $\partial_\mu \cV(m,y) = \partial_y [\delta \cV/\delta m](m,y)$.}
By induction, we then introduce higher-order derivatives:
for any integer $p \geq 2$, $m, m' \in \cP(\bT^d)$ and $y \in (\bT^d)^{p-1}$,
\begin{align*}
 \frac{\delta^{p-1} \cV}{\delta m^{p-1}}(m',y) - \frac{\delta^{p-1} \cV}{\delta m^{p-1}}(m,y) = \int_0^1 \int_{\bT^{d}} \frac{\delta^p \cV}{\delta m^p}\bigl((1-s)m +sm',y,y'\bigr) \, (m'-m)( \ud y') \, \ud s,
\end{align*}
provided that the $(p-1)$-th order derivative is well defined. 
In order to ensure uniqueness, they are required to satisfy 
\begin{equation}
    \intrd   \frac{\delta^p \cV}{\delta m^p}(m,y_1, \ldots, y_p) \, m(\ud y_p) = 0, 
    \quad { y_{1},\cdots,y_{p-1} \in {\mathbb T}^{d-1}.} 
    \label{eq: normalisation linear functional deriatives}
\end{equation} 
There is a related notion called 
Wasserstein derivative $\partial_{\mu} \cV$ (\cite{ambrosiobook,buckdahn2017mean, cardaliaguet2010notes,carmona2017probabilistic}). 
In short, it is the gradient field of the linear functional derivative, i.e., 
$\partial_{\mu} \cV(m,y) = \partial_{y} [\delta V/\delta m] (m,y)$. 
Propositions 5.48 and 5.51
in 
\cite{carmona2017probabilistic} serve as a dictionary to enable us to pass from one to the other.

For a bounded measurable function $\Phi : \cP(\bT^d) \rightarrow \bR$, 
we let (see \eqref{eq:intro:semigroup}):  
\begin{equation}  
\cU( t, \mu) :=\textsf{P}_{t} \Phi(\mu) = \Phi\bigl(m(t \, ; \mu)\bigr),
\quad t \geq 0,  \ \mu \in {\mathcal P}(\bT^d). \label{eq: defofflow}
\end{equation}
It is proven in Theorem 7.2 of  \cite{buckdahn2017mean} (see also Theorem 3.5 in \cite{CHAUDRUDERAYNAL20221}) that  $\cU$ satisfies the \emph{master equation}
\begin{equation}  \label{eq pde measure} 
\begin{split} 
&\partial_t {\cU}(t, \mu) =\int_{\bT^d}  \biggl[ \sum_{i=1}^d\partial_{x_i} \ld[{\cU}] (t, \mu,x)  b_i( x, \mu)  + \tfrac12 \sum_{i=1}^d \partial^2_{x_{i} x_i}   \ld[{\cU}]  ( t,\mu, x)  \biggr] \, \mu (\ud x),
\end{split}
\end{equation} 
for $(t,\mu) \in [0,+\infty) \times {\mathcal P}({\mathbb T}^d)$, 
with 
${\cU}(0,\mu) = \Phi ( \mu)$ as boundary condition,      
provided that $b$ and $\Phi$ 
are smooth enough. 
While we could state properly the conditions required 
in \cite{buckdahn2017mean}, we feel useless to do so at this stage. 
We formulate below stronger sets of conditions on $b$ and $\Phi$ which subsume the conditions needed in 
\cite{buckdahn2017mean}.
In fact, 
the equation 
in \cite[Theorem 7.2]{buckdahn2017mean}
is set in the Euclidean setting, which requires, when translated in the periodic setting, to check that the various derivatives are periodic functions in $y$. We refer to 
the appendix in \cite{cardaliaguet2019master}.
 In this respect, it is worth noticing that in 
\cite[Theorem 7.2]{buckdahn2017mean}, 
the measure argument $\mu$ in \eqref{eq pde measure} is required to have a finite second moment. In our setting, such a restriction 
no longer exists since the torus is compact. 
Very importantly, the result 
of 
\cite[Theorem 7.2]{buckdahn2017mean}
says that:
\begin{proposition}
\label{eq:regularity:mathcalU}
If $b$ and $\Phi$ are smooth enough then, for every $i,j \in \{1, \ldots, d\}$, the derivatives
\[
\partial_{t} {\mathcal U}(t,\mu), \quad \partial_{(y_1)_i} \ld[{\mathcal U}](t,\mu,y_1),  \quad \partial_{(y_2)_j} \partial_{(y_1)_i} \sld[{\mathcal U}](t,\mu,y_1, y_2),  \quad \partial_{(y_1)_j} \partial_{(y_1)_i} \ld[{\mathcal U}](t, \mu,y_1)
\]
exist 
and 
are 
globally Lipschitz continuous w.r.t. $(\mu,y_{1},y_{2})$ for the Euclidean and ${\mathcal W}_1$ norms, uniformly in time $t$ in compact subsets, and are continuous in time (and hence jointly continuous).  
\end{proposition}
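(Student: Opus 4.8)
The plan is to obtain Proposition~\ref{eq:regularity:mathcalU} as a direct consequence of \cite[Theorem 7.2]{buckdahn2017mean}, once the three discrepancies already flagged above --- the periodicity of the state space, the use of the linear functional derivative in place of the L-derivative, and the (here vacuous) second-moment constraint --- have been reconciled. First I would rewrite \diffb and \diffphi in terms of the L-derivative, using the identity $\partial_{\mu}\cV(m,y)=\partial_{y}[\delta\cV/\delta m](m,y)$ and its second-order analogue together with \cite[Propositions 5.48 and 5.51]{carmona2017probabilistic}; this shows that \diffb and \diffphi are precisely the smoothness-plus-global-Lipschitz conditions that \cite[Theorem 7.2]{buckdahn2017mean} imposes on the coefficient and on the terminal datum, the normalisation conventions \eqref{eq: normalisation linear functional deriatives:p=1}--\eqref{eq: normalisation linear functional deriatives} merely fixing the additive constants and playing no further role.

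Next I would pass to the torus by identifying $\cP(\bT^d)$ with the set of $\bZ^d$-periodic probability measures on $\bR^d$ (via push-forward under the universal covering) and by extending $b$ and the linear functional derivatives of $\Phi$ to $\bZ^d$-periodic objects on Euclidean space. A periodic probability measure does not belong to $\cP_2(\bR^d)$, so \cite[Theorem 7.2]{buckdahn2017mean} cannot be quoted verbatim; however the finite-second-moment hypothesis there is used only to keep the moments along the characteristics under control, and on the torus the flow $(m(t\,;\mu))_{t\ge 0}$ remains within a compact set of periodic laws, so those bounds hold automatically and uniformly in $t$. The construction and the a priori estimates of \cite{buckdahn2017mean} then go through with the same (time-uniform, on compacts in $t$) constants. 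It still has to be checked that the resulting derivatives of $\cU$ are again $\bZ^d$-periodic in their spatial arguments, hence genuine functions on $\bT^d$ and on $(\bT^d)^2$; I would borrow this verification from the appendix of \cite{cardaliaguet2019master} (and from \cite{marx} in the one-dimensional case). This periodicity bookkeeping is the only genuinely delicate point of the argument --- everything else is transcription.

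Finally, the existence, the global Lipschitz continuity in $(\mu,y_1,y_2)$, and the control uniform in $t$ on compacts of $\partial_t\cU$, $\partial_{(y_1)_i}[\delta\cU/\delta m]$, $\partial_{(y_1)_j}\partial_{(y_1)_i}[\delta\cU/\delta m]$ and $\partial_{(y_2)_j}\partial_{(y_1)_i}[\delta^2\cU/\delta m^2]$ are exactly the conclusions of \cite[Theorem 7.2]{buckdahn2017mean}, while joint continuity in all the parameters follows by combining the time-continuity of the flow $(m(t\,;\mu))_{t\ge 0}$ with the Lipschitz-in-$(\mu,y)$ bounds just obtained. As an alternative route, more in the spirit of the rest of the paper, one could instead construct $[\delta\cU/\delta m]$ and $[\delta^2\cU/\delta m^2]$ directly as classical solutions of the linearised master equations obtained by differentiating \eqref{eq pde measure}, whose well-posedness and Schauder-type regularity follow from the analysis of the linearised Fokker-Planck equation recalled in Subsection~\ref{subse:second:order:spatial:derivatives} (after \cite{tse2019higher}); I would expect the periodicity issue above to remain the main obstacle on that route too.
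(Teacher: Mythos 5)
Your overall route — reconcile the L-derivative with the linear functional derivative via \cite[Propositions~5.48 and~5.51]{carmona2017probabilistic}, pass to the torus by periodic extension and observe that compactness makes the $\cP_2$-restriction vacuous, and borrow the periodicity bookkeeping from the appendix of \cite{cardaliaguet2019master} and from \cite{marx} — is exactly the one the paper takes. But there is a genuine gap in your last paragraph, and it is precisely the point the paper treats most carefully.

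You write that the global Lipschitz continuity in $(\mu,y_1,y_2)$ is ``exactly the conclusion'' of \cite[Theorem~7.2]{buckdahn2017mean}. It is not: that theorem yields Lipschitz continuity of the derivatives in the measure argument with respect to the $\cW_2$-distance, whereas the proposition claims it with respect to $\cW_1$. These are not interchangeable. On $\bT^d$ one has $\cW_1\le\cW_2$, so a $\cW_1$-Lipschitz bound is strictly stronger; the reverse inequality on a compact space is only $\cW_2\le C\,\cW_1^{1/2}$, so a $\cW_2$-Lipschitz estimate downgrades to $1/2$-H\"older in $\cW_1$, which does not suffice here. Upgrading to $\cW_1$-Lipschitz continuity is genuine additional work, and the paper supplies it by invoking \cite[Theorem~5.10]{CarmonaDelarue_book_II}, together with two further observations: the small-$T$ restriction imposed there (because of the forward-backward structure) can be dropped in the present purely forward setting, and the object called $\cU(t,x,\mu)$ there must be integrated in $x$ against $\mu$ to recover our $\cU(t,\mu)$. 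None of this appears in your sketch, so as written your argument proves Lipschitz continuity only in a topology on $\cP(\bT^d)$ coarser than the one the proposition — and the rest of the paper — actually uses.
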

In fact, 
\cite[Theorem 7.2]{buckdahn2017mean}
is stated for the ${\mathcal W}_{2}$-distance.
The adaptation to the ${\mathcal W}_{1}$-distance may be found in 
\cite[Theorem 5.10]{CarmonaDelarue_book_II}, noticing that the equation for ${\mathcal U}$ has a more general nonlinear form. In particular, the restriction that $T$ has to be small enough (in the latter statement) can be easily removed in our linear setting. Also, our function ${\mathcal U}(t,\mu)$ corresponds
in the notation of  \cite{CarmonaDelarue_book_II} to $\int {\mathcal U}(t,x,\mu) \mu(\ud x)$. 

\subsection{Expansion along the particle system}
The starting point of our analysis is to make use of the identity 
$\Phi(\mu) = {\mathcal U}(0,\mu)$, as given by 
 the initial condition of \eqref{eq: defofflow}. 
Recalling the notation 
$\mu^N_{t}$ from 
\eqref{eq particles},  this gives the decomposition
\begin{equation}
\begin{split}
\Phi(\nlaw[t])  - \Phi ( \cL(X_t) ) &=  \big( \cU(t,\nlaw[0]) - \cU(t,{\mu_{\text{init}}}) \big) + \big( \cU(0,\nlaw[t]) - \cU(t,\nlaw[0]) \big).
\end{split} 
\label{eq:errordecomp}
\end{equation}
To treat the last term, we define, for $t > 0$, the finite dimensional projection  ${U}_t:[0,t]\times(\bT^d)^N \to \bR$ by
\begin{equation*}
    {U}_t(s  ,x_1,\ldots,x_N):= {\cU} \bigg( t-s, \frac{1}{N}\sum_{i=1}^N\delta_{x_i} \bigg). 
\end{equation*} 
Then 
$${\cU}(0,\nlaw[t]) -{\cU}(t,\nlaw[0]) =  {U}_t(t, Y^{1,N}_t, \ldots, Y^{N,N}_t) -  {U}_t(0, Y^{1,N}_0, \ldots, Y^{N,N}_0).$$
We can now apply It\^{o}'s formula to this equality. By combining our Proposition 
\ref{eq:regularity:mathcalU}
with \cite[Proposition 3.1]{chassagneux2014probabilistic}, we can conclude that  ${U}_t$ is differentiable in the time component and twice-differentiable in the space components.  Very much in the spirit of 
\cite[(5.131)]{carmona2017probabilistic}, this allows us to use \eqref{eq pde measure} to obtain a cancellation of all the terms apart from one term which gives us the rate of convergence of $1/N$:
\begin{equation}
\label{eq:second:term:main result intro formula}
\begin{split}
{\mathbb E} \bigl[ \cU(0,\nlaw[t]) - \cU(t,\nlaw[0]) \bigr] 
&=  \frac{1}{N} \sum_{i=1}^d \int_0^t \bE \bigg[  \intrd \bigg( \partial_{(y_{2})_i} \partial_{(y_{1})_i}  \frac{ \delta^2 \cU}{\delta m^2} (t-s, \mu^{N}_s,z,z) \bigg)   \, \mu^{N}_s(\ud z) \bigg] \, \ud s.
\end{split}
 \end{equation}
 More importantly, this formula holds regardless of the assumptions on the initial 
 data $Y^{1,N}_{0},\cdots,Y^{N,N}_{0}$. In particular, they are not required to be I.I.D. 
 In fact, the I.I.D. assumption becomes useful in order to estimate the first term in 
 the expansion 
\eqref{eq:errordecomp}. Indeed, by \cite[Thm. 2.14]{chassagneux2019weak}
(which we can apply in our context thanks to 
Proposition \ref{eq:regularity:mathcalU}), we have 
\begin{equation}
\label{eq:second:term:main result intro formula:2}  
\begin{split}
&{\mathbb E} \bigl[ \cU(t,\nlaw[0]) \bigr] - \cU(t,{\mu_{\text{init}}})
=  \frac1{N}   \int_0^1 \int_0^1  \bE \bigg[ s \frac{\delta^2 \cU}{\delta m^2}(t, \tilde{\mu}^{N}_{s,s_1}, \tilde{\eta},\tilde{\eta})  - s \frac{\delta^2 \cU}{\delta m^2}(t, \tilde{\mu}^{N}_{s,s_1},\tilde{\eta},{ {\eta}_{1}})  \bigg] \ud s_1 \, \ud s, 
\end{split}
\end{equation}
{where 
$\eta_{1}$ is as 
in 
\eqref{eq particles}, $\tilde{\eta}$ 
is independent of $(\eta_{1},\cdots,\eta_{N})$} with law ${\mu_{\text{init}}}$ and 
$$ \tilde{\mu}^{N}_{s,s_1}:= \frac{ss_1}{N}(\delta_{\tilde{\eta}} - \delta_{{\eta_{1}}}) + {\mu_{\text{init}}} + s( \mu^N_0 - {\mu_{\text{init}}}), \quad \quad s, s_1 \in [0,1]. $$ 
By combining the above equation with 
\eqref{eq:errordecomp}
and 
\eqref{eq:second:term:main result intro formula}, we deduce the following 
lemma:
\begin{lemma}
\label{lem:general:lemma}
If $b$ and $\Phi$ are smooth enough (so that the master equation has a 
classical solution satisfying the conclusion of Proposition 
\ref{eq:regularity:mathcalU}), then
for any integer $N \geq 1$ such that 
$(Y_0^{1,N},\cdots,Y_0^{N,N})$ are I.I.D. with common law ${\mu_{\text{init}}} :={\mathcal L}(X_{0})$ and for any  
$t \geq 0$,
\begin{equation}
 \label{main result intro formula}
\begin{split}
\bE[ \Phi(\mu^{N}_t)] - \Phi(\rvlaw[X_t]) 
& =   \frac1{N}   \int_0^1 \int_0^1  \bE \bigg[ s \frac{\delta^2 \cU}{\delta m^2}(t, \tilde{\mu}^{N}_{s,s_1},\tilde{\eta},\tilde{\eta})  - s \frac{\delta^2 \cU}{\delta m^2}(t, \tilde{\mu}^{N}_{s,s_1},\tilde{\eta},{{\eta}_{1}})  \bigg] \ud s_1 \, \ud s 
\\
&\hspace{15pt} + \frac{1}{N} \sum_{i=1}^d \int_0^t \bE \bigg[  \intrd \bigg( \partial_{(y_{2})_i} \partial_{(y_{1})_i}  \frac{ \delta^2 \cU}{\delta m^2} (t-s, \mu^{N}_s,z,z) \bigg)   \, \mu^{N}_s(\ud z) \bigg] \, \ud s,
\end{split}
 \end{equation}
\end{lemma}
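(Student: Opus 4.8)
The plan is to obtain \eqref{main result intro formula} directly from the decomposition \eqref{eq:errordecomp} by substituting into it the two expansions \eqref{eq:second:term:main result intro formula} and \eqref{eq:second:term:main result intro formula:2}; thus the lemma is essentially the synthesis of the material assembled just above, and the real content is to justify those two expansions under \diffb, \diffphi and the regularity of $\cU$ furnished by Proposition \ref{eq:regularity:mathcalU}. To begin, from the definition \eqref{eq: defofflow} and the identity $m(t\,;{\mu_{\text{init}}}) = \rvlaw[X_t]$ one has $\Phi(\nlaw[t]) = \cU(0,\nlaw[t])$ and $\Phi(\rvlaw[X_t]) = \cU(t,{\mu_{\text{init}}})$, so inserting $\pm\,\cU(t,\nlaw[0])$ yields \eqref{eq:errordecomp}: the sum of the \emph{initialisation error} $\cU(t,\nlaw[0]) - \cU(t,{\mu_{\text{init}}})$ and the \emph{dynamical error} $\cU(0,\nlaw[t]) - \cU(t,\nlaw[0])$.

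For the dynamical error I would fix $t>0$, introduce the finite-dimensional projection $U_t(s,x_1,\dots,x_N) := \cU\bigl(t-s,\tfrac1N\sum_{i=1}^N\delta_{x_i}\bigr)$, so that $\cU(0,\nlaw[t]) - \cU(t,\nlaw[0]) = U_t(t,Y^{1,N}_t,\dots,Y^{N,N}_t) - U_t(0,Y^{1,N}_0,\dots,Y^{N,N}_0)$, and then apply It\^{o}'s formula to $s\mapsto U_t(s,Y^{1,N}_s,\dots,Y^{N,N}_s)$ on $[0,t]$. By Proposition \ref{eq:regularity:mathcalU} (one time derivative of $\cU$, together with two spatial derivatives of each of $\ld[\cU]$ and $\sld[\cU]$, all jointly continuous and locally bounded) combined with \cite[Proposition 3.1]{chassagneux2014probabilistic}, $U_t$ is $C^1$ in $s$ and $C^2$ in $(x_1,\dots,x_N)$, with $\partial_{x_i}U_t$ equal to $\tfrac1N$ times the spatial gradient of $\ld[\cU]$ at $x_i$ and $\partial^2_{x_ix_i}U_t$ equal to $\tfrac1N$ times the spatial Hessian of $\ld[\cU]$ at $x_i$ plus $\tfrac1{N^2}$ times the mixed spatial Hessian of $\sld[\cU]$ at $(x_i,x_i)$ (the cross-particle second derivatives are $O(1/N^2)$ and disappear since distinct $W^i,W^j$ have no joint quadratic variation). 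Taking expectations removes the $\ud W^i$-martingales; then, using $\partial_sU_t(s,\cdot) = -[\partial_t\cU](t-s,\cdot)$ together with the master equation \eqref{eq pde measure} — which $\cU$ satisfies by Proposition \ref{eq:regularity:mathcalU} — the time-derivative term, the drift term and the $\ld[\cU]$-Hessian terms cancel identically when evaluated at $\nlaw[s] = \tfrac1N\sum_i\delta_{Y^{i,N}_s}$, and the only surviving contribution is the $\sld[\cU]$-term, which carries the extra $1/N$ coming from its $1/N^2$ weight summed over the $N$ particles; after integration in $s\in[0,t]$ and taking expectations this reproduces \eqref{eq:second:term:main result intro formula}. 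It is worth stressing that this step uses nothing about the law of $(Y^{i,N}_0)_i$.

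For the initialisation error, which is where the I.I.D. assumption enters, I would Taylor-expand $\mu\mapsto\cU(t,\mu)$ to second order along the displacement from ${\mu_{\text{init}}}$ to $\nlaw[0]$: the first-order term $\int_{\bT^d}\ld[\cU](t,{\mu_{\text{init}}})(y)\,(\nlaw[0]-{\mu_{\text{init}}})(\ud y)$ has vanishing expectation by the normalisation \eqref{eq: normalisation linear functional deriatives:p=1} together with $\bE[\nlaw[0]] = {\mu_{\text{init}}}$, while the second-order remainder has a variance-type structure yielding a factor $1/N$; making this precise is exactly \cite[Theorem 2.11]{chassagneux2019weak}, applicable thanks to the regularity of $\sld[\cU](t,\cdot)$ provided by Proposition \ref{eq:regularity:mathcalU}, and it produces \eqref{eq:second:term:main result intro formula:2}. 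Adding the identities of the last two paragraphs according to \eqref{eq:errordecomp} gives \eqref{main result intro formula}. The main obstacle is the dynamical-error step: one must verify that the regularity recorded in Proposition \ref{eq:regularity:mathcalU} is genuinely enough both to differentiate $U_t$ twice in space and once in time and to invoke It\^{o}'s formula legitimately, and that the chain-rule formulas of \cite{chassagneux2014probabilistic} transfer these derivatives onto $\ld[\cU]$ and $\sld[\cU]$ with exactly the $1/N$ and $1/N^2$ weights that leave a single $O(1/N)$ term after the master-equation cancellation; the initialisation bound is entirely delegated to \cite{chassagneux2019weak}.
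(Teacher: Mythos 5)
Your proposal is correct and follows the same route the paper takes: decompose via the identity $\Phi(\mu)=\cU(0,\mu)$ and the insertion of $\pm\,\cU(t,\mu^N_0)$, handle the dynamical part by applying It\^{o}'s formula to the finite-dimensional projection $U_t$ (using Proposition \ref{eq:regularity:mathcalU} together with \cite[Proposition 3.1]{chassagneux2014probabilistic}) and cancel all but the $\sld[\cU]$-term via the master equation \eqref{eq pde measure}, and delegate the initialisation part to \cite[Theorem 2.11]{chassagneux2019weak}, which is where the I.I.D.\ assumption enters. You also correctly isolate the observation — made explicit in the paper — that the It\^{o}/master-equation step requires nothing about the law of the initial data, which is what allows \eqref{eq:second:term:main result intro formula} to be reused later for the semigroup of the empirical measure in Subsection \ref{subse:kuramoto:weak}.
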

Equation \eqref{main result intro formula} is in fact a key in our analysis. 
In order to bound the left-hand side by $1/N$, uniformly in 
$t \geq 0$, we must be able: $(a)$
to bound  $\frac{\delta^2 \cU}{\delta m^2}(t, \cdot,\cdot, \cdot)$, uniformly in time; $(b)$
to bound $\partial_{(y_{2})_i} \partial_{(y_{1})_i}  \frac{\delta^2 \cU}{\delta m^2}(t, \cdot,\cdot, \cdot)$, with 
{an integrable decay
 in long time and, possibly, an integrable blow-up in small time}. 
Observe that, implicitly, those bounds are required to be uniform in space. Obviously, this is 
a very strong constraint, which we are however able to relax partially {in two different ways}: $(i)$ In Subsection 
\ref{subse:metastable}, where we 
just use local-in-time bounds for the derivatives (in the right-hand side in 
\eqref{main result intro formula})
and then obtain {bounds for the left-hand side 
up to times that are polynomial in $N$}; 
$(ii)$ In the analysis of the super-critical Kuramoto model
provided in Section \ref{se:kuramoto}, {where we use bounds for the deriavtives that are uniform away from an unstable equilibrium and where we obtain bounds for the left-hand side for a smaller class of test functionals $\Phi$.}

\subsection{Main assumptions} 

\subsubsection{Functional spaces}
We shall use two types 
of functional spaces in our analysis:
$W^{s,\infty}(\bT^d)$ spaces (and their duals), and $W^{s,2}(\bT^d)$
spaces (and their duals), for $s>0$. Following 
\cite{Brezis_Mironescu,dinezza}, we define the following notations.
\begin{enumerate}
\item For any integer $n \geq 0$, we call $W^{n,\infty}(\bT^d)$ the space of functions 
$f$ that are $(n-1)$-times differentiable and whose $(n-1)^{\rm th}$-derivative is Lipschitz continuous. 
The derivatives up to order $n-1$ are denoted by $(\nabla^k f)_{k=1,\cdots,n-1}$, 
with each $\nabla^k f$ taking values in $({\mathbb R}^d)^k$. 
The function $\nabla^{n-1} f$ itself has a generalised derivative $\nabla^n  f \in L^\infty(\bT^d;({\mathbb R}^d)^n)$. The $W^{n,\infty}(\bT^d)$-norm is written 
$\| f \|_{n,\infty}:=\sum_{k=0}^n \| \nabla^k f\|_{\infty}$. 
\item For any integer $n \geq 0$ and any real $\alpha \in (0,1)$, 
we call $W^{n+\alpha,\infty}(\bT^d)$ the space of functions that are 
$n$-times differentiable such that their $n^{\rm th}$-derivatives
are $\alpha$-H\"older continuous.
The $W^{n+\alpha,\infty}(\bT^d)$-norm is written as
$\| f \|_{n+\alpha,\infty}:=\sum_{k=0} \| \nabla^k f \|_{\alpha,\infty}$,
where $\| \cdot \|_{\alpha,\infty}$
is the standard H\"older norm 
$$\| f \|_{\alpha,\infty} = \sup_{x \in \bT^d} \vert f(x) \vert + \sup_{x,y \in \bT^d : x \not =y}
\frac{\vert f(x) - f(y)\vert}{\vert x-y \vert^{\alpha}}.$$
\item For any integer $n \geq 0$ and any real $\alpha \in (0,1)$, 
we call $(W^{n+\alpha,\infty}(\bT^d))'$ the dual
space of $W^{n+\alpha,\infty}(\bT^d)$.
The dual norm is denoted by $\| \cdot \|_{(n+\alpha,\infty)'}$.
Notice that 
$\| \cdot \|_{(0,\infty)'}$, {when restricted to the space of probability measures}, 
identifies with
$ \textrm{\rm dist}_{\rm TV}$. 
\end{enumerate}
We merely write $\| \cdot \|_{\infty}$ for $\| \cdot \|_{0,\infty}$. 
In the text, we make use of the following interpolation inequality:
\begin{equation}
\label{eq:interpolation}
\| \phi \|_{a+\eta,\infty} \leq \| \phi \|_{a,\infty}^{(\gamma-\eta)/\gamma}
\| \phi \|_{a+\gamma,\infty}^{\eta/\gamma},
\end{equation}
which holds for any $a\geq 0$, $\eta,\gamma \in [0,1]$ with $\eta \leq \gamma$.
Above, $\phi \in W^{a+\gamma,\infty}(\bT^d)$  (see
\cite{Brezis_Mironescu}).
\vskip 4pt

In order to introduce $W^{s,2}(\bT^d)$, we feel more convenient to use Fourier analysis. 
For a function $f \in L^2(\bT^d)$, we denote
its Fourier coefficients by 
\begin{equation*}
f^{\boldsymbol n} :=  \int_{\bT^d} f(x) e^{- \i 2 \pi {\boldsymbol n} \cdot x} \ud x, 
\quad {\boldsymbol n} \in {\mathbb Z}^d.
\end{equation*}
For $s>0$, we call $W^{s,2}(\bT^d)$ the space of functions $f \in L^2(\bT^d)$
such that $\| f \|_{s,2}^2:=\sum_{{\boldsymbol n} \in {\mathbb Z}^d} (1+n^2)^{s} \vert f^{\boldsymbol n} \vert^2 < \infty$. The $W^{s,2}(\bT^d)$-norm is $\| \cdot \|_{s,2}$. The dual space is identified with $W^{-s,2}(\bT^d)$, which is defined in a similar manner, by extending the notation 
$(q^{\boldsymbol n})_{{\boldsymbol n} \in {\mathbb Z}^d}$ for the Fourier coefficients 
of a Schwartz distribution $q$ (acting on smooth functions of $\bT^d$).
Then, 
$W^{-s,2}(\bT^d)$ is the space of distributions $q$
such that $\| q\|_{-s,2}^2:=\sum_{{\boldsymbol n} \in {\mathbb Z}^d} (1+n^2)^{-s} \vert q^{\boldsymbol n} \vert^2 < \infty$. 
The $W^{-s,2}(\bT^d)$-norm is $\| \cdot \|_{-s,2}$.
For brevity, we write $\| \cdot \|_{2}$ for $\| \cdot \|_{0,2}$. 
\vskip 4pt

For any vector field $f=(f^1,\cdots,f^d)$, we write 
$\| f \| = \max_{i=1,\cdots,d} \| f^i \|$ for any norm on the space in which 
the $f^i$'s are taken. 
Most of the time, the duality product between 
a function $f$ and a distribution $q$ is merely denoted by 
$\langle f,q \rangle$, with the spaces to which $f$ and $q$ belong
being implicitly understood.   
For $z \in {\bT}^d$, we define $D_{z}$ as the Dirac distribution at point $z$ and $D_{z}'$ for the
{opposite of its} derivative. In short, $\langle D_{z}',f\rangle = f'(z)$. 
For time-dependent function $f$ and distribution $q$, 
we often write $f(t,x)$ for the former and $q(t)$ for the latter. 
Finally, for any $n \in {\mathbb N} \setminus \{0\}$ and 
any $n$-time differentiable
function $\Phi$ on ${\mathcal P}(\bT^d)$,  we write $\frac{\delta^n \Phi}{\delta m^n} (\mu)(q_1, \ldots,q_n) $, 
for distributions $q_{1},\cdots,q_{n}$ on $\bT^d$,
 to denote
$ \frac{\delta^n \Phi}{\delta m^n} (\mu)(q_1, \ldots, q_n):=  \langle 
\frac{\delta^n \Phi}{\delta m^n}(\mu,\ldots), q_{1} \otimes q_{2} \otimes \ldots \otimes q_{n}\rangle,$
if the duality product makes sense, where we recall that the function in the left-hand side of the duality product is defined on $(\bT^d)^n$. Note that
 $q_{1} \otimes q_{2} \otimes \cdots \otimes q_{n}$ is the tensor product of $q_{1},\cdots,q_{n}$ (see \cite[Definition 40.3]{treves}). 
 The Lebesgue measure on ${\mathbb T}^d$ is denoted by
  $\textrm{\rm Leb}_{\bT^d}$. 
  The constant function on ${\mathbb T}^d$, equal to 1, is denoted by $\one$.  

\subsubsection{Main assumptions} \label{section assumptions}
We use the following assumptions, with $n$ and $k$ denoting two integers and $\alpha$ a real in $[0,1)$:
\begin{description}
\item[\hspace{3pt} \hintb{n+\alpha}{k}] We say that $b$ satisfies \hintb{n+\alpha}{k} if, for any $i \in \{1,\cdots,d\}$, 
the function $b_{i}$ is $k$ times differentiable with respect to the measure argument 
$m$, and for any $m \in \cP(\bT^d)$ and $\ell \in \{0,\cdots,k\}$, the function 
\begin{equation*}
(\bT^d)^{\ell+1} \ni (x,y_{1},\ldots,y_{\ell})
\mapsto 
\frac{\delta^\ell b_{i}}{\delta m^\ell}(x,m,y_{1},\ldots,y_{\ell})
\end{equation*}
has crossed derivatives $\partial_{x}^{n_{0}} \partial_{y_{1}}^{n_1}
\ldots \partial_{y_{\ell}}^{n_{\ell}} \frac{\delta^\ell b_{i}}{\delta m^\ell}(x,m,y_{1},\ldots,y_{\ell})$
for any $n_{0},n_1,\ldots,$ $n_{\ell}$ in $\{0,\cdots,n\}$, with all these crossed derivatives being bounded w.r.t. $(x,y_{1},\ldots,y_{\ell})$, uniformly in $m$, {and $\alpha$-H\"older continuous w.r.t. 
$(x,y_{1},\ldots,y_{\ell})$, uniformly in $m$, if $\alpha >0$}.
\item[\hspace{3pt} \hlipb{n}{k}] We say that $b$ satisfies \hlipb{n}{k} if it satisfies 
\hintb{n}{k} and, for any $i \in \{1,\cdots,d\}$ and $\ell \in \{1,\cdots,k\}$,
for any $n_{0},n_1,\ldots,n_{\ell}$ in $\{0,\cdots,n\}$, 
the derivatives $\partial_{x}^{n_{0}} \partial_{y_{1}}^{n_1}
\ldots \partial_{y_{\ell}}^{n_{\ell}} \frac{\delta^\ell b_{i}}{\delta m^\ell}$
are Lipschitz continuous in $m$ with respect to ${\mathcal W}_{1}$. 
\end{description}
\vskip 4pt

We proceed similarly with the test functional $\Phi: \cP(\bT^d) \to \bR$. 
For two integers $k$ and $n$, we define 
\hintphi{n+\alpha}{k}
by replacing $b_i(x,m)$ by $\Phi(m)$ 
in 
 \hintb{n+\alpha}{k}. In particular, the crossed derivatives $\partial_{y_{1}}^{n_1}
\ldots \partial_{y_{\ell}}^{n_{\ell}} \frac{\delta^\ell \Phi}{\delta m^\ell}(m,y_{1},\ldots,y_{\ell})$
for $n_1,\ldots,n_{\ell}$ in $\{0,\cdots,n\}$, are bounded, uniformly in $m$, 
{and $\alpha$-H\"older continuous w.r.t. 
$(y_{1},\ldots,y_{\ell})$, uniformly in $m$, if $\alpha >0$}.

In the sequel, we will use several values
 of $(n,k)$
 and $\alpha$ in these assumptions. While we use higher values of 
 $(n,k)$
 for intermediary steps, we eventually recover 
the main results under the sole 
\hintb{0}{2}
and
\hintphi{\gamma}{2}, 
for some $\gamma \in (0,1]$, 
by 
a mollification argument.
Importantly, we observe that, under the latter assumption, the mapping 
$y_{1} \mapsto [(\delta^2 \Phi/\delta m^2)(m,y_{1},\cdot) : y_{2} \mapsto 
(\delta^2 \Phi/\delta m^2)(m,y_{1},\cdot) \in W^{\gamma/2,\infty}(\bT^d)]$ is 
$\gamma/2$-H\"older continuous, from which we deduce that
\hintphi{\gamma}{2} implies
\begin{equation}
\sup_{m \in \cP(\bT^d)} \sup_{\substack{ \|q_1\|_{(\gamma/2, \infty)'}, \|q_{2}\|_{(\gamma/2, \infty)'} \leq 1}} \biggl|  \frac{\delta^{2} \Phi}{\delta m^{2}} (\cdot,m)(q_1,q_{2}) \biggr| < + \infty. 
\label{eq:Int-phi-alpha-k}
\end{equation}

\subsection{Examples}
\label{subse:examples}
\subsubsection{Linear interaction}
\label{subsubse:linear:interaction}
Let $n \in \bN$. Suppose that for each $i \in \{1, \ldots, d\}$, $F_i: \bT^d \times \bT^d \to \bR$
is $n$-times continuously differentiable and that $G: \bT^d \to \bR$  is $n$-times differentiable. We then define 
$$ b_i(x, \mu):= \intrd F_i(x, y) \, \mu(\ud y),  \quad \quad \Phi(\mu):= \intrd G(y) \, \mu(\ud y).$$ 
It can be shown easily 
that, for any integer $k \geq 1$, 
\begin{equation*}
\begin{split}
&\frac{ \delta^{k} b_i}{\delta m^k} (x, \mu,y_1, \ldots, y_k)= (-1)^k \bigg( \intrd F_i(x, y) \, \mu(\ud y) - F_i(x, y_k) \bigg),
\end{split}
\end{equation*} 
and similarly for $\Phi$. 
It is easily seen (\cite{tse2019higher}) that  $b$ satisfies \hregb{n}{k} and 
\hlipb{n-1}{k} (if $n \geq 1$), whereas $\Phi$ satisfies \hintphi{n}{k}. 

\subsubsection{A completely non-linear example}
The following `completely non-linear' example will be very useful. 

\begin{proposition}
\label{prop:4:phi:norm:-d}
For given $\alpha \in (0,1)$ and $\nu_{0} \in {\mathcal P}(\bT^d)$, 
the function $\Phi$ below satisfies
\textrm{\rm \hintphi{\alpha/4}{2}}:
\begin{equation*}
\Phi(\mu) = \bigl\| \mu - \nu_{0} \bigr\|_{-(d+ \alpha)/2,2}^2, \quad \mu \in {\mathcal P}(\bT^d). 
\end{equation*}
Moreover, there exists a real $C$ such that, for any $\mu \in {\mathcal P}({\mathbb T}^d)$, the 
$\alpha/4$-norm of $[\delta \Phi/\delta m](\mu,\cdot)$ is less than 
$C \sqrt{\Phi(\mu)}$ and 
the (joint) $\alpha/4$-norm 
of $[\delta^2 \Phi/\delta m^2](\mu,\cdot,\cdot)$
(w.r.t. the two dot arguments) 
is less than $C$. 
\end{proposition}

\begin{proof}
We let $s:=(d+\alpha)/2$. 
Then, 
it is obvious to see that (writing $\bar z$ for the conjugate of $z$) 
\begin{equation}
\label{eq:Fourier:Phi}
\begin{split}
\Phi(\mu) &= \sum_{{\boldsymbol n} \in {\mathbb Z}^d} \frac1{(1+ \vert {\boldsymbol n} \vert^2)^s} 
\bigl( \mu^{\boldsymbol n} \bar \mu^{\boldsymbol n}+ \nu_{0}^{\boldsymbol n} \bar \nu_{0}^{\boldsymbol n} - \mu^{\boldsymbol n} \bar \nu_{0}^{\boldsymbol n} - \nu_{0}^{\boldsymbol n} \bar \mu^{\boldsymbol n}
\bigr).
\end{split}
\end{equation}
Throughout the proof, we use the fact that
\begin{equation}
\label{eq:series:convergent}
\sum_{{\boldsymbol n} \in {\mathbb Z}^d}
 \frac1{(1+ \vert {\boldsymbol n} \vert^2)^s} 
< \infty, \quad \sum_{{\boldsymbol n} \in {\mathbb Z}^d}
 \frac{ \vert {\boldsymbol n} \vert^{\alpha/2}}{(1+ \vert {\boldsymbol n} \vert^2)^s} 
< \infty. 
\end{equation}
Writing the product $\mu^{\boldsymbol n} \bar \mu^{\boldsymbol n}$ in the form 
$\int_{\bT^d} e^{-\i 2 \pi {\boldsymbol n} \cdot (\theta- \theta')}  \mu(\ud \theta)  \mu(\ud\theta')$
(and similarly for the other products) and using 
\eqref{eq:series:convergent}, we 
get that 
\begin{equation*}
\begin{split}
\frac{\delta \Phi}{\delta m}(\mu)(x) 
&= \Phi^{(1)}(\mu,x) - \int_{\bT^d} \Phi^{(1)}(\mu,y)  \mu(\ud y),
\\
\Phi^{(1)}(\mu,x)
&= \sum_{{\boldsymbol n} \in {\mathbb Z}^d} \frac1{(1+ \vert {\boldsymbol n} \vert^2)^s} 
 \int_{\bT^d}
\Bigl( e^{-\i 2 \pi {\boldsymbol n} \cdot (\theta -x)} + e^{-\i 2 \pi {\boldsymbol n} \cdot (x - \theta)} \Bigr)  \bigl( \mu - \nu_{0} \bigr)(\ud \theta), 
\quad x \in \bT^d. 
\end{split}
\end{equation*}
We then compute the derivative $\delta^2 \Phi/\delta m^2$ in a similar manner. We have
\begin{equation*}
\begin{split}
&\frac{\delta^2 \Phi}{\delta m^2}(\mu)(x,x')
=  \Phi^{(2)}(x,x') - \int_{\bT^d} \Phi^{(2)}(x,y) \mu(\ud y)
- \Bigl( \Phi^{(1)}(\mu,x') -  \int_{\bT^d} 
\Phi^{(1)} (\mu,y) \mu(\ud y)
\Bigr). 
\\
&\Phi^{(2)}(x,x') =
\sum_{{\boldsymbol n} \in {\mathbb Z}^d}
\frac1{(1+\vert {\boldsymbol n} \vert^2)^{s}} 
\Bigl( e^{-\i 2 \pi {\boldsymbol n} \cdot (x -x')} + e^{-\i 2 \pi {\boldsymbol n} \cdot (x' - x)} \Bigr), \quad x,x' \in \bT^d. 
\end{split}
\end{equation*}
Then, by Cauchy-Schwarz inequality, we can find a constant $C$, depending on $\alpha$, such that for any $\mu \in \cP(\bT^d)$ 
and $x,x' \in \bT^d$, 
\begin{equation*}
\begin{split}
\bigl\vert 
\Phi^{(1)}(\mu,x)
-
\Phi^{(1)}(\mu,x') \bigr\vert 
&\leq C \vert x - x' \vert^{\alpha/4} \sum_{{\boldsymbol n} \in {\mathbb Z}^d}
\frac{\vert {\boldsymbol n} \vert^{\alpha/4}}{(1+\vert {\boldsymbol n} \vert^2)^{s}}
 \vert  ( \mu - 
\nu_0 )^{\boldsymbol n}
\vert  
\\
&\leq
C \vert x - x' \vert^{\alpha/4} 
\biggl( \sum_{{\boldsymbol n} \in {\mathbb Z}^d}
\frac{\vert {\boldsymbol n} \vert^{\alpha/2}}{(1+\vert {\boldsymbol n} \vert^2)^{s}}
\biggr)^{1/2}
\biggl( \sum_{{\boldsymbol n} \in {\mathbb Z}^d}
\frac{ \vert  ( \mu - 
\nu_0  )^{\boldsymbol n}
 \vert ^2}{(1+\vert {\boldsymbol n} \vert^2)^{s}}
\biggr)^{1/2}
\\
&\leq C  \sqrt{\Phi(\mu)} \vert x - x' \vert^{\alpha/4}.
\end{split}
\end{equation*} 
Proceeding in a similar way with $\Phi^{(2)}$, 
the conclusion easily follows. 
\end{proof}


\section{Uniform weak propagation of chaos for McKean-Vlasov equations}
\label{se:3}

This section is dedicated to the analysis of the general 
$d$-dimensional case. 
Most of our analysis is based upon the properties of the following linearised operator:
\begin{equation}
\label{eq:linearized:operator}
L_{m} q = \tfrac12 \Delta q - \textrm{\rm div} \bigl( b(\cdot,m) q \bigr) - \textrm{\rm div} \Bigl( m \frac{\delta b}{\delta m}(\cdot,m)(q) \Bigr),
\end{equation}
for a probability measure $m \in {\mathcal P}(\bT^d)$ and 
a distribution $q$ on $\bT^d$. For an initial condition $q_0 \in ( W^{k,\infty}(\mathbb{T}^d) )'$ and a source term $r \in \cap_{T>0} L^{\infty} ([0,T], ( W^{\beta, \infty}(\mathbb{T}^d) )' )$, 
for some $k \in [0,2)$ and $\beta \in [0,2)$, we denote by {\CLinear{$\mu$}{$q_0$}{$r$}}
the related Cauchy problem, 
 defined by
\begin{equation} 
    \partial_t q(t) - L_{m(t   ;\mu)} q(t)  -  r(t)  = 0, \quad t \geq 0 \ ; \quad
             q(0) = q_0,
 \label{eq q} \end{equation}
   interpreted in the weak sense. In particular, we study the behaviour of the master equation 
\eqref{eq pde measure}  under 
suitable ergodic properties of the operators 
$(L_{m(t;\mu)})_{\mu \in {\mathcal P}(\bT^d)}$, that are stated in terms of 
{\CLinear{$\mu$}{$q_0$}{$r$}}:
\begin{description}
\item[\hspace{3pt} \hherg{$\alpha$}{$\beta$}{$\gamma$}{$(C_k)_{0 \leq k < 2}$}{$\lambda$}{$\Xi$}] 
For 
$\mu \in {\mathcal P}({\mathbb T}^d)$, 
$\alpha,\beta \in [0,2)$ and 
non-negative constants $(C_k)_{0 \leq k < 2}$ and $\lambda$, we say that $L_{m(t;\mu)}$ in 
\eqref{eq:linearized:operator} satisfies \hherg{$\alpha$}{$\beta$}{$\gamma$}{$(C_k)_{0 \leq k < 2}$}{$\lambda$}{$\Xi$} if, 
for any 
$k \in [\alpha,2)$,
$q_0 \in (W^{k, \infty}(\bT^d))'$, with $\langle q_{0}, \one \rangle =0$ and $r \in \cap_{T>0} L^{\infty} ([0,T], ( W^{\beta, \infty}(\mathbb{T}^d) )' )$, 
with $\lev r(t),\one \rev= 0$, $t \geq 0$, 
the unique solution in $\bigcap_{T >0} L^{\infty} ([0,T] ,(W^{k, \infty}( \bT^d))')$ of the {Cauchy problem} 
{\CLinear{$\mu$}{$q_0$}{$r$}}
satisfies (existence and uniqueness are guaranteed by Lemma \ref{lions paper pde result} below):
   \begin{equation}
   \label{erg:hyp:3} 
   \forall t >0, \quad
   \| q(t) \|_{(k-{\alpha})',\infty} \leq  
   C_k
      \biggl[ 
      {\frac{     \| q_0 \|_{(k,\infty)'}}{1 \wedge t^{\alpha/2}}}
e^{- \lambda t}
   +
   \int_{0}^t \frac{\| r(s) \|_{({\beta}, \infty)'} }{(t-s)^{{\beta}/2}}
   e^{-\lambda(t-s)} \ud s
   \biggr].  
   \end{equation} 
  \item[\hspace{3pt} \ahergo] We say 
that $b$ satisfies \ahergo \, if 
there exists $\lambda >0$ (depending on $b$), and, for any 
$\alpha,\beta \in [0,2)$, there exist
constants
$(C_k)_{0 \leq k < 2}$  (depending on $b$, $\alpha$ and $\beta$), such that 
 $L_{m(t;\mu)}$ satisfies
{ \hherg{$\alpha$}{$\beta$}{$\gamma$}{$(C_k)_{0 \leq k < 2}$}{$\lambda$}{$\Xi$} for all $\mu \in \cP(\bT^d)$}.
\end{description}

%
%
%
%

The following remarks are in order. First, we sometimes say that 
\hergo \, holds but only for a given probability measure $\mu$, in which case the 
property is just assumed `for this $\mu$' (and not `for all $\mu$', {as written in the last line of the definition}). 
Second, 
we often use the notation 
\hergcoeff{$\alpha$}{$\beta$}{$\gamma$}
(resp. 
\ahergcoeff{$\alpha$}{$\beta$}{$\gamma$})
 to say that we invoke \hergo \, (resp. 
 \ahergo)
  for this choice of $\alpha$ and $\beta$, without specifying what the values of $(C_k)_{0 \le k \le 2}$
  {(which may depend on $(\alpha,\beta)$)}
   and $\lambda$ are in that case.

Here is the main statement of this section, which summarises several results that are proven next.  

\begin{theorem} \label{thm main result:2} 
Assume
that $b$ satisfies 
\emph{\hregb{\eta}{2}}
for some $\eta \in [0,1)$
and that 
$\Phi$ 
satisfies 
\emph{\hintphi{\gamma}{2}}, for some $\gamma \in (0,1)$.  
Assume that there exists a measure $\nu_\infty$ satisfying 
\emph{\hergo}
and attracting the solutions
of 
 \eqref{eq: forward eqn }, uniformly with respect to the initial point, i.e., 
 for any $\delta >0$, there exists 
$t \geq 0$ such that 
$\textrm{\rm dist}_{\rm TV}(m(t \, ; \mu),\nu_\infty) < \delta$ for any 
$\mu \in {\mathcal P}({\mathbb T}^d)$.
Then, there exists a collection of constants $(C_\delta)_{\delta \in [0,1)}$ such that, for any $N \geq 1$, 
\begin{equation}
\label{eq:thm:main:result:2}
 \sup_{t \geq 0} \Big|  \bE[ \Phi(\mu^{N}_t)] - \Phi(\rvlaw[X_t]) \Big| \leq
 \left\{ 
 \begin{array}{ll}
C_0 N^{-1} \quad &\textrm{\rm if} \quad \eta>0,
 \\
C_\delta N^{{-1+\delta}} \quad &\textrm{\rm if} \quad \eta=0, \quad \textrm{\rm for any} \  \delta >0.
\end{array}
\right.
\end{equation}
Moreover,  
if $\nu_\infty$
satisfies \emph{\hergo} but is not a global attractor, then the above two bounds remain true up to 
any time $t \in [0,N^p]$, for any integer $p$, in which case the constants $C_0$
and $C_\delta$ also depend on $p$. 
\end{theorem}

Notice 
that the second result is just local and that the invariant measure $\nu_\infty$ is not required to be unique.  
We call this regime `metastable'. 
As a corollary of this statement, we get  
that the distance between the empirical measure and 
the solution to the Fokker-Planck equation is typically of size $N^{-1/2}$ in the norm 
$\| \cdot \|_{-(d+\alpha)/2,2}$, 
see Corollary \ref{cor:4:phi:norm:-d}
for the globally attracting regime
and Theorem \ref{prop:3:18} for the metastable case (in which case the estimate holds true up to polynomial times).

It is worth noticing that similar bounds can be proven in finite time, without any further need to 
assume \hergo \ nor the existence of an invariant measure. The corresponding statement would share some similarities with the result obtained in  
\cite{MR4377993,CHAUDRUDERAYNAL20221}, and would even provide some 
improvement since 
the drift and its derivatives in $m$ are required to be H\"older continuous in 
\cite{MR4377993,CHAUDRUDERAYNAL20221}. 
In order to clarify the finite in time versions of our statements, it is useful to formulate the following local (in time) version of 
\hergo:

\begin{description}
\item[\hspace{3pt} \hergl{$\alpha$}{$\beta$}{$T$}{$(C_k)_{0 \leq k \leq 2}$}] 
For 
$\mu \in {\mathcal P}({\mathbb T}^d)$,  
$\alpha , \beta \in [0,2)$, $T \geq 0$ and non-negative constants $(C_k)_{0 \leq k \leq 2}$, we say that $L_{m(t;\mu)}$ in 
\eqref{eq:linearized:operator} satisfies \hergl{$\alpha$}{$\beta$}{$T$}{$(C_k)_{0 \leq k < 2}$}
if, for any $k \in [\alpha,2)$, 
   \eqref{erg:hyp:3} holds true under the same
   choice of $q$ and $r$, but with $t$ in $[0,T]$ and 
$\lambda=0$.
\vspace{3pt}

  \item[\hspace{3pt} \hergol] We say 
that $b$ satisfies \hergol \, if, for any $\alpha ,\beta \in [0,2)$ and $T \geq 0$,
 there exist $(C_k)_{0 \leq k < 2}$ (depending on $b$, $\alpha$, $\beta$, $T$), such that  
 $L_{m(t;\mu)}$ satisfies
 \hergl{$\alpha$}{$\beta$}{$T$}{$(C_k)_{0 \leq k < 2}$} for any $\mu \in \cP(\bT^d)$.   
\end{description}

The rest of the section is organised as follows: 
In Subsection \ref{subse:second:order:spatial:derivatives}, we collect preliminary results on 
the master equation \eqref{eq pde measure}. 
In Subsection 
\ref{subse:tangent}, we explain how to use \hergo \ to get long time bounds on the derivatives of the master equation and we achieve a first step in the proof of Theorem 
\ref{thm main result:2}. 
Subsection 
\ref{ergodic bounds} addresses the verification of \hergol \ in finite time. The connection
between 
\hergo \, and the long time behaviour 
of 
 \eqref{eq: forward eqn }
is studied in  
Subsection \ref{subse:ERG},  
with a special 
treatment of the metastable case in 
Subsection \ref{subse:metastable}.
 Concrete examples are discussed in Subsection 
\ref{subse:examples}.

\subsection{Second order mixed spatial derivatives of the second order linear functional derivative of \texorpdfstring{$\cU$}{U}}
\label{subse:second:order:spatial:derivatives}

We first invoke a local estimate for forward Kolmogorov equations. The proof is an obvious variant of \cite[Theorem 2.3]{tse2019higher} (see also 
\cite[Subsection 3.3]{cardaliaguet2019master}). 

\begin{lemma}
\label{lions paper pde result}
Assume \textrm{\rm \hlipb{4}{2}}
 and let $q_0 \in ( W^{n,\infty}(\mathbb{T}^d) )'$ and $r \in \cap_{T>0} L^{\infty} ([0,T], ( W^{n, \infty}(\mathbb{T}^d) )' )$, for some
 $n \in \{0,1,2,3\}$. Then, the Cauchy problem
\emph{\CLinear{$\mu$}{$q_0$}{$r$}} has a unique solution in the space $\cap_{T >0} L^{\infty} ([0,T], (W^{n, \infty}(\mathbb{T}^d))' )$ such that
$$  \sup_{t \in [0,T]}  \| q(t) \|_{(n,\infty)'} \leq C \Big( \| q_0 \|_{(n,\infty)'}  + 
{\sup_{t \in [0,T]}
\| r (t) \|_{(n,\infty)'}}  \Big), $$ 
for some constant $C>0$, independent of the inputs $q_{0}$ and $r$ (but depending on $T$ and on $b$). \end{lemma}

We also recall  \cite[Theorem 4.5]{tse2019higher}, which gives a representation of the second order linear functional derivative of $\cU$ in terms of solutions of forward Kolmogorov equations.
It is quite easy to see (see the arXiv version v1 of this work \cite{DelarueTse-arXiv}) that the assumptions stated below are enough to 
apply \cite{tse2019higher}. 

\begin{proposition}
\label{recap theorem 4.5} 
Under \emph{\hlipb{4}{2}} and \emph{\hintphi{4}{3}}, 
${\mathcal U}$ is twice differentiable with respect to $m$ and the 
first and 
second-order derivatives 
 $\frac{\delta {\mathcal U}}{\delta m}$
 and $\frac{\delta^2 {\mathcal U}}{\delta m^2}$ are given by
\begin{align} 
\label{eq: main rep}
&\ld[\cU](t  ,\mu)(z) = \ld[\Phi](m(t \, ; \mu))\Big( m^{(1)} (t \, ; \mu, \delta_{z}) \Big),
\\
&\sld[\cU](t  ,\mu)(z_1, z_2) =  \sld[\Phi](m(t \, ; \mu))\Big( m^{(1)} (t \, ; \mu, \delta_{z_1}),m^{(1)}(t \, ; \mu, \delta_{z_2}) \Big) 
 +  \ld[\Phi](m(t \, ; \mu)) \Big( m^{(2)}(t \, ; \mu, \delta_{z_1}, \delta_{z_2}) \Big), 
 \nonumber
\end{align}
where, for any $\mu,\nu \in {\mathcal P}(\bT^d)$, $m^{(1)}(\cdot \, ;\mu,\nu)  \in \cap_{T>0} L^{\infty}([0,T],  {(W^{0, \infty}(\bT^d))'}) $ satisfies the Cauchy problem
\emph{\CLinear{$\mu$}{$\nu-\mu$}{0}},
and, for any $\nu_{1},\nu_{2} \in {\mathcal P}(\bT^d)$, $m^{(2)}(\cdot \, ;  \mu, \nu_1 , \nu_2)\in \cap_{T>0} L^{\infty}([0,T],  {(W^{1, \infty}(\bT^d))'}) $ satisfies the Cauchy problem 
\emph{\CLinear{$\mu$}{$\mu-\nu_2$}{$r$}}, 
with $r=(r(t))_{t \geq 0}=\emph{\text{\CSource{$\mu$}{$m^{(1)}(\cdot ; \mu, \nu_1)$}{$m^{(1)}(\cdot ; \mu, \nu_2)$}}}$,
where 
\emph{\CSource{$\mu$}{$q_1$}{$q_2$}} is a generic notation for
\begin{equation}
\begin{split}
\emph{\text{\CSource{$\mu$}{$q_1$}{$q_2$}$(t)$}}
&:=
- \text{\emph{div}} \Big( q_1(t)   \,   \frac{ \delta b}{ \delta m} (\cdot, m(t \, ; \mu)) \big( q_2(t) \big) \Bigr)
 - \text{\emph{div}} \Big(  q_2(t)   \frac{ \delta b}{ \delta m} (\cdot, m(t \, ; \mu)) \big( q_1(t) \big) \Bigr)
 \\
&\hspace{15pt}  - \text{\emph{div}} \Big(        m(t \, ; \mu)  \frac{ \delta^{2} b}{ \delta m^{2}}(\cdot, m(t \, ; \mu)) \big( q_1(t),  q_2(t) \big)  \Big). 
\end{split} 
\label{PDE linearised 2}  \end{equation} 
\end{proposition}
Intuitively, $m^{(1)}(t \, ;\mu,\nu)$ is equal to 
$[\ud / \ud \varepsilon]_{\vert \varepsilon=0+} 
m(t,(1-\varepsilon) \mu + \varepsilon \nu)$
and 
$m^{(2)}(t; \mu, \nu_1,\nu_2)$
  to 
$[\ud/ \ud \varepsilon]_{\vert \varepsilon=0+} 
m^{(1)}(t \, ; (1-\varepsilon) \mu + \varepsilon \nu_2,\nu_1)$.

{Notice that {\text{\CSource{$\mu$}{$q_1$}{$q_2$}$(t)$}} belongs to $(W^{k+1,\infty}({\mathbb T}^d))'$ when 
$q_1(t)$ and $q_2(t)$ belong to 
$(W^{k,\infty}({\mathbb T}^d))'$, for $k \in \{0,1,2\}$.}
We now differentiate each of the two terms on the right hand side of \eqref{eq: main rep} respectively.  
\begin{proposition}
\label{diff thm 1}
Under \emph{\hlipb{4}{2}} and \emph{\hintphi{4}{3}}, for any $i,j \in \{1, \ldots, d \}$, $\mu \in \cP(\bT^d)$ and $z_{1},z_{2} \in \bT^d$, 
the derivative below exists and
\begin{equation*}
\begin{split}
&(\partial_{z_2})_j (\partial_{z_1})_i \biggl\{ \sld[\Phi](m(t \, ;\mu))\Big( m^{(1)} (t \, ; \mu, \delta_{z_1}),m^{(1)}(t \, ; \mu, \delta_{z_2}) \Big)\biggr\}
 = \sld[\Phi](m(t \, ;\mu))\Big( d^{(1)}_i(t \, ; \mu, z_1),d^{(1)}_j(t \, ; \mu, z_2) \Big), 
\end{split}
\end{equation*}
  where $d^{(1)}_i(\cdot \, ; \mu, {z}) \in \cap_{T>0} L^{\infty}([0,T], (W^{1, \infty}(\bT^d))') $ satisfies the Cauchy problem 
  \emph{\CLinear{$\mu$}{$(D'_z)_i$}{$0$}}, with the distribution
  $(D'_z)_i$ being defined by
  $\lev \xi, (D'_z)_i \rev = \partial_{x_i} \xi(z)$.
  \end{proposition}
As made clear in the proof below, 
$d^{(1)}_i(t \, ; \mu, z) 
= [d / d \varepsilon]_{\varepsilon =0+}
m^{(1)}(t \, ;\mu,\delta_{z+ \varepsilon e_i})$. Moreover,  
notice that, from the conservative form of 
\eqref{eq:linearized:operator}, the integral of $d^{(1)}(t\, ; \mu,z)$ with respect to the Lebesgue measure is zero, i.e. 
$\langle d^{(1)}(t\, ; \mu,z),\one\rangle = 0$, for all $t \geq 0$. 

  \begin{proof}
  Existence of solution to {\CLinear{$\mu$}{$(D'_z)_i$}{$0$}} in $\cap_{T>0} L^{\infty}([0,T], 
  (W^{1, \infty}(\bT^d))')$  is guaranteed by Lemma \ref{lions paper pde result}. 
  For given $T>0$ and $i \in \{1,\cdots,d\}$, we define, for 
  $t \in [0,T]$, $\mu \in \cP(\bT^d)$, $z \in \bT^d$ and $h \in {\mathbb R} \setminus \{0\}$, 
 \begin{equation}
 \label{eq:rho:1:notation} 
   \rho^{(1)}_i(t \, ; \mu,z,h):= \frac{1}{h} \Big( m^{(1)}(t \, ;\mu,\delta_{z+he_i}) - m^{(1)}(t \, ;\mu,\delta_{z}) \Big) - d^{(1)}_i(t\, ;\mu,z) ,   \quad t \in [0,T].  
   \end{equation} 
By linearity,  $\rho^{(1)}_i(\cdot ; \mu,z,h)$ solves
\CLinear{$\mu$}{$h^{-1} ( \delta_{z+he_i} - \delta_z ) -  (D'_z)_i$}{$0$}.
%
Obviously, 
\begin{equation}
\begin{split}
\| h^{-1} ( \delta_{z+he_i} - \delta_z ) -  (D'_z)_i \|_{{(2, \infty)'}} & =   \sup_{\| \xi \|_{2, \infty} \leq 1 } \bigg[ \frac{\xi(z+he_i)-\xi(z)}{h} - \partial_{x_i} \xi(z) \bigg] \nonumber 
\leq  \frac{\vert h \vert}{2}. 
\end{split}
\label{est h/2}
\end{equation}
By Lemma \ref{lions paper pde result} {(with $n=2$)}, 
we get 
\begin{equation}
\label{eq:rho1itmuzh}
 \lim_{h \to 0} 
\sup_{t \in [0,T]} \bigl\| \rho^{(1)}_i(t \, ; \mu,z,h) \bigr\|_{(2, \infty)'}  =0.
\end{equation}
Therefore, using the smoothness of $\delta^2 \Phi/\delta m^2$,
\begin{equation}
\begin{split}
& (\partial_{z_1})_i \biggl\{ \sld[\Phi](m(t \, ;\mu))\Big( m^{(1)} (t\, ; \mu, \delta_{z_1}),m^{(1)}(t\, ; \mu, \delta_{z_2}) \Big) \biggr\}  
\\
 &=  \lim_{h \to 0} \sld[\Phi](m(t\, ;\mu))\Big( \frac{m^{(1)}(t\, ; \mu, \delta_{z_1+he_i})-m^{(1)}(t\, ; \mu, \delta_{z_1})}h,m^{(1)}(t\, ; \mu, \delta_{z_2}) \Big)   \\ 
& =  \sld[\Phi](m(t\, ;\mu))\Big( d^{(1)}_i(t\, ; \mu, z_1),m^{(1)}(t\, ; \mu, \delta_{z_2})  \Big). 
\end{split}
\label{taking limit diff} 
\end{equation}
The result follows by repeating the same procedure on $z_2$.
\end{proof}

The following result {may be} proven in the same way.

 \begin{proposition}
 \label{diff thm 2}
Under \emph{\hlipb{4}{2}} and \emph{\hintphi{4}{3}}, for any $i,j \in \{1, \ldots, d \}$,
$\mu \in \cP(\bT^d)$ and $z_{1},z_{2} \in \bT^d$,
the derivative below exists and
 $$ (\partial_{z_2})_j (\partial_{z_1})_i  \biggl\{ \ld[\Phi](m(t \, ; \mu)) \Big( m^{(2)}(t\, ; \mu, \delta_{z_1}, \delta_{z_2}) \Big) \biggr\} = \ld[\Phi](m(t\, ;\mu))\Big( d^{(2)}_{i,j}(t\, ; \mu, z_1,z_2) \Big), $$ 
  where $d^{(2)}_{i,j}(\cdot\, ; \mu, {z_1}, z_2) \in \cap_{T>0} L^{\infty}([0,T], (W^{2, \infty}(\bT^d))') $ satisfies the Cauchy problem 
    \emph{\CLinear{$\mu$}{$0$}{$r$}}
      with 
    $(r(t))_{t \geq 0}=$\emph{\CSource{$\mu$}{$d^{(1)}_i (\cdot ; \mu, z_1)$}{$d^{(1)}_j(\cdot ;\mu,z_2)$}}, see \eqref{PDE linearised 2}.
  \end{proposition}
  
  Similar to the interpretation of 
$d^{(1)}_i(t \, ; \mu, z)$ in Proposition 
\ref{diff thm 1}, 
we have 
the representation formula
$d^{(2)}_{i,j}(t \, ; \mu, z_1,z_2) 
= [\ud / \ud \varepsilon_1]_{\varepsilon_1 =0+}
[\ud / \ud \varepsilon_2]_{\varepsilon_2 =0+}
m^{(2)}(t \, ;\mu,\delta_{z+ \varepsilon_1 e_i},,\delta_{z+ \varepsilon_2 e_j})$.

  {  \begin{proof}
As before, existence of solution to 
    \CLinear{$\mu$}{$0$}{$r$} (with the same $r$ as in the statement)
     in the space $\cap_{T>0} L^{\infty}([0,T], 
    (W^{2, \infty}(\bT^d))')$  is guaranteed by Lemma \ref{lions paper pde result}. 
 By Lemma  2.2.4 in 
\cite{cardaliaguet2013long}, we can exchange $q_1$ and $q_2$ in 
\eqref{PDE linearised 2} when the latter two satisfy 
$\langle q_1(t),\one\rangle = \langle q_2(t),\one\rangle= 0$, for all $t \geq 0$.  
 
  Next,     
     for given $T>0$ and $i \in \{1,\cdots,d\}$, we define, for 
  $t \in [0,T]$, $\mu,\nu_{2} \in \cP(\bT^d)$, $z \in \bT^d$ and $h \in {\mathbb R} \setminus \{0\}$,  
      $$ \Pi_i(t\,; \mu,z,\nu_2,h):= \frac{1}{h} \Big( m^{(2)}(t\,;\mu,\delta_{z+he_i},\nu_2) - m^{(2)}(t\,;\mu,\delta_{z},\nu_2) \Big) - \Theta_i(t\,;\mu,z, \nu_2), $$ 
for $t \in [0,T]$,       
      where
  $\Theta_i(\cdot\,;\mu,z, \nu_2) \in  L^{\infty}([0,T], (W^{2, \infty}(\bT^d))') $  satisfies the Cauchy problem
  \CLinear{$\mu$}{$0$}{$r_\Theta$}, with 
    $r_\Theta(t)=${\CSource{$\mu$}{$m^{(1)} (\cdot \, ; \mu, \nu_2)$}{$d^{(1)}_i(\cdot \, ;\mu,z)$}}.
By linearity and
with the same notation as in 
\eqref{eq:rho:1:notation}
for $\rho^{(1)}_i(\cdot \, ;\mu,z,h)$, 
$ \Pi_i(\cdot\,; \mu,z,\nu_2,h) \in  L^{\infty}([0,T], (W^{2, \infty}(\bT^d))')$ satisfies 
\CLinear{$\mu$}{$0$}{$r_\Pi$}
with 
    $r_\Pi(t)=${\CSource{$\mu$}{$m^{(1)} (\cdot \, ; \mu, \nu_2)$}{$\rho^{(1)}_i(\cdot \, ;\mu,z,h)$}}.

By estimate \eqref{eq:rho1itmuzh}, along with condition {\hlipb{4}{2}},
\begin{equation}
\begin{split}
&\lim_{h \rightarrow 0} \sup_{t \in [0,T]} \Big\|  - \text{{div}} \Big[ 
 m^{(1)} (t \, ; \mu, \nu_2) 
     \frac{ \delta b}{ \delta m} \bigl(\cdot, m(t \, ; \mu)\bigr)
     \Bigl(
     \rho^{(1)}_i(t \, ; \mu,z,h)  
      \Big)
       \Bigr]\Bigr\|_{(3, \infty)'}  \\
& =\lim_{h \rightarrow 0}   \sup_{t \in [0,T]} \sup_{\| \xi \|_{3, \infty} \leq 1} \Bigl\langle \xi,  
  - \text{{div}} \Bigl[ 
  m^{(1)} (t \, ; \mu, \nu_2)  
 \,   \frac{ \delta b}{ \delta m} (\cdot, m(t \, ; \mu))
 \Bigl(
 \rho^{(1)}_i(t \, ; \mu,z,h) 
 \Bigr)
 \Bigr] \Bigr\rangle
  \\
& = \lim_{h \rightarrow 0}  \sup_{t \in [0,T]}  \sup_{\| \xi \|_{3, \infty} \leq 1}  \Big\langle \frac{ \delta b}{ \delta m} \bigl(\cdot, m(t \, ; \mu)\bigr)
\Bigl(
\rho^{(1)}_i(t \, ; \mu,z,h) 
\Bigr)
 \cdot \nabla\xi,
  m^{(1)} (t \, ; \mu, \nu_2)  
 \Big\rangle = 0.
\end{split} 
 \label{ 4+alpha est 1}
\end{equation}
Similarly, one can show that
\begin{equation}
\begin{split}
&\lim_{h \rightarrow 0} \sup_{t \in [0,T]} \Bigl\| -\text{{div}} \Big[  m(t \, ; \mu)  \frac{ \delta^{2} b}{ \delta m^{2}}\bigl(\cdot , m(t \, ; \mu)\bigr) \Big(  \rho^{(1)}_i(t \, ; \mu,z,h)  ,  m^{(1)} (t \, ; \mu,  \nu_{2} ) \Big)
 \\
&\hspace{60pt}    { + \rho^{(1)}_i(t \, ; \mu,z,h)    \,   \frac{ \delta b}{ \delta m}\bigl(\cdot, m(t \, ; \mu)\bigr) \bigl(
      m^{(1)}  (t \, ; \mu, \nu_{2} ) 
             \bigr)  } \Bigr] \Bigr\|_{(3, \infty)'}   
=0.
\end{split}
\label{ 4+alpha est 2} 
\end{equation}
We then conclude by \eqref{ 4+alpha est 1}, \eqref{ 4+alpha est 2}  and Lemma \ref{lions paper pde result} that
$$ \lim_{h \to 0} \sup_{t \in [0,T]} \| \Pi_i(t \, ; \mu,z,\mu_2,h) \|_{(3, \infty)'}  =0.$$
Consequently, by repeating the same argument as \eqref{taking limit diff},
it follows that
\begin{equation}
\label{PDE diff 2:thetai:2} 
(\partial_{z_1})_i \ld[\Phi](m(t \, ; \mu)) \Big( m^{(2)}(t \, ; \mu, \delta_{z_1}, \delta_{z_2}) \Big) = \ld[\Phi](m(t \, ;\mu))\Big( \Theta_i(t \, ;\mu,z_{1}, \delta_{z_2}) \Big). 
\end{equation}
By repeating the same analysis on the variable $z_2$, the proof is complete. 
      \end{proof}
}

      By combining the above results (using the 
      notations from 
      Propositions 
\ref{diff thm 1}
and
\ref{diff thm 2}), we obtain:
      \begin{proposition} \label{diff second order L-deriv} 
      Under \emph{\hlipb{4}{2}}
       and \emph{\hintphi{4}{3}}, the derivative $(\partial_{z_2})_j (\partial_{z_1})_i \sld[\cU]$ exists
       for any $i,j \in \{1, \ldots, d \}$
        and, for any $\mu \in \cP_2(\bR^d)$, $(\partial_{z_2})_j (\partial_{z_1})_i\sld[\cU](t, \mu,z_1, z_2)$ is uniformly bounded in $t, z_1, z_2$ and Lipschitz continuous in $z_1$ and $z_2$, uniformly in time $t$ in segments. Moreover, it can be represented by
\begin{align*} 
&(\partial_{z_2})_j (\partial_{z_1})_i\sld[\cU](t ,\mu,z_1, z_2) 
 =  \sld[\Phi](m(t\, ;\mu))\Big( d^{(1)}_i(t\, ; \mu, z_1),d^{(1)}_j(t \, ; \mu, z_2) \Big) 
 + \ld[\Phi](m(t\, ;\mu))\Big( d^{(2)}_{i,j}(t\, ; \mu) (z_1,z_2) \Big). 
\end{align*}
A similar statement holds true for 
$\partial_z \ld[\cU](t, \mu,z)$, 
$\partial_{z_1} \sld[\cU](t, \mu,z_1,z_2)$
and
$\partial_{z_2} \sld[\cU](t, \mu,z_1,z_2)$. 
{Namely, 
\begin{equation*} 
\begin{split} 
&(\partial_{z})_i\sld[\cU](t ,\mu,z) 
 = \ld[\Phi](m(t\, ;\mu))\big( d^{(2)}_{i}(t\, ; \mu, z) \big),
 \\
 & (\partial_{z_1})_i \sld[\cU](t  ,\mu)(z_1, z_2) =  \sld[\Phi](m(t \, ; \mu))\Big( d^{(1)}_i(t\, ; \mu, z_1),m^{(1)}(t \, ; \mu, \delta_{z_2}) \Big) 
 +  \ld[\Phi](m(t \, ; \mu)) \Big( \Theta_i(t \, ;\mu,z_{1}, \delta_{z_2}) \Big), 
 \end{split}
\end{equation*} 
with 
$\Theta_i(t \, ;\mu,z_{1}, \delta_{z_2})$
as in 
\eqref{PDE diff 2:thetai:2} 
(and similarly for the derivative w.r.t. $z_2$). 
}
\end{proposition}

\subsection{From ergodic estimates on the tangent processes to uniform propagation of chaos}
\label{subse:tangent}

The following two propositions illustrate how assumption \ahergo \ is used next. 

    \begin{proposition} 
    \label{global schauder 1}
    Assume 
    \emph{\hlipb{4}{2}}
     and \emph{\ahergo} and let $m^{(1)}(\cdot \, ; \mu, \nu)$ and $d^{(1)}_i(\cdot \, ; \mu, {z}) $ 
     be defined as in Propositions  
     \ref{recap theorem 4.5} 
     and 
     \ref{diff thm 1}. Then, for any  $\alpha \in [0,1]$,  
         \begin{equation*}
    \begin{split}
    &\sup_{\mu, \nu   \in \cP(\bT^d)}  \| m^{(1)}(t \, ; \mu, \nu) \|_{(0, \infty)'} \leq C_0^{(1)} e^{-\lambda_1 t},
    \ \ 
 \sup_{z \in \bT^d} \sup_{\mu  \in   \cP(\bT^d)}   \| d^{(1)}_i(t \, ; \mu, {z}) \|_{({1-\alpha,\infty})'} \leq \frac{C_\alpha^{(1)}}{1 \wedge t^{\alpha/2}}
         e^{-\lambda_1 t}, \quad t >0, 
         \end{split}
         \end{equation*}
         where $\lambda_1$ is chosen as
          $\lambda$ in
          \emph{\ahergo}, 
         $C^{(1)}_0$   as $C_0$  in \emph{\ahergcoeff{0}{0}{1}}
         and 
    $C^{(1)}_\alpha$  as 
    $C_1$ in \emph{\ahergcoeff{$\alpha$}{0}{1}}. 
    \end{proposition}
    

    \begin{proof}
    This result is immediate from \ahergo \, applied to 
   $m^{(1)}( \cdot \, ; \mu,\nu)$
    (with $(k,\alpha,\beta)$ therein given by $(0,0,0)$)
    and
   $d^{(1)}( \cdot \, ; \mu,z)$  (with $(k,\alpha,\beta)$ therein given by $(1,\alpha,0)$). 
    \end{proof}
    \begin{proposition} \label{global schauder 2}
    Assume  \emph{\hlipb{4}{2}}
     and \emph{\ahergo} and 
     let
      $m^{(2)}(\cdot \, ;  \mu, \nu_1 , \nu_2)$ 
     and
     $d^{(2)}_{i,j}(\cdot \, ; \mu,{ {z}_1,z_2}) $ 
     be defined as in Propositions  
     \ref{recap theorem 4.5} 
and
\ref{diff thm 2}. Then, for any  
     {$\alpha \in [1,2)$ and any $\epsilon \in (0,1]$}, 
    \begin{equation*}
\begin{split}
&\sup_{\mu, \nu_1, \nu_2  \in \cP(\bT^d)}   \| m^{(2)}(t\, ; \mu, \nu_1, \nu_2) \|_{(0,\infty)'} \leq C^{(2)}_0   e^{-\lambda_2 t} ,
\   \sup_{z_1, z_2 \in \bT^d} \sup_{\mu \in \cP(\bT^d)}   \| d^{(2)}_{i,j}(t \, ; \mu, {z_1}, z_2) \|_{(0,\infty)'} 
\leq 
{C^{(2)}_{1,\epsilon} K_{\epsilon}  }
         e^{-\lambda_2 t},  
         \end{split}
         \end{equation*}
         for $t >0$, 
where
 $\lambda_2 >0$ is chosen as $\lambda$ in 
     \emph{\ahergo};
 $C^{(2)}_0$ only depends on 
$C_0$ in 
 \emph{\ahergcoeff{0}{1}{$\lambda^{(1)}$}}, $C^{(1)}_0$ in Proposition \ref{global schauder 1}
 and $K_0$ below; $C^{(2)}_{1,\epsilon}$ only depends on  $\epsilon$, 
$C_1$ in \emph{\ahergcoeff{$1$}{$1$}{$\lambda^{(1)}$}}, 
and 
$C^{(1)}_{1-\epsilon}$ and $C^{(1)}_1$ in Proposition \ref{global schauder 1};  and
          \begin{equation*}
\forall \eta \in [0,1),\quad         K_\eta:= \sup_{m \in {\mathcal P}({\mathbb T}^d)} \biggl[       \sup_{x \in {\mathbb T}^d}
           \Bigl\| \frac{\delta b}{\delta m}(x,m,\cdot) \Bigr\|_{\eta,\infty}
           +
     \sup_{x,y \in {\mathbb T}^d}
         \Bigl\| \frac{\delta^2 b}{\delta m^2}(x,m,y,\cdot) \Bigr\|_{\eta,\infty} \biggr].      
          \end{equation*}
    \end{proposition}


      \color{black}
    \begin{proof}
    We just study $d^{(2)}_{i,j}( \cdot \, ; \mu, {z_1}, z_2)$, the proof being similar for $m^{(2)}(\cdot \, ; \mu, \nu_1, \nu_2)$
    (with the small difference that the initial condition of the latter is not equal to $0$ but is bounded 
    in $\| \cdot \|_{(0,\infty)'}$).  We notice 
    from Proposition 
    \ref{global schauder 1} ({applied twice, with $\alpha=1$ and  
    $\alpha=1-\epsilon$} respectively)
    that, for any $z_1,z_2 \in {\mathbb R}^d$ and $\mu \in {\mathcal P}({\mathbb T}^d)$,
    \begin{equation*}
    \begin{split}
 &  \bigg\|  -\text{div} \bigg[ d^{(1)}_i (t \ ;\mu, z_2)  \,   \frac{ \delta b}{ \delta m} (\cdot, m(t \, ; \mu)) \big( d^{(1)}_j (t \, ; \mu, z_1) \big) \bigg] \bigg\|_{({1},\infty)'}  
  \\
 & =    \sup_{\| \xi \|_{{{1},\infty}} \leq 1} \bigg| \lev  \frac{\delta b}{\delta m}( \cdot, m(t\, ; \mu) )\bigl(d^{(1)}_{ {j}}(t \, ; \mu, {z_1})\bigr) \cdot \nabla \xi(\cdot) ,  d^{(1)}_i (t \, ; \mu, z_2)\rev  \bigg|
\leq \frac{C^{(1)}_{1} C^{(1)}_{1-\epsilon} K_{\epsilon}}{{1 \wedge t^{1-\epsilon/2}}} e^{- 2\lambda_1 t}, 
    \end{split}
    \end{equation*}
 for $t>0$.  {(Notice that the exponent $1-\epsilon/2$ should be understood as $(1+(1-\epsilon))/2$, with $1$ corresponding to our first choice of $\alpha$ and $1-\epsilon$ to our second choice for $\alpha$.)}  The other terms 
appearing in \eqref{PDE linearised 2} can be handled in the same way. 
%
%
Noting that
$d^{(2)}_{i,j}(0 \, ; \mu, {z_1},z_2)   =0$
and
    applying \ahergo \, 
    with $(k,\alpha,\beta)=(1,1,1)$ (note that the source term $r$,  {with an obvious choice for $r$ therein}, clearly satisfies 
    $\langle r(t),\one \rangle =0$  {and has an integrable singularity in zero}), the result follows. 
    \end{proof}

\begin{remark}
Under  
 \hergol,
Propositions
\ref{diff second order L-deriv} 
and
\ref{global schauder 1}
remain true, provided that $t$ is restricted to  $[0,T]$ for some $T>0$, 
and $\lambda_1$ and 
$\lambda_2$ are set equal to $0$. 
In this case, the various constants may depend on $T$. 
\end{remark}

We now return to the original problem of the weak error estimate between the particle system \eqref{eq particles}  and the equation \eqref{eq:MVSDE}.      
We are now in a position to prove 
a 
preliminary version of 
Theorem \ref{thm main result:2}. 

\begin{proposition} \label{thm main result} 
Assume
that the drift $b$ 
is bounded and
satisfies 
\emph{\hregb{0}{2}}. 
For
a sequence of mollifiers
$(\rho^n=n^d \rho (n \cdot))_{n \geq 1}$ on ${\mathbb R}^d$, 
with $\rho$ a smooth symmetric density on ${\mathbb R}^d$ with a compact support, 
define  
the drifts 
 $(b^n : {\mathbb T}^d \times {\mathcal P}({\mathbb T}^d) \ni (x,m) \mapsto b(\cdot,m * \rho^n) * \rho^n \in {\mathbb R}^d)_{n \geq 1}$ 
and assume that for any $n \geq 1$ and any $\alpha,\beta \in [0,2)$, 
$b^n$ satisfies 
\emph{\ahergcoeff{$\alpha$}{$\beta$}{$\gamma$}}
 with respect to constants 
$(C_k[\alpha,\beta ])_{0 \leq k < 2}$ and $\lambda$   independent of $n$.
Then, for a function 
$\Phi$ 
that
satisfies 
\emph{\hintphi{\gamma}{2}}
(for some $\gamma \in (0,1)$)
and
for any  $\epsilon \in (0,1)$, there exists 
a constant $C>0$, only depending on $\epsilon$, 
on $K_0$ in the statement of 
Proposition 
\ref{global schauder 2}, on 
the  
$L^\infty$ and H\"older bounds in 
\emph{\hintphi{\gamma}{2}}
and on
$\max_{\alpha=0,1} \max_{\beta=0,1/2,1-\epsilon/2} 
\max_{k=0,1} ( C_k[\alpha,\beta]),$
such that, for any $\mu_{\textrm{\rm init}} \in \cP(\bT^d)$
and any two integers $N,n \geq 1$, 
\begin{equation}
 \sup_{t \geq 0} \Big|  \bE\bigl[ \Phi(\mu^{N}_t)\bigr] - \Phi\bigl(\rvlaw[X_t^n]\bigr) \Big| \leq 
 { \frac{C}{\min(n^{1-\epsilon },N)}  {\bigl(1+ K_{\epsilon}^n\bigr)  }},
 \label{eq:weak:error}
 \end{equation}
 where $K_\epsilon^n$ is defined as
 in the statement of Proposition 
\ref{global schauder 2}, but for $b^n$ instead of $b$, and 
$(X_t^n)_{t \geq 0}$ stands for the solution of 
\eqref{eq:MVSDE}
with $b^n$ as drift
and $\mu_{\rm init}^{\otimes N}$ as initial distribution. 
\end{proposition}

In brief, the impact of $K_\epsilon^n$ in \eqref{eq:weak:error} is clarified in the proof of Theorem 
\ref{thm main result:2}. When $b$ satisfies \hregb{\alpha}{2}, 
for some $\alpha \in (0,1)$, we choose $\epsilon  = \alpha$, in which case 
$K_\alpha^n$ can be bounded independently of $n$. 
Then, 
the right-hand side is 
less than ${\mathcal O}(N^{-1})$ when $n \geq N^{1/(1-\alpha)}$.
When $b$ just satisfies {\hregb{0}{2}}, 
$\epsilon$ is arbitrary and $K_\epsilon^n = {\mathcal O}(n^{\epsilon})$, in which case the right-hand side is 
less than ${\mathcal O}(N^{-1+2\epsilon})$ when $n=N$.

\begin{proof} 
{In the proof, we make use of two statements from the appendix: Theorem \ref{thm:regul}, which is a regularisation result 
 for real-valued functions defined on ${\mathcal P}({\mathbb T}^d)$, and Lemma \ref{lem:annex:regularity:highd}, which provides estimates for the marginal densities of large
particle systems. For the time being, we notice} that 
\begin{equation}
\label{eq:deltabn:deltam}
\frac{\delta b^n}{\delta m}(x,m)(y) = \frac{\delta b}{\delta m}(\cdot,m*\rho^n)(\cdot)* \rho_n^{\otimes 2}(x,y), 
\quad 
{
\frac{\delta^2 b^n}{\delta m^2}(x,m)(y,z) = \frac{\delta^2 b}{\delta m^2}(\cdot,m*\rho^n)(\cdot,\cdot)* \rho_n^{\otimes 3}(x,y,z)}.
\end{equation}
For
each $n \geq 1$, 
$b^n$ satisfies  
{\hregb{4}{2}}, with constants depending on $n$, and 
{\hregb{0}{2}} independently of $n$. 
However, $b^n$ may not satisfy 
{\hlipb{4}{2}}
 {since the second order derivative in $m$ may not be Lipschitz in $m$}. We invoke Theorem  
\ref{thm:regul}  {below}, from which we deduce that, for any given $\varepsilon >0$ 
 {(which is distinct from $\epsilon$ in the statement)}
and each $n \geq 1$, there exists a new drift, denoted 
$\tilde b^n$, satisfying \hlipb{4}{2} w.r.t. constants depending on $(n,\varepsilon)$ and satisfying \hregb{0}2 independently of 
$(n,\varepsilon)$, such that
\begin{equation}
\label{eq:b-tildeb:n}
\sup_{n \geq 1} 
\Bigl[ 
\sup_{x \in {\mathbb T}^d} \sup_{\mu \in {\mathcal P}({\mathbb T}^d)}
 \vert (\tilde b^n - b^n)(x,\mu)\vert + 
 \sup_{x,y \in {\mathbb T}^d} \sup_{\mu \in {\mathcal P}({\mathbb T}^d)}
\bigl\vert \frac{\delta \tilde b^n}{\delta m}(x,\mu,y) - \frac{\delta b^n}{\delta m}(x,\mu,y) \bigr\vert 
\Bigr] \leq \varepsilon.
\end{equation}
Thanks to 
Theorem  
\ref{thm:regul},
$\tilde K^n_\epsilon$, defined as in Proposition 
\ref{global schauder 2} but for $\tilde b^n$, is less than 
$C K_\epsilon^n$, for  $C$ independent of $(n,\varepsilon)$. 
Importantly, the forthcoming 
Remark 
\ref{rem:stability:erg}
says that, for $\varepsilon$ small enough and 
  $\alpha,\beta \in [0,2)$, 
$\tilde b^n$ satisfies 
{\ahergcoeff{$\alpha$}{$\beta$}{$\gamma$}}
 w.r.t. constants 
 that depend only on 
$(C_k[\alpha,\beta ])_{0 \leq k <2}$ and $\lambda$ and not on $(n,\varepsilon)$. 

 {Next}, we can expand the difference $[b-b^n](x,m)$ into 
\begin{align}
&\bigl[ b-b^n \bigr](x,m) 
\nonumber
\\
&= \Bigl( b(x,m) - b(\cdot,m)* \rho^n (x) \Bigr) 
+ \Bigl( b(\cdot,m)*\rho^n - b(\cdot,m*\rho^n) * \rho^n \Bigr)(x)
\nonumber
\\
&=   b(\cdot,m) * \bigl( \delta_0 -   \rho^n  \bigr)(x) 
  + 
\int_{{\mathbb T}^d}
\biggl(  
 \int_0^1 \ud r
\int_{{\mathbb R}^d} 
\frac{\delta b}{\delta m}\Bigl(z, r m + (1-r) m* \rho^n ,y \Bigr)
\rho^n(x-z) \ud z
\biggr) 
\bigl[ m* (  \delta_0 - \rho^n) \bigr](\ud y)
\nonumber
\\
&=:B_0^n(x,m) +
\int_{{\mathbb T}^d} B_1^n(x,m,y) \, m(\ud y),
\label{eq:expansion:mollified:drift}
\end{align}
where 
\begin{equation}
\label{eq:B1n:**}
\begin{split}
&B_1^n(x,m,y) 
:= \Bigl[ b_1^n(x,m,\cdot) * (\delta_0- \rho^n ) \Bigr] (y),
\\ 
&b_1^n(x,m,y) :=  
 \int_0^1 
\int_{{\mathbb R}^d} 
\frac{\delta b}{\delta m}\Bigl(z, r m + (1-r) m* \rho^n ,y \Bigr)
\rho^n(x-z) \ud z.
\end{split}
\end{equation}
Clearly, 
$B_0^n$ and $B_1^n$  
are bounded and Lipschitz continuous in $m$ (w.r.t. $\textrm{\rm dist}_{\rm TV}$) uniformly in 
the other variables and in 
$n \geq 1$
(which follows from the fact that $b$ and $\delta b/\delta m$ 
are Lipschitz continuous in $m$, uniformly in the other variables{, with the latter being a consequence of the boundedness of 
$\delta^2 b/\delta m^2$}).

The value of $\epsilon$ 
in the statement
is fixed throughout the proof
and we denote by $C$ a generic constant as in the statement (whose value is allowed to vary from line to line).
Also, we let 
(with $\lambda_1$ and $\lambda_2$ being as 
in Propositions \ref{global schauder 1}
and
\ref{global schauder 2})
$\lambda := \min(1,\lambda_1,\lambda_2)$.
We first assume that
$\Phi$ satisfies \hintphi{4}{3}. 
This is only in the end that we relax this assumption,  
 just assuming \hintphi{\gamma}{2}
(in brief, the relaxation is possible because $C$ only depends on 
$\Phi$ through the bounds in  
\hintphi{\gamma}{2}). 
\vskip 4pt

\textit{First Step.} 
For each $n \geq 1$, we consider ${\mathcal U}^{n}$ the solution to  
\eqref{eq pde measure},
with $b$ 
replaced by $\tilde b^n$.
Accordingly, 
we call $m^{(1),n}$, $m^{(2),n}$, $d^{(1),n}$ and $d^{(2),n}$ the various 
 {functionals} in 
Propositions 
\ref{recap theorem 4.5}, 
\ref{diff thm 1}
and
 \ref{diff thm 2}.

Following the derivation of
Lemma  
\ref{lem:general:lemma}, 
we can apply 
It\^o's formula to the process $({\mathcal U}^{n} (  \mu^N_t ))_{t \geq 0}$, 
with $(\mu^N_t)_{t \geq 0}$ denoting the same 
flow of empirical measures as in 
\eqref{eq particles} (driven by the unmollified drift $b$). 
In comparison with
\eqref{main result intro formula}
in the statement of Lemma
\ref{lem:general:lemma}, 
we get an additional term coming from the fact that the drift of
\eqref{eq particles} is not the same as the drift of \eqref{eq pde measure}.
This additional term is 
\begin{equation}
\label{eq:Tadd}
T_{\textrm{\rm add}}(t) := \frac1{N} {\mathbb E} \int_0^t \sum_{i=1}^N 
\bigl[ b - \tilde b^{n} \bigr]\bigl( {Y_s^{i,N}}, \mu^N_s \bigr) \cdot \partial_{\mu} {\mathcal U}^{n} \bigl(s, \mu^N_s\bigr) (X_s^i)\, \ud s.
\end{equation}
\vskip 2pt

\textit{Second Step.} We address the right-hand side 
of 
 \eqref{main result intro formula}. By 
Propositions \ref{recap theorem 4.5}
 and
 \ref{global schauder 2}, 
 we have a bound for $\delta^2 \cU^n/\delta m^2$, independent of {$(n,\varepsilon)$ (as the quantities therein are controlled independently of $(n,\varepsilon)$}), 
 namely 
 $ \| [\delta^2/\delta m^2] {\mathcal U}^{n}(s,\cdot,\cdot) \|_\infty \leq  
C e^{-\lambda t}$. Thus, \begin{equation}
\label{eq:term1:lemma2.2}
\sup_{t \geq 0} \bigg| \int_0^1 \int_0^1  \bE \bigg[ s \frac{\delta^2 \cU^{n}}{\delta m^2}(t, \tilde{\mu}^{N}_{s,s_1})(\tilde{\eta},\tilde{\eta})  - s \frac{\delta^2 \cU^{n}}{\delta m^2}(t, \tilde{\mu}^{N}_{s,s_1})(\tilde{\eta},{{\eta}_{1}})  \bigg] \ud s_1 \, \ud s \bigg| \leq C.
\end{equation}
Next, we bound the second term 
in the right-hand side 
of 
 \eqref{main result intro formula}. By 
 Propositions
\ref{diff second order L-deriv}, 
    \ref{global schauder 1}
   (together with \eqref{eq:Int-phi-alpha-k}) 
 and \ref{global schauder 2},
we have
the following bound for 
 {$\partial_{y_2} \partial_{y_1} [ \delta^2 \cU^n/\delta m^2]$}:
 \begin{equation}
\label{eq:bound:partialz1:partialz2:U:theta:**}
\sup_{\mu \in \cP(\bT^d)} \sup_{y_1,y_2 \in \bT^d} \bigg|   (\partial_{y_1})_i
(\partial_{y_1})_j \sld[\cU^n](t,\mu,y_1, y_2) \bigg| \leq  \frac{C  {(1+ \tilde K_\epsilon^n)}}{1 \wedge t^{1-\gamma}}  e^{- \lambda t}
 \leq  \frac{C  {(1+ K_\epsilon^n)}}{1 \wedge t^{1-\gamma}}  e^{- \lambda t}.
\end{equation}
  that depends on $n$ 
through  {$\tilde K^n_{\epsilon}$} and thus 
 {$K^n_{\epsilon}$}. 
And then, 
\begin{equation}
\label{eq:term2:lemma2.2}
\begin{split}
&\bigg| \sum_{i=1}^d \int_0^t \bE \bigg[  \intrd \bigg( \partial_{y_{2}} \partial_{y_{1}}  \frac{ \delta^2 \cU^n}{\delta m^2} (t-s, \mu^{N}_s,z,z) \bigg)   \, \mu^{N}_s(\ud z) \bigg] \, \ud s \biggr\vert 
\leq 
C  {(1+  {K^n_{\epsilon}})}.
\end{split}
\end{equation}
\vspace{4pt}

\textit{Third Step.}
We now address $T_{\textrm{\rm add}}$ in 
\eqref{eq:Tadd}.
We recall 
Propositions
\ref{diff second order L-deriv} 
   and  \ref{global schauder 1}. 
We have the bound $\| \partial_\mu {\mathcal U}^{n}(s,\cdot,\cdot) \|_\infty \leq  
C e^{-\lambda t}$. 
{Next, 
as pointed out 
in the statement of Proposition 
 \ref{diff second order L-deriv},  
the analogue of 
Proposition 
 \ref{diff second order L-deriv}
 holds true}, but for 
  $\partial_{y_1}[ \delta^2 \cU^n/\delta m^2]$
  and, similar to 
  $\partial_{y_1} \partial_{y_2}[ \delta^2 \cU^n/\delta m^2]$,
  $\partial_{y_1}[ \delta^2 \cU^n/\delta m^2]$ can be bounded by 
\begin{equation}
\label{eq:bound:partialz1:partialz2:U:theta}
\sup_{\mu \in \cP(\bT^d)} \sup_{y_1,y_2 \in \bT^d} \bigg|   (\partial_{y_1})_i\sld[\cU^n](t,\mu,y_1, y_2) \bigg| \leq  \frac{C}{1 \wedge t^{1/2}}  e^{- \lambda t}.
\end{equation}
Intuitively, the exponent $1/2$ comes from the fact that there is only one derivative $\partial_{y_1}$ (and no derivative $\partial_{y_2}$). For the same reason, 
there is no need to add the additional factor $K_n$ in the right-hand side.  
{The details are as follows. 
In the formula 
for 
$\partial_{y_1}[ \delta^2 \cU^n/\delta m^2]$
displayed in
Proposition  
\ref{diff second order L-deriv}, the first term in the right-hand side gives the 
singular behaviour in small time. As for the second term in the right-hand side, 
it can be estimated by means of 
the definition of $\Theta_i$ in Proposition 
 \ref{diff thm 2}. 
Indeed, we know that 
  $\Theta_i(\cdot\,;\mu,z, \nu_2)$  satisfies the Cauchy problem
  \CLinear{$\mu$}{$0$}{$r_\Theta$}, with 
    $r_\Theta(t)=${\CSource{$\mu$}{$m^{(1)} (\cdot \, ; \mu, \nu_2)$}{$d^{(1)}_i(\cdot \, ;\mu,z)$}}. 
    Here, $\| m^{(1)} (t \, ; \mu, \nu_2) \|_{(0,\infty)'}$ 
    is bounded and decays exponentially fast in long time whereas 
$\| d^{(1)}_i (t \, ; \mu,z) \|_{(0,\infty)'}$
blows up like $1/\sqrt{t}$ in small time and decays exponentially fast 
in long time. To get the above inequality, it then suffices to 
recall that, 
for
each $n \geq 1$, 
$b^n$ satisfies  
{\hregb{0}{2}} with constants independent of $n$.}

We now use 
\eqref{eq:b-tildeb:n}
together with {the}
expansion 
\eqref{eq:expansion:mollified:drift}. 
By the bound for 
$\partial_\mu {\mathcal U}^n$, we have
\begin{equation}
\label{eq:pre:exchangeability}
\begin{split}
T_{\textrm{\rm add}}(t) &= \frac1{N} {\mathbb E} \int_0^t \sum_{i=1}^N 
B_0^{n} \bigl( {Y_s^{i,N}}, \mu^N_s \bigr)\cdot \partial_{\mu} {\mathcal U}^n\bigl(s , \mu^N_s,{Y_s^{i,N}}\bigr) \ud s
\hspace{-1pt} 
\\
&\hspace{15pt}+ \frac1{N^2} {\mathbb E} \int_0^t \sum_{i,j=1}^N 
B_1^{n}\bigl( {Y_s^{i,N}},  \mu^N_s, {Y_s^{j,N}}\bigr)
\cdot
\partial_{\mu} {\mathcal U}^n\bigl(s,  \mu^N_s,{Y_s^{i,N}}\bigr) \ud s 
 + {\mathcal O}( \varepsilon),
\end{split}
\end{equation}
where $\vert {\mathcal O}(\varepsilon)\vert \leq C \varepsilon$. By exchangeability and once again by the bound for 
$\partial_\mu {\mathcal U}^n$, 
\eqref{eq:pre:exchangeability} may be rewritten as
\begin{equation*}
\begin{split}
T_{\textrm {\rm add}}(t) &:=  {\mathbb E} \int_0^t  
\Bigl(
B^n_{0} \bigl( {Y_s^{1,N}}, \mu^N_s \bigr)  +
B^n_1\bigl({Y_s^{1,N}}, \mu^N_s,{Y_s^{2,N}}\bigr)
\Bigr)
\cdot \partial_{\mu} {\mathcal U}^n\bigl(s, \mu^N_s,{Y_s^{1,N}}\bigr)  \ud s
 + {\mathcal O}\Bigl(\frac{1}{N} + \varepsilon \Bigr),
\end{split}
\end{equation*}
where $\vert {\mathcal O}(1/N)\vert \leq C /N$, with $C$ as in the statement. 

The goal next is to replace 
$\mu^N_s$
 by 
${\mu}_s^{N-(1,2)}$, with the latter standing for the empirical measure of 
the $(N-2)$-vector $({Y_s^{3,N}},\cdots,{Y_s^{N,N}})$. 
To do so, 
we notice that, for any two $\mu,\nu \in {\mathcal P}({\mathbb T}^d)$,  
\begin{equation*}
\partial_{\mu} {\mathcal U}^n(s,\nu,y) 
- \partial_{\mu} {\mathcal U}^n(s,\mu,y)
= 
\int_0^1
\ud r
\int_{{\mathbb T}^d} 
\partial_{y} \frac{\delta^2 {\mathcal U}^n}{\delta m^2} 
\bigl(s, r \nu + (1-r) \mu ,y,z\bigr) \, 
\bigl( \nu - \mu \bigr)(\ud z),
\end{equation*}
for $\mu,\nu \in {\mathcal P}({\mathbb T}^d)$ and $y,z \in {\mathbb T}^d$. 
And then, by 
\eqref{eq:bound:partialz1:partialz2:U:theta},
\begin{equation}
\label{eq:bound:partialz1:partialz2:U:theta:appli}
\Bigl\vert 
\partial_{\mu} {\mathcal U}^n(s,\nu,y) 
- \partial_{\mu} {\mathcal U}^n(s,\mu,y)
\Bigr\vert \leq \frac{C}{1 \wedge t^{1/2}}  e^{- \lambda t}  \textrm{\rm dist}_{\rm TV}(\mu,\nu). 
\end{equation}
Noticing that 
$\textrm{\rm dist}_{\rm TV}({\mu}_s^{N-(1,2)},{\mu}_s^{N}) \leq 4/N$, we deduce from the Lipschitz property 
of $B_0^n$ and $B_1^n$ in the measure argument that
\begin{equation*}
\begin{split}
T_{\textrm{\rm add}}(t) &:=  {\mathbb E} \int_0^t  
\Bigl(
B^n_0 \bigl( {Y_s^{1,N}}, \mu^{N-(1,2)}_s \bigr) 
+
B^n_1 \bigl({Y_s^{1,N}},  \mu^{N-(1,2)}_s, {Y_s^{2,N}}\bigr)
\Bigr)
\cdot \partial_{\mu} {\mathcal U}^n\bigl(s,\mu^{N-(1,2)}_s,{Y_s^{1,N}}\bigr)  ds
 \\
 &\hspace{15pt} + {\mathcal O}\Bigl(\frac{1}N + \varepsilon \Bigr).
\end{split}
\end{equation*}

\textit{Fourth Step.}
In order to handle the above display, we
denote by 
$[{\mathbb T}^d]^N 
\ni
{\boldsymbol x}=(x_1,\cdots,x_N) \mapsto 
p_t^N({\boldsymbol x})$ the marginal density at time 
$t$ of the particle system \eqref{eq particles}. 
We start with  {the analysis of} $B_1^n$ in 
\eqref{eq:B1n:**}. 
 
Denoting by 
$  \mu^{N-(1,2)}_{{\boldsymbol x}}$ the empirical measure on $\{x_3,\cdots,x_N\}$ when 
${\boldsymbol x}=(x_1,\cdots,x_N)$, we have
\begin{equation*}
\begin{split}
&{\mathbb E} \int_0^t  
B^n_1 \bigl({Y_s^{1,N}},  \mu^{N-(1,2)}_s, {Y_s^{2,N}}\bigr) \cdot \partial_{\mu} {\mathcal U}^n\bigl(s, \mu^{N-(1,2)}_s,{Y_s^{1,N}}\bigr) \, \ud s
\\
&=\int_{{\mathbb T}^d}  \int_0^t \biggl[ \int_{[{\mathbb T}^d]^N}  
b^n_1(x_1,  \mu^{N-(1,2)}_{{\boldsymbol x}},x_2) 
\cdot
\partial_{\mu} {\mathcal U}^n\bigl(s,
  \mu^{N-(1,2)}_{{\boldsymbol x}},x_1
\bigr)
\\
&\hspace{15pt} \times \bigl[ p_s^N(x_1,x_2,x_3,\cdots) - 
p_s^N(x_1,x_2+z,x_3,\cdots)
\bigr]
 \ud x_1 \cdots \ud x_n \biggr]  \rho^n(z) \, \ud s \, \ud z. 
\end{split}
\end{equation*}
{We now 
apply 
Lemma 
\ref{lem:annex:regularity:highd}, using the fact that $b_1$ has same smoothness in 
$(x,y)$ as $\delta b/\delta m$ and that 
the $m$-derivative of 
$\partial_\mu {\mathcal U}^n$ 
satisfies 
\eqref{eq:bound:partialz1:partialz2:U:theta}. 
In the statement of
Lemma 
\ref{lem:annex:regularity:highd}, we use 
$\rho=1-\epsilon$.} 
We get
\begin{equation}
\label{eq:term4:lemma2.2}
\begin{split}
&\biggl\vert {\mathbb E} \int_0^t  
B^n_1 \bigl( {Y_s^{1,N}},\mu^{N-(1,2)}_s,{Y_s^{2,N}} \bigr) \cdot \partial_{\mu} {\mathcal U}^{{n}}\bigl(s, \mu^{N-(1,2)}_s, {Y_s^{1,N}}\bigr) \, \ud s
\biggr\vert
\\
&\leq \frac{C}{n^{{1-\epsilon}}}     \int_0^t \Bigl( \frac1{1 \wedge s^{(1-\epsilon)/2}} + s \Bigr) 
\frac{e^{- \lambda s}}{1 \wedge s^{1/2}}     \, \ud  s
\leq 
\frac{C}{n^{{1-\epsilon}}}.
\end{split}
\end{equation} 
Proceeding in the same way with 
$B_0$ in \eqref{eq:expansion:mollified:drift}, we get 
%
%
\begin{equation}
\label{eq:term5:lemma2.2}
\begin{split}
&\biggl\vert {\mathbb E} \int_0^t  
B^n_0 \bigl( {Y_s^{1,N}},\mu^{N-(1,2)}_s \bigr) \cdot \partial_{\mu} {\mathcal U}^n\bigl(s, \mu^{N-(1,2)}_s,
{Y_s^{1,N}}\bigr)\, \ud s
\biggr\vert
 \leq 
\frac{C}{n^{{1-\epsilon}}}.
\end{split}
\end{equation}

\textit{Conclusion.}
We 
now combine 
 {Lemma \ref{lem:general:lemma}
with} (\ref{eq:term1:lemma2.2})--(\ref{eq:term2:lemma2.2})--(\ref{eq:term4:lemma2.2})--(\ref{eq:term5:lemma2.2}). 
We get, for any $t \geq 0$, 
\begin{equation*}
\begin{split}
&\Big|  \bE[ \Phi(\mu^{N}_t)] - \Phi(\rvlaw[\tilde X_t^n]) \Big| 
 \leq
 { \frac{C}{\min(n^{1-\epsilon},N)}  
\bigl(1+ K_{\epsilon}^n\bigr)}  + {\mathcal O}( \varepsilon ),
 \end{split}
\end{equation*}
with $(\tilde X_t^n)_{t \geq 0}$ standing for the solution of 
\eqref{eq:MVSDE}
with $\tilde b^n$ as drift. 
By letting $\varepsilon$ tend to $0$ in \eqref{eq:b-tildeb:n}
{(using for instance the arguments from 
\cite{lacker18} to pass to the limit in the McKean-Vlasov SDE)}, 
we obtain
\begin{equation}
\label{eq:weak:error:bb}
\begin{split}
&\Big|  \bE[ \Phi(\mu^{N}_t)] - \Phi(\rvlaw[X_t^n]) \Big| 
 \leq
 { \frac{C}{\min(n^{1-\epsilon },N)}  
\bigl(1+ K_{\epsilon}^n\bigr)}.
 \end{split}
\end{equation}
This is the result for $\Phi$ smooth. We 
can extend it to \hintphi{\gamma}{2} thanks to Theorem 
\ref{thm:regul}. 
We can find a sequence $(\Phi_{k})_{k \geq 1}$ that converges uniformly to 
$\Phi$, such that each $\Phi_{k}$ satisfies \hintphi{4}{3} (for constants depending on 
$k$) and all the functions $\Phi_{k}$ satisfy 
\hintphi{\gamma}{2}  independently of $k$.
Applying 
\eqref{eq:weak:error:bb}
to each $\Phi_{k}$ and letting $k$ tend to $\infty$, we complete the proof (as $C$ above is then independent of $k$).
 \end{proof}
\color{black}

\subsection{Estimates in finite time}
\label{ergodic bounds} 
We here prove
\hergol \ under the sole assumption 
{\hlipb{4}{2}}.  
We start with  
\begin{lemma} 
\label{conjecture backward PDE } 
Let $t>0$, $\xi \in W^{1,\infty}(\bT^d)$ and $V$ be a vector field from $[0,t] \times \bT^d$ into ${\mathbb R}^d$ that is 
 H\"older continuous in time and space. Then the {Cauchy problem} 
\begin{equation}  
    \partial_s w(s,\cdot) + \tfrac12 \Delta_{x} w(s,\cdot) +  V(s, \cdot) \cdot \nabla_{x} w(s,\cdot)  =0, \quad \quad s \in [0,t]
    \, ;  \quad
             w(t,\cdot)  = \xi,
     \label{eq backward Cauchy}    
     \end{equation}
    admits a unique solution $(w(s, \cdot))_{0\leq s \leq t}$ 
    that is continuous on $[0,t] \times \bT^d$ and classical on $[0,t) \times \bT^d$. Moreover,
    there are constants  $C,\lambda>0$ ({only depending  on 
    $V$ through 
    $\sup_{s \in [0,t]}  \|V(s,\cdot)\|_{\infty}$}) such that
\begin{equation} \bigg\| w(s, \cdot) - \intrd w(s,y) \, \ud y \bigg\|_{ \infty} \leq C\|\xi\|_{ \infty} e^{-\lambda(t-s)}, \quad \quad \forall s \in [0,t], \label{W12 estimate est 1} 
\end{equation} 
and, for any {$\alpha \in [0,2)$ and $\beta \in [0,1)$ with $\alpha \leq 1+\beta$} (also allowing $C$  to depend on $\alpha,\beta$),  
\begin{equation}\big\|  \nabla_{x}  w(s, \cdot) \big\|_{\beta,\infty} \leq 
 {\frac{C}{1 \wedge (t-s)^{(1+\beta-\alpha)/2}}}
\|\xi\|_{ {\alpha},\infty}  e^{-\lambda(t-s)} , \quad \quad \forall s \in [0,t].
\label{eq: W12 estimate est 3} 
\end{equation}
\end{lemma}

Notice that, within the framework of Lemma \ref{conjecture backward PDE }, 
the $\alpha$-H\"older semi-norm  
of $w(s,\cdot)$, for $\alpha \in (0,1)$, 
is less than $C  (1 \wedge (t-s)^{\alpha/2})^{-1} \| \xi \|_\infty 
e^{-\lambda(t-s)}$, for $C,\lambda$ as in \eqref{eq: W12 estimate est 3} (with $\alpha=\beta = 0$ therein). 
This follows from an obvious interpolation argument 
(see \eqref{eq:interpolation})
combining 
\eqref{W12 estimate est 1} and
\eqref{eq: W12 estimate est 3} (with $(\alpha,\beta)=(0,0)$ therein).

\begin{proof}
The well-posedness of
\eqref{eq backward Cauchy}
in the classical sense is a standard fact  (see \cite[Thm. 5, Chap. 3]{friedman}). Estimate \eqref{W12 estimate est 1}  is a direct consequence of \cite[Lem. 7.4]{cardaliaguet2013long}. 
In order to prove
\eqref{eq: W12 estimate est 3}, 
we recall the following standard property. For {$\alpha \in [0,2)$ and $\beta
\in [0,1)$ with $\alpha \leq 1+\beta$}, 
\begin{equation}
\label{eq:Schauder:linfinity}
\bigl\| w(s,\cdot) \bigr\|_{1+\beta,\infty} \leq \frac{C}{({t-s})^{({1+\beta-\alpha})/2}} \bigl\| w\bigl( s+ ({t-s}) \vee 1,\cdot \bigr) 
\bigr\|_{\alpha,\infty},
\quad  {s <t},  
\end{equation}
for a constant $C$ only depending on 
$\alpha$, $\beta$ and 
$\sup_{s \in [0,t]} \| V(s,\cdot) \|_\infty$. 
This gives 
\eqref{eq: W12 estimate est 3} when $t-s\leq 1$.
When $t-s \geq 1$, 
we apply 
\eqref{eq:Schauder:linfinity}
to the function  {$(w(r,\cdot) - \int_{{\mathbb T}^d} w(s+1,y) \ud y)_{s \leq r \leq s +1}$}. 
By
\eqref{W12 estimate est 1}, 
\begin{equation}
\label{eq:Schauder:linfinity:000}
\bigl\| \nabla w(s,\cdot) \bigr\|_{\beta} \leq C \Bigl\| w\bigl( s+ 1,\cdot \bigr) 
-
\int_{{\mathbb T}^d} w(s+1,y) \ud y
\Bigr\|_{\infty}
\leq C e^{ - \lambda (t - s -1 ) }, 
\end{equation}
from which we complete the proof of 
\eqref{eq: W12 estimate est 3}.
\end{proof}

Lemma
\ref{conjecture backward PDE } 
allows us to check
\hergol \, in a general setting. 

\begin{proposition} \label{W1 decay} 
Let $b$ satisfy \emph{\hlipb{4}{2}}. 
Then, 
it satisfies \emph{$\hergol$} and, for each $\alpha$, ${\beta}$
and $T$ as in the statement of 
\emph{\hlocalcoeff{$\alpha$}{$\beta$}{$T$}}, the constants $(C_k)_{0 \leq k < 2}$ 
in 
\emph{\hergl{$\alpha$}{$\beta$}{$T$}{$(C_k)_{0 \leq k \leq 2}$}}
depend on $b$ only through
the quantities
\begin{equation}
\label{eq:quantities:ergl}
\sup_{m \in {\mathcal P}({\mathbb T}^d)} \| b(\cdot,m)\|_{\infty},
\quad
  \sup_{x \in {\mathbb T}^d}
  \sup_{m \in {\mathcal P}({\mathbb T}^d)} 
  \bigl\|
   \frac{\delta b}{\delta m} \big( x, m \big)( \cdot)
  \bigr\|_{0,\infty}.
  \end{equation}
\end{proposition}

    
        \begin{proof}
{For $\alpha$, $\beta$ and $T$ as in   
  \hlocalcoeff{$\alpha$}{$\beta$}{$T$},
  for $k \in [\alpha,2)$,
  for 
$r \in L^{\infty} ([0,\infty), (W^{k,\infty}(\bT^d))')$, 
with $\lev r(t),\one \rev= 0$, 
 and 
  for  
  $q_0 \in (W^{k, \infty}(\bT^d))'$, with $\langle q_{0}, \one \rangle =0$, 
 we consider the solution $q$ to 
 {\CLinear{$\mu$}{$q_0$}{$r$}}
 {(see  \eqref{eq q})}
within the space {$L^{\infty}([0,T] , (W^{k, \infty}(\bT^d))')$}}
(which exists under the standing assumption on $b$). 
\vskip 4pt

\textit{First Step.}
We provide a bound for $q$ in $L^\infty([0,{T}],(W^{k, \infty}(\bT^d))')$.
    We adopt a duality approach. For a smooth function $\xi$ on $\bT^d$ {and for $t \in [0,T]$}, we consider the following Cauchy problem
   \begin{equation}   
    \partial_s w +\tfrac12\Delta_{x} w + b\bigl(x,m(s \, ; \mu) \bigr) \cdot \nabla_{x} w = 0, \quad \quad (s,x) \in [0,t] \times \bT^d \ ; \qquad 
            w(t,x) = \xi(x).   \label{cauchy w} \end{equation}
The above problem fits the assumption of Lemma 
\ref{conjecture backward PDE }: 
since $b$ is bounded,  
the path 
$(m(s \, ; \mu))_{0 \leq s \leq t}$
is $1/2$-H\"older continuous in $s$ w.r.t. ${\mathcal W}_{1}$ and,  
by 
\hlipb{4}{2}, 
the transport coefficient in 
\eqref{cauchy w}
has time-space continuous derivatives. 
Since $b$ has derivatives in $x$ of order 1 and 2, 
$w$ has derivatives up to the order 4. 
Therefore, we can expand 
the duality product $\langle w(s,\cdot), q(s) \rangle$. 
By \eqref{eq:linearized:operator}
and 
\eqref{eq q}, 
    \begin{align}
\nonumber
      \langle \xi , q(t)   \rangle 
& =  \langle \bar w(0,\cdot)  , q(0)   \rangle
  +   \int_0^t \langle \bar w(s,\cdot) 
  , r(s)   \rangle \, \ud s
 +    \int_0^t \int_{\bT^d}     \frac{\delta b}{\delta m} \big( x, m(s,\mu) \big)\bigl(q(s)\bigr) \cdot \nabla_{x} w(s,x)  \,  m(s, \mu)  (\ud x) \, \ud s
\\
&=: T_{1} + T_{2} + T_{3},
    \label{duality W1 infty} 
\end{align}
with $\bar w(s,\cdot) = w(s,\cdot) -   \intrd w(s,y) \ud y$
{(notice that we can write $T_1$ and $T_2$ in terms of $\bar w$ since $q(0)$ and 
$r$ are centred)}. 
    By 
  {Lemma}  
    \ref{conjecture backward PDE },
  \begin{equation}
 \big| 
 T_{1} \big| \leq 
 {
  \| \bar w(0,\cdot) \|_{k,\infty} \| q(0) \|_{(k,\infty)'}}   
 \leq
 \frac{C_{\alpha, {k},b}}{1 \wedge t^{\alpha/2}}
   \|\xi\|_{k-\alpha,\infty } \| q_0 \|_{(k,\infty)'},\label{eq: bound q est 1}
  \end{equation}
  where $C_{\alpha,{k},b}$ only depends on $\alpha$, {$k$} and $b$ 
   through the quantity 
   $\sup_{m \in {\mathcal P}({\mathbb T}^d)} \| b(\cdot,m)\|_\infty$.
   {As for $T_3$, we have, by 
 \eqref{eq: W12 estimate est 3} (with
 $(\alpha,\beta)$ therein given by  
 $( {0},0)$),}
\begin{equation}
\begin{split}
 \big| T_{3} \big| &  \leq   C_{\alpha,b} 
   \int_0^t  \| \nabla_x w(s, \cdot)  \|_{{\infty}}   \| q(s)  \|_{({0, \infty})'}
 \, \ud s
 \leq 
   C_{\alpha,b}    {\| \xi \|_{{\infty}}}  
 \int_0^t    \frac{ \| q(s)  \|_{({0, \infty})'}}{1 \wedge (t-s)^{1/2}}
 \, \ud s. 
\end{split}
\label{eq: bound q est 2}  
\end{equation}
 Similarly, 
\begin{equation}
\begin{split}
\bigl| T_{2} \bigr| &\leq C_{\alpha,\beta,b,T}  
\| \xi \|_{{\infty}}   
\int_{0}^t \frac{\| r(s) \|_{({\beta}, \infty)'} }{1 \wedge (t-s)^{{\beta}/2}} \ud s.
\end{split}
 \label{eq: bound q est 3}   
\end{equation} 
\vskip 4pt

\textit{Third Step.} 
By combining \eqref{duality W1 infty}, \eqref{eq: bound q est 1}, \eqref{eq: bound q est 2}  and \eqref{eq: bound q est 3},   
 we have
\begin{equation} 
\begin{split}
\| q(t) \|_{(k-\alpha, \infty)'} &\leq   C_{\alpha,k,b} \biggl[ \frac{\| q_0 \|_{(k,\infty)'}}{1 \wedge t^{\alpha/2}}  +  
\int_0^t \frac{ \| q(s) \|_{(0,\infty)'}}{1 \wedge (t-s)^{1/2}} ds 
\biggr]
+
C_{\alpha,\beta,b,T}
\int_{0}^t \frac{\| r(s) \|_{({\beta}, \infty)'} }{1 \wedge (t-s)^{{\beta}/2}} \ud s.
\end{split}
\label{eq: bound q est 2021}
\end{equation} 
%
Multiplying 
by $\exp(-ct) (\tau-t)^{-1/2}$, for some $c >0$, and integrating with respect to 
$t$ from $0$ to $\tau$, for some $\tau >0$,  
 we obtain, for $C=C_{\alpha,\beta,k,b,T,\tau}$,
{\begin{equation*}
\begin{split}
\int_0^{\tau} e^{-ct} \frac{ \| q(t) \|_{(k-\alpha, \infty)'}}{(\tau-t)^{1/2}} dt 
&\leq   C   \| q_0 \|_{(k,\infty)'}   
 +
  C 
 \int_0^{\tau} 
e^{-cs} \| q(s) \|_{(0, \infty)'} 
 \biggl( \int_{s}^\tau \frac{e^{-c(t-s)} }{(\tau-t)^{1/2} [1 \wedge (t-s)^{{1}/2}]} \ud t
 \biggr) \ud s.
 \\
&\hspace{15pt} +   
   C
 \int_0^{\tau} 
e^{-cs} \| r(s) \|_{({\beta}, \infty)'} 
 \biggl( \int_{s}^\tau \frac{e^{-c(t-s)} }{(\tau-t)^{1/2} [ 1 \wedge (t-s)^{{\beta}/2}]} \ud t
 \biggr) \ud s.
\end{split}
\end{equation*}
and then
(for a possibly new choice of $C$),}
 \begin{equation*}
\begin{split}
\int_0^{\tau} e^{-ct} \frac{ \| q(t) \|_{(k-\alpha, \infty)'}}{(\tau-t)^{1/2}} dt 
&\leq   C   \| q_0 \|_{(k,\infty)'}   
 +
  C 
 \int_0^{\tau} 
e^{-cs} \| q(s) \|_{(0, \infty)'} 
 \biggl( \int_{s}^\tau \frac{e^{-c(t-s)} }{(\tau-t)^{1/2} (t-s)^{{1}/2}} \ud t
 \biggr) \ud s.
 \\
&\hspace{15pt} +   
   C
 \int_0^{\tau} 
e^{-cs} \| r(s) \|_{({\beta}, \infty)'} 
 \biggl( \int_{s}^\tau \frac{e^{-c(t-s)} }{(\tau-t)^{1/2} (t-s)^{{\beta}/2}} \ud t
 \biggr) \ud s,
\end{split}
\end{equation*}
Choosing $c$ large enough, the first integral from $s$ to $\tau$ in the right-hand side can be made small.
Then, \begin{equation*}
\label{eq: bound q est 2021:1:0}
\begin{split}
&\int_0^{\tau} e^{-ct} \frac{ \| q(t) \|_{(k-\alpha, \infty)'}}{(\tau-t)^{1/2}} \ud t \leq   C 
  \| q_0 \|_{(k,\infty)'}   
 +
\frac12 
 \int_0^{\tau} 
e^{-cs} \frac{\| q(s) \|_{(0, \infty)'} }{(\tau-s)^{ {1}/2}}  \ud s
+   
  C 
  \int_0^{\tau} 
e^{-cs} \frac{\| r(s) \|_{({\beta}, \infty)'} }{1 \wedge (\tau-s)^{\beta/2}}  \ud s.
\end{split}
\end{equation*}
Obviously, this gives a bound for the left-hand side when $k=\alpha=0$ (provided that it is finite).
Assuming first that $\| q_0 \|_{(0,\infty)'}$ and $\sup_{0 \leq t \leq \tau} \| r(t)\|_{(0,\infty)'}$
are finite 
(so that $\sup_{0 \leq t \leq \tau} \| q(t)\|_{(0,\infty)'}$
is finite thanks to Lemma \ref{lions paper pde result}), 
and inserting above bound in 
\eqref{eq: bound q est 2021}, 
we get 
$\hergol$
(for this type of initial condition). The assumptions on $q_0$ and $r$ can be easily dropped by mollification:
we get $\hergol$ 
for any initial condition. 
\end{proof}
 \color{black}

\subsection{Connection between the assumption \emph{\hergo}  and 
the long time behaviour of the McKean-Vlasov equation}
\label{subse:ERG}

We  establish connections between {\hergo} and  the long time behaviour of 
\eqref{eq:MVSDE}--\eqref{eq: forward eqn }. We start with 
\begin{proposition}
\label{eq:prop:dTV:exp}
Under 
\emph{\hregb{0}{2}}
and 
\emph{\hergo}, 
the equation \eqref{eq:MVSDE} has a unique invariant measure $\nu_{\infty}$ and it is exponentially stable, i.e., there exist two constants $C,\lambda >0$ such that, for any 
$\mu \in \cP(\bT^d)$,  
\begin{equation}
\label{eq:expo:stability}
\textrm{\rm dist}_{{\rm TV}} \Bigl( m(t \, ;\mu) ,  \nu_{\infty} \Bigr) \leq C e^{-\lambda t}, \quad t \geq 0.
\end{equation} 
\end{proposition}

\begin{proof}
For a smooth test function $f$ on the torus, we choose 
$\Phi(m)=\langle f,m\rangle$ in the statement of 
Proposition 
\ref{recap theorem 4.5}. 
And then,  
by 
Proposition \ref{global schauder 1}, we obtain that, for any two measures $\mu_{1},\mu_{2} \in 
{\mathcal P}(\bT^d)$,  
\begin{equation}
\label{eq:existence:invariant}
\begin{split}
\bigl\vert \langle f, m(t \, ; \mu_{1}) - m(t \, ; \mu_{2}) 
\rangle 
\bigr\vert &=  \biggl\vert \int_{0}^1 \biggl( \int_{{\mathbb T}^d} m^{(1)}\bigl(t \, ; \lambda \mu_{1}
+
(1-\lambda) \mu_{2} ,\delta_z \bigr)(f) 
\ud \bigl( \mu_2-\mu_1 \bigr)(z)
\biggr)
\ud \lambda
\biggr\vert
\\
& \leq C^{(1)}_0 \| f\|_{\infty} e^{-\lambda_1 t} \textrm{\rm dist}_{{\rm TV}}(\mu_1,\mu_2)
 \leq C^{(1)}_0 \| f\|_{1,\infty} e^{-\lambda_1 t} \textrm{\rm dist}_{{\rm TV}}(\mu_1,\mu_2). 
\end{split}
\end{equation}
By choosing $\mu_{1}$ and 
$\mu_{2}$ as two candidates for being an invariant measure, 
this shows that an invariant measure (if it exists) must be unique. Existence follows by choosing $\mu_{1}=\mu$ and then $\mu_{2}=m(s \, ; \mu)$ in 
\eqref{eq:existence:invariant}. By the flow property of McKean-Vlasov dynamics, we have $m(t \, ; m(s \, ; \mu))=m(t+s \, ; \mu)$, and thus
\begin{equation*}
\begin{split}
\bigl\vert \langle f, m(t+s \, ; \mu) - m(t \, ; \mu) 
\rangle 
\bigr\vert \leq 2C_0^{(1)} \| f\|_{\infty} e^{-\lambda_1 t}. 
\end{split}
\end{equation*}
By completeness of $({\mathcal P}(\bT^d),{\mathcal W}_1)$, we deduce that 
$(m(t\, ; \mu))_{ t\geq 0}$ has a limit. We call it $\nu_{\infty}$. 
Writing the above left-hand side in the form 
$\vert \langle f, m (s \, ; m(t\, ;\mu)) - m(t \, ; \mu) 
\rangle 
\vert$ and letting $t$ tend to $\infty$, it is easy to deduce that $m(s\, ;\nu_{\infty})=\nu_{\infty}$. Finally, by choosing 
$\mu_{1}=\nu_{\infty}$ in 
\eqref{eq:existence:invariant}, we get that 
$\nu_{\infty}$
is exponentially stable. 
\end{proof}

{An interesting question is to determine to which extent the converse to 
Proposition 
\ref{eq:prop:dTV:exp}
holds true:}

\begin{proposition} \label{prop main result:2} 
Assume
that $b$ satisfies 
\emph{\hregb{0}{2}}
and that there exists an invariant measure $\nu_\infty$ satisfying 
\emph{\hergo}. For
$(\rho^n=n^d \rho (n \cdot))_{n \geq 1}$, 
with $\rho$ a smooth symmetric compactly supported density on ${\mathbb R}^d$, 
define  
the same drifts 
 $(b^n : {\mathbb T}^d \times {\mathcal P}({\mathbb T}^d) \ni (x,m) \mapsto b(\cdot,m * \rho^n) * \rho^n \in {\mathbb R}^d)_{n \geq 1}$
as in 
Proposition  
\ref{thm main result}.

Then, there exist $\lambda_0 >0$ and $C_0 \geq 0$, and for any $\alpha \in [0,2)$, there exist $a_\alpha >0$ and $(C_{\alpha,\beta} \geq 0)_{0<\beta<1} $, such that, for any 
$\mu \in {\mathcal P}({\mathbb T}^d)$ with $\| \mu - \nu_\infty\|_{(\alpha,\infty)'} \leq a_\alpha$, 
any integer $n \geq 1$ and any $\beta \in (0,1)$, 
\begin{equation*}
\forall t \geq 0, \quad \| m^n(t  \, ; \mu) - m(t \, ; \mu) \|_{(0,\infty)'}  
\leq \frac{C_{\alpha,\beta}}{n^\beta}, \quad 
 \| m^n(t  \, ; \mu) - \nu_\infty \|_{(0,\infty)'} 
\leq C_0 e^{-\lambda_0 t} 
+ {\frac{C_{\alpha,\beta}}{n^\beta}}, 
\end{equation*}
where $(m^n(t \, ; \mu))_{t \geq 0}$ is the solution to 
\eqref{eq: forward eqn }
 with $b^n$ as drift. 
And, 
for any large integer $n$ and any $\mu \in {\mathcal P}({\mathbb T}^d)$ 
with $\| \mu - \nu_\infty\|_{(\alpha,\infty)'} \leq a_\alpha$, 
$b^n$ satisfies \emph{\hergo} at $\mu$,  with  $(C_k)_{0 \leq k < 2}$ and $\lambda$  therein being uniform w.r.t. 
$\mu$.
\end{proposition}

\begin{proof} { \ }
We first recall the following preliminary result, see for instance 
\cite[Prop. 2.3]{jourdain:hal-03223426}.
For any $t>0$, $m(t \, ; \mu)$ has a density, denoted by 
$x \in {\mathbb T}^d \mapsto p(t,x \, ; \mu)$. For any $\beta \in (0,1)$, it satisfies 
\begin{equation}
\label{eq:preliminary:bound:density}
p(t,x\, ; \mu) \leq C g_\mu(t,x) \ ; \quad
\bigl\vert p(t,x\, ; \mu) - 
 p(t,x'\, ; \mu) \bigr\vert \leq \frac{C_\beta}{1 \wedge t^{\beta/2}} \vert x -x' \vert^\beta 
 \Bigl( g_\mu(t,x) + g_\mu(t,x') \Bigr),
 \end{equation}
 where $g_\mu(t,\cdot)$ is a density that satisfies $\| g_\mu(t,\cdot)\|_\varrho \leq C_\varrho \min(1,t)^{d(1/\varrho-1)/2}$, for 
 $\varrho \in (1,+\infty]$. 
 \vskip 4pt

\textit{First Step.} We  provide a bound for the distance between $(m^n(t \, ; \mu))_{t \geq 0}$ and $(m(t \, ; \nu))_{t \geq 0}$.  
 We write 
 \begin{equation*}
 \begin{split}
& \partial_t \bigl[ m^n(t \, ;\mu) - m(t \, ; \nu) \bigr] 
-  \frac12 \Delta \bigl( m^n(t \, ;\mu) - m(t \, ; \nu) \bigr) 
\\
&\hspace{5pt} + \textrm{\rm div} 
     \Bigl( b^n\bigl(\cdot,m^n(t\, ; \mu) \bigr) 
     \bigl[
     m^n(t \, ;\mu) - m(t \, ; \nu) \bigr] \Bigr)
     + \textrm{\rm div} 
     \Bigl( \bigl[ b^n\bigl(\cdot,m^n(t \, ; \mu)\bigr) - b \bigl(\cdot,m(t \, ;\nu) \bigr)   \bigr] m(t \, ;\nu) \Bigr) =0,
 \end{split}
 \end{equation*}
 with $\mu-\nu$ as initial condition. We rewrite the last term in the above left-hand side as 
 \begin{equation*}
 \begin{split}
&  b^n\bigl(\cdot ,m^n(t \, ;\mu)\bigr)- b \bigl(\cdot,m(t \, ;\nu) \bigr) 
 \\
 &=
  \Bigl[
  b^n\bigl(\cdot ,m^n(t \, ;\mu)\bigr)
  - b^n\bigl(\cdot,m(t \, ;\nu)\bigr) 
  \Bigr]
 +
 \Bigl[ 
 b^n\bigl(\cdot ,m(t \, ;\nu)\bigr)
 -
 b \bigl(\cdot ,m(t \, ;\nu) \bigr) 
 \Bigr]
 \\
 &=
  \biggl[   \frac{\delta b^n}{\delta m}\bigl(\cdot, m^n(t \, ;\mu)\bigr)\Bigl( m^n(t \, ;\mu) - m(t \, ;\nu)  \Bigr)
  \biggr]
  +
    \Bigl[ b^n\bigl(\cdot ,m(t \, ;\nu)\bigr)
 -
 b \bigl(\cdot,m(t \, ;\nu) \bigr) 
 \Bigr]
  \\
&\hspace{15pt} + \biggl[ \int_0^1   
\Bigl[ \frac{\delta b^n}{\delta m}\bigl(\cdot, r m^n(t \, ;\mu) + (1-r) m(t \, ;\nu) \bigr)
-
\frac{\delta b^n}{\delta m}\bigl(\cdot, m^n(t \, ;\mu) \bigr)  
    \Bigr]\Bigl( m^n(t \, ;\mu) - m(t \, ;\nu) \Bigr) \ud r \biggr]
 \\
&=:
    \frac{\delta b^n}{\delta m}\bigl(\cdot, m^n(t \, ;\mu)\bigr)\Bigl(m^n(t \, ;\mu) - m(t \, ;\nu)\Bigr)
    +\vartheta^n_1(t)(\cdot) 
+\vartheta^n_2(t)(\cdot).
  \end{split}
  \end{equation*}
We thus have 
 \begin{equation}
 \label{eq:equation:mn-n:stab}
 \begin{split}
& \partial_t \bigl[ m^n(t \, ;\mu) - m(t \, ; \nu) \bigr] 
- L^n_{m^n( t\, ; \mu)}  \bigl( m^n(t \, ;\mu) - m( t \, ; \nu) \bigr) 
+ {\rm div} \Bigl( \bigl[ \vartheta^n_1(t) + \vartheta^n_2(t) \bigr] m(t \, ;\nu) \Bigr) 
\\
&\hspace{15pt} + 
 { 
 {\rm div} \Bigl(
  \frac{\delta b^n}{\delta m}\bigl(\cdot, m^n(t \, ;\mu)\bigr)\Bigl(m^n(t \, ;\mu) - m(t \, ;\nu)\Bigr)
 \bigl( m(t \, ;\nu)
-
m^n(t \, ;\mu)
\bigr) 
\Bigr)} 
=0.
 \end{split}
 \end{equation}
 Above, $L^n_{m^n(t,\mu)}$ 
  denotes the linearised operator 
  \eqref{eq:linearized:operator}
 associated with $b^n$. 
By using the fact that $\delta^2 b^n/\delta m^2$ is bounded,  
we get $\| \vartheta^n_2(t) \|_{\infty}\leq C \| m^n(t \, ;\mu) - m(t \, ; \nu) \|^2_{(0,\infty)'}$,
the constant $C$ being allowed (here and in the rest of the proof) to depend 
on the bounds in 
{\hregb{0}{2}}. We refer to the preliminary step in the proof of Proposition \ref{thm main result}  for the fact that 
each $b^n$ satisfies 
{\hregb{0}{2}} with constants that are independent of $n$. 
As for $\vartheta^n_1(t)$, we 
write it in the form:
\begin{equation*}
\begin{split}
\vartheta^n_1(t) 
&= 
 \Bigl[ b \bigl(\cdot,m(t \, ;\nu) * \rho^n \bigr) - 
 b\bigl(\cdot,m(t \, ;\nu)\bigr) \Bigr] * \rho^n
+
\Bigl[ b \bigl(\cdot,m(t \, ;\nu) \bigr) * \rho^n - 
 b\bigl(\cdot,m(t \, ;\nu)\bigr) \Bigr]
 =:\vartheta^n_{1,1}(t)+\vartheta^n_{1,2}(t).
\end{split}
\end{equation*}
Recalling 
\eqref{eq:preliminary:bound:density}, we obtain 
\begin{equation*}
\begin{split}
\| \vartheta^n_{1,1}(t) \|_{\infty} \leq C \textrm{\rm dist}_{\rm TV} \bigl(m(t \, ;\nu) , m(t \, ;\nu)*\rho^n \bigr)
&\leq C \int_{[{\mathbb T}^d]^2}  \vert p(t,x ;\nu) - p(t,x-y ;\nu)  \vert \rho^n(y)  \ud x \, \ud y
\leq \frac{C_\beta n^{-\beta}}{(1\wedge  t^{\beta/2})}.
\end{split}
\end{equation*}
Moreover, by the same argument, 
\begin{equation*}
\begin{split}
\Bigl\| \textrm{\rm div} \Bigl( \vartheta_{1,2}^n(t) m(t \, ;\nu) \Bigr) \Bigr\|_{(1+\beta,\infty)'} 
&\leq 
\sup_{\| \xi \|_{1+\beta,\infty} \leq 1} 
\Bigl\langle 
\Bigl[ \Bigl( b \bigl(\cdot,m(t \, ;\nu) \bigr) * \rho^n - 
 b\bigl(\cdot,m(t \, ;\nu)\bigr) \Bigr) 
 m(t\, ; \nu) 
 \Bigr] ,
\nabla \xi
\Bigr\rangle 
\\
&=
\sup_{\| \xi \|_{1+\beta,\infty} \leq 1} 
\Bigl\langle 
b \bigl(\cdot,m(t \, ;\nu) \bigr),  \bigl( m(t\, ; \nu)  \nabla \xi \bigr) * \rho^n - 
m(t\, ; \nu)  \nabla \xi
\Bigr\rangle \leq 
 \frac{C_\beta n^{-\beta}}{(1\wedge  t^{\beta/2})}.
\end{split}
\end{equation*}
Applying 
{\hergol}
(i.e., the finite time version of 
\eqref{erg:hyp:3})
to 
\eqref{eq:equation:mn-n:stab}
 three times, 
 once
with $k=\alpha$, 
$\mu - \nu$ as initial condition and $0$ as remainder,
once 
  with $k=0$, $0$ as initial condition and $\textrm{\rm div}(\vartheta^n_1 m(t \, ; \nu))$ as remainder
(which is in $(W^{(1+\beta,\infty)}({\mathbb T}^d))'$)
 and another time with $k=0$, $0$ as initial condition and $\textrm{\rm div}(\vartheta^n_2 m(t \, ; \nu))$ as remainder
(which is in $(W^{(1,\infty)}({\mathbb T}^d))'$), we get, for any time $S \geq 0$ and any $t \in [0,S]$, 
\begin{equation}
\label{eq:mn-m*}
\begin{split}
\bigl\| m^{n}(t \, ;\mu) - m(t\, ; \nu) \bigr\|_{(0,\infty)'} 
& \leq 
C_{\alpha,S}
\frac{\| \mu  - \nu \|_{(\alpha,\infty)'} }{1 \wedge t^{\alpha/2}}
+ C_{S} \int_0^t  
\frac{\bigl\| m^{n}(s\, ;\mu) - m(s \, ;\nu) \bigr\|_{(0,\infty)'}^2
}{1 \wedge (t-s)^{1/2}} 
\, 
\ud s
 + \frac{C_{\beta,S}}{n^{\beta}}.
\end{split}
\end{equation}

\textit{Second Step.} 
By upper bounding 
$\| m^{n}(s\, ;\mu) - m(s \, ;\nu) \|_{(0,\infty)'}^2$ by 
$2 \| m^{n}(s\, ;\mu) - m(s \, ;\nu) \|_{(0,\infty)'}$, 
we recover an inequality very similar to 
\eqref{eq: bound q est 2021}, from which we deduce that 
\begin{equation}
\label{eq:mn-m:rate:shorttime}
\bigl\| m^{n}(t\, ;\mu) - m(t \, ; \nu)  \|_{(0,\infty)'} \leq 
C_{\alpha,S}
\frac{\| \mu  - \nu \|_{(\alpha,\infty)'} }{1 \wedge t^{\alpha/2}}
 + \frac{C_{\beta,S}}{n^{\beta}}, \quad t \in (0,S], 
\end{equation}
By choosing $\mu=\nu$, this provides the rate of convergence of $m^n(\cdot \, ; \mu)$ to $m(\cdot \, ; \mu)$  {in finite time}.

Now, we choose $\nu=\nu_\infty$. 
For a fixed $a >0$, we can choose $t_0$ small enough and $n_0$ large enough such that 
the sum of the last two terms on  
\eqref{eq:mn-m*} (with $t=t_0$ and $n \geq n_0$) is less than $a/2$. Next, for this $t_0$, we
can choose 
$\| \mu -\nu_\infty\|_{(\alpha,\infty)'}$ small enough such that 
the first term on the right-hand side is also less than 
$a/2$. 
We deduce that $\|m^n(t_0 \, ; \mu) -\nu_\infty\|_{(0,\infty)'} \leq a$ for $n \geq n_0$. 
This says that, when 
$\mu$ is close to $\nu_\infty$ for
the norm $\| \cdot \|_{(\alpha,\infty)'}$, 
$m^n(\cdot \, ; \mu)$
is close to $\nu_\infty$ for $\textrm{\rm dist}_{\textrm{\rm TV}}$ at time $t_0$. Equivalently, by 
restarting $m^n$ at time $t_0$, we can assume that
the initial condition is close to $\nu_\infty$  for $\textrm{\rm dist}_{\textrm{\rm TV}}$ (and not only for the norm 
 $\| \cdot \|_{(\alpha,\infty)'}$). 
 Hence, we now study the long time behaviour of 
 $({\rm dist}_{\rm TV}( m^n(t \, ; \mu) ,\nu_\infty))_{t \geq 0}$
 under the assumption 
 that 
  ${\rm dist}_{\rm TV}(\mu, \nu_\infty)$ is small. The strategy is to repeat the analysis of the first step, but exchanging the roles of $b^n$ and $b$, and using \hergo \, for $b$ (instead of using 
 \hergol \, for $b^n$).  {This is the key point to get long time estimates, but this barely changes the proof. In fact,} this leads to the following variant of 
\eqref{eq:mn-m*}: 
\begin{equation}
\label{eq:mn-m*:glob}
\begin{split}
\bigl\| m^{n}(t \, ;\mu) - \nu_\infty \bigr\|_{(0,\infty)'} 
& \leq C
 \| \mu  - \nu_\infty \|_{(0,\infty)'} e^{-\lambda t} 
+ C   \int_0^t  
\frac{\| m^{n}(s\, ;\mu) - \nu_\infty \|_{(0,\infty)'}^2}{1 \wedge (t-s)^{1/2}} 
e^{-\lambda(t-s)}
\ud s
 + \frac{C_{\beta}}{n^{\beta}}.
\end{split}
\end{equation}
We choose $\beta=1/2$. 
For some $a>0$ (whose value is fixed next and which has nothing to do with the parameter $a$ used in the previous paragraph), we now assume that
$\mu$ and $n$ satisfy
\begin{equation}
\label{eq:condition:a:small}
C \textrm{\rm dist}_{\rm TV}( \mu, \nu_\infty 
)
+ \frac{C_{1/2}}{n^{1/2}} \leq a.
\end{equation}
We argue by contradiction to show that 
$\textrm{\rm dist}_{\rm TV}( \mu, \nu_\infty )$ remains small. We thus assume that the hitting time $t_0:=\inf\{ t \geq 0 : \| m^{n}(t \, ;\mu) - \nu_\infty \|_{(0,\infty)'} 
\geq 2a\}$
is finite.
Then, by continuity (in $s$) of 
$\| m^{n}(s \, ;\mu) - \nu_\infty \|_{(0,\infty)'}$, we get 
from \eqref{eq:mn-m*:glob} that 
$2 a \leq a + 4 Ca^2 \int_0^{\infty} e^{-\lambda s}/{(1 \wedge s^{1/2})} \ud s,$
which is impossible if $a$ is small enough. This says that, 
for $a$ small enough and under the prescription 
\eqref{eq:condition:a:small}, we have 
$\sup_{t \geq 0} \| m^n(t \, ; \mu) - \nu_\infty \|_{(0,\infty)'}
\leq 2a$.
By a similar analysis (or letting 
$n$ tend to $\infty$ in 
\eqref{eq:mn-m:rate:shorttime}{, an argument invoked in the proof of 
Proposition \ref{thm main result}}), 
we also have 
$\sup_{t \geq 0} \| m(t \, ; \mu) - \nu_\infty \|_{(0,\infty)'}
\leq 2a$ when $\| \mu- \nu_\infty \|_{(0,\infty)'}=\textrm{\rm dist}_{\rm TV}( \mu, \nu_\infty ) < a/C$. 

This explains how to obtain 
a global in time version of 
\eqref{eq:mn-m:rate:shorttime}. 
Assume indeed for a while that, for some $\mu$ satisfying 
\eqref{eq:condition:a:small}
and for any $n$ large enough, 
\hergo \,  holds 
true for 
the version of
 \eqref{eq: forward eqn } driven by $b^n$ and for constants $(C_k)_{0 \leq k < 2}$
 and $\lambda$ that are independent of $n$ (this is proven in the two steps below). 
Then, 
for two measures $\mu$ and $\nu$ satisfying 
\eqref{eq:condition:a:small} and for 
$n$ large, 
we obtain the 
following
long time version
of \eqref{eq:mn-m*} 
(whose derivation is similar to 
\eqref{eq:mn-m*:glob}):
\begin{equation*}
\begin{split}
\bigl\| m^{n}(t \, ;\mu) - m(t\, ; \nu) \bigr\|_{(0,\infty)'} 
& \leq 
 \| \mu  - \nu  \|_{(0,\infty)'} e^{-\lambda t}
+ C \int_0^t  
\frac{\| m^{n}(s\, ;\mu) - m(s \, ;\nu) \|_{(0,\infty)'}^2}{1 \wedge (t-s)^{1/2}} 
e^{-\lambda(t-s)}
\, 
\ud s  + \frac{C_{\beta}}{n^{\beta}}.
\end{split}
\end{equation*}
Inserting
the two bounds 
$\sup_{t \geq 0} \| m^n(t \, ; \mu) - \nu_\infty \|_{(0,\infty)'}
\leq 2a$
and
$\sup_{t \geq 0} \| m(t \, ; \nu) - \nu_\infty \|_{(0,\infty)'}
\leq 2a$ and using in addition the inequality
$e^{-\lambda t}/(1 \wedge t^{1/2}) \leq C e^{-\lambda t/2}/t^{1/2}$, we obtain, for $t \geq 0$, 
\begin{equation}
\label{eq:mn-m*:3}
\begin{split}
\bigl\| m^{n}(t\, ;\mu) - m(t\, ;\nu) \bigr\|_{(0,\infty)'}
&\leq C  \| \mu  - \nu \|_{(0,\infty)'} e^{-\lambda t} 
 +
C a 
\int_{0}^t  
\frac{\| m^{n}(s\, ;\mu) - m(s \, ;\nu) \|_{(0,\infty)'}}
{ (t-s)^{1/2}}
e^{-\lambda (t-s)/2} \, 
\ud s + 
\frac{C_\beta}{n^{\beta}}.
\end{split}
\end{equation}
Multiplying by 
$e^{-\lambda (\tau-t)/2}/(\tau-t)^{1/2}$,  
integrating w.r.t. $t \in [0,\tau]$, we obtain (for a new value of $C$)
\begin{equation*}
\begin{split}
&\int_{0}^{\tau} 
\frac{\| m^{n}(t\, ;\mu) - m(t\, ;\nu) \|_{(0,\infty)'}}{(\tau-t)^{1/2}}
e^{-\lambda(\tau-t)/2}  \ud t
\\
&\leq C \| \mu  - \nu \|_{(0,\infty)'} e^{-\lambda \tau/2}  
 +
C a 
\int_{0}^\tau
\bigl\| m^{n}(s\, ;\mu) - m(s \, ;\nu) \|_{(0,\infty)'}
\Bigl( \int_{s}^\tau
\frac{e^{-\lambda (\tau-s)/2} }{ (t-s)^{1/2}  (\tau-t)^{1/2}} 
\ud t
\Bigr)
\, 
\ud s
 + 
\frac{C_\beta}{n^{\beta}} 
\\
&\leq C \| \mu  - \nu \|_{(0,\infty)'} e^{-\lambda \tau/2}   
 +
C a 
\int_{0}^\tau
\frac{\| m^{n}(s\, ;\mu) - m(s \, ;\nu) \|_{(0,\infty)'}
}{ (\tau-s)^{1/2}}
e^{-\lambda (\tau-s)/2} 
\ud s  + 
\frac{C_\beta}{n^{\beta}} .
\end{split}
\end{equation*}
For $C a \leq 1/2$ (recalling 
\eqref{eq:condition:a:small} for $\mu$, $\nu$ and $n$), we get 
a bound for the left-hand side.  
Back to 
\eqref{eq:mn-m*:3},
\begin{equation}
\label{eq:mn-m*:4}
\bigl\| m^{n}(t\, ;\mu) - m(t\, ; \nu)  \|_{(0,\infty)'} \leq C  \| \mu  - \nu \|_{(0,\infty)'} e^{-\lambda t/2}    + 
\frac{C_\beta}{n^{\beta}}, \quad t \geq 0. 
\end{equation}
Choosing $\nu=\mu$, we get the first inequality in the statement (noticing from 
\eqref{eq:mn-m:rate:shorttime} that 
the bound also holds true in small time
). Choosing 
$\nu=\nu_\infty$, we get the second claim. 
 \vskip 4pt

\textit{Third Step.} 
We now show that 
$m^n(\cdot \, ; \mu)$ (whose dynamics  are driven by $b^n$) satisfies \hergo \, when 
$\| \mu - \nu_\infty\|_{(\alpha,\infty)'}$ is small, which property we have just used to prove 
\eqref{eq:mn-m*:3}
and
\eqref{eq:mn-m*:4}. By Proposition 
\ref{W1 decay}, we already know that it satisfies \hergol \, 
on any finite interval $[0,T]$
w.r.t. constants that depend on $T$ but are independent of $n$
(since the quantities in 
\eqref{eq:quantities:ergl}, with 
$b$ replaced by $b^n$, are independent of $n$). 
The point is mainly to establish 
   \eqref{erg:hyp:3} for time indices $t$ greater than or equal to 
 $1$.  In fact, by applying 
    \eqref{erg:hyp:3} at fixed $t_0>0$, we already have a bound for 
    $\| q(t_0) \|_{(0,\infty)'}$, and by
    the paragraph below 
    \eqref{eq:mn-m:rate:shorttime}, we also know 
 that $\| m(t_0\, ; \mu) - \nu_\infty\|_{(0,\infty)'}$ is small
 when 
 $\| \mu - \nu_\infty\|_{(\alpha,\infty)'}$ is small
 and $n$  and $t_0$ are large. 
The main goal is thus to prove 
 \eqref{erg:hyp:3}
 for $k=\alpha=0$ under   \eqref{eq:condition:a:small}, which implies 
 $\sup_{t \geq 0} \| m^n(t \, ; \mu) - \nu_\infty \|_{(0,\infty)'}
\leq 2a$. 
 
 For an initial condition $q_0$ and for a source term as in 
 \eqref{erg:hyp:3}, we rewrite the solution of the equation
 \begin{equation}
 \label{eq:3rdstep:mn:q}  
    \partial_t q(t) - L^n_{m^n(t;\mu)}q(t)  -r(t)= 0, \quad \quad t \geq 0,
    \end{equation}
as $q(t) := q_1(t) + q_2(t)$ with
 \begin{equation*}  
    \partial_t q_1(t) - L_{\nu_\infty} q_1(t) - r(t)= 0, \quad   t \geq 0
   \,; \quad q_1(0)=q_0 \, ; \quad 
    \partial_t q_2(t) - L_{\nu_\infty} q_2(t) - r_2^n(t)= 0, \quad t \geq 0
   \,; \quad q_2(0)=0, 
    \end{equation*} 
    where 
$ r^n_2(t) =  [  L^n_{m^n(t;\mu)} - L_{\nu_\infty} ] q(t)$.
By \hergo \, (for $\nu_\infty$), it is easy to estimate 
 $q_1(t)$. Next, we rewrite 
$r_2^n(t)$ as
\begin{equation}
\begin{split}
r^n_2(t) &=  
  \textrm{\rm div} 
     \Bigl( \bigl[ b^n \bigl(\cdot,m^n(t \, ;\mu) \bigr) - b(\cdot,\nu_\infty) \bigr] q(t) \Bigr)
     + 
     \textrm{\rm div}\Bigl(
           \frac{\delta b^n}{\delta m}\bigl(\cdot,m^n(t;\mu)\bigr)
     \bigl(q(t)\bigr)
     \bigl[ m^n(t;\mu)
-      \nu_\infty\bigr] \Bigr)
\\
&\hspace{15pt}    
+
   \textrm{\rm div}\Bigl(
     \Bigl[ 
      \frac{\delta b^n}{\delta m}\bigl(\cdot,m^n(t;\mu)\bigr)
     \bigl(q(t)\bigr)
-      \frac{\delta b}{\delta m}(\cdot,\nu_\infty)\bigl(q(t)\bigr) \Bigr] \nu_\infty\Bigr). 
\label{eq:rn2}
\end{split}
     \end{equation}
The difficulty here is that we have no $L^\infty$ estimates on $b^n-b$ (since $b$ may not be continuous in the spatial variable). 
We proceed as follows. 
For  $\delta>0$, we 
let $p_n(\delta):=\textrm{\rm Leb}_d(\{ x \in {\mathbb T}^d : 
\vert b^n(x,\nu_\infty) - b(x,\nu_\infty) \vert \geq \delta\})+
\textrm{\rm Leb}_{2d}(\{ (x,y) \in {\mathbb T}^{2d} : 
\vert [\delta b^n/\delta m](x,\nu_\infty,y) - [\delta b/\delta m](x,\nu_\infty,y) \vert \geq \delta\})$, which 
tends to $0$ as $n$ tends to $\infty$ (see \eqref{eq:deltabn:deltam}). 
Then, for $\| \nabla \xi \|_\infty \leq 1$ (and observing from \eqref{eq:preliminary:bound:density} that $\nu_\infty$ has a bounded density), 
\begin{align}
\label{eq:add:proof:l1}
&\biggl\vert \int_{{\mathbb T}^d}
\Bigl[\frac{\delta b^n}{\delta m}\bigl(x,\nu_\infty\bigr) -\frac{\delta b}{\delta m}(x,\nu_\infty)\Bigr]\bigl(q(t)\bigr) \cdot 
\nabla \xi(x) \, \ud \nu_\infty(x) \biggr\vert
\\
&= \biggl\vert \biggl\langle 
\Bigl[\frac{\delta b^n}{\delta m}\bigl(\cdot,\nu_\infty,\cdot\bigr) -\frac{\delta b}{\delta m}(\cdot,\nu_\infty,\cdot)\Bigr],
\bigl( \nabla \xi \cdot \nu_\infty \bigr) \otimes q(t)
\biggr\rangle\biggr\vert
\leq  C \delta \|q(t)\|_{(0,\infty)'}
+ C \sup_{\| \zeta \|_\infty \leq 1, \| \zeta \|_1 \leq p_n(\delta)}
\bigl\langle q(t), \zeta \bigr\rangle,
\nonumber
\end{align}
with $\zeta$ in the supremum being  
a real-valued measurable function on ${\mathbb T}^d$.
{To obtain the second line right-above, we used the distributional version of Fubini's theorem together with the fact that 
\begin{equation*}
\begin{split} 
&\biggl\langle 
\Bigl[\frac{\delta b^n}{\delta m}\bigl(\cdot,\nu_\infty,\cdot\bigr) -\frac{\delta b}{\delta m}(\cdot,\nu_\infty,\cdot)\Bigr],
\bigl( \nabla \xi \cdot \nu_\infty \bigr) \otimes q(t)
\biggr\rangle
\\
&= \biggl\langle 
\Bigl[\frac{\delta b^n}{\delta m}\bigl(\cdot,\nu_\infty,\cdot\bigr) -\frac{\delta b}{\delta m}(\cdot,\nu_\infty,\cdot)\Bigr]
\one_{A_n} ,
\bigl( \nabla \xi \cdot \nu_\infty \bigr) \otimes q(t)
\biggr\rangle
\\
&\hspace{15pt} +
\biggl\langle 
\Bigl[\frac{\delta b^n}{\delta m}\bigl(\cdot,\nu_\infty,\cdot\bigr) -\frac{\delta b}{\delta m}(\cdot,\nu_\infty,\cdot)\Bigr]
\one_{A_n^{\complement}},
\bigl( \nabla \xi \cdot \nu_\infty \bigr) \otimes q(t)
\biggr\rangle,
\end{split} 
\end{equation*} 
where we let $A_n := \{ (x,y) \in {\mathbb T}^{2d} : 
\vert [\delta b^n/\delta m](x,\nu_\infty)(y) - [\delta b/\delta m](x,\nu_\infty)(y) \vert \geq \delta\}$. 
In order to get the very last term in 
\eqref{eq:add:proof:l1}, we let 
\begin{equation*} 
\zeta(x) := \int_{{\mathbb T}^d} 
\one_{A_n^{\complement}}(y,x)  
\Bigl[\frac{\delta b^n}{\delta m}\bigl(y,\nu_\infty,x\bigr) -\frac{\delta b}{\delta m}(y,\nu_\infty,x)\Bigr]
 \cdot \nabla \xi(y)   \nu_\infty(\ud y), 
\end{equation*} 
which satisfies $\| \zeta \|_{\infty} \leq C$ and $\| \zeta \|_1 \leq C p_n(\delta)$.  The claim 
\eqref{eq:add:proof:l1} easily follows.}

Proceeding in the same way for the other two terms in the expression of
$r^n_2(t)$, 
we have
(using 
the bound  $\sup_{t \geq 0} \| m^n(t \, ; \mu) - \nu_\infty \|_{(0,\infty)'}
\leq 2a$ in order to handle the last term in the definition 
\eqref{eq:rn2}
of $r_2^n(t)$)
\begin{equation}
\label{eq:rn2:2}
\begin{split}
\bigl\| r^n_2(t) \bigr\|_{(1,\infty)'} 
&\leq 
C
\|q(t)\|_{(0,\infty)'}
\bigl( \delta + 
\bigl\| m^{n}(t\, ;\mu) - \nu_\infty \|_{(0,\infty)'}
\bigr) +
C  \sup_{\| \zeta \|_\infty \leq 1, \| \zeta \|_1 \leq p_n(\delta)}
\bigl\langle q(t), \zeta \bigr\rangle
\\
&\leq C 
\|q(t)\|_{(0,\infty)'}
\bigl(  \delta
+a \bigr) + C \sup_{\| \zeta \|_\infty \leq 1,\| \zeta \|_1 \leq p_n(\delta)}
\bigl\langle q(t), \zeta \bigr\rangle.
\end{split}
\end{equation}
By \hergo \, at $\nu_\infty$ (applied twice, 
once to $q_1$  and once to $q_2$, see 
the decomposition of \eqref{eq:3rdstep:mn:q}  ), we get 
\begin{equation*}
\begin{split}
&\| q(t) \|_{(0,\infty)'} \leq C e^{-\lambda t} \| q(0)\|_{(0,\infty)'} 
+ 
C \int_0^t \| r(s) \|_{(\beta,\infty)'} \frac{e^{-\lambda(t-s)}}{1 \wedge (t-s)^{\beta/2}} \ud s
\\
&\hspace{15pt} + 
C \int_0^t \bigl( \delta + a  \bigr) 
\| q(s) \|_{(0,\infty)'} 
 \frac{e^{-\lambda(t-s)}}{1 \wedge (t-s)^{1/2}} \ud s
  + 
C \int_0^t 
\sup_{\| \zeta \|_\infty \leq 1 ,\| \zeta \|_1 \leq p_n(\delta)}
\bigl\langle q(s), \zeta \bigr\rangle 
\frac{e^{-\lambda(t-s)}}{1 \wedge (t-s)^{1/2}} \ud s.
\end{split}
\end{equation*}
We now proceed as in the second step. We multiply by 
$e^{-\lambda(\tau-t)/2}/(\tau-t)^{1/2}$, for some $\tau > 0$ and then integrate 
w.r.t $t \in [0,\tau]$. 
Assuming w.l.o.g that $\delta+a$ is small enough (compared to $1/C$),
we obtain 
\begin{equation}
\begin{split}
\| q(t) \|_{(0,\infty)'} &\leq C \| q(0)\|_{(0,\infty)'} e^{-\lambda t/2}
+ 
C \int_0^t \| r(s) \|_{(\beta,\infty)'} \frac{e^{-\lambda(t-s)/2}}{1 \wedge (t-s)^{\beta/2}} \ud s
\\
&\hspace{15pt} + 
C \int_0^t 
\sup_{\| \zeta \|_\infty \leq 1, \| \zeta \|_1 \leq p_n(\delta)}
\bigl\langle q(s), \zeta \bigr\rangle 
\frac{e^{-\lambda(t-s)/2}}{1 \wedge (t-s)^{1/2}} \ud s.
\end{split}
\label{eq:q:mn:third:step}
\end{equation}
\vskip 2pt

\textit{Fourth Step.} 
The problem now is to handle the last term in 
\eqref{eq:q:mn:third:step}. This asks us to revisit the proof of 
Proposition \ref{W1 decay}. 
For
 $t >0$, we consider the solution   to  
\eqref{cauchy w} 
with $w(t,\cdot) = \zeta$ as terminal condition, where
$\| \zeta \|_\infty \leq 1$ and $\| \zeta \|_1 \leq p_n(\delta)$. Then, 
we already have the two bounds 
\eqref{W12 estimate est 1} 
and
\eqref{eq: W12 estimate est 3}, but we can improve them using the assumption on $\| \zeta\|_1$. 
Following
\eqref{eq:preliminary:bound:density}, 
for $s \in [(t-1) \vee 0,t]$,  
for any real $\varrho > 1$, 
\begin{equation*}
\begin{split}
\| w(s,\cdot) \|_\infty &\leq C 
\sup_{x \in {\mathbb T}^d}
\int_{{\mathbb T}^d} \zeta(y) g_{\delta_x}(t-s,y) 
\ud y
\leq 
C 
\| \zeta \|_{\varrho}
\sup_{x \in {\mathbb T}^d}
\| g_{\delta_x}(t-s,\cdot) \|_{\varrho/(\varrho-1)}
\leq C_{\varrho} \frac{p_n(\delta)^{1/\varrho}}{ (t-s)^{d/(2\varrho)}}.
\end{split}
\end{equation*}
and then, by treating the case $s \leq t-1$ by regarding $w(t-1,\cdot)$ as a new initial condition, we deduce from 
 \eqref{W12 estimate est 1} that 
 \begin{equation*}
 \Big\| w(s, \cdot) - \intrd w(s,y) \, \ud y \Big\|_{ \infty} \leq  C \frac{p_n(\delta)^{1/\varrho}}{1 \wedge (t-s)^{d/(2\varrho)}}  e^{-\lambda(t-s)},
 \end{equation*}
 first for $t-s \geq 1$ and, then, for $t-s > 0$. 
 Letting $\epsilon:=1/\varrho$ and repeating 
\eqref{eq:Schauder:linfinity}
and
\eqref{eq:Schauder:linfinity:000}, we obtain
\begin{equation*}
\| \nabla_x w(s,\cdot) \|_{\infty} \leq C \frac{p_n(\delta)^{\epsilon} }{ 1 \wedge (t-s)^{1/2+\epsilon d /2}} e^{-\lambda(t-s)}. 
\end{equation*}
Then, 
assuming {$1/2+  \epsilon d/2 <1$} (which is always doable by choosing $\varrho$ large enough) 
and
expanding $(\langle w(s),q(s) \rangle)_{0 \leq s \leq t}$ as
in the proof of 
Proposition
\ref{W1 decay}, 
we can insert 
the above estimate in the analysis of $T_3$ in
\eqref{eq: bound q est 2}  
  (whereas \eqref{eq: bound q est 1}
  and \eqref{eq: bound q est 3}   
  do not change). 
  We get 
  {\begin{equation*}
\begin{split}
 \big| T_{3} \big| &  \leq   C    \int_0^t  \| \nabla_x w(s, \cdot)  \|_{{\infty}}   \| q(s)  \|_{({0, \infty})'}
 \, \ud s
 \leq 
   C   p_n(\delta)^{\epsilon}  
   \int_0^t  
\frac{\| q(s) \|_{(0,\infty)'}}{ 1 \wedge (t-s)^{1/2+\epsilon d/2}} e^{-\lambda(t-s)}
\ud s. 
\end{split}
\end{equation*}
and then,}
\begin{equation*}
\begin{split}
\langle \zeta,q(t) \rangle &\leq 
 C  e^{-\lambda t}  \| q(0)\|_{(0,\infty)'}
 +
 \int_{0}^t \frac{\| r(s) \|_{(\beta,\infty)'}}{ 1 \wedge (t-s)^{\beta/2}} e^{-\lambda(t-s)} \ud s
 +
 C 
 p_n(\delta)^{\epsilon}
 \int_{0}^t 
 \frac{\| q(s) \|_{(0,\infty)'}}{ 1 \wedge (t-s)^{1/2+\epsilon d/2}} e^{-\lambda(t-s)} \ud s.
\end{split}
\end{equation*}
We complete the proof by inserting the above estimate in 
\eqref{eq:q:mn:third:step}. 
Proceeding as 
in 
\eqref{eq:mn-m*:3} and choosing $n$ large enough so that $p_n(\delta)$ becomes small enough, 
we get 
\begin{equation}
\begin{split}
\| q(t) \|_{(0,\infty)'} &\leq C \| q(0)\|_{(0,\infty)'} e^{-\lambda t/4}
+ 
C \int_0^t \| r(s) \|_{(\beta,\infty)'} \frac{e^{-\lambda(t-s)/4}}{1 \wedge (t-s)^{\beta/2}} \ud s,
\end{split}
\label{eq:q:mn:third:step:bb}
\end{equation}
which is 
\eqref{erg:hyp:3} 
when    
$q(0)$ is in $(W^{0,\infty}({\mathbb T}^d))'$.

Now, we must explain what happens when we just have a bound for 
$q(0)$ in $(W^{k,\infty}({\mathbb T}^d))'$ for some $k \in (0,2)$. 
As we already said, we can apply \eqref{eq:q:mn:third:step:bb} but for the dynamics restarted at some
small $t_0>0$. In fact, $t_0$ can be chosen in some interval $[S/2,3S/2]$, for $S$ small. 
We then 
have \eqref{eq:q:mn:third:step:bb}
but with 
$\| q(0)\|_{(0,\infty)'}$ replaced by 
$\| q(t_0)\|_{(0,\infty)'}$ and next, by averaging w.r.t. 
$t_0$, 
we have \eqref{eq:q:mn:third:step:bb}
but with  $\| q(0)\|_{(0,\infty)'}$ 
replaced by $S^{-1} \int_{S/2}^{3S/2} \| q(t_0)\|_{(0,\infty)'} \ud t_0$. It remains to see that, by applying Proposition
 \ref{W1 decay} 
to \eqref{eq:3rdstep:mn:q}, we already have a bound in finite time, which writes:
\begin{equation}
\label{eq:q:mn:third:step:bb:ddd}
\begin{split}
\| q(t_0) \|_{(0,\infty)'} &\leq \frac{C_S}{t_0^{k/2}} \|q(0)\|_{(k,\infty)'}
+
C_S \int_0^{t_0} 
\frac{ \| r(s) \|_{(\beta,\infty)'} }{1 \wedge (t_0-s)^{\beta/2}}
\ud s, \quad t_0 \in \bigl[ \frac{S}2,\frac{3S}2\bigr], 
\end{split}
\end{equation}
for a constant $C_S$ depending on the time index $S$. By integrating w.r.t $t_0$
over $[S/2,3S/2]$, we get 
\begin{equation}
\label{eq:q:mn:third:step:bb:eee}
\begin{split}
\frac1S \int_{S/2}^{3S/2} \| q(t_0) \|_{(0,\infty)'} \ud t_0 &\leq C_S \|q(0)\|_{(k,\infty)'}
+
C_S \int_0^{3S/2} 
 \| r(s) \|_{(\beta,\infty)'} 
\ud s, 
\end{split}
\end{equation}
which can be inserted in 
\eqref{eq:q:mn:third:step:bb}, 
when
$\| q(0)\|_{(0,\infty)'}$ 
therein is replaced by $S^{-1} \int_{S/2}^{3S/2} \| q(t_0)\|_{(0,\infty)'} \ud t_0$. 
This gives 
\eqref{erg:hyp:3}
for $t \geq S$. 
The result, for $t < S$, is a direct consequence of 
\hergol. 
\end{proof}

\begin{remark}
\label{rem:stability:erg}
The reader will observe that the proof of Proposition 
\ref{prop main result:2} 
{relies on a stability argument. As such, it} 
can be easily adapted to prove the following: 
if the original dynamics 
 \eqref{eq: forward eqn }
 satisfies \hergo \, at a given $\mu$ and if 
 $\tilde b$ is another drift, satisfying
 \hregb{0}{2}, such that  
 \begin{equation*}
\sup_{x \in {\mathbb T}^d} \sup_{\mu \in {\mathcal P}({\mathbb T}^d)}
 \vert (\tilde b - b)(x,\mu)\vert + 
 \sup_{x,y \in {\mathbb T}^d} \sup_{\mu \in {\mathcal P}({\mathbb T}^d)}
\bigl\vert \frac{\delta \tilde b}{\delta m}(x,\mu,y) - \frac{\delta b}{\delta m}(x,\mu,y) \bigr\vert 
  \leq \varepsilon,
\end{equation*}
for some $\varepsilon >0$,  then, for $\varepsilon$ small enough, 
 the dynamics driven by 
 $\tilde b$ satisfy \hergo \, at $\mu$ w.r.t. to constants that are independent of 
 $\varepsilon$. 
 We used this observation in the proof of Proposition \ref{thm main result}.  
 
 Also, in the sequel, we sometimes refer to 
 \eqref{eq:mn-m:rate:shorttime}
as a stability estimate in finite time. Importantly, it 
is proven by means of the sole assumption 
{\hergol}
for the drift $b^n$, which follows from 
Proposition \ref{W1 decay}. 
In particular, letting $n$ tend to $\infty$ in  \eqref{eq:mn-m:rate:shorttime}, we obtain
a bound for 
$\| m(t\, ;\mu) - m(t \, ; \nu)  \|_{(0,\infty)'}$, but it blows-up  as $t$ tends to $0$.
 {To make it clear,  
 \begin{equation*}
\bigl\| m(t\, ;\mu) - m(t \, ; \nu)  \bigr\|_{(0,\infty)'} \leq 
C_{\alpha,S}
\frac{\| \mu  - \nu \|_{(\alpha,\infty)'} }{1 \wedge t^{\alpha/2}}, \quad t \in (0,S].
\end{equation*}}
To overcome this drawback, one may use the same norm on the two sides of the inequality. In fact, by the same argument, 
we can easily show the following inequality: 
 \begin{equation}
\label{eq:mn-m:rate:shorttime:37}
\bigl\| m(t\, ;\mu) - m(t \, ; \nu) \bigr\|_{(\alpha,\infty)'} \leq 
C_{\alpha,S}
\| \mu  - \nu \|_{(\alpha,\infty)'}, \quad t \in [0,S],
\end{equation}
where $\alpha >0$. 
{It suffices to return to 
\eqref{eq:mn-m*} and to estimate the left-hand side therein by means of 
$\| \cdot \|_{(\alpha,\infty)'}$. 
Whereas, in the derivation of \eqref{eq:mn-m*}, we applied 
{\hergol}
 one
 first time
with $k=\alpha$, 
$\mu - \nu$ as initial condition and $0$ as remainder
and with $\alpha$ as parameter in 
  \eqref{erg:hyp:3}, we here replace 
  the latter choice of 
  $\alpha$ as parameter by $0$ in 
  \eqref{erg:hyp:3}. This gives 
\begin{equation*}
\begin{split}
\bigl\| m^{n}(t \, ;\mu) - m(t\, ; \nu) \bigr\|_{(\alpha,\infty)'} 
& \leq 
C_{\alpha,S}
\| \mu  - \nu \|_{(\alpha,\infty)'}  
+ C_{S} \int_0^t  
\frac{\bigl\| m^{n}(s\, ;\mu) - m(s \, ;\nu) \bigr\|_{(0,\infty)'}^2
}{1 \wedge (t-s)^{1/2}} 
\, 
\ud s
 + \frac{C_{\beta,S}}{n^{\beta}},
\end{split}
\end{equation*}
from which 
\eqref{eq:mn-m:rate:shorttime:37}
follows.}

 \end{remark}

We can now complete:
\begin{proof}[Proof of Theorem \ref{thm main result:2}, first part.]
The first part of the statement 
of Theorem \ref{thm main result:2} (the case when $\nu_\infty$ is a global attractor) 
follows from the combination of 
Propositions \ref{thm main result} and 
\ref{prop main result:2}, provided we can prove that $b^n$ satisfies 
\hergo \, at any $\mu \in {\mathcal P}({\mathbb T}^d)$. 
It suffices to notice that, by assumption, 
there exists $t_0>0$ such that, for any 
$\mu \in {\mathcal P}({\mathbb T}^d)$, 
$\| m(t_0,\mu) - \nu_\infty \|_{(0,\infty)'} \leq a_0$, 
for $a_0$ as in the statement of Proposition \ref{prop main result:2}. 
By 
Proposition \ref{prop main result:2} itself, 
the same holds with $m^n(t_0,\mu)$ instead of 
$m(t_0,\mu)$, namely 
$\| m^n(t_0,\mu) - \nu_\infty \|_{(0,\infty)'} \leq a_0$, 
for $n$ large enough. 
By Proposition \ref{prop main result:2} again, 
$(m^n(t \, ; m^n(t_0,\mu)))_{t \geq 0}$ 
and $b^n$
(with the former being also  
$((m^n(t + t_0 ,\mu))_{t \geq 0}$)
satisfies \hergo \, (for $n$ large enough). 
The proof of \hergo \ for $b^n$ at $\mu$ and on the whole $[0,+\infty)$ is then achieved as (the end of) the fourth step of the proof of Proposition 
\ref{prop main result:2},
using in particular 
\eqref{eq:q:mn:third:step:bb:ddd}
and
\eqref{eq:q:mn:third:step:bb:eee}. 
{The details are as follows. 
 For an initial condition $q_0$ and for a source term as in 
 \eqref{erg:hyp:3}, we rewrite the solution (after time $t_0$) 
 of the equation
 \begin{equation*}
    \partial_t q(t) - L^n_{m^n(t;\mu)}q(t)  -r(t)= 0, \quad \quad t \geq 0,
    \end{equation*}
as
 \begin{equation*}
    \partial_t q(t+t_0) - L^n_{m^n(t;m^n(t_0,\mu))}q(t+t_0)  -r(t+t_0)= 0, \quad \quad t \geq 0.
    \end{equation*}
    This yields
    \begin{equation*}
\begin{split}
\| q(t+t_0) \|_{(0,\infty)'} &\leq C \| q(t_0)\|_{(0,\infty)'} e^{-\lambda t}
+ 
C \int_0^t \| r(t_0+s) \|_{(\beta,\infty)'} \frac{e^{-\lambda(t-s)}}{1 \wedge (t-s)^{\beta/2}} \ud s,
\end{split}
\end{equation*}
for $t \geq 0$, or equivalently, 
\begin{equation*}
\begin{split}
\| q(t) \|_{(0,\infty)'} &\leq C \| q(t_0)\|_{(0,\infty)'} e^{-\lambda (t-t_0)}
+ 
C \int_{t_0}^t \| r(s) \|_{(\beta,\infty)'} \frac{e^{-\lambda(t-s)}}{1 \wedge (t-s)^{\beta/2}} \ud s,
\end{split}
\end{equation*}
for $t \geq t_0$. 
At this point, we can invoke 
\eqref{eq:q:mn:third:step:bb:ddd}
and
\eqref{eq:q:mn:third:step:bb:eee}
    in order to conclude.}
    
    The remaining difficulty is to estimate 
    (in the statement of  
Proposition \ref{thm main result})
\begin{equation*} 
\Bigl\vert 
\Phi\bigl(\rvlaw[X_t^n]\bigr)
-
\Phi\bigl(\rvlaw[X_t]\bigr)
\Bigr\vert.
\end{equation*} 
Since $\delta \Phi/\delta m$ is bounded, it suffices to 
give a bound for 
$d_{\rm TV}(\rvlaw[X_t^n],\rvlaw[X_t])$, but this a consequence of 
Proposition \ref{prop main result:2}  again: when $t \leq t_0$, we get 
$C n^{-\beta}$ 
as a direct consequence
of 
Remark 
\ref{rem:stability:erg}, 
and this 
without using 
\hergo \,  (for sure, $C$ depends on $t_0$); when 
$t>t_0$, we get the same bound, but now using 
\hergo \, and restarting the two dynamics (with $b$ and $b^n$ as respective drifts) 
from
two initial conditions in 
 the neighbourhood of $\nu_\infty$, which framework fits 
exactly 
\eqref{eq:mn-m*:4}.
Here, $\beta$ is a fixed parameter in $(0,1)$: 
when 
$b$ satisfies \hregb{\alpha}{2}, 
for some $\alpha \in (0,1)$, we choose $\beta  = 1- \alpha$
and  $n = N^{1/(1-\alpha)}$ in 
Proposition \ref{thm main result};
when $b$ just satisfies {\hregb{0}{2}}, 
we choose $\beta= 1-\epsilon$ and $n=N$.
\end{proof}
By combining 
the last two statements with 
Proposition 
\ref{prop:4:phi:norm:-d}, we get the following result: 
\begin{corollary}
\label{cor:4:phi:norm:-d}
Assume
that $b$ satisfies 
\emph{\hregb{\eta}{2}}
for some $\eta \in [0,1)$. 
Assume that there exists a measure $\nu_\infty$ satisfying 
\emph{\hergo}
that attracts the solutions
of 
 \eqref{eq: forward eqn }, uniformly w.r.t. the initial point. Then, there exists $\lambda>0$ and, for any $\alpha \in (0,1)$, there exists also a collection of constants $(C_\delta)_{\delta \in [0,1)}$ such that, for any $N \geq 1$, 
 \begin{equation*}
 \forall t \geq 0, \quad
{\mathbb E}
\Bigl[ \bigl\| \mu^N_{t} - \nu_{\infty} \bigr\|_{-(d+ \alpha)/2,2}^2
\Bigr] \leq 
 \left\{ 
 \begin{array}{ll}
C_0 \Bigl( N^{-1} + e^{-\lambda t} \Bigr) \quad &\textrm{\rm if} \quad \eta>0,
 \\
C_\delta \Bigl( N^{-1+\delta} + e^{-\lambda t}\Bigr) \quad &\textrm{\rm if} \quad \eta=0, \quad \textrm{\rm for any} \  \delta >0.
\end{array}
\right. 
\end{equation*}
\end{corollary}
\begin{proof}
It suffices to choose $\Phi$ as in Proposition 
\ref{prop:4:phi:norm:-d} and to observe from Proposition
\ref{prop main result:2} that $(m(t \, ; \mu))_{t \geq 0}$ converges exponentially fast to $\nu_\infty$.  
\end{proof}
\begin{subsection}{Metastability}
\label{subse:metastable}

As in the second part of the statement of 
Theorem \ref{thm main result:2}, 
we now consider an invariant measure 
$\nu_\infty$ to 
 \eqref{eq: forward eqn }
that 
satisfies {\hergo}
{but that may not be a global attractor.
So is the case throughout the subsection}. 

We recall now the Markov property satisfied by the empirical distribution
$(\mu^N_t)_{t \geq 0}$, which takes values in ${\mathcal P}_{N}(\bT^d)$, the collection of probability measures  $\mu$ 
that are uniformly distributed 
on some finite state $\{x_{1},\cdots,x_{N}\}$, with ${\boldsymbol x}=(x_{1},\cdots,x_{N})\in ({\mathbb T}^d)^N$.
The Markov property of 
$(\mu^N_t)_{t \geq 0}$
 follows from the exchangeable structure of the particle system
\eqref{eq particles}: 
for any 
permutation $\sigma$ on $\{1,\cdots,N\}$,
$(Y_t^{1,N},\cdots,Y_{t}^{N,N})_{t \geq 0}$ starting from $(x_{1},\cdots,x_{N})$
has the same law as 
$(Y_t^{1,N},\cdots,Y_{t}^{N,N})_{t \geq 0}$ starting from $(x_{\sigma(1)},\cdots,x_{\sigma(N)})$. Accordingly, 
for any (bounded measurable) test functional $\Phi : {\mathcal P}({\mathbb T}^d) \rightarrow {\mathbb R}$, 
the conditional expectation 
${\mathbb E} [ \Phi( {\mu}_{t}^N ) \vert {\mathcal F}_{s} ]$
must be a symmetric function of $(Y^{1,N}_{s},\cdots,Y^{N,N}_{s})$
and thus a function of $\mu^N_s$. 
{Details are as follows. Letting 
$u^N(t,{\boldsymbol x}) = 
{\mathbb E} [ \Phi( {\mu}_{t}^N ) \vert 
(Y_0^{1,N},\cdots,Y_0^{N,N})={\boldsymbol x}]$, 
we 
deduce from the exchangeable structure that 
$u^N(t,\cdot)$ is symmetric. Also, since the dynamics 
\eqref{eq particles} are time-homogeneous, 
we have 
${\mathbb E} [ \Phi( {\mu}_{t}^N ) \vert {\mathcal F}_{s} ]
= u^N(t-s,Y_s^{1,N},\cdots,Y_s^{N,N})$. And then, 
${\mathbb E} [ \Phi( {\mu}_{t}^N ) \vert {\mathcal F}_{s} ]$ is indeed a symmetric function of 
 $(Y^{1,N}_{s},\cdots,Y^{N,N}_{s})$. The same argument holds when $s$ is replaced by a stopping time.}

Below, we often distinguish between two types of initial condition for 
$(\mu^N_t)_{t \geq 0}$. Some properties are stated under the initial condition
$\mu^N_0=\mu^N_{\boldsymbol x}:=N^{-1} \sum_{i=1}^N \delta_{x_i}$ for some tuple 
${\boldsymbol x}=(x_1,\cdots,x_N)\in ({\mathbb T}^d)^N$, i.e. ${\boldsymbol Y}^N_0:=(Y^{1,N}_0,\cdots,Y_0^{N,N})=
{\boldsymbol x}$, in which case we write the corresponding 
probabilities (resp. expectations) in the form ${\mathbb P}(\cdot \, \vert \, {\boldsymbol Y}^N_0 = {\boldsymbol x})$
(resp. ${\mathbb E}[ \cdot \, \vert \, {\boldsymbol Y}_0^N={\boldsymbol x} ]$). Alternatively, 
we may work with  a random initial condition 
${\boldsymbol Y}^N_0 \sim \nu^{\otimes N}$, for some $\nu \in {\mathcal P}({\mathbb T}^d)$, in which case we write 
probabilities in the form ${\mathbb P}(\cdot \, \vert \, {\boldsymbol Y}^N_0 \sim \nu^{\otimes N})$,
and similarly for the expectations. 
With these notations, 
the expectation in 
 \eqref{eq:weak:error}
 (main display in the statement of Proposition 
\ref{thm main result}) should be rewritten 
$ \bE[ \Phi(\mu^{N}_t) \vert {\boldsymbol Y}_0^N \sim \muinit^{\otimes N}
]$. Part of the 
analysis below relies on the fact that 
the arguments in Proposition 
\ref{thm main result} also hold true under 
the initial condition
${\boldsymbol Y}^N_0 = {\boldsymbol x}$. 
We clarify this in the text. 
Here is a primer. 
{The first step in the adaptation of 
Proposition 
\ref{thm main result}
to the case 
${\boldsymbol Y}^N_0 = {\boldsymbol x}$
is to come back to \eqref{eq:errordecomp}. 
We throw away the first term in the right-hand side of the identity and
we just retain the following simpler version of  
\eqref{eq:errordecomp}:
\begin{equation*}
\begin{split}
\Phi(\nlaw[t])  - \cU(t,\nlaw[0]) &=   \cU(0,\nlaw[t]) - \cU(t,\nlaw[0]) .
\end{split} 
\end{equation*}
Under the probability 
${\mathbb P}(\cdot \, \vert \, {\boldsymbol Y}^N_0 =  {\boldsymbol x})$, 
$\nlaw[0]$ is obviously equal to $\mu_{\boldsymbol x}^N$ and the left-hand side is equal to 
$\Phi(\nlaw[t])  - \cU(t,\mu_{\boldsymbol x}^N)$. Then, 
the 
display 
\eqref{eq:second:term:main result intro formula:2}  
becomes useless and one may just focus on 
\eqref{eq:second:term:main result intro formula}.
The objective is then to give an upper bound 
for $\vert 
{\mathbb E}[\Phi(\nlaw[t]) \vert Y_0^N] = {\boldsymbol x} ] - \cU^n(t,\mu_{\boldsymbol x}^N)
\vert$ by adapting the proof of 
Proposition \ref{thm main result}
(we here use $\cU^n$ and not $\cU$ to be consistent with the statement of 
Proposition 
\ref{thm main result}, in which $\Phi$ is computed at ${\mathcal L}(X_t^n)$). In the proof, 
\eqref{eq:term1:lemma2.2} is clearly useless, whereas 
\eqref{eq:term2:lemma2.2} remains necessary. 
As for the term 
$T_{\textrm{\rm add}}(t)$
in 
\eqref{eq:Tadd}, the analysis can be carried out in a similar way, except for one main thing:
in the third step of the proof of 
Proposition \ref{thm main result}, one can no longer invoke the exchangeability property of the particle system. 
Instead, one must estimate 
the summand in
\eqref{eq:pre:exchangeability}
for each pair $(i,j)$ of indices with $i \not = j$. Proceeding as in the fourth step of the proof, one gets
the two bounds
\eqref{eq:term4:lemma2.2}
and
\eqref{eq:term5:lemma2.2}
for any $i \not =j$. The end of the proof is similar. We retrieve the same bound 
as in 
Proposition \ref{thm main result}
but for 
$\vert 
{\mathbb E}[\Phi(\nlaw[t]) \vert Y_0^N] = {\boldsymbol x} ] - \cU^n(t,\mu_{\boldsymbol x}^N)
\vert$.}
\vskip 4pt

\begin{lemma}
\label{lem:metastab}
For any integer $p \in {\mathbb N}$, any 
$\alpha,\epsilon \in (0,1)$ and any 
$t \geq 0$, there exists a constant $C$ such that, 
for any integer $N \geq 1$, any 
probability measure $\muinit \in {\mathcal P}({\mathbb T}^d)$
and any tuple ${\boldsymbol x} \in ({\mathbb T}^d)^N$, 
\begin{equation*}
{\mathbb E} \Bigl[ \| \mu_t^N - m(t \, ; \muinit) \|_{-(d+\alpha)/2,2}^{p} 
\, \vert \, 
{\boldsymbol Y}_0^N \sim \muinit^{\otimes N}
\Bigr] +
{\mathbb E} \Bigl[ \| \mu_t^N - m(t \, ; \mu^N_{\boldsymbol x}) \|_{-(d+\alpha)/2,2}^{p} 
\, \vert \, 
{\boldsymbol Y}_0^N = {\boldsymbol x}
\Bigr]
\leq
C  N^{-p/2+\epsilon}.
\end{equation*}
\end{lemma}

\begin{proof}
We first introduce the following mollification of the norm 
$\| \cdot \|_{-(d+\alpha)/2,2}$, letting, for some $k >0$, 
\begin{equation}
\label{eq:def:aleph:k} 
\aleph_k(q) = \sum_{\vert {\boldsymbol n} \vert \leq k} \frac1{(1+ \vert {\boldsymbol n} \vert^2)^s}  \vert q^{\boldsymbol n}\vert^2,
\quad q \in W^{-(d+\alpha)/2,2}({\mathbb T}^d), 
\end{equation} 
which definition is very similar to 
\eqref{eq:Fourier:Phi}. 
The role of this mollification is very similar to the 
role of the mollification used in 
the last step of the proof of Proposition 
\ref{thm main result}.
 The key point is that all the estimates below are independent of $k$, which allows us to send 
$k$ to $\infty$ in the end. 
Intuitively, this comes from the fact that 
the function 
${\mathcal P}({\mathbb T}^d) \ni 
\mu \mapsto 
\aleph_k (\mu - \nu_0)$, for some $\nu_0 \in {\mathcal P}({\mathbb T}^d)$,
satisfies the conclusion of Proposition
\ref{prop:4:phi:norm:-d} uniformly in $k$ (taking into account the fact that the sum is truncated in the definition of $\aleph$).
By the way,
it is easy to notice that  
$\aleph_k (q) \rightarrow \| q \|_{-(d+\alpha)/2,2}^2$ as $k \rightarrow \infty$. 
In the rest of the proof (except when indicated), $k$ is fixed. For this reason, we omit it in the notation 
$\aleph_k$ and we just write $\aleph$.  

With this convention, and for  given $t_0>0$, $p \in {\mathbb N}$ and $\nu \in {\mathcal P}({\mathbb T}^d)$, we let 
\begin{equation}
\label{eq:def:phipnmu}
\Phi_p^n(\mu) := \Bigl[ \aleph \Bigl( \mu - m^n(t_0 \ ; \nu) \Bigr) \Bigr]^{p},
\end{equation}
with
 $(m^n(t \, ; \nu))_{t \geq 0}$
solving 
 \eqref{eq: forward eqn }
 when the latter is driven by $\tilde b^n$, for 
$\tilde b^n$ as in the proof of Proposition 
\ref{thm main result}, see in particular \eqref{eq:b-tildeb:n}. 
The reason why we force 
 $(m^n(t \, ; \mu))_{t \geq 0}$ to be driven by 
 $\tilde b^n$ and not by $b^n$ 
 (which is what is done in the proof of Proposition 
 \ref{thm main result})
 is explained in the second step below. Moreover, 
the precise choice of $\nu$ is specified next. 
Following \eqref{eq: defofflow}, 
we call ${\mathcal U}_p^n(t,\mu) := \Phi_p^n(m^n(t \, ; \mu))$.
We first establish the bound for the second term in the main inequality of the statement, 
but with $\| \cdot \|_{-(d+\alpha)/2,2}$ replaced by $\aleph$. 
As the resulting bound is shown to be independent of $k$, this also implies 
the expected bound for the second term in the statement. 
\vskip 4pt

\textit{First Step.} 
We start with $p=1$. 
Then, 
for another integer $q \in {\mathbb N}$
(with $(1-\epsilon) q>1$) 
and 
$n=N^q$ 
in Proposition 
\ref{thm main result} (so that $K^{N^q}_\epsilon={\mathcal O}(N^{\epsilon q})$), the same proof as therein 
(but just using 
\eqref{eq:second:term:main result intro formula} without invoking 
\eqref{eq:second:term:main result intro formula:2} and using \hergol \ instead of \hergo) yields   
\begin{equation*}
\begin{split}
\Bigl\vert 
{\mathbb E} \Bigl[ \bigl\| \mu^N_{t_0} - m^{N^q}(t_0 \, ; \nu)  \bigr\|^{2}_{-(d+\alpha)/2,2}
- 
{\mathcal U}_1^{N^q}( {t_0}, \mu_0^N)  
\Bigr] 
\Bigr\vert 
&\leq \sup_{0 \leq t \leq t_0} 
\Bigl\vert 
{\mathbb E} \Bigl[
{\mathcal U}_1^{N^q}(t , \mu_t^N)
- 
{\mathcal U}_1^{N^q}(t_0 , \mu_0^N)  
\Bigr] 
\Bigr\vert 
\leq \frac{C_{\epsilon,q,t_0}}{N^{1- {(q+1)}\epsilon}}.
\end{split}
\end{equation*}
(The constant right above also depends on $\alpha$, but we feel useless to indicate the dependence on $\alpha$, which is fixed.)
Replacing $\epsilon$ by $\epsilon/(q+1)$, choosing $\nu=\mu_{\boldsymbol x}^N$ for some 
${\boldsymbol x} \in ({\mathbb T}^d)^N$, working under 
the initial condition $\mu^N_0=\mu^N_{\boldsymbol x}$ (so that 
${\mathcal U}_1^{N^q}(t_0 , \mu_0^N)=0$)
and using in addition 
Remark
\ref{rem:stability:erg}
to compare $m^{N^q}(t_0 \, ; \nu)$ and 
$m(t_0 \, ; \nu)$, 
we get the second inequality in the statement with $p=1$. 
\vskip 4pt

\textit{Second Step.}
We now work with an integer $p \geq 2$.
We repeat the proof of 
Proposition 
\ref{thm main result}, with $n=N^q$ in 
\eqref{eq:term4:lemma2.2}
and
\eqref{eq:term5:lemma2.2}, except that 
we do not upper bound the left-hand side in 
\eqref{eq:term2:lemma2.2}.
 We obtain
 \begin{align}
 \label{eq:metastab:main:inequality}
&\sup_{0 \leq t \leq t_0} 
  \Bigl\vert 
  {\mathbb E} \bigl[
 {\mathcal U}_p^{N^q}(t_0-t,\mu^N_t) 
  - {\mathcal U}_p^{N^q}(t_0,\mu^N_0)
  \bigr]
   \Bigr\vert
\leq \frac{C_{\epsilon,p,q,t_0}}{N^{{q(1-\epsilon)}}} 
\\
&\hspace{15pt} +  
 \frac{1}{N}
 \sum_{i=1}^d \int_0^{t_0} \biggl\vert \bE \biggl[ 
  \intrd \bigg( \partial_{(y_{2})_i} \partial_{(y_{1})_i}  \frac{ \delta^2 \cU_p^{N^q}}{\delta m^2} (t_0-s, \mu^{N}_s,z,z) \bigg)   \, \mu^{N}_s(\ud z) \bigg] \biggr\vert \, \ud s
  +
    {\mathcal O}\bigl( \varepsilon \bigr). 
 \nonumber
 \end{align}
In the right-hand side, the last term on the first line 
corresponds to 
\eqref{eq:term4:lemma2.2}
and
\eqref{eq:term5:lemma2.2}. The last term on the second line corresponds to  
 the remainder term in 
\eqref{eq:pre:exchangeability}, 
 with $\varepsilon$ as in 
 \eqref{eq:b-tildeb:n}.
 As for the first term on the second line, it corresponds to 
\eqref{eq:second:term:main result intro formula}. 

Now, we 
 use 
Proposition
\ref{diff second order L-deriv}
 to represent 
 $[\partial_{(y_2)_i} \partial_{(y_1)_i} \delta^2 \cU_p^{N^q}/\delta m^2] (t_0-s, \mu,\cdot,\cdot)$
in the right-hand side. 
As made clear in the latter statement, these derivatives can be represented in terms of 
$[\delta^2 \Phi_p^{{N^q}}/\delta m^2](m^{N^q}(t_0-s \, ; \mu),\cdot,\cdot)$
and 
$[\delta \Phi_p^{{N^q}}/\delta m](m^{N^q}(t_0-s \, ; \mu),\cdot)$ 
and  
$d^{(1)}_i$ and $d^{(2)}_{i,j}$ 
in Propositions 
\ref{diff thm 1}
and
\ref{diff thm 2}. 
Then, 
by 
Proposition
\ref{prop:4:phi:norm:-d}, we
observe that, for any 
$\mu \in  {\mathcal P}({\mathbb T}^d)$, 
\begin{equation}
\label{eq:bound:deltaphi_p}
\begin{split}
&
\Bigl\|
\frac{\delta  \Phi^{{N^q}}_p}{\delta m}(\mu,\cdot)
\Bigr\|_{\alpha/4,\infty}
\leq
C_{p}
\Phi_{p-1}^{N^q} \bigl(\mu \bigr)
\sqrt{\Phi^{N^q} \bigl(\mu \bigr)}, 
\quad 
\Bigl\|
\frac{\delta^2  \Phi^{{N^q}}_p}{\delta m^2}(\mu,\cdot,\cdot)
\Bigr\|_{\alpha/4,\infty} \leq 
C_{p}
\Phi_{p-1}^{N^q} \bigl(\mu \bigr).
\end{split}
\end{equation}
Moreover, 
$d^{(1)}_i$ and $d^{(2)}_{i,j}$ 
in Propositions 
\ref{diff thm 1}
and
\ref{diff thm 2}
can be estimated by means of {\hergol}. 
Similar to 
\eqref{eq:bound:partialz1:partialz2:U:theta:**}, we get, for $t \in [0,t_0]$, 
 \begin{equation}
\label{eq:bound:partialz1:partialz2:U:theta:**:aleph}
\sup_{\mu \in \cP(\bT^d)} \sup_{y_1,y_2 \in \bT^d} \bigg|   (\partial_{y_1})_i
(\partial_{y_1})_j \sld[\cU^{N^q}](t,\mu,y_1, y_2) \bigg| \leq    \frac{C_{\epsilon,p,t_0}  {(1+ K_\epsilon^{N^q})}}{1 \wedge t^{1-\alpha/4}} 
\Phi_{p-1}^{N^q} \bigl( m^{N^q}(t \, ; \mu) \bigr).
\end{equation}
Then,
 choosing as before $\nu=\mu_{\boldsymbol x}^N$
 in the definition of $\aleph$ and noticing 
 from the fact that 
 $(m^{N^q}(t \, ; \cdot))_{t \geq 0}$ 
 is
 driven by $\tilde{b}^{N^q}$ 
 that 
$\Phi^{{N^q}}_{p-1}( m^{N^q}(t_0-s \, ; \mu) ) 
= {\mathcal U}_{p-1}^{N^q}(t_0-s,\mu)$, 
for 
$0 \leq s \leq t_0$, 
 \eqref{eq:metastab:main:inequality}
 yields
 \begin{equation}
 \label{eq:bound:deltaphi_p:2} 
 \begin{split}
  &\sup_{0 \leq t \leq t_0} 
  {\mathbb E} \Bigl[
 {\mathcal U}_p^{N^q}(t_0-t,\mu^N_t) 
 \, \vert \, 
  {\boldsymbol Y}_0^N = {\boldsymbol x}
  \Bigr]
  \\
&\hspace{15pt} \leq \frac{C_{\epsilon,p,t_0}}{N^{{q(1-\epsilon)}}} 
  +  
 \frac{C_{\epsilon,p,t_0}}{N}
  \bigl( 1 + K_{\epsilon}^{N^q} \bigr)
 \int_0^{t_0} 
\frac{  {\mathbb E} [
 {\mathcal U}_{p-1}^{N^q}(t_0-s,\mu^N_s) 
 \, \vert \, 
{\boldsymbol Y}_0^N = {\boldsymbol x}
  ]}{1 \wedge (t_0-s)^{1-\alpha/4}}
 \, \ud s   +
    {\mathcal O}\bigl( \varepsilon \bigr).
 \end{split}
 \end{equation}
We then proceed by induction  {on $p$},
assuming that $ {\mathbb E} [
 {\mathcal U}_{p-1}^{N^q}(t_0-t,\mu^N_t) 
 \, \vert \, 
{\boldsymbol Y}_0^N = {\boldsymbol x}
  ] \leq C_{\epsilon,p, t_0} N^{-p+1+\epsilon} + 
   {\mathcal O}( \varepsilon )$, 
   for any $q>p$ (here, the shape of ${\mathcal O}( \varepsilon )$
  does not really matter because $\varepsilon$ is sent to to $0$ in the end, but a careful inspection would show that it is of the form 
   $\vert {\mathcal O}( \varepsilon )
    \vert \leq 
   C_{\epsilon,p, t_0} \varepsilon$). 
  We get,  {for $q>p$}, 
 \begin{equation*}
 \begin{split}
  &\sup_{0 \leq t \leq t_0} 
  {\mathbb E} \Bigl[
 {\mathcal U}_p^{N^q}(t_0-t,\mu^N_t) 
 \, \vert \, 
{\boldsymbol Y}_0^N = {\boldsymbol x}
  \Bigr]
 \leq \frac{C_{\epsilon,p,t_0}}{N^{ {q(1-\epsilon)}}} 
  +  
 \frac{C_{\epsilon,p,t_0}}{N^{p-\epsilon}}
  \bigl( 1 + K_{\epsilon}^{N^q} \bigr)   +  
C_{\alpha,\epsilon,p,t_0}  \varepsilon. 
 \end{split}
 \end{equation*}
 Recalling that $q>p$ and that 
 $\vert  K_{\epsilon}^{N^q} \vert = \vert {\mathcal O}(N^{\epsilon q}) \vert \leq C_{\epsilon} N^{q \epsilon}$
and changing the value of $\epsilon$ in terms of $q$, 
 we prove the induction hypothesis at rank $p$. Choosing 
 $t=t_0$, 
we get 
\begin{equation*}
{\mathbb E} \Bigl[ 
\Bigl[ \aleph
\Bigl( 
 \mu_{t_0}^N - m^{N^q}(t_0 \, ; \mu^N_{\boldsymbol x}) \Bigr)\Bigr]^{p} 
\, \vert \, 
{\boldsymbol Y}_0^N = {\boldsymbol x}
\Bigr]
\leq
C_{\epsilon,p,t_0} N^{-p+\epsilon} + {\mathcal O}(\varepsilon).
\end{equation*}
  Letting $\varepsilon$ tend to $0$, then $k$ to $\infty$ and finally $q$ to $\infty$, we complete the proof
 of the bound of the second term in the main inequality of the statement.  
 \vskip 4pt
 
 \textit{Third Step.}
 We now complete the proof of the bound of the first term in the main inequality of the statement. 
 We rewrite the conclusion of the second step in the form 
 \begin{equation*}
 {\mathbb E} \Bigl[ \bigl\| \mu_{t_0}^N - m^{{N^q}}(t_0,\mu_0^N) \bigr\|^{2p}_{-(d+\alpha)/2,2} 
 \Bigr] \leq  \frac{C_{\epsilon,p,t_0}}{N^{p-\epsilon}} + {\mathcal O}(\varepsilon),
 \end{equation*}
{which follows by conditioning on the value of $Y_0^N$.}
 It thus remains to bound 
 \begin{equation}
 \label{eq:metastab:2}
 {\mathbb E} \Bigl[ \bigl\| m^{{{N^q}}}(t_0,\mu_0^N) - m^{{{N^q}}}(t_0,\muinit)  \bigr\|^{2p}_{-(d+\alpha)/2,2} 
 \, \vert \, {\boldsymbol Y}_0^N \sim \muinit^{\otimes N}
 \Bigr].
 \end{equation}
With the same notation as in the previous step, the strategy is to bound 
 \begin{equation}
 \label{eq:metastab:2:aleph}
 {\mathbb E} \Bigl[ \Bigl[ \aleph_k \Bigl( m^{{{N^q}}}(t_0,\mu_0^N) - m^{{{N^q}}}(t_0,\muinit)  \Bigr)
 \Bigr]^p
  \, \vert \, {\boldsymbol Y}_0^N \sim \muinit^{\otimes N}
 \Bigr],
 \end{equation}
independently of the parameter $k$. We rewrite the above quantity as 
 ${\mathbb E}[{\mathcal U}_p^{N^q}(t_0,\mu_0^N) \, \vert \, {\boldsymbol Y}_0^N \sim \muinit^{\otimes N}]$ when $\nu=\muinit$ in the definition of $\Phi_p$
 in \eqref{eq:def:phipnmu}. 
When $p=1$, it suffices to use  
\eqref{eq:second:term:main result intro formula:2}, which directly says that 
\eqref{eq:metastab:2:aleph}
 is bounded by 
$C_{t_0}/N$. 
When $p \geq 2$,   
we need a bound for  
 the second-order 
 derivatives $[\delta^2 {\mathcal U}^{N^q}_p/\delta m^2](t_0,\cdot)$ (as the latter appears
 in the formula 
 \eqref{eq:second:term:main result intro formula:2}). Proceeding as in the second step
 {(the framework  being  simpler since there is no derivatives in $(y_1)_i$ and 
 $(y_2)_i$)}, 
 we have the following analogue of 
 \eqref{eq:bound:partialz1:partialz2:U:theta:**:aleph}:
 \begin{equation}
\label{eq:bound:partialz1:partialz2:U:theta:**:aleph:2}
 \Bigl\|
\frac{ \delta^2 {\mathcal U}^{N^q}_p}{\delta m^2}(t_0,\mu,\cdot,\cdot) \Bigr\|_{\infty}
\leq C_{p} {\mathcal U}^{N^q}_{p-1}(t_0,\mu), \quad \mu \in {\mathcal P}({\mathbb T}^d). 
\end{equation} 
Back to the formula
\eqref{eq:second:term:main result intro formula:2}, we thus must compute 
${\mathcal U}^{N^q}_{p-1}(t_0,\tilde{\mu}^{N}_{s,s_1})$.
By the definition
\eqref{eq:def:aleph:k} of $\aleph$, we obtain
(with the same notation as in the formula) 
\begin{align}
\nonumber
&{\mathcal U}^{N^q}_{p-1}\bigl(t_0,\tilde{\mu}^{N}_{s,s_1}\bigr) 
 = \Phi^{ {N^q}}_{p-1} \bigl( m^{N^q}( t_0 ,
\tilde{\mu}^{N}_{s,s_1}) \bigr)
\\
&= \Bigl[ \aleph \Bigl( 
m^{N^q}( t_0 ,
\tilde{\mu}^{N}_{s,s_1})
- 
m^{N^q}( t_0 ,\muinit)
\Bigr) \Bigr]^{p-1} 
\label{tilde:mu:s:s1:N}
\\
&\leq C_p 
\Bigl( 
\Bigl[
\aleph
\Bigl(
m^{N^q}( t_0 ,
\tilde{\mu}^{N}_{s,s_1})
- 
m^{N^q}( t_0 , s \mu_0^N + (1-s) \muinit
) 
\Bigr) 
\Bigr]^{p-1} 
 +
\Phi^{ {N^q}}_{p-1} \bigl( m^{N^q}( t_0 ,
s \mu_0^N + (1-s) \muinit \bigr)
\Bigr) \nonumber
\\
&\leq  C_p 
\Bigl( 
\bigl\|
m^{N^q}( t_0 ,
\tilde{\mu}^{N}_{s,s_1})
- 
m^{N^q}( t_0 , s \mu_0^N + (1-s) \muinit
) 
\bigr\|^{2(p-1)}_{-(d+\alpha)/2,2} 
 +
 {\mathcal U}^{N^q}_{p-1}\bigl(t_0,
s \mu_0^N + (1-s) \muinit 
 \bigr) 
\Bigr). \nonumber
\end{align}
By {\hergol} (which holds true for $\tilde b^{N^q}$ w.r.t. constants that are independent of $N$
 and which enforces a form of stability, see \eqref{eq:mn-m:rate:shorttime}) {and by Sobolev embedding (which says that 
$\| \cdot \|_{-(d+\alpha)/2,2}\leq \| \cdot \|_{(0,\infty)'}$)}, 
\begin{equation}
\label{eq:3rdstep:local}
\begin{split}
{\mathcal U}^{N^q}_{p-1}\bigl(t_0,\tilde{\mu}^{N}_{s,s_1}\bigr) 
&\leq  C_{p,t_0} 
\Bigl( 
\bigl\| 
\tilde{\mu}^{N}_{s,s_1}
- 
\bigl( s \mu_0^N + (1-s) \muinit
\bigr) \bigr\|_{(0,\infty)'}^{2(p-1)} 
 +
 {\mathcal U}^{N^q}_{p-1}\bigl(t_0,
s \mu_0^N + (1-s) \muinit 
 \bigr) 
\Bigr)
\\
&\leq  C_{p,t_0} 
\Bigl( 
\frac1{N^{2(p-1)}}
 +
 {\mathcal U}^{N^q}_{p-1}\bigl(t_0,
s \mu_0^N + (1-s) \muinit
 \bigr) 
\Bigr).
\end{split}
\end{equation}
Therefore, by  formulas 
 \eqref{eq:second:term:main result intro formula:2} and 
 \eqref{eq:bound:partialz1:partialz2:U:theta:**:aleph:2},
\begin{equation}
\label{eq:lemn:2.17:cl}
\begin{split}
&{\mathbb E} \Bigl[ {\mathcal U}^{N^q}_{p}\bigl(t_0,\mu_0^N\bigr)
 \, \vert \, {\boldsymbol Y}_0^N \sim \muinit^{\otimes N}
\Bigr] 
\\
&\hspace{15pt} \leq  C_{p,t_0} 
\Bigl( 
\frac1{N^{2p-1}}
 +
\frac1N \sup_{0 \leq s \leq 1} {\mathbb E} \Bigl[ {\mathcal U}^{N^q}_{p-1}\bigl(t_0,
s \mu_0^N + (1-s) \muinit 
 \bigr)
  \, \vert \, {\boldsymbol Y}_0^N \sim \muinit^{\otimes N}
  \Bigr] \Bigr).
  \end{split}
\end{equation}
In fact, we can easily replace in the analysis 
$\mu_0^N$ by any $\lambda \mu_0^N + (1-\lambda) \muinit$. This amounts to repeat the computation with 
$\cU^{ {N^q}}_p(t_0,\lambda \mu + (1-\lambda) \muinit)$
in place of
 $\cU^{ {N^q}}_p(t_0,\mu)$ in formula 
  \eqref{eq:second:term:main result intro formula:2}, which is licit thanks to 
  \cite[Thm. 2.14]{chassagneux2019weak}. 
  {Then, the bound 
 \eqref{eq:bound:partialz1:partialz2:U:theta:**:aleph:2} 
 with $\mu$ replaced by 
 $\lambda \mu + (1-\lambda) \muinit$ remains true. 
 Denoting by 
$\tilde{\mu}^{N}_{s,s_1}[\lambda]$ the 
corresponding version of 
$\tilde{\mu}^{N}_{s,s_1}$
  in 
  \eqref{eq:second:term:main result intro formula:2}, i.e., 
  $$ \tilde{\mu}^{N}_{s,s_1}[\lambda]
  := \frac{ss_1}{N}(\delta_{\tilde{\eta}} - \delta_{{\eta_{1}}}) + {\mu_{\text{init}}} + s \lambda (\mu^N_0 - {\mu_{\text{init}}}), \quad \quad s, s_1 \in [0,1],$$ 
  we can repeat 
  \eqref{tilde:mu:s:s1:N}, which yields
  \begin{align}
\nonumber
{\mathcal U}^{N^q}_{p-1}\bigl(t_0,\tilde{\mu}^{N}_{s,s_1}[\lambda]\bigr) 
&\leq  C_p 
\Bigl( 
\bigl\|
m^{N^q}\bigl( t_0 ,
\tilde{\mu}^{N}_{s,s_1}[\lambda]\bigr)
- 
m^{N^q}( t_0 , s\lambda \mu_0^N + (1-s \lambda) \muinit
) 
\bigr\|^{2(p-1)}_{-(d+\alpha)/2,2} 
\\
&\hspace{15pt} +
 {\mathcal U}^{N^q}_{p-1}\bigl(t_0,
s\lambda \mu_0^N + (1-s \lambda) \muinit 
 \bigr) 
\Bigr). \nonumber
\end{align}
And then,
\eqref{eq:3rdstep:local}
becomes
\begin{equation*}
\begin{split}
{\mathcal U}^{N^q}_{p-1}\bigl(t_0,\tilde{\mu}^{N}_{s,s_1}[\lambda]\bigr) 
&\leq C_{p,t_0} 
\Bigl( 
\frac1{N^{2(p-1)}}
 +
 {\mathcal U}^{N^q}_{p-1}\bigl(t_0,
s \lambda \mu_0^N + (1-s \lambda) \muinit
 \bigr) 
\Bigr).
\end{split}
\end{equation*}}In the end, we can replace $\mu_0^N$ in the left-hand side of 
\eqref{eq:lemn:2.17:cl}
  by $\lambda \mu_0^N + (1-\lambda) \muinit$. 
Taking the supremum over $\lambda$, we obtain 
\begin{equation*}
\begin{split}
&
\sup_{0 \leq \lambda \leq 1}
{\mathbb E} \Bigl[ {\mathcal U}^{N^q}_{p}\bigl(t_0, \lambda \mu_0^N + (1-\lambda) \muinit \bigr)
 \, \vert \, {\boldsymbol Y}_0^N \sim \muinit^{\otimes N}
\Bigr] 
\\
&\hspace{15pt} \leq  C_{p,t_0} 
\Bigl( 
\frac1{N^{2(p-1)}}
 +
\frac1N \sup_{0 \leq s \leq 1} {\mathbb E} \Bigl[ {\mathcal U}^{N^q}_{p-1}\bigl(t_0,
s \mu_0^N + (1-s) \muinit 
 \bigr)
  \, \vert \, {\boldsymbol Y}_0^N \sim \muinit^{\otimes N}
  \Bigr] \Bigr).
  \end{split}
\end{equation*}
We deduce by induction (on $p$) that 
the left-hand side is less than $C_{p,t_0} N^{-p}$. 
 \end{proof}

We now provide  metastability properties of an invariant measure 
$\nu_\infty$ 
satisfying \hergo. Before we state the results, we claim that, from 
Proposition 
\ref{prop main result:2}, for any $\alpha \in (0,1)$, there exists $a>0$ such that, for 
\begin{equation}
\label{eq:basin:a}
\lim_{t \rightarrow \infty}
\sup_{\| \mu - \nu_\infty\|_{-(d+\alpha)/2,2} \leq a} 
\bigl\| m(t \, ; \mu) - \nu_\infty \bigr\|_{-(d+\alpha)/2,2} =0.
\end{equation} 
The proof relies on the following observation. Fix $\delta >0$. Then, for any $\varepsilon >0$,  
we can find {$\Gamma_\varepsilon >0$} such that, 
for any $\varphi$ in the unit ball of $W^{(\delta,\infty)}({\mathbb T}^d)$, 
there exists $\psi \in W^{((d+\alpha)/2,\infty)}({\mathbb T}^d)$ with 
$\| \psi \|_{(d+\alpha)/2,2} \leq \| \psi \|_{(d+\alpha)/2,\infty} \leq  {\Gamma_\varepsilon}$ and 
$\| \varphi - \psi \|_{0,\infty} \leq \varepsilon$, 
which proves that, for any distribution $q$, 
\begin{equation}
\label{eq:embedding:0,alpha,d/2}
\| q \|_{(\delta,\infty)'}
\leq \varepsilon 
\| q \|_{(0,\infty)'} 
+ 
\Gamma_\varepsilon 
\| q \|_{-(d+\alpha)/2,2}. 
\end{equation}
Display \eqref{eq:basin:a} follows by choosing $\varepsilon$ small and by noting that 
$\| q \|_{(0,\infty)'} \leq 2$ when $q$ is the difference of two probability measures. 
Therefore, for $a_\alpha$ as in 
Proposition 
\ref{prop main result:2}, 
we can find $a \in (0,a_\alpha)$ such that 
$\| \mu  -  \nu_\infty \|_{-(d+\alpha)/ {2},2} \leq a$
implies
$\| \mu  -  \nu_\infty \|_{(\alpha,\infty)'} \leq a_\alpha$, which allows one to use Proposition 
\ref{prop main result:2}.

\begin{corollary}
\label{cor:3:16}
Consider an invariant measure 
$\nu_\infty$ to 
 \eqref{eq: forward eqn }
at which \emph{\hergo} holds (for the drift $b$) and, for some $\alpha \in (0,1)$, let 
$a$ be as in \eqref{eq:basin:a}. 
Then, for 
any integer $p \geq 1$, 
there exist $\varepsilon >0$ and a constant $C_{p}$ such that, 
for any integer $N \geq 1$,
any $\muinit \in {\mathcal P}({\mathbb T}^d)$ with 
$\| \muinit - \nu_\infty\|_{-(d+\alpha)/2,2} \leq \varepsilon$
and any ${\boldsymbol x}_0
\in ({\mathbb T}^d)^N$ 
with
$\| \mu_{{\boldsymbol x}_0}^N - \nu_\infty\|_{-(d+\alpha)/2,2} \leq \varepsilon$,
\begin{equation*}
\begin{split}
& {\mathbb P}\Bigl( 
 A^{(N)}
\, \vert \, 
{\boldsymbol Y}_0^N \sim \muinit^{\otimes N} 
\Bigr)
 +
 {\mathbb P}\Bigl( 
 A^{(N)}
\, \vert \, 
{\boldsymbol Y}_0^N = {\boldsymbol x}_0
\Bigr)
\leq \frac{C_{p}}{N^p}, 
\quad 
{\rm with} \ \ A^{(N)} := 
\Bigl\{
 \sup_{t \leq N^p}
 \| \mu_t^N - \nu_\infty \|_{-(d+\alpha)/2,2} \geq \frac{a}2
\Bigr\}. 
 \end{split}
\end{equation*}
\end{corollary}
\begin{proof} { \ }

\textit{First Step.} 
For $a$ as in \eqref{eq:basin:a}, 
we can find $t_0>0$ such that 
$\sup_{\| \mu - \nu_\infty \|_{-(d+\alpha)/2,2} \leq a} \|  m(t_0 \, ; \mu) - \nu_\infty \|_{-(d+\alpha)/2,2} \leq a/2$.
Then, 
for any integer
$k \in {\mathbb N}$, 
we 
let
$A_k :=\{ \| \mu_{k t_0}^N - \nu_\infty \|_{-(d+\alpha)/2,2} \geq 3a/4
\}$. For $k \geq 2$, 
and 
with ${\mathbb P}$ being implicitly understood as 
${\mathbb P}( \cdot \, \vert \, {\boldsymbol Y}_0^N \sim \muinit^{\otimes N} )$
or 
${\mathbb P}( \cdot \, \vert \, {\boldsymbol Y}_0^N = {\boldsymbol x}_0 )$, 
we
have
\begin{equation*}
\begin{split}
&{\mathbb P}\bigl( A_k
\bigr) 
\leq 
{\mathbb P}\bigl( A_{k}
\, \vert \, 
A_{k-1}^{\complement}
\bigr)
{\mathbb P}\bigl( 
A_{k-1}^{\complement}
\bigr) +
{\mathbb P}\bigl(
A_{k-1}
\bigr).
\end{split}
\end{equation*}
By 
the homogeneous 
Markov 
structure of $(\mu_t^N)_{t \geq 0}$, we 
have
\begin{equation*}
\begin{split}
{\mathbb P}\bigl( A_k
\, \vert \,  A_{k-1}^{\complement}
\Bigr)
&\leq \sup_{{\boldsymbol x} : \| \mu^N_{\boldsymbol x} - \nu_\infty \|_{-(d+\alpha)/2,2} \leq 3a/4}
 {\mathbb P}\Bigl( \Bigl\{ \bigl\| \mu_{t_0}^N - \nu_\infty \bigr\|_{-(d+\alpha)/2,2} \geq 3a/4
\Bigr\}
\, \vert \, \mu_0^N = \mu^N_{\boldsymbol x}
\Bigr).
\end{split}
\end{equation*}
For ${\boldsymbol x}$ as in the supremum right above, 
$\| m(t_0\, ; \, \mu^N_{\boldsymbol x}) - \nu_\infty \|_{-(d+\alpha)/2,2}
\leq a/2$ (by definition of $a$). Therefore, 
\begin{equation*}
\begin{split}
{\mathbb P}\bigl( A_k
\, \vert \,  A_{k-1}^{\complement}
\Bigr)
&\leq \sup_{{\boldsymbol x} \in ({\mathbb T}^d)^N}
{\mathbb P}\Bigl( \Bigl\{ \bigl\| \mu_{t_0}^N - m(t_0 \, ; \mu_{\boldsymbol x}^N) 
\bigr\|_{-(d+\alpha)/2,2} \geq a/4
\Bigr\}
\, \vert \,  
\mu_0^N = \mu_{\boldsymbol x}^N
\Bigr)
\leq \frac{C_{p}}{N^{p}},
\end{split}
\end{equation*}
with the second bound following from 
Lemma \ref{lem:metastab}.
Provided that $\muinit$ or $\mu_{{\boldsymbol x}_0}^N$ in the statement is close enough to 
$\nu_\infty$, we can handle 
${\mathbb P}(A_1)$ in a similar way and prove it to be less than $C_{p}/N^{p}$. By iteration, 
we obtain
${\mathbb P}( A_k) \leq k C_{p} / N^{p}$.
\vskip 4pt

\textit{Second Step.}
For $k \in {\mathbb N}$ and $t_0$ as above, we now compute 
${\mathbb P}(\{ \| \mu_{t}^N - \nu_\infty \|_{-(d+\alpha)/2,2} \geq 3a/4\} )$ 
for $t \in [kt_0,(k+1)t_0]$
under the assumption that 
$\| \muinit - \nu_\infty\|_{-(d+\alpha)/2,2}$ 
(resp. 
$\| \mu^N_{{\boldsymbol x}_0} - \nu_\infty\|_{-(d+\alpha)/2,2}$ )
is small enough
and under the initial condition 
${\boldsymbol Y}_0 \sim \muinit^{\otimes N}$ (resp. 
${\boldsymbol Y}_0 = {\boldsymbol x}_0$). 
By Sobolev embedding
and
by the stability property 
\eqref{eq:mn-m:rate:shorttime:37},
there exists a real $\delta(\alpha) \in (0,1)$ such that 
\begin{equation*}
\begin{split}
\bigl\| \mu_{t}^N - \nu_\infty \bigr\|_{-(d+\alpha)/2,2}
&\leq 
\bigl\| \mu_{t}^N - m(t -kt_0 \, ; \mu_{kt_0}^N) \bigr\|_{-(d+\alpha)/2,2}
+
\bigl\|  m(t -k t_0 \, ; \mu_{k t_0}^N) -  \nu_\infty \bigr\|_{-(d+\alpha)/2,2}
\\
&\leq 
\bigl\| \mu_{t}^N - m(t -kt_0 \, ; \mu_{kt_0}^N) \bigr\|_{-(d+\alpha)/2,2}
+
\bigl\|  m(t -k t_0 \, ; \mu_{k t_0}^N) -  m(t -k t_0 \, ; \nu_\infty) \bigr\|_{(\delta(\alpha),\infty)'}
\\
&\leq
\bigl\| \mu_{t}^N - m(t -kt_0\, ; \mu_{kt_0}^N) \bigr\|_{-(d+\alpha)/2,2}
+
C_{t_0}
\bigl\| \mu_{kt_0}^N -  \nu_\infty \bigr\|_{(\delta(\alpha),\infty)'},
\end{split}
\end{equation*}
where we used 
$m(t - kt_0 \, ; \nu_\infty)=\nu_\infty$ in the second line. 
By 
\eqref{eq:embedding:0,alpha,d/2}, we  can find $a' \in (0,a)$ such that 
$\| \mu_{kt_0}^N -  \nu_\infty \|_{-(d+\alpha)/2,2} \leq a'$
implies
$
C_{t_0}
\| \mu_{kt_0}^N -  \nu_\infty \|_{(\delta(\alpha),\infty)'} \leq a/4$.
Then, proceeding as in the first step, Markov property yields
\begin{equation*}
\begin{split}
{\mathbb P}\Bigl( \Bigl\{ \| \mu_{t}^N - \nu_\infty \|_{-(d+\alpha)/2,2} \geq 3a/4
\Bigr\} \Bigr) 
& \leq 
 {\mathbb P}\Bigl( 
\Bigl\{  \bigl\| \mu_{k t_0}^N - \nu_\infty \bigr\|_{-(d+\alpha)/2,2} \geq a'
\Bigr\}
\Bigr)
\\
&\hspace{-10pt} + \sup_{{\boldsymbol x} \in ({\mathbb T}^d)^N}
{\mathbb P}\Bigl( \Bigl\{ \bigl\| \mu_{t- k t_0}^N - m(t- k t_0 \, ; \mu_{{\boldsymbol x}}^N) \bigr\|_{-(d+\alpha)/2,2} \geq a/2
\Bigr\}
\, \vert \,  
\mu_0^N=
\mu^N_{{\boldsymbol x} }
\Bigr).
\end{split}
\end{equation*}
The   term on the second line is handled by means of
Lemma 
\ref{lem:metastab}
whilst the term on the first line is treated by means of the first step, by modifying the value of 
$a$. We get, for $k \geq 1$  and $t \in [kt_0,(k+1)t_0)$, 
\begin{equation*}
{\mathbb P}\Bigl( \Bigl\{ \| \mu_{t}^N - \nu_\infty \|_{-(d+\alpha)/2,2} \geq 3a/4
\Bigr\}
\Bigr) 
\leq \frac{C_{p}(k+1)}{N^{p}}.
\end{equation*}
If $k=0$, we can proceed in a similar way in order to upper bound 
${\mathbb P}(\{ \| \mu_{t}^N - \nu_\infty \|_{-(d+\alpha)/2,2} \geq 3a/4\})$
for $t \in [0,t_0]$. One just needs  to assume that 
$\| \mu - \nu_\infty\|_{-(d+\alpha)/2,2}$ (resp. 
$\| \mu^N_{{\boldsymbol x}_0} - \nu_\infty\|_{-(d+\alpha)/2,2}$) is small enough. 
\vskip 4pt

\textit{Third Step.}
We now complete the proof.  For $p$ as above, with {$p/2$ even}, 
and we consider the mesh $(s_j=j/N)_{0 \leq j \leq  {N^{p/4}}}$. Then, 
as a consequence of the second step, 
\begin{equation*}
{\mathbb P}
\biggl( \bigcup_{j=1}^{N^{ {p/4}}}  \Bigl\{ \| \mu_{s_j}^N - \nu_\infty \|_{-(d+\alpha)/2,2} \geq 3a/4\Bigr\}
\biggr) {\leq}  \frac{C_{p}}{N^{ {p/2}}}. 
\end{equation*}
Now, back to \eqref{eq particles}, it is standard to prove that, for each 
$j \in \{0,\cdots,N^{{p/4}}-1\}$, 
\begin{equation*}
\frac1N {\mathbb E} \Bigl[ \sup_{s_j \leq s \leq s_{j+1}} \sum_{i=1}^N \vert
Y^{i,N}_s -  
Y^{i,N}_{s_j}\vert^p \Bigr] \leq \frac{C_p}{N^{p/2}}, 
\end{equation*}
from which we deduce that 
\begin{equation*}
{\mathbb E} \Bigl[ \sup_{s_j \leq s \leq s_{j+1}} 
 \| \mu_{s}^N - \mu_{s_j}^N \|_{-(d+\alpha)/2,2}^p
 \Bigr] \leq 
 {\mathbb E} \Bigl[ \sup_{s_j \leq s \leq s_{j+1}} 
 \| \mu_{s}^N - \mu_{s_j}^N \|_{(\delta(\alpha),\infty)'}^p
 \Bigr]
 \leq
\Bigl( \frac{C_p}{N^{p/2}} \Bigr)^{\delta(\alpha)}, 
\end{equation*}
and then, we easily deduce that (changing $p$ into $2p/\delta(\alpha)$ in the above bound)
\begin{equation*}
{\mathbb P} \Bigl( \Bigl\{ \sup_{s_j \leq s \leq s_{j+1}} 
 \| \mu_{s}^N - \mu_{s_j}^N \|_{-(d+\alpha)/2,2} \geq \frac{a}4
  \Bigr\} 
  \Bigr)
  \leq \frac{C_{p}}{N^{p}}, 
\end{equation*}
and then 
\begin{equation*}
\begin{split}
&{\mathbb P}
\biggl(  {\bigcup_{j=1}^{N^{{p/4}}}}  \Bigl(  \Bigl\{ \| \mu_{s_j}^N - \nu_\infty \|_{-(d+\alpha)/2,2} \geq 3a/4
\Bigr\} \cup \Bigl\{ 
  \sup_{s_j \leq s \leq s_{j+1}} 
 \| \mu_{s}^N - \mu_{s_j}^N \|_{-(d+\alpha)/2,2} \geq a/4
 \Bigr\}
 \Bigr) 
\biggr)
  \leq  { \frac{C_{p}}{N^{p/2}}}
   \leq  { \frac{C_{p}}{N^{p/4}}},
\end{split} 
\end{equation*}
from which the result easily follows {(changing $p/4$ into $p$)}. 
\end{proof}

We deduce the following statement, which says that 
for any polynomial time in $N$, 
the empirical measure stays at distance of order $N^{-1/2}$ 
(for $\| \cdot \|_{-(d+\alpha)/2,2}$) to the solution of the Fokker-Planck equation. 

\begin{theorem}
\label{prop:3:18}
Consider an invariant measure 
$\nu_\infty$ to 
 \eqref{eq: forward eqn }
at which \emph{\hergo} holds (for the drift $b$). Then, for  $\alpha \in (0,1)$, 
there exists  
$a >0$ such that, for any $p \geq 1$ and $\epsilon >0$, there is a constant 
$C$ satisfying, for any integer $N$,  
\begin{equation}
\label{eq:prop:3:18:1}
\begin{split}
&\sup_{\| \mu_{\textrm{\rm init}} - \nu_\infty \|_{-(d+\alpha)/2,2} \leq a} \, 
\sup_{t \leq N^p} {\mathbb E} \Bigl[ \| \mu_t^N - m(t \, ; \muinit) \|_{-(d+\alpha)/2,2}^{p} 
\, \vert \, 
{\boldsymbol Y}_0^N \sim \muinit^{\otimes N}
\Bigr]
\\
&\hspace{15pt} +
\sup_{{\boldsymbol x} : 
\| \mu^N_{\boldsymbol x} - \nu_\infty \|_{-(d+\alpha)/2,2} \leq a} \, 
\sup_{t \leq N^p} {\mathbb E} \Bigl[ \| \mu_t^N - m(t \, ; \mu^N_{\boldsymbol x}) \|_{-(d+\alpha)/2,2}^{p} 
\, \vert \, 
{\boldsymbol Y}_0^N = {\boldsymbol x}
\Bigr]
 \leq C N^{-p/2+\epsilon}.
 \end{split}
\end{equation}

%
%
\end{theorem}

\begin{proof}
The main idea is to localise 
the arguments in 
 \eqref{main result intro formula} (the proof of which relies on a semi-martingale expansion) 
 and Proposition 
\ref{thm main result}, using the stopping time $\tau^N := \inf\{ s>0 : \| \mu_s^N - \nu_\infty \|_{-(d+\alpha)/2,2} \geq a\} \wedge t_0$, where 
$t_0$ is a fixed time in $[0,N^p]$ that plays the role of 
$t$ in 
 \eqref{main result intro formula}
and
$a$ is chosen in such a way that \hergo \ holds true for 
$\mu$ satisfying 
$ \| \mu - \nu_\infty \|_{-(d+\alpha)/2,2} \leq a$ (and for the dynamics driven by the mollified drifts $\tilde b^n$, for 
$n \geq N$).
Indeed, by
\eqref{eq:embedding:0,alpha,d/2}, we can render 
$ \| \mu - \nu_\infty \|_{(\alpha,\infty)'}$ small enough by choosing 
$a$ small enough. 
Then, Proposition
\ref{prop main result:2} 
and Remark 
\ref{rem:stability:erg}
 guarantee that, for $N$ large enough and $n \geq N$,  
 $\tilde b^n$ satisfies 
\hergo \ holds at any $\mu$ satisfying
$ \| \mu - \nu_\infty \|_{(\alpha,\infty)'}$ small enough and thus for $a$ small enough. 
 
The next step is to follow the proof of Lemma 
\ref{lem:metastab} 
 (with the same notation for ${\mathcal U}_p^{N^q}$), recalling that we are now looking for constants independent of $t_0$
 (and also of $k$ in \eqref{eq:def:aleph:k}). In this respect, \hergo \ is crucial. 
Indeed, 
thanks to the localisation, 
$\mu^N_s$ stays in the basin where \hergo \, holds true as long as 
$s \leq \tau^N$. 
In the end, 
by expanding 
${\mathcal U}_{p}^{N^q}(t_0-s,\mu_s^N)$ in \eqref{eq:second:term:main result intro formula}
for $s$ between $0$ and $t \wedge \tau^N$, 
\eqref{eq:metastab:main:inequality}
(with
\eqref{eq:def:phipnmu})
becomes
\begin{equation*}
\begin{split}
&\sup_{0 \leq t \leq t_0} 
  \Bigl\vert 
  {\mathbb E} \bigl[
 {\mathcal U}_p^{N^q}\bigl(t_0-t \wedge \tau^N,\mu^N_{t \wedge \tau^N}\bigr) 
  - {\mathcal U}_p^{N^q}(t_0,\mu^N_0)
  \bigr]
   \Bigr\vert
  \\
&\leq \frac{C_{ {\epsilon},p,q}}{N^{ {q(1-\epsilon)}}} \bigl( 1 + K_{\epsilon}^{N^q} \bigr) 
 +  
 \frac{1}{N}
 \sum_{i=1}^d \bE\biggl[  \int_0^{t_0 \wedge \tau^N}  \biggl\vert  
  \intrd \bigg( \partial_{(y_{2})_i} \partial_{(y_{1})_i}  \frac{ \delta^2 \cU_p^{N^q}}{\delta m^2} (t_0-s, \mu^{N}_s)(z,z) \bigg)   \, \mu^{N}_s(\ud z)  \biggr\vert \bigg] \, \ud s +       {\mathcal O}\bigl( \varepsilon \bigr),
\end{split}
\end{equation*}
where $q>p$. 
The first key point is that 
 ${\mathbb P}(\{\tau^N <t_0\}) \leq  {C_{p} N^{-p}}$, 
which follows from Corollary 
\ref{cor:3:16}.
The second one is that, in 
the above right-hand side, the constant $C_{\epsilon,p,q}$ is independent of $t_0$, which follows from 
\eqref{eq:term4:lemma2.2}
and
\eqref{eq:term5:lemma2.2}
together with the fact that 
all the involved derivatives of $\cU_p^{N^q}$
 feature an extra exponential decay (in time) at any 
 probability measure belonging to the 
basin where \hergo \, holds true. 
In particular, so is the case when those derivatives are computed at $\mu_s^N$ for $s \leq \tau^N$. As a result, and this is our third point, 
the available bounds for the derivatives
$[\partial_{(y_2)_i} \partial_{(y_1)_i} \delta^2 \cU_p^{N^q}/\delta m^2] (t_0-s, \mu^N_s,\cdot,\cdot)$
(that appear on the second line) also
feature an extra exponential decay $\exp(-\lambda(t_0-s))$.

By
inserting this extra exponential decay in 
\eqref{eq:bound:partialz1:partialz2:U:theta:**:aleph}, we get the following variant of 
 \eqref{eq:bound:deltaphi_p:2}
 (with $\nu = \mu_{\boldsymbol x}^N$ in 
 \eqref{eq:def:phipnmu} and $\| \mu_{\boldsymbol x}^N - \nu_\infty \|_{-(d+\alpha)/2,2} \leq a$): 
  \begin{align}
  &\sup_{0 \leq t \leq t_0} 
  {\mathbb E} \Bigl[
 {\mathcal U}_p^{N^q}(t_0-t,\mu^N_t) 
 \, \vert \, 
{\boldsymbol Y}_0^N = {\boldsymbol x}  \Bigr]
    \label{eq:thm:3:19:1}
  \\
&\hspace{15pt} \leq 
\frac{C_{\epsilon,p,q}}{N^{q(1-\epsilon)}}
  +  
 \frac{C_{\epsilon,p,q}}{N}
  \bigl( 1 + K_{\epsilon}^{N^q} \bigr)
 \int_0^{t_0} 
\frac{  {\mathbb E}[
 {\mathcal U}_{p-1}^{N^q}(t_0-s,\mu^N_s) 
 \, \vert \, 
{\boldsymbol Y}_0^N = {\boldsymbol x}
 ]}{1 \wedge (t_0-s)^{1-\alpha/4}} e^{-\lambda(t_0-s)}
 \, \ud s   +
    {\mathcal O}\bigl( \varepsilon \bigr). 
    \nonumber
 \end{align}
 By induction, 
we get $ {\mathbb E} [
 {\mathcal U}_{p}^{N^q}(t_0-t,\mu^N_t) 
 \, \vert \, 
{\boldsymbol Y}_0^N = {\boldsymbol x}
  ] \leq C_{\epsilon,p} N^{-p+\epsilon} + 
   {\mathcal O}( \varepsilon )$, 
   with $\vert {\mathcal O}( \varepsilon ) \vert \leq 
   C_{\epsilon,p,q} \varepsilon$.
   Together with Proposition 
   \ref{prop main result:2},
   this yields the bound for the second term in \eqref{eq:prop:3:18:1}. 
  The bound for the first term in \eqref{eq:prop:3:18:1} is obtained as in the third step of the proof of 
  Lemma
  \ref{lem:metastab}, except that we use {\hergo} instead of 
  {\hergol} in    
   \eqref{eq:3rdstep:local}.   
  By 
{\hergo}, 
  the right-hand side of 
  \eqref{eq:bound:partialz1:partialz2:U:theta:**:aleph:2} 
 features an extra exponential decay
 and the two constants 
$C_{p,t_0}$ in \eqref{eq:3rdstep:local}
and
\eqref{eq:lemn:2.17:cl}
are independent of $t_0$, which suffices to conclude. 
\end{proof}

We can now complete:
\begin{proof}[Proof of Theorem \ref{thm main result:2}, second part.]
To prove the second part of 
of Theorem \ref{thm main result:2} (when $\nu_\infty$ is not a global attractor), it suffices to repeat the localisation argument used in 
 \eqref{eq:thm:3:19:1}, except that 
 $\Phi$ in  
 \eqref{eq:def:phipnmu}
 is now taken as a general test functional satisfying \hintphi{4}{3} and that 
 $p$ is implicitly taken as $1$ (in clear, there is no induction, which makes the proof very much simpler). 
 As in the proof of 
 Theorem 
 \ref{prop:3:18}, the key point is to use Corollary 
\ref{cor:3:16}
in order to upper bound   
 ${\mathbb P}(\{\tau^N <t_0\})$ by $C_{q} N^{-q}$, 
 for any $q \geq 1$.
\end{proof}
\end{subsection}

\begin{subsection}{Examples}
\label{subse:examples}

\subsubsection{Small case interaction}

\begin{proposition} \label{W1 decay:small} 
 {Assume that
$b$ satisfies \emph{\hregb{\eta}{2}}, for some $\eta \in [0,1)$}. 
Then, there exists $\eps_0>0$ $($only depending on  $\sup_{m \in \cP(\bT^d)} \| b(\cdot,m)\|_{0,\infty})$
such that 
\eqref{eq:thm:main:result:2}
holds true 
 if
\begin{equation}
\label{eq:condition:epsilon0:petit}
 \sup_{m \in \cP(\bT^d)} \Bigl\| \ld[b] (\cdot, m)(\cdot) \Bigr\|_{0, \infty} < \epsilon_{0}.
\end{equation}
\end{proposition}

\begin{proof}
The strategy is just to repeat the proof of  {finite in time result, see} Proposition \ref{W1 decay} (with $b$ replaced by $b^n$, with 
$b^n$ as in the proof of Proposition \ref{thm main result}),  {and then} to apply 
Proposition \ref{thm main result}, noticing that  
the constant $C_{\alpha,b}$
in 
\eqref{eq: bound q est 2} 
(which derives from the analysis of $T_3$ in   \eqref{duality W1 infty})
can be made small if 
$\epsilon_0$ in \eqref{eq:condition:epsilon0:petit} 
 is small
(which in turn implies that 
$\sup_{m \in \cP(\bT^d)} \| [\delta b^n/\delta m] (\cdot, m)(\cdot) \|_{0, \infty} < \epsilon_{0}$). 
We then conclude 
as in the proof of 
\eqref{eq:q:mn:third:step} that each $b^n$ satisfies {\hergo} (for any probability measure $\mu$): the role played by 
$\delta + a$ in 
\eqref{eq:q:mn:third:step}  {and the line before} 
is here played by $\epsilon_0$; 
moreover, the last term in
the display 
\eqref{eq:q:mn:third:step}
does not
appear in this analysis, 
see for instance
    \eqref{duality W1 infty} 
    where the only difficulty comes from $T_3$. 
    The fact that Proposition
    \ref{thm main result} holds true permits to conclude directly, as in the proof of 
    the first part of 
    Theorem \ref{thm main result:2}.
\end{proof}

\subsubsection{Conservative case}

\begin{proposition}
\label{prop:div:free}
 {Assume that
$b$ satisfies \emph{\hregb{\eta}{2}}}, for some $\eta \in [0,1)$, and that, for any $m \in {\mathcal P}({\mathbb T}^d)$, $b(\cdot,m)$ is divergence free 
in the sense of distribution. 
Then, 
\eqref{eq:thm:main:result:2}
holds true.
\end{proposition}

A prototype for $b$ being divergence free  is $b(x,m) = (B*m)(x)$, for a bounded vector field $B$ from 
${\mathbb T}^d$ into ${\mathbb R}^d$ with zero divergence (in the sense of distribution). 

\begin{proof}
Obviously, the Lebesgue measure is invariant for  \eqref{eq: forward eqn }. 
We prove that it is exponentially stable. 
It suffices to observe that the solution to the Fokker-Planck equation 
writes (in the sense of distribution):
\begin{equation*}
\partial_t m(t \, ; \mu) - \tfrac12 \Delta m(t \, ; \mu) + b\bigl(\cdot , m(t \, ; \mu)\bigr) \cdot \nabla_x m(t \, ; \mu) 
= 0.
\end{equation*}
Since $m(t \, ; \mu)$ is known to have a bounded density for any $t>0$, it is clear that the solution to 
the above equation has a continuous gradient (in $x$) at any positive time $t>0$. 
By Lemma 
\ref{conjecture backward PDE }, it satisfies 
\begin{equation*}
\bigl\| m(t \, ; \mu) - 1 \bigr\|_\infty
=
\Bigl\| m(t \, ; \mu) - \int_{{\mathbb T}^d} m(t \, ; \mu)(x) dx \Bigr\|_\infty \leq C e^{-\lambda t}. 
\end{equation*}
As for the linearized operator
\eqref{eq:linearized:operator}
 at $\textrm{\rm Leb}_{{\mathbb T}^d}$, 
 we notice that 
 $[\delta b/\delta m](\cdot,m)(q)$ is divergence free.
 This is shown 
 (first at any $m$ with a positive density and then at any $m \in {\mathcal P}({\mathbb T}^d)$) 
  by passing to the limit in 
  \begin{equation*}
\frac1{\epsilon} \int_{{\mathbb T}^d} b\bigl(x,  m + \epsilon q \bigr) \cdot \nabla \varphi(x) \, \ud x =0,
 \end{equation*}
 for 
any smooth  $q : {\mathbb T}^d \rightarrow {\mathbb R}$ with $\int_{{\mathbb T}^d} q(x) \ud x = 0$ and any
 smooth $\varphi : {\mathbb T}^d \rightarrow {\mathbb R}$. 
Therefore, 
\eqref{eq:linearized:operator}
 becomes
 \begin{equation*}
L_{\textrm{\rm Leb}_{{\mathbb T}^d}} q = \tfrac12 \Delta q - \textrm{\rm div} \bigl( b(\cdot,\textrm{\rm Leb}_{{\mathbb T}^d}) q \bigr),
\end{equation*}
which is local (whilst 
$L_m$ in 
\eqref{eq:linearized:operator}
is nonlocal in $q$ because of the third term therein). 
Accordingly, the term $T_3$ in the proof of 
Proposition \ref{W1 decay} 
becomes null, which makes it possible to prove 
\hergo \ at $\textrm{\rm Leb}_{{\mathbb T}^d}$.   
\end{proof}

\subsubsection{Gradient systems}
\label{main result extension}
We now the study the case when $b$ derives from 
a symmetric potential $W$, namely 
\begin{equation}
\label{eq:B}
b(x,m) = - \kappa \int_{\bT^d} \nabla W(x-y)  m(\ud y), \quad x \in \bT^d, \ m \in {\mathcal P}(\bT^d),
\end{equation}
for a positive constant $\kappa$ and a twice continuously differentiable potential $W : {\bT}^d \rightarrow {\mathbb R}$ that is 
 coordinate-wise even, i.e.
$W(x_{1},\cdots,-x_{i},\cdots,x_{d})=W(x_{1},\cdots,x_{i},\cdots,x_{d})$, for 
$(x_{1},\cdots,x_{d}) \in {\mathbb T}^d$.
This example has received a lot of attention in the literature. Below, we borrow several results from 
\cite{carrillo:gvalani:pavliotis:schlichting}. 

The uniform distribution $\textrm{\rm Leb}_{\bT^d}$ is an invariant measure. 
This follows from the simple fact that $b(x,\textrm{\rm Leb}_{\bT^d})=0$. The linearised operator $L_{\textrm{\rm Leb}_{{\bT}^d}}$ at $\textrm{\rm Leb}_{{\bT}^d}$ has the simple form (see \cite[Subsection 3.2]{carrillo:gvalani:pavliotis:schlichting}):
\begin{equation}
\label{eq:L:leb:H}
L_{\textrm{\rm Leb}_{\bT^d}}(\cdot) = \tfrac12 \Delta (\cdot) + 
\kappa \Delta \bigl( W \star \cdot \bigr),
\end{equation}
where $\star$ stands for the convolution product. 
Since $W$ is coordinate-wise even, $L_{\textrm{\rm Leb}_{\bT^d}}$ is symmetric. 
 
The following two results follow the analysis carried out in 
\cite[Section]{carrillo:gvalani:pavliotis:schlichting}
(with $\beta=2$, $L=1$). 
\begin{proposition}
\label{prop:cas:potential}
 Assume that 
$\textrm{\rm Leb}_{\bT^d}$ is the unique invariant measure
 and 
 that 
 $-1-2 \kappa \inf_{{\boldsymbol n} \in {\mathbb Z}^d} \widehat{W}^{\boldsymbol n} <0$, 
then
\eqref{eq:thm:main:result:2}
holds true.
\end{proposition}

For instance, if the potential $W$ satisfies 
$\widehat W^{\boldsymbol n} \geq 0$ for any 
${\boldsymbol n} \in {\mathbb Z}^d$, then it satisfies the assumptions of Proposition \ref{prop:cas:potential}, see
\cite[Section 3]{carrillo:gvalani:pavliotis:schlichting}. The latter condition is equivalent to 
\begin{equation*}
\int_{\bT^d} \int_{\bT^d} W(x-y)  \eta(\ud x)   \eta(\ud y) \geq 0,
\end{equation*}
for any finite measure $\eta$ on 
${\mathbb T}^d$, 
see again
\cite{carrillo:gvalani:pavliotis:schlichting}. 
{The above condition 
says the potential 
${\mathcal P}({\mathbb T}^d) \ni m \mapsto 
\int_{\bT^d} \int_{\bT^d} W(x-y)  m(\ud x)   m(\ud y)$ is convex in the functional sense, 
which is consistent with the recent result 
obtained in \cite{chen2023uniformintime}.} 
More generally,
letting
\begin{equation*}
W_u(x) := - \sum_{{\boldsymbol n} \in {\mathbb Z}^d} \bigl( \widehat{W}^{\boldsymbol n} \bigr)_- \cos \bigl( 2 \pi {\boldsymbol n} \cdot x), 
\quad x \in {\mathbb T}^d, 
\end{equation*}
it is shown in 
\cite{carrillo:gvalani:pavliotis:schlichting}
that 
$\textrm{\rm Leb}_{\bT^d}$  is
the unique invariant measure if
$2 \pi^2 > \kappa \sup_{x \in {\mathbb T}^d} \| \Delta W_u \|_\infty$, but this condition is not sharp. For instance, the Kuramoto model addressed in the next section 
corresponds to 
$d=1$
and
$W(x)=-\cos(2 \pi x)$ (for $x \in {\mathbb T}$). 
In this case, 
$2 \pi^2 > \kappa \sup_{x \in {\mathbb T}^d} \| \Delta W_u \|_\infty$
if and only if $\kappa <1/2$. 
However, the analysis carried out in 
\cite{giacomin:pakdaman:khashayar:pellegrin}
(see in particular 
Proposition 4.2 therein) 
shows that the proof of Proposition
\ref{prop:cas:potential}
still works when $\kappa \in [1/2,1)$.

\begin{proof}
\textit{First Step.}
The first step is to show that 
$L_{\textrm{\rm Leb}_{\bT^d}}$ satisfies 
\hergo. This follows from 
the analysis performed in 
\cite{carrillo:gvalani:pavliotis:schlichting}.
The 
non-trivial eigenfunctions of the operator $L_{\textrm{\rm Leb}_{{\mathbb T}^d}}$
are the non-trivial Fourier functions. They form an orthonormal basis of the space $\{ f \in {\mathbb L}^2(\bT^d) : \langle f,\one\rangle=0\}$ 
and, under the assumption of the statement,
 all the corresponding eigenvalues $(\lambda_{{\bm k}})_{{\bm k} \in {\mathbb Z}^d \setminus \{0\}}$ are strictly negative, with a non-zero spectral gap, i.e.
$\sup_{{\bm k} \in {\mathbb Z}^d \setminus \{0\}} \lambda_{\bm k} <0$.

We then adapt 
Lemma 
 \ref{conjecture backward PDE }
 to our setting in order 
 to prove 
 \eqref{erg:hyp:3}.
 To do so, we use the existence of a spectral gap. It says that, for any  smooth $u : {\bT}^d \rightarrow {\mathbb R}$ with $\langle u, \one \rangle=0$,
$- \langle L_{\textrm{\rm Leb}_{\bT^d}} u,u \rangle \geq \lambda
\langle u,u \rangle.$
Now, for
 a smooth function $\xi$ on $\bT^d$ and for $t>0$,  
 we consider 
 {the solution} 
 $(w(s))_{0 \leq s \leq t}$ to the equation 
\begin{equation}
\label{eq:L:leb:w}
\partial_s w(s,\cdot) + L_{\textrm{\rm Leb}_{{\bT}^d}}w(s,\cdot) = 0, \quad s \in [0,t] ; \qquad 
             w(0) = \xi.
\end{equation}
Then $\langle w,\one \rangle$ is constant and, by the {existence of a} spectral gap, 
\begin{equation}
\label{eq:expo:decay:w:L2}
\bigl\| w(s) - \langle w,\one\rangle \bigr\|_{2} \leq \| \xi \|_{2} e^{-\lambda (t-s)}, \quad s \in [0,t]. 
\end{equation} 
{Noticing} 
that $\int_{\bT^d} \Delta W(x-y) \ud y=0$, 
we deduce that 
\begin{equation}
\label{eq:L:leb:derivative}
\sup_{x \in \bT^d} \bigl\vert  \Delta (W \star w)(s,x) \bigr\vert
=
\sup_{x \in \bT^d} \bigl\vert  \Delta  {\bigl(W \star \bigl[ w - \langle w,\one \rangle \bigr] \bigr)}(s,x) \bigr\vert
 \leq {C} \| \xi \|_{\infty} e^{-\lambda(t-s)}.
\end{equation}
By
 {recalling the shape of
$L_{\textrm{\rm Leb}_{{\bT}^d}}$ in 
\eqref{eq:L:leb:H} 
and} regarding the equation 
\eqref{eq:L:leb:w} as the heat equation plus a source term that decays exponentially fast, it is quite standard to show that 
$w$ satisfies the conclusion of
 Lemma
 \ref{conjecture backward PDE }.  It remains to adapt the proof of
 Proposition \ref{W1 decay}. 
 We start from \eqref{eq q}, {when driven by $L_{\textrm{\rm Leb}_{{\mathbb T}^d}}$}. 
 Instead of considering 
 $w$ as the solution of  
 \eqref{cauchy w}, we choose $w$ as the solution of 
 \eqref{eq:L:leb:w}. This leads to a new expansion in 
     \eqref{duality W1 infty}
with $T_{3}=0$ (because we included the term 
$\kappa \Delta ( W \star w )$ in 
\eqref{eq:L:leb:w}). It then suffices to let appear the exponential decay of 
$w$ and $\nabla_x w$ in the estimates of $T_1$ and $T_2$. 
\vskip 4pt

\textit{Second Step.}
The next step is to prove that 
$L_{\textrm{\rm Leb}_{\bT^d}}$ is 
uniformly attracting (as in the statement of Theorem \ref{thm main result:2}). This follows from the
variational structure of the problem and from the additional assumption that 
$L_{\textrm{\rm Leb}_{\bT^d}}$ is the unique invariant measure. 
Indeed, the McKean-Vlasov equation
 may be regarded as a gradient flow, with potential
\begin{equation}
\label{eq:potential:mathcalF}
{\mathcal F}(m) =  \frac12 \int_{{\mathbb T}^d}
\frac{dm}{dx}(x) \ln \bigl( \frac{dm}{dx}(x) \bigr) \ud x + \frac{\kappa}2  
\int_{{\mathbb T}^d} \int_{{\mathbb T}^d} W(x-y) m(x)m(y)  \ud x \ud y, 
\end{equation}
if $m$ is absolutely continuous with respect to 
$L_{\textrm{\rm Leb}_{\bT^d}}$,
and 
${\mathcal F}(m)=+\infty$ if it is not absolutely continuous. 

Recall that the solution of 
\eqref{eq: forward eqn } has a density $p(t,x;\mu)$ in time $t>0$.
Since $\nabla W$ is Lipschitz continuous, 
$p(t,x\, ;\mu)$ is continuously differentiable in $x$ and belongs to a compact subset 
${\mathcal K} \subset {\mathcal C}^1({\mathbb T}^d)$, independent of $\mu$, when $t \geq 1$. 
 In particular, 
$ {\mathcal F}(m(t\, ; \mu)) < \infty$
for any $ t >0$. 
Moreover, by the gradient flow structure, 
\begin{equation}
\label{eq:gradient:flow}
\begin{split}
\forall 0<t_{1}<t_2, \quad &{\mathcal F} \bigl( m(t_{2} ;\mu) \bigr) 
- {\mathcal F} \bigl( m(t_{1} ;\mu) \bigr) 
\\
&= - \int_{t_{1}}^{t_{2}} 
\int_{{\bT}^d}
\Bigl\vert 
\kappa \int_{{\bT}^d} \nabla W \bigl( x - x' \bigr) m(r  ;\mu)(\ud x')
+ \frac12 \frac{\nabla_x p(r,x ;\mu)}{p(r,x  ;\mu)} \Bigr\vert^2 m(r \, ; \mu)(\ud x) \ud r. 
\end{split}
\end{equation}
For $\delta >0$, let ${\mathcal K}_{\delta} = \{ p \in {\mathcal K} : \| p- \one \|_{{\mathcal C}^1} \geq \delta \}$. Clearly, ${\mathcal K}_{\delta}$ is a compact subset of 
${\mathcal C}^1({\mathbb T}^d)$. 
More importantly, the quantity 
\begin{equation*}
\int_{\bT}
\Bigl\vert 
 \kappa \int_{\bT} \nabla W \bigl( x - x' \bigr) p(x') \ud x' 
+\frac12 \frac{\nabla p}{p}(x) \Bigr\vert^2 p(x) \ud x
\end{equation*}
cannot vanish on ${\mathcal K}_{\delta}$, as otherwise there would exist
a non-trivial stationary solution of 
\eqref{eq: forward eqn }. 
By compactness of ${\mathcal K}_{\delta}$ and by a straightforward continuity argument, we deduce that there exists 
a constant $c>0$ such that, for all $p \in {\mathcal K}_{\delta}$, 
the above quantity is greater than $c$. 
In particular, if we take $t_{1}=1$ 
and $t_{2} = \inf\{ t \geq 1 : p(t,\mu) \not \in {\mathcal K}_{\delta} \}$ in 
\eqref{eq:gradient:flow}, we obtain that
\begin{equation}
\label{eq:gradient:flow:222}
{\mathcal F}\bigl( p(t_{2},\mu) \bigr)  \leq   {\mathcal F}\bigl( p(t_{1},\mu) \bigr)  - c (t_{2} - t_{1})
\leq C - c(t_2-t_1). 
\end{equation}
By the same compactness argument, ${\mathcal F}$ must be lower-bounded on ${\mathcal K}_{\delta}$, 
from which 
we deduce that, for any $\mu$, $t_{2}$ is finite and that, most of all, 
there exists $T<\infty$, independent of $\mu$, such that $t_{2} \leq T$. 
\end{proof}

We conclude with the following statement, which shows that the results proven in the metastable regime cover 
some of the examples 
considered in 
\cite{carrillo:gvalani:pavliotis:schlichting}:

\begin{proposition}
\label{prop:cas:potential:2}
We can find potentials $W$ 
for which there are several invariant measures
 and 
for which  $-1-2 \kappa \inf_{{\boldsymbol n} \in {\mathbb Z}^d} \widehat{W}^{\boldsymbol n} <0$. 
In this case, 
$\textrm{\rm Leb}_{\bT^d}$
 satisfies the second part of
Theorem 
\ref{thm main result:2}  (in the metastable regime) and 
Theorem 
\ref{prop:3:18}.
\end{proposition}

\begin{proof}
This follows from (3.5),
Definition 5.1 {and Proposition 5.8} in \cite{carrillo:gvalani:pavliotis:schlichting}. With the same notations as therein, 
it suffices to have $\kappa_c<\kappa_\sharp$ and to choose $\kappa \in (\kappa_c,\kappa_\sharp)$. 
Examples 
 {of potentials $W$ for which 
$\kappa \in (\kappa_c,\kappa_\sharp)$}
are provided by 
Corollaries 5.13 and 5.14 
and Proposition 6.2
in \cite{carrillo:gvalani:pavliotis:schlichting}.
\end{proof}

\end{subsection}

 \begin{section}{Model without a unique invariant measure}
 \label{se:kuramoto}

The purpose of this section is to address the Kuramoto model.
It is in fact a particular case of example
\eqref{eq:B}, with $d=1$ and $W(x)=-\cos(2 \pi x)$, i.e. 
\begin{equation}
\label{eq:b:kuramoto}
b(y,\mu) = - 2 \pi \kappa \int_{{\mathbb T}} \sin(2\pi (y-x))  \mu(\ud x).
\end{equation} 
%
%
Interestingly, Kuramoto's model exhibits a phase transition when 
$\kappa=1$ (see for instance \cite{bertini:giacomin:pakdaman}). When $\kappa \leq 1$, the Fokker-Planck equation 
\eqref{eq: forward eqn }
has a unique invariant probability measure, which is given by 
${\textrm{\rm Leb}}_{{\mathbb T}^d}$ (and 
the method of proof of
Proposition 
\ref{prop:cas:potential} covers the case $\kappa <1$). 
When $\kappa>1$, it has an infinite number of invariant measures,
namely ${\textrm{\rm Leb}}_{{\mathbb T}^d}$ and 
a collection of non-trivial ones, all of them being obtained by rotation of a  non-constant density
$p_\infty$ (i.e., $p_{\infty,\psi}:=p_{\infty}(\cdot - \psi)$ is an invariant measure for any 
$\psi \in {\mathbb T}$). 

In the rest of the section, we focus on the regime $\kappa>1$. In 
that case, 
propagation of chaos cannot hold at time of order $t=N$   
(see \cite{bertini:giacomin:poquet} together with
\cite{MR1970276}
for a similar phenomenon in the Euclidean setting). 
In fact, the result of  
\cite{bertini:giacomin:poquet}
has just been revisited by 
\cite{coppini2019long} (in the even more complex case when the interactions are subjected to a
 non-complete graph). 
The main idea therein is to show that, even though the particle system may strongly deviate from an invariant profile in time of order 
$N$, it stays close to the whole collection\footnote{Most of the time, we shall identify the densities that belong to ${\mathcal I}$ together with the probability measures that are driven by those densities.} ${\mathcal I}:=\{p_{\infty,\psi}, \psi \in {\mathbb T}\}$ for a time period that is nearly exponential in 
$N$. However, the rate of convergence is not addressed in \cite{coppini2019long}. Using the techniques developed in the previous section, we show here that, if the initial condition is not ${\textrm{\rm Leb}}_{{\mathbb T}^d}$, we can retain a uniform
weak error of size $1/N$ provided that we force the test functional  
$\Phi$ in 
\eqref{main result intro formula}
to be invariant by rotation (see Theorem 
\ref{main:thm:kuramoto}
below).  
We stress that the latter requirement on $\Phi$ is fully consistent 
with the point of view used in 
\cite{coppini2019long}. 
Our improvement is thus twofold: Not only do we get an explicit rate for the weak error, but we also manage to get a bound that 
holds uniformly in time (not only up until times that are exponential in $N$). Notice however that, in 
\cite{coppini2019long}, the convergence is understood for the sup norm over the trajectory (which is stronger).

Things become more subtle whenever the system is initialized from the invariant measure ${\textrm{\rm Leb}}_{{\mathbb T}^d}$ since 
Lemma \ref{lem:exit:time:cos}
below shows that the empirical measure leaves, with a large probability, any sufficiently small neighbourhood 
of the uniform distribution in a time that is at most polynomial in $N$. As a result, another study would be necessary to handle this case specifically.
\vskip 4pt

Below, we use freely the same general notations as in the previous section. 
In particular,
 $m(\cdot \, ;\mu)$ denotes the solution to \eqref{eq: forward eqn }
with $\mu$ as initial condition and with $b$ as in 
\eqref{eq:b:kuramoto}. Here, it takes the form
\begin{equation}
\label{eq:se:5:FKP} 
     \partial_t m(t \, ; \mu) - \tfrac12 \partial^2_{xx} m(t \, ; \mu) - \partial_{x} \Bigl( m(t \, ; \mu) \bigl( J \star m(t\, ; \mu) \bigr)  \Bigr) =0,
\end{equation}
where $\star $ denotes the standard convolution product  and 
$J( \cdot) = 2 \pi \kappa \sin(2 \pi \cdot)$. 

Of course, 
$m(\cdot \, ;\mu)$ is absolutely continuous in positive time, we therefore let $p(t \, ;\mu)= (\ud /\ud x)m(t ; \mu) : x \mapsto p(t \, ; \mu)$ (which we also write $p(t ,x\, ;\mu)$) be the density of 
$m(t\, ;\mu)$, for $t >0$. The function $p(t\, ;\mu)(\cdot)$ is a smooth function of $x$, uniformly in $t \in [t_{0},\infty)$ for any $t_{0}>0$.  
Also, we recall that each $p_{\infty,\psi}$ is (strictly) positive. 

\subsection{Main result} 
We focus on initial conditions that are away from ${\textrm{\rm Leb}}_{{\mathbb T}^d}$, namely we let, for any 
$\eta \in (0,1)$,
\begin{equation*}
{\mathcal Q}_{\eta} = \bigl\{ \mu \in {\mathcal P}({\mathbb T}) : \vert \mu^1 \vert \geq \eta \bigr\}, 
\quad \mu^1 :=  \int_{{\mathbb T}}
\exp( - \i  2 \pi \theta) \mu(\ud \theta). 
\end{equation*}
The next result (proven at the end of the subsection) shows that 
${\mathcal Q}_{\eta}$ is attracted   by ${\mathcal I}$.
\begin{proposition}
\label{prop:5:10}
For any $\eta \in (0,1)$,
and any integer $k \geq 1$, 
there exist an exponent $\beta >0$ and a constant $C$, both depending on $\kappa$, $\eta$ and $k$, such that  
\begin{equation*}
\forall t \geq 1, \quad \sup_{\mu \in {\mathcal Q}_{\eta}}
\inf_{\psi \in {\mathbb T}} \, 
\bigl\|  p(t \, ; \mu) - p_{\infty,\psi} \bigr\|_{k,\infty} \leq C \exp(-\beta t).
\end{equation*}
\end{proposition}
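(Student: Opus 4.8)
The plan is to combine the gradient-flow structure of \eqref{eq:se:5:FKP} with the normal hyperbolicity of the manifold $\cI$, and then to turn the resulting pointwise convergence into a bound that is uniform over $\cQ_\eta$ by a compactness argument. Recall that \eqref{eq:se:5:FKP} is the Wasserstein gradient flow of a free energy functional $F$ of standard granular-media form, so that $t\mapsto F(m(t\,;\mu))$ is non-increasing, is constant along an orbit only at a stationary solution, and that for $\kappa>1$ the set of stationary solutions of \eqref{eq:se:5:FKP} is exactly $\{\textrm{\rm Leb}_{\bT}\}\cup\cI$ (the Kuramoto self-consistency equation; see \cite{bertini:giacomin:pakdaman,giacomin:pakdaman:khashayar:pellegrin}). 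A first, elementary observation concerns the order parameter $r(t):=|m(t\,;\mu)^{1}|$: testing \eqref{eq:se:5:FKP} against $e^{-\i 2\pi x}$ and using that $J$ carries only the Fourier modes $\pm1$ yields the closed identity
\[
\tfrac{\ud}{\ud t}m(t\,;\mu)^{1}=2\pi^{2}\bigl[(\kappa-1)\,m(t\,;\mu)^{1}-\kappa\,\overline{m(t\,;\mu)^{1}}\;m(t\,;\mu)^{2}\bigr].
\]
Read in real coordinates this is a linear, time-inhomogeneous system for $m(t\,;\mu)^{1}$, so $r$ either vanishes identically or never vanishes; hence $\mu\in\cQ_\eta$ forces $r(t)>0$ for all $t\ge0$. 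Moreover $|m^{2}|\le1$ gives $\tfrac{\ud}{\ud t}r^{2}\ge-4\pi^{2}r^{2}$, whereas on a small neighbourhood of $\textrm{\rm Leb}_{\bT}$, where $|m^{2}|<(\kappa-1)/(2\kappa)$, one gets $\tfrac{\ud}{\ud t}r^{2}\ge2\pi^{2}(\kappa-1)r^{2}>0$; the latter shows that $m(\cdot\,;\mu)$ cannot converge to $\textrm{\rm Leb}_{\bT}$ when $\mu\in\cQ_\eta$.

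The second step is qualitative attraction to $\cI$. Since $b(x,m)=-2\pi\kappa\,\mathrm{Im}\bigl(e^{\i 2\pi x}m^{1}\bigr)$ is bounded in $C^{\infty}(\bT)$ uniformly in $m$, parabolic Schauder estimates show that $\{m(t\,;\mu):t\ge t_{0}\}$ is precompact in $C^{k}(\bT)$ for every $k$ and every $t_{0}>0$, with densities bounded below by a positive constant by the parabolic Harnack inequality, so in particular $F(m(t_{0}\,;\mu))<\infty$. The LaSalle invariance principle, applied with the strict Lyapunov function $F$, then gives that the $\omega$-limit set $\omega(\mu)$ is a connected subset of $\{\textrm{\rm Leb}_{\bT}\}\cup\cI$; by the first step it is not $\{\textrm{\rm Leb}_{\bT}\}$, and since $\textrm{\rm Leb}_{\bT}\notin\cI$ and $\cI$ is a topological circle, $\omega(\mu)$ is a connected subset of $\cI$. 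In particular $\mathrm{dist}_{C^{k}}\bigl(m(t\,;\mu),\cI\bigr)\to0$ as $t\to\infty$, for every $k$ and every $\mu\in\cQ_\eta$.

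The third step makes this local attraction quantitative. The linearised operator $L_{p_{\infty,\psi}}$ from \eqref{eq:linearized:operator} has $0$ as a simple eigenvalue, with eigenfunction $p_{\infty,\psi}'$, and a strictly negative rest of the spectrum separated by a gap $-\lambda>0$ depending only on $\kappa$; this is the normal hyperbolicity of $\cI$ (see \cite[Section~3]{giacomin:pakdaman:khashayar:pellegrin} and \cite{bertini:giacomin:poquet}, and compare with Proposition~\ref{prop:H-stable:2}). Standard normally-hyperbolic-manifold theory, together with a parabolic bootstrap from the $L^{2}$ spectral-gap estimate to the $C^{k}$ norm carried out exactly as in the proof of Theorem~\ref{thm:3:17:H}, then produces an open tubular neighbourhood $\cN\supset\cI$, a rate $\beta=\beta(\kappa)>0$ and constants $C_{k}=C_{k}(\kappa)$ such that any solution of \eqref{eq:se:5:FKP} with $m(T\,;\mu)\in\cN$ satisfies $\inf_{\psi\in\bT}\|p(t\,;\mu)-p_{\infty,\psi}\|_{k,\infty}\le C_{k}\,e^{-\beta(t-T)}$ for all $t\ge T$, through a moving asymptotic phase.

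It remains to make the bound uniform over $\cQ_\eta$. The set $\cK:=\{m(1\,;\mu):\mu\in\cQ_\eta\}$ is precompact in $C^{k}(\bT)$, and the global inequality $\tfrac{\ud}{\ud t}r^{2}\ge-4\pi^{2}r^{2}$ shows that each element of $\cK$ has order parameter at least $\eta e^{-2\pi^{2}}>0$, so the first three steps apply to each $m\in\cK$. Hence the entry time $\tau(m):=\inf\{t\ge0:m(t\,;m)\in\cN\}$ is finite, and it is upper semicontinuous on $\cK$ by continuous dependence of the flow (in $C^{k}$, locally uniformly in time) together with the openness of $\cN$; therefore $T_{1}:=\sup_{m\in\cK}\tau(m)<\infty$. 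Applying the third step with $T=1+T_{1}$ gives $\inf_{\psi}\|p(t\,;\mu)-p_{\infty,\psi}\|_{k,\infty}\le C_{k}e^{-\beta(t-1-T_{1})}$ for all $t\ge1+T_{1}$ and $\mu\in\cQ_\eta$, while for $1\le t\le1+T_{1}$ the same quantity is bounded by a constant, by compactness of $\cK$ and of $\cI$ in $C^{k}(\bT)$; enlarging $C$ to absorb $e^{\beta(1+T_{1})}$ yields the statement. The main obstacle is the third step: upgrading the $L^{2}$ spectral gap of $L_{p_{\infty,\psi}}$ to an exponential attraction rate in the strong $C^{k}$ norm, uniformly along the non-flat manifold $\cI$, which is where the normal-hyperbolicity input and a careful parabolic bootstrap are needed; the uniformity of the fourth step likewise rests on the compactness afforded by the torus and on the instantaneous regularisation of \eqref{eq:se:5:FKP}.
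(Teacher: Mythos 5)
Your proof is correct in outline and follows the same two-stage strategy as the paper (a soft global step bringing the orbit into a tubular neighbourhood of $\cI$, followed by a local exponential contraction near $\cI$), but the middle of the argument is organised differently. The paper's first step is fully quantitative: after quoting \cite[Proposition~4.4]{giacomin:pakdaman:khashayar:pellegrin} to push the orbit out of an $L^2$-ball around $\one$ by a uniform time, it uses the gradient-flow identity \eqref{eq:gradient:flow} together with a strictly positive lower bound for the Fisher-information-type dissipation on the compact set $\cK_\delta$ (away from $\one$ and away from $\cI$), which immediately produces a uniform upper bound $T$ on the time of first entry into a $\delta$-neighbourhood of $\cI$ in $C^1$. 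Your route instead derives the closed ODE for $m^1$, shows $r$ never vanishes and grows near $\one$, invokes LaSalle for pointwise $\omega$-limit set identification, and then recovers uniformity over $\cQ_\eta$ from upper-semicontinuity of the entry time $\tau$ on the precompact set $\cK=\{m(1\,;\mu):\mu\in\cQ_\eta\}$ and openness of $\cN$. Both approaches are sound; yours is a bit more qualitative (you never exhibit an explicit dissipation lower bound, you get $T_1<\infty$ from topology rather than from an energy estimate), while the paper's is sharper and closer in spirit to what one needs later for quantitative bounds. The $m^1$-equation computation you carry out and the consequent lower bound $r(1)\ge\eta e^{-2\pi^2}$ are correct, and are a nice elementary substitute for the cited Proposition~4.4.

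The one place where your argument leans on a genuine black box is the third step. The invocation of ``standard normally-hyperbolic-manifold theory'' together with ``a parabolic bootstrap carried out exactly as in the proof of Theorem~\ref{thm:3:17:H}'' glosses over the real difficulty: unlike the $H$-stable case of Theorem~\ref{thm:3:17:H}, where $\textrm{\rm Leb}_{\bT^d}$ is a stable fixed point and $L_{\textrm{\rm Leb}}$ has no zero mode among centred perturbations, here $L_{p_{\infty,\psi}}$ has the nontrivial kernel spanned by $p_{\infty,\psi}'$ (rotation invariance of $\cI$), so the contraction holds only transversally to $\cI$ and a moving asymptotic phase must be tracked. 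This is exactly what \cite[Theorem~4.6 and Lemma~4.9]{giacomin:pakdaman:khashayar:pellegrin} supply (uniform exponential attraction to $\cI$ in a tubular neighbourhood, with an asymptotic phase), and the paper's proof cites these results directly, together with \cite[Lemma~2.2]{giacomin:pakdaman:khashayar:pellegrin} for upgrading from $L^2$ to $C^k$. Your references point in the right direction, but for the statement to be airtight you should cite those specific results rather than generic NHM theory, since carrying out a full normally-hyperbolic-manifold argument from scratch (including persistence of the tubular neighbourhood and the phase tracking under the nonlinear semiflow) would be a substantial undertaking.
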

Proposition \ref{prop:5:10} plays a key role in our analysis. 
Notice that the constraint $t \geq 1$ may be easily changed into $t \geq t_{0}$ for any $t_{0}>0$, in which case the 
constant $C$ may depend on $t_{0}$ as well. 
  
In order to state our main result precisely, we need the following 
additional definition.
\begin{definition}
\label{def:rotation:invariant:function}
We say that a function $\Phi : {\mathcal P}(\bT) \rightarrow {\mathbb R}$
is rotation invariant if, for any 
$\mu \in {\mathcal P}(\bT)$ and 
$\psi \in \bT$, 
$\Phi ( 
 \mu \circ \tau_{\psi}^{-1}) = \Phi ( \mu )$,
where 
$ \mu \circ \tau_{\psi}^{-1}$ is the image of $\mu$ by 
the translation $\tau_{\psi} : \bT \ni x \mapsto x + \psi$. 
\end{definition}

We now have all the ingredients to formulate the main theorem of this section.
\begin{theorem}
\label{main:thm:kuramoto}
Assume
that 
$\Phi$ is rotation invariant and 
satisfies 
\emph{\hintphi{\gamma}{2}}
for some $\gamma \in (0,1]$. Then, 
for any $\eta \in (0,1)$, 
there exists a constant $C>0$ such that, 
for any $\mu_{\textrm{\rm init}} \in {\mathcal Q}_{\eta}$ and any $N \geq 1$, 
$$ \sup_{t \geq 0} \Big|  \bE\bigl[ \Phi(\mu^{N}_t)\bigr] - \Phi\bigl(m(t\, ;{\mu_{\text{\emph{init}}}})\bigr) \Big| \leq \frac{C}{N}.$$
\end{theorem}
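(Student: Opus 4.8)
Here is how I would approach the proof of Theorem \ref{main:thm:kuramoto}.

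\emph{Overall strategy.} As in Theorem \ref{thm main result}, the starting point is the exact weak-error identity of Lemma \ref{lem:general:lemma}. First I would reduce, via the mollification of Theorem \ref{thm:regul}, to the case where $\Phi$ satisfies \hintphi{4}{3}; the one extra point to verify is that the regularising procedure can be performed so as to preserve rotation invariance (it can, since the construction of the approximants $\Phi_n$ only involves operations on the measure argument — e.g.\ smoothing of $\mu$ by the heat flow on $\bT$ — that commute with the rotations $\tau_\psi$), and that all $\Phi_n$ satisfy \hintphi{\alpha}{2} with constants independent of $n$. After that, the whole game is to bound, uniformly in $t\ge0$, the two terms on the right-hand side of \eqref{main result intro formula}, which amounts to controlling $\sld[\cU](\tau,\mu)$ and $(\partial_{z_2})_j(\partial_{z_1})_i\sld[\cU](\tau,\mu)(z,z)$ at the random measures $\mu^N_s$ that appear there. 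The difficulty proper to $\kappa>1$ is that, since \eqref{eq: forward eqn } has several invariant measures, \hergo\ fails globally (it already fails at $\textrm{\rm Leb}_\bT$, where the linearised operator has a positive eigenvalue); so the plan is to prove a \emph{localised} substitute for \hergo\ valid in a neighbourhood $\cN$ of the synchronised family $\cI=\{p_{\infty,\psi}\}$, and then to show that $\mu^N$ spends, with overwhelming probability, almost all of its time in $\cN$.

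\emph{Step 1: decay of the derivatives of $\cU$ near $\cI$.} Fix $\eta'=\eta'(\eta)\in(0,1]$ so small that the deterministic flow started from $\cQ_\eta$ stays at distance at least $\eta'$ from $\textrm{\rm Leb}_\bT$ for all $t\ge0$ (this follows from Proposition \ref{prop:5:10}, the profiles $p_{\infty,\psi}$ sharing a common fixed order parameter, together with continuity of the flow on $[0,1]$), and set $\cN=\overline{\{\mu\in\cP(\bT):|\mu^1|>\eta'/2\}}$. I would establish: there are $C,\lambda>0$ such that, for $\mu\in\cN$, all $\tau>0$ and all $z,z_1,z_2\in\bT$,
\begin{equation}
\Bigl| \sld[\cU](\tau,\mu)(z_1,z_2) \Bigr|\le C e^{-\lambda\tau},\qquad
\Bigl| (\partial_{z_2})_j(\partial_{z_1})_i\,\sld[\cU](\tau,\mu)(z,z) \Bigr|\le \frac{C}{1\wedge\tau^{1-\alpha/4}}\,e^{-\lambda\tau},
\label{eq:plan:decay}
\end{equation}
with $C$ depending on $\Phi$ only through \hintphi{\alpha}{2}. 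The proof would follow the three-step scheme of Theorem \ref{thm:3:17:H}, with the reference operator $L_{\textrm{\rm Leb}_\bT}$ there replaced by $L_{p_{\infty,\psi(\tau;\mu)}}$, where $\psi(\tau;\mu)$ is the phase of the profile that $m(\tau\,;\mu)$ approaches; Proposition \ref{prop:5:10} supplies $\|m(\tau\,;\mu)-p_{\infty,\psi(\tau;\mu)}\|_{k,\infty}\le Ce^{-\beta\tau}$ and the exponential convergence $\psi(\tau;\mu)\to\psi_\infty(\mu)$. The essential new feature is that $L_{p_{\infty,\psi}}$ does not contract on all of $\{f\perp\one\}$: it has a spectral gap only on the codimension-one subspace transverse to its kernel, which is spanned by $p'_{\infty,\psi}$ (see \cite{bertini:giacomin:pakdaman,giacomin:pakdaman:khashayar:pellegrin}). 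Hence the adaptation of the backward estimates of Theorem \ref{conjecture backward PDE } around $p_{\infty,\psi}$ — the technical core of this step — yields exponential decay only of the component of the dual solution transverse to $p'_{\infty,\psi}$, equivalently, in the representation formulas of Theorems \ref{recap theorem 4.5} and \ref{diff second order L-deriv}, the tangent processes $m^{(1)},m^{(2)},d^{(1)},d^{(2)}$ relax onto $\textrm{\rm span}(p'_{\infty,\psi})$ rather than to $0$. This is exactly where Definition \ref{def:rotation:invariant:function} enters: differentiating $\Phi(\mu\circ\tau_\psi^{-1})=\Phi(\mu)$ in $\psi$ gives $\langle\ld[\Phi](\mu,\cdot),\partial_x\mu\rangle=0$ for every $\mu$, hence $\ld[\Phi](p_{\infty,\psi})(p'_{\infty,\psi})=0$, and a further differentiation in $\mu$ controls the pairing of $\sld[\Phi](p_{\infty,\psi})$ against $p'_{\infty,\psi}$; so the non-decaying direction is annihilated in the representation formulas. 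Conceptually, rotation invariance forces $\cU(\tau,\cdot)\to\Phi(p_\infty)$ (a constant) as $\tau\to\infty$, so its derivatives must vanish in the limit — and the representation formulas, fed with the spectral gap, make this quantitative, the blow-up $1\wedge\tau^{-(1-\alpha/4)}$ near $\tau=0$ coming (as in Theorem \ref{thm main result}) from the $\alpha/2$ Hölder duality imposed by \hintphi{\alpha}{2}. This gives \eqref{eq:plan:decay}.

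\emph{Step 2: localisation of the empirical measure.} Since in \eqref{main result intro formula} the derivatives of $\cU$ are evaluated at $\mu^N_s$, I would show that the contribution of the times at which $\mu^N_s\notin\cN$ is $O(1/N)$, uniformly in $t$. This combines three ingredients: (i) a finite-horizon concentration estimate, $\bE[\sup_{s\le L}\mathcal{W}_1(\mu^N_s,m(s\,;\mu^N_0))^2]\le C_L/N$, which — since $\mu^N_0$ is $O(N^{-1/2})$ close to $\mu_{\textrm{\rm init}}\in\cQ_\eta$ and $m(\cdot\,;\mu^N_0)$ stays in $\cN$ — yields, by chaining windows of length $\asymp\log N$, that $\mu^N$ stays in a slightly enlarged neighbourhood of $\cI$ up to subexponential-in-$N$ times with probability $1-o(1/N)$, as in \cite{bertini:giacomin:poquet,coppini2019long}; (ii) Lemma \ref{lem:exit:time:cos}, ensuring that $\textrm{\rm Leb}_\bT$ — the only invariant measure outside $\cN$ — cannot trap $\mu^N$, any small neighbourhood of it being exited in time at most polynomial in $N$; and (iii) an a~priori bound $\sup_z|(\partial_{z_2})_j(\partial_{z_1})_i\sld[\cU](\tau,\mu)(z,z)|\le C(1+\textrm{\rm dist}(\mu,\textrm{\rm Leb}_\bT)^{-c})$, uniform in $\tau\ge0$ (the deterministic flow from such $\mu$ leaves any neighbourhood of $\textrm{\rm Leb}_\bT$ in time $O(\log(1/\textrm{\rm dist}))$ and then decays by Step 1). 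A renewal argument — excursions of $\mu^N$ out of $\cN$ occur at rate exponentially small in $N$ and have durations polynomially bounded in $N$, so their time-average is $o(1/N)$ even against the polynomial weight in (iii) — together with a cut-off $\chi:\cP(\bT)\to[0,1]$ equal to $1$ on $\cN$ (in the spirit of Proposition \ref{prop:5:11}) gives a constant $C$, independent of $N$ and $t$, with
\begin{equation}
\sup_{t\ge0}\ \bE\Bigl[\int_0^t\one_{\{\mu^N_s\notin\cN\}}\ \sup_{z\in\bT}\Bigl|(\partial_{z_2})_j(\partial_{z_1})_i\sld[\cU](t-s,\mu^N_s)(z,z)\Bigr|\ \ud s\Bigr]\le\frac{C}{N}.
\label{eq:plan:excursion}
\end{equation}

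\emph{Step 3: assembling.} For $N$ large, $m^N_{s,s_1}$ lies within $O(1/N)$ of $\mu_{\textrm{\rm init}}\in\cQ_\eta\subset\cN$ for all $s,s_1\in[0,1]$, so the first bound in \eqref{eq:plan:decay} with $\tau=t$ controls the first term of \eqref{main result intro formula} by $C/N$, uniformly in $t$ and $N$. For the second term, split the spatial integral according to whether $\mu^N_s\in\cN$: on $\{\mu^N_s\in\cN\}$ the second bound in \eqref{eq:plan:decay} with $\tau=t-s$ applies, and since $\int_0^t(1\wedge(t-s)^{-(1-\alpha/4)})e^{-\lambda(t-s)}\,\ud s$ is bounded uniformly in $t$, this part is $O(1/N)$; on $\{\mu^N_s\notin\cN\}$ it is $O(1/N)$ by \eqref{eq:plan:excursion}. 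Summing and recalling the reduction to smooth $\Phi$ completes the proof. The main obstacle I anticipate is Step 2 — the quantitative, uniform-in-time control of the stochastic excursions of $\mu^N$ away from $\cI$ (in particular the recurrence/renewal bookkeeping that must survive beyond the horizon on which finite-time chaining is informative) — together with the technical heart of Step 1, namely adapting the backward Schauder-and-spectral-gap machinery of Theorem \ref{conjecture backward PDE } to the whole circle of synchronised states $\{p_{\infty,\psi}\}$ while tracking the asymptotic phase $\psi_\infty(\mu)$.
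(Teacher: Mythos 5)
Your Step 1 is in the spirit of the paper (localise near $\mathcal I$, exploit the spectral gap transverse to $\textrm{span}(p'_{\infty,\psi})$, cancel the neutral direction via rotation invariance; this is Propositions \ref{prop:Q:estimates}--\ref{prop:5:5}, \ref{prop:tangent:proc:kuramoto} and \ref{corol:1}), though you overclaim slightly: the paper does \emph{not} prove exponential decay of $\sld[\cU]$, only boundedness (see \eqref{eq:derivatives:proc:kuramoto:m} in Proposition \ref{corol:1}), and that suffices because the first term in \eqref{main result intro formula} only needs a uniform bound. Your remark on mollification is a fair point the paper leaves implicit — one needs the regularisation of Theorem \ref{thm:regul} to be rotation-equivariant, which it is once the Fourier-coefficient mollifier $\varrho$ is chosen radially symmetric; your heat-flow alternative is a different but plausible fix.

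The real gap is your Step 2(iii), together with the renewal bookkeeping that depends on it. You want to integrate the second line of \eqref{main result intro formula} through excursions of $\mu^N_s$ near $\textrm{Leb}_\bT$ and beat the resulting blow-up with an a~priori bound of the form $\sup_z|\partial_{z_1}\partial_{z_2}\sld[\cU](\tau,\mu)(z,z)|\le C(1+\textrm{\rm dist}(\mu,\textrm{\rm Leb})^{-c})$, uniformly in $\tau\ge0$. Nothing of this kind is established anywhere in the paper, and it is genuinely delicate: the heuristic ``escape time $\sim\log(1/\textrm{\rm dist})$ plus subsequent decay'' does give a candidate exponent $c$, but the tangent processes $d^{(1)},d^{(2)}$ start from distributional data and the hyperbolic escape estimate must be made uniform in the initial point near the unstable fixed point, including the interplay of the unstable mode with the Schauder smoothing — that is a new estimate, not a corollary of anything proved here. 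Moreover, even granted (iii), controlling $\textrm{\rm dist}(\mu^N_s,\textrm{\rm Leb})^{-c}$ under the expectation, together with quantitative recurrence of $\mu^N$ to $\cN$, is more work than you acknowledge.

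The paper sidesteps (iii) entirely and never evaluates derivatives of $\cU$ outside $\cQ_\eta$. Its mechanism is different in two ways. First, in Lemma \ref{lem:kura:expN1/2} it stops the It\^o expansion at the exit time $\tau_N$ from a neighbourhood of $\cI$ and pays $\|\Phi\|_\infty\cdot\bP(\tau_N<\exp(N^{1/2}))\le C/N$ on the bad event, using the subexponential confinement estimate of \cite{coppini2019long}; no bound on $\partial^2\cU$ beyond the good set is ever needed. Second, to reach all $t\ge 0$, it introduces the semigroup $\overline{\cU}^N$ of the $N$-particle empirical process (Subsection~\ref{subse:kuramoto:weak}) and applies the strong Markov property at the first hitting time of $\cQ_\eta$ after $t-T_N$ with $T_N=\exp(N^{1/4})/2$; Lemma \ref{lem:exit:time:cos} ensures this hit occurs within $N^{1/4}$ with probability $1-C/N$. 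Proposition \ref{prop:5:11} then handles the remaining (subexponential) horizon, and the last ingredient is that for $s\gtrsim T_N$ and $\nu\in\cQ_\eta$, Proposition \ref{prop:5:10} forces $\cU(s,\nu)=\Phi(\cI)+O(1/N)$ \emph{uniformly} in $\nu$, so the randomness of the restart position is harmless (see \eqref{end:proof:main:statement}). None of this requires your (iii), and none of it is a renewal decomposition over excursions. In short: your localisation idea is correct, but the way you handle the times when $\mu^N_s$ is near $\textrm{Leb}_\bT$ — integrating through excursions with an unproven polynomial a~priori bound — is a gap; the paper instead uses a stopping-time cut (pay $\|\Phi\|_\infty\cdot\bP(\text{bad})$) plus a Markov restart through the particle-system semigroup $\overline{\cU}^N$, which removes the need to control $\partial^2\cU$ anywhere near the unstable equilibrium.
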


Very much in the spirit of 
Proposition \ref{prop:4:phi:norm:-d}, 
the most useful example for $\Phi$ is

\begin{proposition}
\label{prop:choose:Phi:kuramoto}
Let $p_{\infty,+}$ denote the unique element of ${\mathcal I}$ whose first Fourier coefficient $p_{\infty,+}^1$ is a positive real and let $\mu_{\infty,+}:=p_{\infty,+} \cdot \textrm{\rm Leb}_{\mathbb T}$. 
For $\varepsilon \in (0,1]$ and for a smooth non-decreasing cut-off function $\varphi : [0,1] \rightarrow [0,1]$ that is equal to $0$ on 
$[0,\delta/2]$ and  $1$ on $[\delta,1]$, for some $\delta \in (0,1)$, let $\Phi$ be defined by
\begin{equation*}
\Phi(\mu) := 
\varphi \bigl( \vert \mu^1 \vert \bigr) 
\Bigl\| \mu \circ \tau_{\frac{\mu^1}{\vert \mu^1\vert}}^{-1} - \mu_{\infty,+} \Bigr\|_{-(1+\varepsilon)/2,2}^2 + 1 - \varphi \bigl( \vert \mu^1 \vert \bigr), 
\end{equation*}
where $\| \cdot \|_{-(1+\varepsilon)/2,2}$ is defined  
as in Proposition 
\ref{prop:4:phi:norm:-d}
with $d=1$ therein. 
Then $\Phi$ satisfies the assumption of 
Theorem 
\ref{main:thm:kuramoto}. 
\end{proposition}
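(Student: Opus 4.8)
\textbf{Proof proposal for Proposition \ref{prop:choose:Phi:kuramoto}.}

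The plan is to verify the two structural requirements separately: first, that $\Phi$ is rotation invariant in the sense of Definition \ref{def:rotation:invariant:function}, and second, that $\Phi$ satisfies \hintphi{\alpha/2}{2} for $\alpha=\varepsilon$ (the H\"older exponent in the duality being $\varepsilon/2$, as in the statement of Proposition \ref{prop:4:phi:norm:-d}). Rotation invariance is the conceptually transparent part: under the translation $\tau_\psi$, the first Fourier coefficient transforms as $(\mu\circ\tau_\psi^{-1})^1 = e^{-\i 2\pi\psi}\mu^1$, so $\vert(\mu\circ\tau_\psi^{-1})^1\vert = \vert\mu^1\vert$ and the phase $(\mu\circ\tau_\psi^{-1})^1/\vert(\mu\circ\tau_\psi^{-1})^1\vert = e^{-\i 2\pi\psi}\mu^1/\vert\mu^1\vert$ shifts by exactly $\psi$; hence $\mu\circ\tau_\psi^{-1}$ re-centered by its own phase coincides with $\mu$ re-centered by its phase, and both the cut-off term $\varphi(\vert\mu^1\vert)$ and the Sobolev norm term are left unchanged. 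This requires the cut-off to kill the region $\vert\mu^1\vert\le\delta/2$ where the phase $\mu^1/\vert\mu^1\vert$ is not even well defined, which is precisely why $\varphi$ is taken to vanish near $0$.

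For the regularity, I would write $\Phi(\mu) = 1 + \varphi(\vert\mu^1\vert)\bigl(\Psi(\mu)-1\bigr)$ with $\Psi(\mu):=\bigl\|\mu\circ\tau_{\mu^1/\vert\mu^1\vert}^{-1}-\mu_{\infty,+}\bigr\|_{-(1+\varepsilon)/2,2}^2$, and compute the linear functional derivatives by the product/chain rule. The map $\mu\mapsto\mu^1$ is linear in $\mu$ (its derivative in the $\delta m$ sense is $x\mapsto e^{-\i 2\pi x}-\mu^1$), hence smooth; composed with the smooth functions $z\mapsto\vert z\vert$ and $z\mapsto z/\vert z\vert$ on the region $\{\vert z\vert\ge\delta/2\}$ (which is where $\varphi$ and all its derivatives are supported), one obtains that $\psi(\mu):=\mu^1/\vert\mu^1\vert$ has bounded and smooth first and second linear functional derivatives on $\{\vert\mu^1\vert\ge\delta/2\}$. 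Next, for fixed $\psi$, the translated-and-squared Sobolev norm is a ``completely non-linear'' functional of the same type as in Proposition \ref{prop:4:phi:norm:-d}; its Fourier expansion in the translated variable introduces only phase factors $e^{\pm\i 2\pi\boldsymbol{n}\psi}$, which are smooth in $\psi$, so differentiating once more in $\mu$ through the $\psi$-dependence keeps everything controlled, using the summability $\sum_{\boldsymbol{n}}\vert\boldsymbol{n}\vert^{\varepsilon/2}(1+\vert\boldsymbol{n}\vert^2)^{-(1+\varepsilon)/2}<\infty$ exactly as in \eqref{eq:series:convergent}. The H\"older-$(\varepsilon/2)$ continuity in the spatial variables $y_1,y_2$ of $\delta\Phi/\delta m$ and $\delta^2\Phi/\delta m^2$ then comes, as in the proof of Proposition \ref{prop:4:phi:norm:-d}, from the bound $\vert e^{-\i 2\pi\boldsymbol{n}\cdot x}-e^{-\i 2\pi\boldsymbol{n}\cdot x'}\vert\le C\vert\boldsymbol{n}\vert^{\varepsilon/2}\vert x-x'\vert^{\varepsilon/2}$ summed against the convergent series; the extra $\psi$-dependent phases contribute only bounded smooth factors and do not affect the H\"older modulus in $x$.

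The main obstacle I anticipate is bookkeeping the cross terms produced when the product $\varphi(\vert\mu^1\vert)\Psi(\mu)$ is differentiated twice: the second linear functional derivative splits into a term where both derivatives hit $\Psi$, a term where both hit $\varphi(\vert\mu^1\vert)$, and mixed terms, and in each of these one must track that every factor involving $\mu^1$ is evaluated only where $\vert\mu^1\vert\ge\delta/2$ (so the non-smooth points of $z\mapsto\vert z\vert$ and $z\mapsto z/\vert z\vert$ are never reached) and that the factor carrying the spatial variables $y_1,y_2$ is always one of the $\Psi$-type pieces, whose $\varepsilon/2$-H\"older regularity is already known; the $\varphi(\vert\mu^1\vert)$-type pieces depend on $\mu$ only through the single linear functional $\mu^1$, hence their spatial dependence is through $y\mapsto e^{-\i 2\pi y}$, which is $C^\infty$ in $y$ and thus trivially $\varepsilon/2$-H\"older. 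Collecting these bounds uniformly in $\mu\in{\mathcal P}(\bT)$ — which is automatic because $\vert\mu^1\vert\le 1$ always and the cut-off confines the delicate region to a compact annulus — yields \hintphi{\alpha/2}{2}, and combined with the rotation invariance this is exactly the hypothesis of Theorem \ref{main:thm:kuramoto}.
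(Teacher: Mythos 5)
Your rotation-invariance argument is correct and is in fact more explicit than what the paper provides (the paper's proof addresses only the smoothness of $\Phi$ and takes rotation invariance as understood). Your regularity analysis follows essentially the same route as the paper: write $\tilde\mu := \mu\circ\tau_{\mu^1/\vert\mu^1\vert}^{-1}$, compute $\tilde\mu^n = (\vert\mu^1\vert/\mu^1)^n\mu^n$, expand the Sobolev norm in Fourier modes, and differentiate through the phase factor $\Psi(\mu)^n := (\vert\mu^1\vert/\mu^1)^n$, using that $\Psi$ is a smooth function of $\mu$ on $\{\vert\mu^1\vert\ge\delta/2\}$.

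There is, however, a quantitative gap where you claim the $\psi$-dependence is ``kept controlled, using the summability $\sum_n\vert n\vert^{\varepsilon/2}(1+\vert n\vert^2)^{-(1+\varepsilon)/2}<\infty$ exactly as in \eqref{eq:series:convergent}.'' That series controls the $\varepsilon/2$-H\"older modulus in the spatial variable only. It does not control the extra powers of $n$ that appear when the product/chain rule is applied to $\Psi(\mu)^n$: each linear-functional derivative in $\mu$ that falls on the phase pulls down a factor of $n$, so the relevant series becomes $\sum_n n(1+n^2)^{-(1+\varepsilon)/2}$ at first order and $\sum_n n^2(1+n^2)^{-(1+\varepsilon)/2}$ at second order, and both diverge for $\varepsilon\le 1$. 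Convergence is rescued only because, in the Parseval expansion of $\Phi$, the factors $\Psi(\mu)^n$ appear exclusively in the cross terms, where they are always multiplied by $\widehat{\mu_{\infty,+}}^n$ or its conjugate (this is visible in the paper's explicit formula $\Phi(\mu)=\sum_n (1+n^2)^{-(1+\varepsilon)/2}\bigl[\mu^n\bar\mu^n + \nu_0^n\bar\nu_0^n - \Psi(\mu)^n\mu^n\bar\nu_0^n - \overline\Psi(\mu)^n\nu_0^n\bar\mu^n\bigr]$ with $\nu_0 = \mu_{\infty,+}$). Since $p_{\infty,+}$ is a smooth density, $\widehat{\mu_{\infty,+}}^n$ decays faster than any polynomial and absorbs the extra $n^k$ growth. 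You should invoke this smoothness of $\mu_{\infty,+}$ explicitly; the summability condition you cite is insufficient on its own, and as written your middle paragraph would not yield boundedness of $\delta\Phi/\delta m$ or $\delta^2\Phi/\delta m^2$.
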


We add a few words about the meaning of $p_{\infty,+}$. The elements of ${\mathcal I}$ are obtained by rotation. Therefore, the collection of their first Fourier coefficients coincides with a circle, whose radius is non-zero (see \cite[Subsection 1.2]{bertini:giacomin:poquet}). Consequently, we may indeed choose $p_{\infty,+} \in {\mathcal I}$ such that 
$p_{\infty,+}^1>0$. 
Accordingly, we also notice that, in the notation $\mu \circ \tau_{\mu^1/\vert \mu^1 \vert}^{-1}$, 
we identify $ \mu^1/\vert \mu^1 \vert$ with the unique element $\psi \in {\mathbb T}$ such that 
$\mu^1= \vert \mu^1 \vert \exp(\i 2 \pi \psi)$. In particular, 
the first Fourier coefficient $(\mu \circ \tau_{\mu^1/\vert \mu^1\vert}^{-1})^1$ of 
$\mu \circ \tau_{\mu^1/\vert \mu^1\vert }^{-1}$, which is 
equal to $\exp(- \i 2 \pi \psi) \mu^1= \vert \mu^1 \vert$, is 
positive (when $\mu^1 \not = 0$), which explains why we compare $\mu \circ \tau_{\mu^1/\vert \mu^1\vert}^{-1}$ with $\mu_{\infty,+}$. We refer to 
\cite[Lemma 2.8]{MR3689966} for another 
projection onto ${\mathcal I}$.

Moreover, we notice that this is precisely the role of the cut-off function $\varphi$ in the definition of $\Phi$  to remove the measures $\mu$ for which $\mu^1 =0$. In fact, the cut-off function has no real consequence on our result. 
Actually, what matters is that $\Phi(\mu)$ is small if and only if $\mu$ is close enough to ${\mathcal I}$. Precisely, we can find a constant $c>0$ such that 
\begin{equation}
\label{eq:lower:bound:Phi}
\Phi(\mu) \geq c \inf_{\psi \in \bT} \bigl\| \mu - \mu_{\psi} \|_{-(1+\varepsilon)/2,2}^2. 
\end{equation}
The proof of the above lower bound is quite easy. It suffices to prove it for $\vert \mu^1\vert$ bounded away from zero. To do so, we may observe that 
$\|  \mu \circ \tau_{\mu^1/\vert \mu^1\vert}^{-1} - \mu_{{\infty,+}}\|_{-(1+\varepsilon)/2,2}^2$
is lower-bounded by 
$\inf_{\psi \in \bT} \bigl\| \mu - \mu_{\psi} \|_{-(1+\varepsilon)/2,2}^2$.
The following is a straightforward corollary.
\begin{corollary}
\label{corol:H-3/2}
For any $\eta \in (0,1)$ and $\varepsilon \in (0,1]$, 
there exist two (positive) constants $c$ and $C$ such that, for $\mu_{\textrm{\rm init}}$ in ${\mathcal Q}_{\eta}$, 
\begin{equation*}
\forall t \geq 0, \quad {\mathbb E} \Bigl[\inf_{\psi \in \bT} \bigl\| \mu_{t}^N - \mu_{\psi} \|_{-(1+\varepsilon)/2,2}^2 \Bigr] \leq \frac{C}{N} + C \exp(-ct).
\end{equation*}
\end{corollary}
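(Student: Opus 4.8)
The plan is to deduce Corollary \ref{corol:H-3/2} as a direct consequence of Theorem \ref{main:thm:kuramoto} applied to the specific test function $\Phi$ built in Proposition \ref{prop:choose:Phi:kuramoto}, combined with the lower bound \eqref{eq:lower:bound:Phi} and the long-time convergence of the Fokker--Planck flow toward $\mathcal{I}$ provided by Proposition \ref{prop:5:10}. So the structure is: first produce a uniform-in-time $O(1/N)$ bound for $\mathbb{E}[\Phi(\mu^N_t)]$; second, translate that into a bound for the Sobolev-type distance of $\mu^N_t$ to $\mathcal{I}$ using \eqref{eq:lower:bound:Phi}; third, incorporate the exponential term $\exp(-ct)$ coming from the decay of $\Phi(m(t\,;\mu_{\textrm{init}}))$.

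First I would check that the function $\Phi$ of Proposition \ref{prop:choose:Phi:kuramoto} is admissible for Theorem \ref{main:thm:kuramoto}: it is rotation invariant (since both $\vert \mu^1\vert$ and the norm $\| \mu \circ \tau_{\mu^1/\vert \mu^1\vert}^{-1} - \mu_{\infty,+}\|_{-(1+\varepsilon)/2,2}^2$ are invariant under $\mu \mapsto \mu \circ \tau_\psi^{-1}$), and it satisfies \hintphi{\alpha}{2} for some $\alpha \in (0,1]$ — this is exactly the content of Proposition \ref{prop:choose:Phi:kuramoto}, via the computation in Proposition \ref{prop:4:phi:norm:-d} with $d=1$ (so $\alpha = \varepsilon$, say, up to the usual $\alpha/2$ bookkeeping) together with the fact that the cut-off $\varphi(\vert \mu^1\vert)$ is a smooth function of the (smooth, bounded) linear functional $\mu \mapsto \mu^1$ and hence does not spoil the required regularity. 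Granting this, Theorem \ref{main:thm:kuramoto} yields a constant $C>0$, depending on $\eta$ and $\varepsilon$, such that for all $\mu_{\textrm{init}} \in \mathcal{Q}_\eta$, all $N\geq 1$ and all $t\geq 0$,
\begin{equation*}
\Bigl| \mathbb{E}\bigl[\Phi(\mu^N_t)\bigr] - \Phi\bigl(m(t\,;\mu_{\textrm{init}})\bigr) \Bigr| \leq \frac{C}{N}.
\end{equation*}

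Next I would deal with the deterministic term $\Phi(m(t\,;\mu_{\textrm{init}}))$. Since $\mu_{\textrm{init}}\in\mathcal{Q}_\eta$, Proposition \ref{prop:5:10} gives, for $t\geq 1$ (and trivially bounding on $[0,1]$), an exponent $\beta>0$ and a constant so that $\inf_{\psi}\| p(t\,;\mu_{\textrm{init}}) - p_{\infty,\psi}\|_{k,\infty} \leq C\exp(-\beta t)$; feeding this into the definition of $\Phi$ (and using that the map $\mu\mapsto \mu^1$ and the projection $\mu\mapsto\mu\circ\tau_{\mu^1/\vert\mu^1\vert}^{-1}$ are Lipschitz on $\mathcal{Q}_\eta$, so that $m(t\,;\mu_{\textrm{init}})$ close to $\mathcal{I}$ forces $\Phi(m(t\,;\mu_{\textrm{init}}))$ small) yields $\Phi(m(t\,;\mu_{\textrm{init}})) \leq C\exp(-ct)$ for some $c>0$. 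Combining with the previous display, $\mathbb{E}[\Phi(\mu^N_t)] \leq C/N + C\exp(-ct)$. Finally, applying the lower bound \eqref{eq:lower:bound:Phi}, namely $\Phi(\mu)\geq c\inf_{\psi}\|\mu - \mu_\psi\|_{-(1+\varepsilon)/2,2}^2$, to $\mu = \mu^N_t$ and taking expectations gives
\begin{equation*}
\mathbb{E}\Bigl[\inf_{\psi\in\bT}\bigl\|\mu^N_t - \mu_\psi\bigr\|_{-(1+\varepsilon)/2,2}^2\Bigr] \leq \frac{1}{c}\,\mathbb{E}\bigl[\Phi(\mu^N_t)\bigr] \leq \frac{C}{N} + C\exp(-ct),
\end{equation*}
which is the claim after renaming constants. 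The only genuinely non-routine point is the verification that the cut-off-modified, projection-composed functional $\Phi$ really satisfies \hintphi{\alpha}{2} uniformly — i.e. that composing the $H^{-(1+\varepsilon)/2}$-norm-squared with the nonlinear projection $\mu\mapsto\mu\circ\tau_{\mu^1/\vert\mu^1\vert}^{-1}$ and multiplying by $\varphi(\vert\mu^1\vert)$ preserves the requisite bounded, Hölder-continuous flat derivatives up to order two — but this is exactly what Proposition \ref{prop:choose:Phi:kuramoto} asserts, so here it may simply be invoked; everything else is a short computation.
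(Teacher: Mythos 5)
Your proposal is correct and takes essentially the same route as the paper's own proof: invoke Theorem \ref{main:thm:kuramoto} with the $\Phi$ of Proposition \ref{prop:choose:Phi:kuramoto}, use the lower bound \eqref{eq:lower:bound:Phi} to convert the bound on $\mathbb{E}[\Phi(\mu^N_t)]$ into the stated Sobolev-distance estimate, and obtain the exponential term from the decay of $\Phi(m(t\,;\mu_{\textrm{init}}))$ via Proposition \ref{prop:5:10}. The only place the paper says more than you do is in the middle step: where you invoke ``Lipschitz continuity of the projection on ${\mathcal Q}_\eta$'' to pass from $\inf_\psi \|p(t\,;\mu)-p_{\infty,\psi}\|$ small to $\Phi(m(t\,;\mu))$ small, the paper actually carries out that computation — it uses that $\vert (p(t\,;\mu))^1\vert$ is uniformly bounded away from $0$ for $t$ large, so the phase $(p(t\,;\mu))^1/\vert (p(t\,;\mu))^1\vert$ converges exponentially fast to $(p_{\infty,\psi})^1/\vert(p_{\infty,\psi})^1\vert$, and then that the shift map $\psi \mapsto p_{\infty,+}\circ\tau_\psi$ is Lipschitz because $p_{\infty,+}$ is smooth. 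That is precisely the content of the Lipschitz claim you invoke, so your argument is sound; it is just stated at a higher level where the paper unpacks the detail.
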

\begin{proof}[Proof of Corollary \ref{corol:H-3/2}]
We take for granted the statements of Theorem 
\ref{main:thm:kuramoto} and 
Propositions 
\ref{prop:5:10} and \ref{prop:choose:Phi:kuramoto}. 
{By 
\eqref{eq:lower:bound:Phi} and with $\Phi$ as in Proposition \ref{prop:choose:Phi:kuramoto}}, it suffices to prove that 
$\Phi(m(t\, ;\mu))$ decays exponentially fast for any $\mu \in {\mathcal Q}_{\eta}$. 
By Proposition 
\ref{prop:5:10}, there exists $\psi \in \bT$ such that 
$\| p(t\, ; \mu) - p_{\infty,\psi}\|_{2} \leq C \exp (-ct)$, for $t \geq 1$. 
Hence, it is enough to show that 
\begin{equation*}
\Bigl\| p_{\infty,\psi} - p_{\infty,+} \circ \tau_{\frac{(p(t,\mu))^1}{\vert (p(t,\mu))^1 \vert}}
\Bigr\|_{2} \leq C \exp(- ct),
\end{equation*}
or equivalently that 
\begin{equation*}
\Bigl\| p_{\infty,+} \circ \tau_{\frac{(p_{\infty,\psi})^1}{\vert (p_{\infty,\psi})^1 \vert}} - 
p_{\infty,+}
\circ \tau_{\frac{(p(t,\mu))^1}{\vert (p(t,\mu))^1} \vert}
\Bigr\|_{2} \leq C \exp(- ct),
\end{equation*}
at least for $t$ sufficiently large. 
Since $p^1_{\infty,+}>0$, we know that
$\vert (p(t,\mu))^1 \vert$ is lower-bounded by a positive constant, uniformly over all $t$ greater than some $t_{0}>0$. 
In turn, we have 
\begin{equation*}
\biggl\vert 
\frac{(p(t,\mu))^1}{ \vert (p(t,\mu))^1 \vert} 
- 
\frac{(p_{\infty,\psi})^1}{ \vert (p_{\infty,\psi})^1 \vert} 
\biggr\vert \leq C \exp(-ct),
\end{equation*}
for some possibly new value of $C$, which gives the expected result.  
\end{proof}

\subsubsection*{Proofs of the auxiliary Propositions 
\ref{prop:5:10}
and
\ref{prop:choose:Phi:kuramoto}}
The reader may skip the proofs of
Propositions 
\ref{prop:5:10}
and
\ref{prop:choose:Phi:kuramoto}  
 ahead on a first reading. 

\begin{proof}[Proof of Proposition \ref{prop:choose:Phi:kuramoto}.]
We use a simplified notation $\tilde \mu$ for 
$\mu \circ \tau_{{\mu^1}/{\vert \mu^1\vert}}^{-1}$. 
The tricky point is then to study the smoothness of the mapping 
\begin{equation*}
\tilde{\Phi}(\mu) = \bigl\| \tilde \mu - \nu_{0} \bigr\|_{-(1+\varepsilon)/2,2}^2,
\end{equation*}
at least when $\mu^1$ stays away from $0$, for  $\nu_{0}$  {fixed}. 
We observe that we have the following: 
\begin{equation*}
\tilde\mu^n
= \int_{\bT} e^{-\i 2\pi n x} d \tilde \mu(x)
= \Bigl( \frac{\vert \mu^1 \vert}{\mu^1} \Bigr)^n
\int_{\bT} e^{-\i 2\pi n x} d \mu(x) =
\Bigl( \frac{\vert \mu^1 \vert}{\mu^1} \Bigr)^n  \mu^n. 
\end{equation*}
Therefore, following 
\eqref{eq:Fourier:Phi}, 
we get 
\begin{equation*}
\Phi(\mu) = \sum_{n \in {\mathbb N}} \frac1{(1+n^{2})^{(1+\varepsilon)/2}} 
\biggl[ \mu^n \bar \mu^n + \nu_{0}^n \bar \nu_{0}^n - \Bigl( \frac{\vert \mu^1 \vert}{\mu^1} \Bigr)^n \mu^n \bar \nu_{0}^n - \Bigl( \frac{\vert \mu^1 \vert}{\bar \mu^1} \Bigr)^n \nu_{0}^n \bar \mu^{n}
\biggr],
\end{equation*}
which we can then rewrite in the form 
\begin{equation*}
\Phi(\mu) = \sum_{n \in {\mathbb N}}   \frac1{(1+n^{2})^{(1+\varepsilon)/2}} 
\biggl[ \mu^n \bar \mu^n + \nu_{0}^n \bar \nu_{0}^n - \bigl( \Psi(\mu) \bigr)^n \mu^n \bar \nu_{0}^n - \bigl( 
\overline{\Psi}(\mu)
 \bigr)^n \nu_{0}^n \bar \mu^{n}
\biggr],
\end{equation*}
with $\Psi(\mu) = \vert \mu^1 \vert / \mu^1$. 
On the open subset $\{\mu^1 \not =0\}$ (for the 
${\mathcal W}^1$ topology), the function $\Psi$ is infinitely differentiable with respect to 
$\mu$. The power $n$ creates additional factors that are handled in the same way as in the proof of Proposition 
\ref{prop:4:phi:norm:-d}. As a result, we get that, for the same values of $k$, 
$\Phi$ satisfies the same properties as in Proposition \ref{prop:4:phi:norm:-d}, but on any domain where 
$\mu^1$ stays away from $0$. 
\end{proof}

We close this subsection with the following proof.

\begin{proof}[Proof of Proposition \ref{prop:5:10}.]
The proof is achieved in two steps, which are mostly adapted from 
\cite{bertini:giacomin:pakdaman} and \cite{giacomin:pakdaman:khashayar:pellegrin}.
\vskip 4pt

\textit{First step.}
The first step is to show that
\begin{equation*}
\lim_{t \rightarrow \infty}
\sup_{\mu \in {\mathcal Q}_{\eta}}
\inf_{\psi \in {\mathbb T}}
\
\bigl\| p(t\, ;\mu) - p_{\infty,\psi} \bigr\|_{2}  =0. 
\end{equation*}
In order to do so, we follow the proof of Proposition 1.7 in \cite{bertini:giacomin:pakdaman} and of 
Proposition \ref{prop:cas:potential}. 
We recall indeed that the McKean-Vlasov equation
\eqref{eq:se:5:FKP}
 may be regarded as a gradient flow, with potential
 ${\mathcal F}$ in 
\eqref{eq:potential:mathcalF} (with $W(x)=-\cos(2 \pi x)$). 
%
Then, we know from Proposition 4.4 in \cite{giacomin:pakdaman:khashayar:pellegrin} that, after some time 
$t_{\eta,\epsilon}$ (independent of the choice of $\mu \in {\mathcal Q}_{\eta}$), $p(t \,  ;\mu) \not \in B_{L^2(\bT)}(\one,\epsilon)$ for a given $\epsilon>0$ (here,  $B_{L^2(\bT)}(\one,\epsilon)$ is the $L^2(\bT)$-ball of center $\one$ and radius $\epsilon$). 
Also, after the same time $t_{\eta,\epsilon}$, we know that 
$p(t \, ; \mu)$ belongs to a compact subset ${\mathcal K}$ of ${\mathcal C}^1({\mathbb T},(0,+\infty))$, which may be chosen independently of $\mu$.

For $\delta >0$ as above, let ${\mathcal K}_{\delta} = \{ p \in {\mathcal K} : \| p- \one \|_{{\mathcal C}^1} \geq \delta, \,
\inf_{\psi \in \bT} \|p - p_{\infty,\psi} \|_{{\mathcal C}^1} \geq \delta \}$, which is a compact subset of 
${\mathcal C}^1$. Next, we follow
\eqref{eq:gradient:flow}. 
If 
%
%
we take $t_{1}=t_{\eta,\epsilon}$ 
and $t_{2} = \inf\{ t \geq t_{\eta,\epsilon} : p(t,\mu) \not \in {\mathcal K}_{\delta} \}$, we obtain the analogue of
\eqref{eq:gradient:flow:222}. 
By the same  argument as therein,  we deduce that, for any $\mu \in {\mathcal Q}_{\eta}$, $t_{2}$ is finite and that 
there exists $T<\infty$, independent of $\mu$, such that $t_{2} \leq T$. 

Now, if we choose $\delta \leq \epsilon$, then we cannot have $\| p(t\, ;\mu) - \one \|_{{\mathcal C}^1} \leq \delta$ for $t \geq t_{1}$, as otherwise {$p(t\, ;\mu)$} would belong to $B_{L^2(\bT)}(\one,\epsilon)$. Therefore, the condition that $t_{2}< \infty$ implies 
$\inf_{\psi \in \bT} \| p(t_{2}\, ;\mu) - p_{\infty,\psi} \|_{{\mathcal C}^1} < \delta$. 
All in all, we deduce that, for any $\mu \in {\mathcal Q}_{\eta}$, there exists $t_{2}$ (possibly depending on 
$\mu$), less than $T$ (which is independent of $\mu$), such that 
$\inf_{\psi \in \bT} \|p(t_{2}\, ;\mu) - p_{\infty,\psi} \|_{{\mathcal C}^1} \leq \delta$. 
\vskip 4pt

\textit{Second step.}
By choosing $\delta$ as small as we want, we deduce that, after the same time $t_{2} \leq T$ as above, 
$p(t_{2}\, ;\mu)$, regarded as the new initial condition for the Fokker-Planck equation, satisfies 
the assumption of Theorem 4.6 in \cite{giacomin:pakdaman:khashayar:pellegrin}. 
It now remains to see that the exponential decay in \cite[Theorem 4.6]{giacomin:pakdaman:khashayar:pellegrin} is uniform 
on $B_{L^2(\bT)}(p_{\infty,\psi},\delta)$, provided that $\delta$ is small enough (independently of the value of $\psi$), which is exactly \cite[Lemma 4.9]{giacomin:pakdaman:khashayar:pellegrin}. 
By combining with the smoothing estimate in 
\cite[Lemma 2.2]{giacomin:pakdaman:khashayar:pellegrin}, this completes the proof. 
\end{proof}

\subsection{Linearised operator}
\label{subse:5.1}
We now address the linearisation of the nonlinear Fokker-Planck equation
at 
a (non-trivial) invariant measure (which hence has the form 
$p_{\infty,\psi}$ for some phase $\psi \in \bT$). 
For simplicity, we write $p_{\psi}$ instead of 
$p_{\infty,\psi}$. 
The linearised version
\eqref{eq:linearized:operator} may be written in the form 
\begin{equation*}
\partial_{t} q(t) - L_{m(t  ;\mu)} q(t) = 0,
\end{equation*}
with the convenient notation
\begin{equation*}
  L_{m} q  
= \tfrac12 \partial^2_{xx} q + \partial_{x} \Bigl( q \bigl( J \star m \bigr)   + m  \bigl( J \star q \bigr)   
\Bigr),
 \end{equation*}
for any two distributions $m$ and $q$ acting on smooth functions on the torus (notice that
$L_{m} q$ always makes sense as a distribution since 
$J \star m$ and $J \star q$ themselves should be smooth functions).  
When $m=p_{\psi}$ for some element $\psi \in \bT$, we merely write 
$L_{\psi}$ for $L_{m}$. 
 Notice in particular that, by choosing $m_{t}=p_{\psi}$ (which is hence independent of $t$)
 in  
\eqref{eq:se:5:FKP} and then by taking the derivative with respect to $\psi$ (which coincides with the derivative in $x$),  
\begin{equation}
\label{eq:se:5:ppsiprime}
L_{\psi} p_{\psi}'=0,
\end{equation}
where $p_{\psi}'=(\ud/\ud x) p_{\psi}$. 
The above identity was already used in \cite{bertini:giacomin:pakdaman} and in the subsequent works of the same authors. It plays a key role here in our analysis as well. 

We start with 
the long-run analysis of the linearised operator, which is the most demanding step. In fact, the proof is made easier by all the existing results on the Kuramoto model, but the reader must realise that this preliminary step is the cornerstone of the whole analysis in this subsection. 
\begin{proposition}
\label{prop:Q:estimates}
For a smooth initial condition $q_{0}$ on $\bT$, with $\langle q_{0},\one \rangle=0$, 
let $q$ denote the solution of 
\begin{equation}
\label{eq:prop:Q:estimates:0}
 \partial_t q - L_{\psi} q  =0, \quad \quad t \geq 0,  
\end{equation}
and, for any $t \geq 0$, let $Q(t,\cdot)$ denote the (unique) periodic primitive 
of $q(t,\cdot)$ satisfying 
$\int_{\bT} Q(t,x) \ud x=0$. 

Then there exist two positive constants $\lambda$ and $C$, only depending on $\kappa$, 
together with a constant $\bar q_{1/2}$, depending on $q(0,\cdot)$ and $\psi$, such that, for any $t \geq 0$,  
\begin{align}
&\int_{\mathbb T} \vert Q(t,x) \vert^2 
\ud x
\leq 
C \int_{\mathbb T} \vert Q(0,x) \vert^2
\ud x, 
\quad \int_{\mathbb T} \vert Q(t,x)- \bar q_{1/2} (p_{\psi}(x)-1) \vert^2 
\ud x
\leq 
C e^{- \lambda t} 
\int_{\mathbb T} \vert Q(0,x) \vert^2
\ud x. \label{eq:prop:Q:estimates:2}
\end{align}
\end{proposition}

\begin{remark}
\label{rem:5:2}
In fact, the proof shows that 
$\bar q_{1/2}^2 \leq C 
\int_{\bT}  \vert Q(0,x) \vert^2 \ud x$. \end{remark}

\begin{proof}
The fact that 
\eqref{eq:prop:Q:estimates:0} has a unique solution is a  consequence of the analysis performed in the previous section. 
Since $\int_{\mathbb T} q(t,x) \ud x=0$ for any $t \geq 0$, it makes sense to define
$Q(t,\cdot)$ as in the statement. 

Thanks to 
Remark
\ref{rem:5:2}, it suffices to focus on the proof of
\eqref{eq:prop:Q:estimates:2}. 
The proof mostly relies on the work of 
\cite{bertini:giacomin:pakdaman}.
Following the notation introduced in 
\cite[(1.23)]{bertini:giacomin:pakdaman}, we indeed let
\begin{equation}
\label{eq:inner:h-1}
\llangle u,v \rrangle_{\psi} := \int_{\bT} \frac{\bar U(x) \bar V(x)}{p_{\psi}(x)} \ud x,
\end{equation}
for any two distributions $u$ and $v$ on $\bT$ and any two $\bar U$ and $\bar V$ in $L^2(\bT)$ such that $\bar U'=u$, $\bar V'=v$ and 
$\langle \bar U,p_{\psi}^{-1} \rangle = \langle \bar V,p_{\psi}^{-1} \rangle =0$. 
From 
\cite[(2.14), (2.16), (2.37)]{bertini:giacomin:pakdaman}, there exists a constant $\lambda$, only depending on $\kappa$, such that,
for any $t \geq 0$, 
\begin{equation}
\label{eq:gronwall:decay}
\frac{\ud}{\ud t} \bigl\llangle q(t,\cdot),q(t,\cdot) \bigr\rrangle_{\psi}
+ \lambda \big\llangle q(t,\cdot) - \bar q_{1/2}(t) p_{\psi}',q(t,\cdot) - \bar q_{1/2}(t) p_{\psi}'\bigr\rrangle_{\psi} \leq 0, 
\quad
 {\rm with} 
 \ 
\bar q_{1/2}(t) = \frac{\big\llangle q(t,\cdot),p_{\psi}' \big\rrangle_{\psi}}{\big\llangle p_{\psi}',p_{\psi}' \big\rrangle_{\psi}}. 
\end{equation}
By \cite[(2.14)]{bertini:giacomin:pakdaman} again, we observe that 
\begin{equation*}
\frac{\ud}{\ud t}
\big\llangle q(t,\cdot),p_{\psi}' \big\rrangle_{\psi}
= 
\big\llangle L_{\psi} q(t,\cdot),p_{\psi}' \big\rrangle_{\psi} =
\big\llangle  q(t,\cdot),L_{\psi} p_{\psi}' \big\rrangle_{\psi} = 0, 
\end{equation*}
with the last equality following from 
\eqref{eq:se:5:ppsiprime}. Hence, we can write $\bar{q}_{1/2}(t)$ as $\bar{q}_{1/2}$. 
In particular, 
\begin{equation} 
\label{eq:barq1/2}
\vert \bar q_{1/2}
\vert = 
\biggl\vert 
\frac{\big\llangle q(0,\cdot),p_{\psi}' \big\rrangle_{\psi}}{\big\llangle p_{\psi}',p_{\psi}' \big\rrangle_{\psi}}
\biggr\vert 
\leq C 
\biggl( \int_{\mathbb T} 
\vert Q(0,x) \vert^2 \ud x \biggr)^{1/2}.  
\end{equation} 
Moreover, 
applying \eqref{eq:gronwall:decay} to $q(t,\cdot) - \bar{q}_{1/2} p_{\psi}'$, we deduce that, for $t \geq 0$, 
\begin{equation}
\label{eq:exponential:decay}
\big\llangle q(t,\cdot) - \bar q_{1/2} p_{\psi}',q(t,\cdot) - \bar q_{1/2} p_{\psi}'\bigr\rrangle_{\psi}
\leq 
\big\llangle q(0,\cdot) - \bar q_{1/2} p_{\psi}',q(0,\cdot) - \bar q_{1/2} p_{\psi}'\bigr\rrangle_{\psi}
e^{ - \lambda t }.
\end{equation}

By 
\eqref{eq:inner:h-1}, 
the right-hand side in \eqref{eq:exponential:decay}
reads
\begin{equation*}
\big\llangle q(0,\cdot) - \bar q_{1/2} p_{\psi}',q(0,\cdot) - \bar q_{1/2} p_{\psi}'\bigr\rrangle_{\psi}
= \int_{\bT} \frac{\vert Q(0,x) - \bar q_{1/2} p_{\psi}(x) -  k(0) \vert^2}{p_{\psi}(x)} \ud x,
\end{equation*}
where $k(0)$ is a centring constant that forces the mean of $(Q(0,\cdot) - \bar q_{1/2} p_{\psi} - k(0))/p_{\psi}$  
to be zero. In particular, 
the left-hand side is less than 
\begin{align}
\big\llangle q(0,\cdot) - \bar q_{1/2} p_{\psi}',q(0,\cdot) - \bar q_{1/2} p_{\psi}'\bigr\rrangle_{\psi}
\leq 
\int_{\bT} \frac{\vert Q(0,x) - \bar q_{1/2} p_{\psi}(x) \vert^2}{p_{\psi}(x)} \ud x
&\leq 
C \int_{\bT} \vert Q(0,x)  \vert^2  \ud x,
\label{eq:exponential:decay:RHS}
\end{align}
where the last inequality is obtained by expanding the square and by invoking 
Remark
\ref{rem:5:2}, with the constant $C$ depending only on $\kappa$. 
Back to \eqref{eq:exponential:decay}, we write in a similar manner:
\begin{equation*}
\big\llangle q(t,\cdot) - \bar q_{1/2} p_{\psi}',q(t,\cdot) - \bar q_{1/2} p_{\psi}'\bigr\rrangle_{\psi}
= \int_{\bT} \frac{\vert Q(t,x) - \bar q_{1/2} p_{\psi}(x) -  k(t) \vert^2}{p_{\psi}(x)} \ud x,
\end{equation*}
for a new centring constant $k(t)$. Using now an upper bound for 
$p_{\psi}$ and assuming w.l.o.g. that the constant $C$ right above is large enough (as long as it only depends on $\kappa$), 
we obtain that
\begin{equation*}
\big\llangle q(t,\cdot) - \bar q_{1/2} p_{\psi}',q(t,\cdot) - \bar q_{1/2} p_{\psi}'\bigr\rrangle_{\psi}
\geq C^{-1} \int_{\bT} \vert Q(t,x) - \bar q_{1/2} p_{\psi}(x) -  k(t) \vert^2  \ud x.
\end{equation*}
Using the fact that $Q(t,\cdot) - \bar q_{1/2} (p_{\psi} -1)$ has zero mean, we deduce that 
\begin{equation}
\label{eq:exponential:decay:LHS}
\big\llangle q(t,\cdot) - \bar q_{1/2} p_{\psi}',q(t,\cdot) - \bar q_{1/2} p_{\psi}'\bigr\rrangle_{\psi}
\geq C^{-1} \int_{\bT} \vert Q(t,x) - \bar q_{1/2} (p_{\psi}(x) -  1) \vert^2  \ud x.
\end{equation}
Combining 
\eqref{eq:exponential:decay},
\eqref{eq:exponential:decay:RHS}
and 
\eqref{eq:exponential:decay:LHS}, we get
\eqref{eq:prop:Q:estimates:2}. 
\end{proof}

The following result 
is the analogue of
Lemma 
\ref{conjecture backward PDE } in 
Section  
\ref{ergodic bounds}.

\begin{proposition}
\label{prop:5:3}
Let $t>0$, $\psi \in \bT$ and $\xi \in W^{1,\infty}(\bT)$ and let
$L_{\psi}^*$ denote the adjoint of $L_{\psi}$ on $L^2(\bT)$. Then the  problem
\begin{equation} 
\label{eq:w:prop:5:3} 
    \partial_s w + L_{\psi}^* w  =0, \quad s \in [0,t] ; \quad
 w(t,x)  = \xi(x),
    \end{equation}
admits a unique classical solution $(w(s,\cdot))_{0 \leq s \leq t}$.  Moreover,
    there exist constants  $C,\lambda>0$ $($only depending  on 
    $\kappa$ and hence independent of $t)$ such that
\begin{equation} 
\label{eq:prop:5:3:1}
\bigg\| w(s, \cdot) - \int_{\bT} w(s,y)\ud y \bigg\|_{\infty} \leq C
\Bigl( \|\xi\|_{\infty} e^{-\lambda(t-s)} + \vert \langle \xi, p_{\psi}'  \rangle
 \vert \Bigr), \quad \quad \forall s \in [0,t],
\end{equation}  
and,  for any $\alpha,\beta \in [0,1]$
$($allowing, in addition, the constants $C$ and $\lambda$ to depend on 
$\alpha,\beta)$,
\begin{equation} 
\label{eq:prop:5:3:2}
\begin{split}
&\big\| \partial_{x} w(s, \cdot)  \big\|_{\beta,\infty} \leq 
{\frac{C}{1 \wedge (t-s)^{(\alpha+\beta)/2} }}
\Bigl( \|\xi\|_{1-\alpha,\infty} e^{-\lambda(t-s)}
+ 
 \vert  \langle \xi, p_{\psi}'  \rangle
 \vert \Bigr), \quad \quad \forall s \in [0,t].
\end{split}
\end{equation} 
\end{proposition}

\begin{proof}
Existence and uniqueness of a classical solution to the Cauchy problem is 
standard. 
The rest of the proof is  similar to 
the proof of Lemma
\ref{conjecture backward PDE }, but with some differences that we clarify below.  
\vskip 4pt

\textit{First Step.}
The first step is to  prove the bounds  when 
$t-s \leq 1$. 
By expanding the operator $L_{\psi}^*$, 
the equation satisfied by 
$w$ may be rewritten in the form 
\begin{equation}
\label{eq:expanded:form:}
\partial_{s} w(s,\cdot) + \tfrac12 \partial^2_{xx} w(s,\cdot) + V_1  \partial_{x} w(s,\cdot) + \int_{\bT} V_2(\cdot,y) \bar w(s,y) \ud y = 0,
\quad s \in [0,t],
\end{equation}
where 
$\bar w(s,x) = w(s,x) - \int_{\bT} w(s,y) \ud y$
and $V_1$ and $V_2$ are smooth functions on $\bT$ and $\bT^2$ respectively, 
whose derivatives up to any order are bounded in terms of $\kappa$ only.  
By a standard application of the maximum principle combined with Gronwall's lemma, 
it is easy to show that   
\begin{equation}
\label{eq:prop:4:8:bound:infini:w}
\sup_{\max(0,t-1) \leq s \leq t}\| w(s,\cdot) \|_{\infty} \leq C \| \xi \|_{\infty}. 
\end{equation}
Since $V_2$ is smooth, this provides a bound for the derivatives of any order 
of the third term in the left-hand side of 
\eqref{eq:expanded:form:}. 
We then split $w$ into $w=w_{1}+w_{2}$, 
with 
\begin{equation}
\label{eq:expanded:form:n}
\begin{split}
&\partial_{s} w_{1}(s,\cdot) + \tfrac12 \partial^2_{xx} w_{1}(s,\cdot) + V_1  \partial_{x} w_{1}(s,\cdot)  = 0,
\quad s \in [0,t] \, ; \quad w_{1}(t,\cdot) = \xi,
\\
&\partial_{s} w_{2}(s,\cdot) + \tfrac12 \partial^2_{xx} w_{2}(s,\cdot) 
+ V_1  \partial_{x} w_{2}(s,\cdot) + 
\int_{\bT} V_2(\cdot,y) \bar w(s,y) \ud y =0, 
\quad s \in [0,t] \, ; \quad w_{2}(t,\cdot) = 0.
\end{split}
\end{equation}
By Lemma
\ref{conjecture backward PDE }, 
we get all the required bounds on 
$w_{1}(s,\cdot)$ and its derivatives,
at least for $t-s \leq 1$. 
 
Now, in the equation for $w_{2}$,  the source term 
 (with $\bar w$ being frozen)
is smooth. The solution $w_{2}(s,\cdot)$ thus has  bounded (spatial) derivatives 
of any order, for $t-s \leq 1$, with the bounds being independent of $t$. 
\vskip 4pt

\textit{Second Step.}
The rest of the proof is dedicated to the case $t \geq 1$.
We start with the proof of \eqref{eq:prop:5:3:1},
using the same notations 
 $q(0,\cdot)$, $q$ and $\bar q_{1/2}$ 
as in the statement and the proof of 
Proposition
\ref{prop:Q:estimates}.
By Proposition 
\ref{prop:Q:estimates} and Remark 
\ref{rem:5:2},
 there exist $\lambda$ and $C$ as in the statement (but the values of which are allowed to vary from line to line) such that 
\begin{equation*}
\begin{split}
\bigl\vert 
\bigl\langle \xi, q(t,\cdot) \bigr\rangle
\bigr\vert
&\leq 
\bigl\vert 
\bigl\langle
\xi, 
\bigl( q(t,\cdot) - \bar q_{1/2} p_{\psi}' \bigr) 
\bigr\rangle  \bigr\vert
+
\bigl\vert 
\bar q_{1/2}
\bigr\vert 
\, \bigl\vert 
\bigl\langle \xi,    p_{\psi}'  
\bigr\rangle \bigr\vert
\\
&\leq \bigl\vert 
\bigl\langle \xi',  \bigl( Q(t,\cdot) - \bar q_{1/2}  [p_{\psi}-1] \bigr) \bigr\rangle  \bigr\vert
+ C 
\| Q(0,\cdot) \|_{2}
 \bigl\vert \bigl\langle \xi,p_{\psi}' \bigr\rangle \bigr\vert  
 \leq 
C 
 \| Q(0,\cdot) \|_{2} \bigl( \| \xi \|_{1,\infty} e^{-\lambda t} 
+  \bigl\vert \bigl\langle \xi,p_{\psi}' \bigr\rangle \bigr\vert  \bigr). 
\end{split}
\end{equation*}
Next, 
we use the Sobolev
bound
\begin{equation*}
\begin{split}
\| Q(0,\cdot)\|_{2} 
\leq 
\biggl( 
\int_{\bT} \int_{\bT} \bigl\vert Q(0,x)  - Q(0,y) \bigr\vert^2 
\ud x \, \ud y \biggr)^{1/2}
\leq C \| q(0,\cdot)\|_{1}, 
\end{split}
\end{equation*}
where we used the 
equality $\int_{\bT} Q(0,x) \ud x=0$
together with the
obvious identity 
$Q(0,x) - Q(0,y) = \int_{y}^x q(0,z) \ud z$.  
By the same duality argument as in the proof of Proposition 
\ref{W1 decay}, 
we observe that
\begin{equation}
\label{eq:prop:4:8:duality}
\begin{split}
\frac{\ud}{\ud s} \langle w(s,\cdot) ,q(s,
\cdot) \rangle  =0,
\end{split}
\end{equation}
which implies that 
$\langle w(0,\cdot), q(0,\cdot) \rangle = \langle \xi, q(t,\cdot) \rangle$. 
We therefore deduce that 
\begin{equation*}
\bigl\vert \langle w(0,\cdot), q(0,\cdot) \rangle \bigr\vert
 \leq C  
 \| q(0,\cdot)\|_{1}
 \bigl( \| \xi \|_{1,\infty} e^{-\lambda t} 
+  \bigl\vert \bigl\langle \xi,p_{\psi}' \bigr\rangle \bigr\vert  \bigr).
\end{equation*}
By choosing $q(0,x)=q^n(x)-1$, where $(q^n)_{n \geq 1}$ is a standard mollifier of the Dirac
mass at some point $x_{0} \in \bT$, and letting $n$ tend to $\infty$ and then taking the supremum over $x_{0}$, we obtain 
\begin{equation}
\label{eq:prop:4:8:w0-meanw}
\sup_{x \in \bT} \biggl\vert w(0,x) - \int_{\bT} w(0,y) \ud y  \biggr\vert 
\leq 
C  
 \bigl( \| \xi \|_{1,\infty} e^{-\lambda t} 
+  \bigl\vert \bigl\langle \xi,p_{\psi}' \bigr\rangle \bigr\vert  \bigr).
\end{equation}
The above bound does not exactly fit \eqref{eq:prop:5:3:1}. 
The first point to recover \eqref{eq:prop:5:3:1} is to replace 
$\| \xi \|_{1,\infty}$ by $\| \xi \|_{\infty}$. 
We apply 
\eqref{eq:prop:4:8:w0-meanw},
but on the interval $[0,t-1]$ and with $\xi=w(t-1,\cdot)$ itself. 
For a new value of $C$,
\begin{equation}
\label{bound:w0:infini:4:8}
\begin{split}
\sup_{x \in \bT} \biggl\vert w(0,x) - \int_{\bT} w(0,y) \ud y  \biggr\vert 
&\leq 
C  
 \Bigl( \| w(t-1,\cdot) \|_{1,\infty}  e^{-\lambda t} 
+  \bigl\vert \bigl\langle w(t-1,\cdot),p_{\psi}' \bigr\rangle \bigr\vert  \Bigr)
  \leq 
C  
 \Bigl( \| \xi \|_{\infty} e^{-\lambda t}
+  \bigl\vert \bigl\langle \xi,p_{\psi}' \bigr\rangle \bigr\vert  \Bigr),
\end{split}
\end{equation}
where we used   the fact that $\langle \xi, p_{\psi}' \rangle 
= \langle w(t-1,\cdot),p_{\psi}' \rangle$, 
which follows 
from 
\eqref{eq:se:5:ppsiprime}
and then from the same duality argument as in 
\eqref{eq:prop:4:8:duality}. 
In the above, we  also used the bound 
$\| w(t-1,\cdot) \|_{1,\infty} \leq C \| \xi \|_{\infty}$, which follows from 
the first step. 
\vskip 4pt

\textit{Third Step.}
By Lemma \ref{conjecture backward PDE }, 
 $w_{1}$ in 
\eqref{eq:expanded:form:n} satisfies all the required bounds (in long time). 
 In particular, $w_{2}=w-w_{1}$
 satisfies 
\eqref{bound:w0:infini:4:8}. 
By interior estimates for 
the second equation in \eqref{eq:expanded:form:n},
we  obtain 
\eqref{eq:prop:5:3:2}.
\end{proof}

The following proposition is one key step in our proof. 

\begin{proposition}
\label{prop:5:5}
For fixed $\kappa >1$ and $\eta \in (0,1)$,
the drift \eqref{eq:b:kuramoto}
satisfies 
\textrm{\rm \hergo} up to the change that, in
\textrm{\rm\hergcoeff{$\alpha$}{$\beta$}{$\gamma$}}, 
$\mu$ is taken in ${\mathcal Q}_{\eta}$
and that
\begin{enumerate}
\item \eqref{erg:hyp:3}
holds  when $t \leq 1$;
\item   
when $t \geq 1$,  
the following variant of 
\eqref{erg:hyp:3}
holds true:
   \begin{equation}
   \label{erg:hyp:3:bbb} 
  \bigl\| q(t) - q_{\infty}
p'(t \, ;\mu) \bigr\|_{(k-{\alpha})',\infty} \leq  
   C_k
   e^{- \lambda t}
      \bigl[ 
        \| q_0 \|_{(k,\infty)'}
   +
   K \bigr], \ 
  \textrm{\rm with}   \ K:=\sup_{t \geq 0} \bigl[ \| r(t) \|_{(\beta,\infty)'} \exp(\lambda_0 t)\bigr], 
   \end{equation} 
under the additional assumption that 
$\| r(t) \|_{(\beta,\infty)'}$ decays exponentially fast as $t$ tends to $+\infty$ (at a rate $\lambda_0$, 
 {on which $C_k$ and 
$\lambda$ may depend})
and
where
$q_{\infty}$ is a real number depending on the input of the Cauchy problem 
 \textrm{\rm{\CLinear{$\mu$}{$q_0$}{$r$}}} (see \eqref{eq q}), but independent of $t$,  and
is bounded by 
$C( \|q_{0}\|_{(k,\infty)'} + K )$, for $C$ only depending
on $k$, $\eta$ and $\kappa$. 
\end{enumerate}
\end{proposition}
{Of course, the bound  \eqref{erg:hyp:3}, which 
holds for $t \leq 1$, 
implies the bound \eqref{erg:hyp:3:bbb}
for $t \leq 1$.}

\begin{proof} 
The result for $t \leq 1$ is a direct consequence of {\hergol}. 
We focus on the case $t >1$. 
\vskip 4pt
  
 \textit{First Step.}
 We first choose $\mu = p_\psi$ for some $\psi \in {\mathbb T}$. 
%
  We  start from  \eqref{eq q} (with the same solution $q$), but 
instead of considering 
 $w$ as the solution of  
 \eqref{cauchy w}, we choose $w$ as the solution of 
 \eqref{eq:w:prop:5:3}. 
Following the proof of Proposition 
\ref{W1 decay},   
 this leads to a new expansion in 
     \eqref{duality W1 infty}: $T_{1}$ and $T_{2}$ are the same, but $T_{3}$ 
     is zero. 
Following 
\eqref{eq: bound q est 1} and \eqref{eq: bound q est 3}, 
we deduce from 
\eqref{eq:prop:5:3:1}
    and
    \eqref{eq:prop:5:3:2}
    that, for $\alpha,\beta \in [0,2)$ and $k \in [\alpha,2)$, 
    \begin{equation}
    \label{eq:prop:5:3:0:00}
\| q(t) \|_{(k-\alpha,\infty)'} \leq  \frac{C}{1 \wedge t^{\alpha/2}}
 \| q_0 \|_{(k,\infty)'} 
   + C  
\int_{0}^t \frac{\| r(s) \|_{({\beta}, \infty)'} }{1 \wedge (t-s)^{{\beta}/2}} \ud s,
\quad t >0. 
\end{equation}
Notice that there is no exponential decay at this stage, due to to fact that we have not assumed yet 
that $\xi$ in 
 \eqref{cauchy w}
satisfies 
$\langle \xi,p_\psi' \rangle =0$ 
(which term appears in \eqref{eq:prop:5:3:1}
    and
    \eqref{eq:prop:5:3:2}). 
However, 
since $\| r(t) \|_{(\beta,\infty)'}$ decays exponentially fast, this says that 
$\| q(t) \|_{(0,\infty)'} \leq C( \| q_0  
\|_{(k,\infty)'}
+ K) $, for $t \geq 1$. 

We now assume that $\xi$ in 
 \eqref{eq:w:prop:5:3}
satisfies 
$\langle \xi,p_\psi' \rangle =0$. 
With the same duality argument, \eqref{eq:prop:5:3:0:00} becomes 
    \begin{equation}
    \label{eq:newproof:prop:4:9:1}
\Bigl\|
 q(t) - \frac{ \langle q(t),p_{\psi}' \rangle}{\langle p_{\psi}',p_{\psi}'\rangle}
p_{\psi}'
\Bigr\|_{(k-\alpha,\infty)'}   \leq \frac{C}{1 \wedge t^{\alpha/2}}
   \| q_0 \|_{(k,\infty)'} e^{-\lambda t} + C  
\int_{0}^t \frac{\| r(s) \|_{({\beta}, \infty)'} }{1 \wedge (t-s)^{{\beta}/2}} e^{-\lambda (t-s)} \ud s.  
\end{equation} 
Then, for a given $t_0 \geq 1$ and for $t\geq t_0$, we  expand $q(t)$ as $q(t)=q_1(t)+q_2(t)$, with $q_1$
solving 
\textrm{\rm{\CLinear{$\psi$}{$q_0$}{$0$}}}
on $[t_0,+\infty)$, i.e. $q_1(t_0)=q_0$,
 and $q_2$ solving 
\textrm{\rm{\CLinear{$\psi$}{$0$}{$r$}}} on $[t_0,+\infty)$, i.e. $q_2(t_0)=0$. 
Applying 
\eqref{eq:prop:5:3:0:00}
at time $t_0$ instead of $0$ and using the exponential decay of 
$\| r(t) \|_{(\beta,\infty)'}$, we get
$\|q_2(t) \|_{(0,\infty)'} \leq C 
K \exp(-\lambda_0 t_0)$, for $t \geq t_0$.
Next, we 
prove that 
$\langle q_1(t),p_{\psi}' \rangle/\langle p_{\psi}',p_{\psi}'\rangle$
converges exponentially fast to a constant $q_{\infty}$. 
We write
$\langle q_1(t),p_{\psi}' \rangle=
-\langle Q_1(t,\cdot),p_{\psi}'' \rangle$, with $Q_1$ being the primitive of $q_1$ (in space) with a zero mean. Invoking Proposition 
\ref{prop:Q:estimates}, 
$\langle Q_1(t,\cdot),p_{\psi}'' \rangle$ converges exponentially fast to some 
constant. 
The rate of  convergence, as given by 
\eqref{eq:prop:Q:estimates:2}, 
 depends on 
$\| Q_1(t_0,\cdot)\|_{2}  \leq C \|q(t_0,\cdot) \|_{(0,\infty)'}$, from which we deduce that, for some $q_\infty$, 
\begin{equation*}
\bigl\vert \langle q_1(t),p_\psi' \rangle  {-q_\infty} \bigr\vert
\leq C \|q(t_0,\cdot) \|_{(0,\infty)'} e^{-\lambda (t-t_0)}. 
\end{equation*}
Adding the bound for $q_2(t)$, we get, for any $t \in [2t_0-1,2t_0+1]$,  
$\vert \langle  {q(t)} ,p_\psi' \rangle  {-q_\infty} \vert
\leq C ( \| q(t_0,\cdot)
\|_{(0,\infty)'}
+ K ) e^{-\lambda (t-t_0)}$,
 for 
$\lambda$ depending on $\lambda_0$. 
Equivalently, we can first fix $t > 3$ and then choose $t_0 \in [(t-1)/2,(t+1)/2]$. We obtain
\begin{equation}
\label{eq:prop:5:3:0:001}
\bigl\vert \langle {q(t)},p_\psi' \rangle  {-q_\infty} \bigr\vert
\leq C\bigl( \| q(t_0,\cdot)
\|_{(0,\infty)'}
+ K\bigr) e^{-\lambda t/2},
\end{equation}
 for 
$t_0 \in [(t-1)/2,(t+1)/2]$. 
Choosing $t=t_0$ and $k=\alpha$ in 
    \eqref{eq:prop:5:3:0:00}, we can 
replace $\| q(t_0,\cdot)\|_{(0,\infty)'}$ by 
$\| q_0\|_{(k,\infty)'}$
in 
\eqref{eq:prop:5:3:0:001}. 
Back to 
\eqref{eq:newproof:prop:4:9:1}
 {(and replacing 
$q_\infty$ by 
$q_\infty 
\langle p_{\psi}',p_{\psi}'\rangle$)}, we obtain, for $t \geq 1$, 
\begin{equation}
\label{eq:newproof:prop:4:9:2}
\bigl\|
 q(t) - 
 q_{\infty}
p_{\psi}'
\bigr\|_{(k-\alpha,\infty)'}   \leq 
C\bigl( \| q_0
\|_{(k,\infty)'}
+ K \bigr) e^{-\lambda t/2}. 
\end{equation} 
Since $p(t\, ; \mu)=p_\psi$, this  is  the  result for $t \geq 1$. 
The bound for $q_\infty$ follows by letting $t$ 
tend to $+\infty$ in 
    \eqref{eq:prop:5:3:0:00}. 
\vskip 4pt

\textit{Second Step.}
When 
$\mu$ is taken in ${\mathcal Q}_{\eta}$, we know from 
Proposition 
\ref{prop:5:10}
that $p(t):=p(t\, ; \mu)$ converges exponentially fast to 
$p_{\psi}$ for some $\psi \in {\mathbb T}$.
This allows us to repeat the third step of the proof of Proposition 
\ref{prop main result:2}, see in particular
\eqref{eq:rn2}. Here, the point is to 
rewrite 
\eqref{eq q}
as
$\partial_t q(t) - L_{ {\psi}} q(t) - (r(t)+r_2(t))= 0$, for $t \geq 0$,
$ r_2(t) =  [  L_{m(t;\mu)} - L_{\psi}] q(t)$.
Following 
\eqref{eq:rn2:2}, we obtain 
$\| r_2(t)\|_{(1,\infty)'} \leq C \| q(t)\|_{(0,\infty)'} \exp(- \lambda t)$. 
By 
\eqref{eq:prop:5:3:0:00},
 {but replacing 
$r$ by $r+r_2$ (in order to force the equation to be driven by $L_\psi$)}, we have, for $t>0$, 
\begin{equation}
\label{eq:newproof:prop:4:9:33}
\bigl\|
 q(t) 
\bigr\|_{(k-\alpha,\infty)'}   \leq  
\frac{C \| q_0 \|_{(k,\infty)'}}{1 \wedge t^{\alpha/2}}
      + C  
\int_{0}^t \frac{\| r(s) \|_{({\beta}, \infty)'} }{1 \wedge (t-s)^{{\beta}/2}}  \, \ud s
 + C  
\int_{0}^t \frac{\| q(s) \|_{(0, \infty)'} }{1 \wedge (t-s)^{1/2}}   e^{-\lambda s} \, \ud s, \quad t >0.
\end{equation} 
Fix $t_0>0$. By {\hergol}, we get a bound for $\| q(s)\|_{(0,\infty')}$ for $s \in (0,t_0]$. Inserting this bound in 
the above display, we get 
\begin{equation}
\label{eq:newproof:prop:4:9:34}
\bigl\|
 q(t) 
\bigr\|_{(k-\alpha,\infty)'}   \leq  
C_{t_0}
   \| q_0 \|_{(k,\infty)'}   + C_{t_0}  
\int_{0}^t \frac{\| r(s) \|_{({\beta}, \infty)'} }{1 \wedge (t-s)^{{\beta}/2}}   \ud s
 + C  
\int_{t_0}^t \frac{\| q(s) \|_{(0, \infty)'} }{1 \wedge (t-s)^{1/2}}  e^{-\lambda s} \ud s, \quad t \geq t_0.
\end{equation} 
Choosing $k=\alpha$,  we get  
\begin{equation*}
\sup_{t \geq t_0} 
\bigl[ \|
 q(t) 
\|_{(0,\infty)'}
\bigr]
\leq  
C_{t_0}
   \| q_0 \|_{(k,\infty)'}   + C_{t_0} K 
+ C  
 e^{-\lambda t_0}
\sup_{s \geq t_0} 
\bigl[ \|
 q(s) 
\|_{(0,\infty)'} \bigr].
\end{equation*} 
For $t_0$ large enough, we obtain a bound for the left-hand side.  
In
\eqref{eq:newproof:prop:4:9:2}, 
 {we replace 
$r$ by $r+r_2$}.
Recalling that 
$\| r_2(t)\|_{(1,\infty)'} \leq C \| q(t)\|_{(0,\infty)'} \exp(- \lambda t)$
and using the above bound, we complete the proof. 
\end{proof}

\subsection{Estimates of the tangent processes}
We   estimate the  processes 
$m^{(1)}$,
$m^{(2)}$, 
$d^{(1)}$ 
and $d^{(2)}$
in 
Propositions
\ref{recap theorem 4.5}, 
\ref{diff thm 1}
and 
\ref{diff thm 2}
respectively. Since the model is one-dimensional, we may remove the indices $i$ and $(i,j)$
in $d^{(1)}$ and $d^{(2)}$ respectively. 

\begin{proposition}
\label{prop:tangent:proc:kuramoto}
For any 
$\alpha \in [0,1]$,
$\beta \in [1,2)$ 
and
$\eta \in (0,1)$,
there exist two positive constants $\lambda$ and $C$, only depending on $\kappa$, $\alpha$, $\beta$ and $\eta$, such that, for any $\mu$ in  ${\mathcal Q}_{\eta}$, 
$\nu$, $\nu_{1}$
and $\nu_2$ in 
${\mathcal P}(\bT)$
and
$z$, $z_{1}$ and 
$z_{2}$ in ${\mathbb T}$, 
the following bound holds:
\begin{equation}
\label{eq:tangent:proc:kuramoto:m}
\bigl\|m^{(1)}(t \, ; \mu,\nu)
\bigr\|_{(0,\infty)'} + 
\bigl\|m^{(2)}(t\, ; \mu,\nu_{1},\nu_{2})
\bigr\|_{(0,\infty)'}
\leq C,
\qquad t \geq 0.
\end{equation}
Moreover, 
we can find two real numbers 
$q_{\infty}^{(1)}(\mu, z)$ and $q_{\infty}^{(2)}(\mu , z_{1},z_{2})$ such that 
\begin{equation}
\label{eq:tangent:proc:kuramoto:d}
\begin{split}
& \min(1,t^{\alpha/2}) \bigl\|d^{(1)}(t \, ; \mu,z)
\bigr\|_{(1-\alpha,\infty)'} +
\min(1,t^{\beta/2}) 
\bigl\|d^{(2)}(t \, ; \mu,z_{1},z_{2})
\bigr\|_{(2-\beta,\infty)'}
\leq C,
\\
&\min(1, t^{\alpha/2}) \bigl\|
\tilde d^{(1)}(t\, ; \mu,z)  
\bigr\|_{(1-\alpha ,\infty)'} + 
\min(1,t^{\beta/2})
\bigl\|
\tilde d^{(2)}(t\, ; \mu,z_{1},z_{2})    
\bigr\|_{(2-\beta,\infty)'}
\leq C
e^{-\lambda  t},
\end{split}
\end{equation}
for $t>0$, where
\begin{equation}
\label{eq:tangent:proc:kuramoto:d:def}
\begin{split}
&\tilde d^{(1)}(t\, ; \mu,z)  = d^{(1)}(t \, ; \mu,z)  - q_{\infty}^{(1)}(\mu,z) p'(t\, ; \mu), 
\\
&\tilde d^{(2)}(t\, ; \mu,z_{1},z_{2})=d^{(2)}(t \, ; \mu,z_{1},z_{2}) 
-
q_\infty^{(1)}(\mu,z_{2}) \partial_{x} d^{(1)}(t\, ; \mu,z_{1})
- 
q_\infty^{(2)}(\mu,z_{1},z_{2}) p'(t \, ; \mu). 
\end{split}
\end{equation}
\end{proposition}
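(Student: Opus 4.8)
\textbf{Proof plan for Proposition \ref{prop:tangent:proc:kuramoto}.}

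The plan is to derive all four estimates by a systematic bootstrap from the ergodic bounds on the linearised operator, exactly in the way Propositions \ref{global schauder 1} and \ref{global schauder 2} derived the estimates for $m^{(1)},m^{(2)},d^{(1)},d^{(2)}$ in the generic case from \hergo, but now replacing \hergo\ by its Kuramoto-specific substitute Proposition \ref{prop:5:5}. The first thing I would do is record that $m^{(1)}(\cdot\,;\mu,\nu)$ solves \eqref{PDE linearised} with zero source and initial datum $\nu-\mu$ satisfying $\langle \nu-\mu,\one\rangle=0$; applying Proposition \ref{prop:5:5} with $k=\alpha=0$ (so that the ``$q_\infty p'(t)$'' correction is absent since $k=0$ is outside the $t\geq 1$ regime of item (2) only in the sense that the $(0,\infty)'$ bound is uniform, not exponentially decaying) yields $\sup_{t\geq 0}\|m^{(1)}(t\,;\mu,\nu)\|_{(0,\infty)'}\leq C$. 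Here one must be a little careful: item (2) of Proposition \ref{prop:5:5} only gives decay of $q(t)-q_\infty p'(t\,;\mu)$, and for $k=0$ we simply keep the uniform-in-time bound, which is all \eqref{eq:tangent:proc:kuramoto:m} asks for. For $m^{(2)}$ one feeds the already-established bound on $m^{(1)}$ into the source terms of \eqref{PDE linearised 2} — each of the three divergence terms is $-\mathrm{div}$ of a product of an $m^{(1)}$ (or $m$) with $\frac{\delta b}{\delta m}$ or $\frac{\delta^2 b}{\delta m^2}$ evaluated against $m^{(1)}$'s, so using \hintb{4}{2} and the uniform bound on $m^{(1)}$ one controls $\|r(t)\|_{-2,\infty}\leq C$ (no decay needed), and Proposition \ref{prop:5:5} with $k=2,\alpha=0$ gives the uniform bound on $\|m^{(2)}(t\,;\mu,\nu_1,\nu_2)\|_{(0,\infty)'}$.

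For the $d^{(1)}$ estimates, I would apply Proposition \ref{prop:5:5} to \eqref{PDE linearised diff}: the initial datum is $(D'_z)_i$ which lies in $(W^{1,\infty}(\bT))'$ with norm bounded by $1$, the source is zero, and $\langle (D'_z)_i,\one\rangle=0$. Item (1) of the proposition gives the short-time bound $\min(1,t^{\alpha/2})\|d^{(1)}(t\,;\mu,z)\|_{(1-\alpha,\infty)'}\leq C$ for $t\leq 1$; item (2) gives, for $t\geq 1$, the analogous bound for $d^{(1)}(t\,;\mu,z)-q_\infty^{(1)}(\mu,z)\,p'(t\,;\mu)$, with $q_\infty^{(1)}(\mu,z)$ — the ``$q_\infty$'' attached to this Cauchy problem — bounded by $C\max\{1,\|(D'_z)_i\|_{(2,\infty)'}\}\leq C$. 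This is precisely $\tilde d^{(1)}$ as defined in \eqref{eq:tangent:proc:kuramoto:d:def}, and it decays like $e^{-\lambda t}$ by item (2); for $t\leq 1$ the decay is vacuous and the short-time singular bound suffices. Combining the $t\leq 1$ and $t\geq 1$ pieces (and using the exponential decay of $p'(t\,;\mu)-p_\psi'$ from Proposition \ref{prop:5:10} to absorb the difference where convenient) yields the first lines of \eqref{eq:tangent:proc:kuramoto:d} and \eqref{eq:tangent:proc:kuramoto:d:def} for the first-order object.

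The $d^{(2)}$ estimate is the main obstacle and requires genuine care, because its source in \eqref{PDE diff 2} involves $d^{(1)}(t\,;\mu,z_1)$ and $d^{(1)}(t\,;\mu,z_2)$, which do \emph{not} decay but only satisfy $d^{(1)}=\tilde d^{(1)}+q_\infty^{(1)}p'(t)$ with $\tilde d^{(1)}$ decaying and $p'(t)$ the non-decaying zero mode. I would substitute this decomposition into each of the three divergence terms of \eqref{PDE diff 2} and split the source accordingly into a ``decaying part'' (at least one factor is a $\tilde d^{(1)}$, or uses the exponential convergence $m(t)\to p_\psi$ to replace $m(t)\frac{\delta^2 b}{\delta m^2}(\cdot,m(t))$-type terms) and a ``non-decaying part'' built purely out of $q_\infty^{(1)}p'(t)\frac{\delta b}{\delta m}(\cdot,p_\psi)(p'(t))$ and $p_\psi\frac{\delta^2 b}{\delta m^2}(\cdot,p_\psi)(p'(t),p'(t))$. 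The non-decaying part of the source, call it $r_\infty(t)$, is (up to exponentially small corrections) time-independent and of the form $-\mathrm{div}$ of something built from $p'(t)$; one checks $\langle r_\infty(t),\one\rangle=0$. Writing $d^{(2)}=d^{(2)}_{\mathrm{part}}+d^{(2)}_{\mathrm{hom}}$ where $d^{(2)}_{\mathrm{part}}$ handles $r_\infty$, the natural guess is $d^{(2)}_{\mathrm{part}}(t)\approx q_\infty^{(1)}(\mu,z_2)\,\partial_x d^{(1)}(t\,;\mu,z_1)$ plus a further $p'(t)$-multiple: indeed differentiating \eqref{PDE linearised diff} in $x$ shows $\partial_x d^{(1)}$ solves an equation with precisely a $-\mathrm{div}(d^{(1)}\frac{\delta b}{\delta m}(\cdot,m)(\cdot))$-type forcing, matching one of the divergence terms, and $p'(t)$ itself is a stationary zero mode of $L_{p_\psi}$ by \eqref{eq:se:5:ppsiprime}. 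After this matching, the remainder $\tilde d^{(2)}:=d^{(2)}-q_\infty^{(1)}(\mu,z_2)\partial_x d^{(1)}(t\,;\mu,z_1)-q_\infty^{(2)}(\mu,z_1,z_2)p'(t\,;\mu)$ solves a Cauchy problem with a \emph{decaying} source and an initial datum in $(W^{3,\infty}(\bT))'$, so Proposition \ref{prop:5:5} (applied with $k=2$ — or $k=3$ after an extra use of Theorem \ref{lions paper pde result}/Proposition \ref{prop:5:3} to gain a derivative — and with the exponential-decay item) delivers $\min(1,t^{\beta/2})\|\tilde d^{(2)}(t\,;\mu,z_1,z_2)\|_{(2-\beta,\infty)'}\leq Ce^{-\lambda t}$, with $q_\infty^{(2)}$ bounded by $C$ as in item (2). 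The uniform-in-time bound on $\|d^{(2)}\|_{(0,\infty)'}$ in \eqref{eq:tangent:proc:kuramoto:m} then follows by adding back $q_\infty^{(1)}\partial_x d^{(1)}$ (bounded uniformly by the $d^{(1)}$ estimate with $\alpha=1$, $\beta=0$, noting $\|\partial_x d^{(1)}(t)\|_{(0,\infty)'}\leq\|d^{(1)}(t)\|_{(1,\infty)'}$ is \emph{not} uniformly bounded near $t=0$ — so here one actually uses $k=2$ directly on \eqref{PDE diff 2} via item (1)+(2) to get the uniform $(0,\infty)'$ bound, rather than going through the decomposition) and $q_\infty^{(2)}p'(t)$. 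The delicate bookkeeping of which zero-mode corrections appear, and verifying that $\partial_x d^{(1)}$ exactly cancels the problematic non-decaying forcing, is where the bulk of the work lies; everything else is a routine transcription of the arguments already used for Theorems \ref{thm:3:17:H} and \ref{W1 decay}.
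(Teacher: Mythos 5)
You correctly identify the central difficulty and the key device in the $d^{(2)}$ analysis — the non-decaying forcing coming from the zero mode $q^{(1)}_\infty p'(t\,;\mu)$ inside the products of $d^{(1)}$'s, and the remedy of subtracting $q^{(1)}_\infty(\mu,z_2)\,\partial_x d^{(1)}(t\,;\mu,z_1)$ before invoking the ergodic estimate, exactly as the paper does with $\bar d^{(2)}$ in \eqref{eq:bar:d2} and \eqref{eq:Kuramoto:partialxd1}. The treatment of $m^{(1)}$ and $d^{(1)}$ is also in line with the paper.

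There is, however, a genuine gap in your argument for the $m^{(2)}$ bound in \eqref{eq:tangent:proc:kuramoto:m}. You assert that controlling $\|r(t)\|_{-2,\infty}\leq C$ with ``no decay needed'' and applying Proposition \ref{prop:5:5} with $k=2$, $\alpha=0$ yields the uniform-in-time bound. This does not work: the whole content of the Kuramoto-specific ergodic estimate (see the second step of the proof of Proposition \ref{prop:5:5}, in particular \eqref{eq:kuramoto:T3} and \eqref{eq:thm:kuramoto:bound:second:step}) is that a \emph{non-decaying} source produces a $(1+t)$ growth in the output, because the dual solution $w$ of \eqref{eq:w:prop:5:3} does not decay along the direction $p'_\psi$. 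Item (2) of Proposition \ref{prop:5:5} delivers an exponential bound on $q(t)-q_\infty p'(t\,;\mu)$ precisely because one first reduces to a source $r(t)$ that itself decays exponentially; you cannot bypass that. So the same cancellation you performed for $d^{(2)}$ is needed here: the paper writes the analogue of \eqref{eq:Kuramoto:partialxd1} for $\partial_x m^{(1)}(t\,;\mu,\nu)$, subtracts $q^{(1)}_\infty\,\partial_x m^{(1)}$ from $m^{(2)}$ so that the residual source (the analogue of \eqref{eq:r:m:d:tilde}, involving $\tilde m^{(1)}$ rather than $m^{(1)}$) satisfies $\|r(t)\|_{-1,\infty}\leq C e^{-\lambda t}$, applies item (2) of Proposition \ref{prop:5:5} with $k=1$, and only then reassembles $m^{(2)}$. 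Without that step your argument establishes at best a linear-in-$t$ bound, not the uniform bound claimed.

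One smaller point: in your closing paragraph you refer to ``the uniform-in-time bound on $\|d^{(2)}\|_{(0,\infty)'}$ in \eqref{eq:tangent:proc:kuramoto:m}'', but \eqref{eq:tangent:proc:kuramoto:m} concerns $m^{(1)}$ and $m^{(2)}$; the $d^{(2)}$ estimate is the $(2-\beta,\infty)'$ bound with the singular prefactor in \eqref{eq:tangent:proc:kuramoto:d}. The fallback you propose there (``use $k=2$ directly on \eqref{PDE diff 2} via item (1)+(2) rather than the decomposition'') runs into the same obstruction as above, for the same reason.
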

 {The reader may observe that the estimate for $d^{(2)}$ in 
\eqref{eq:tangent:proc:kuramoto:d}  is not formulated as in the statement of Proposition 
\ref{global schauder 2} (basically, it is worse). In fact, we chose to give it in this form in order to make it consistent with the estimate of $\tilde d^{(2)}$ in the second line of \eqref{eq:tangent:proc:kuramoto:d}.}

\begin{proof}
%
\textit{First Step.}
We start with the proof of 
\eqref{eq:tangent:proc:kuramoto:d}.
Throughout the proof, we fix $\eta \in (0,1)$. 
By Proposition \ref{diff thm 1}, 
we know that, for any $\mu \in {\mathcal Q}_{\eta}$ and $z \in \bT$,
$d^{(1)}(t\, ; \mu,z)$ solves the linearised equation 
\begin{equation}
\label{eq:Kuramoto:eq:d1}
\partial_{t} d^{(1)}(t\, ; \mu,z) - L_{m(t  ; \mu)} d^{(1)}(t \, ; \mu,z)  = 0,
\end{equation}
with $d^{(1)}(0 \, ; \mu,z) =  D_{z}' = - \partial_{x}(\delta_{z})$ and $m(0 \, ; \mu)=\mu$. Then, 
by 
item (2) in Proposition \ref{prop:5:5}, with $k=1$, 
we get the two bounds for 
$d^{(1)}(t \, ; \mu,z)$ and 
$\tilde d^{(1)}(t \, ;\mu,z)$,  with $q_{\infty}^{(1)}(\mu,z)$ being given by 
Proposition \ref{prop:5:5}. 

As for $d^{(2)}$, things are more complicated. 
By Proposition \ref{diff thm 2}, we indeed know that, for any $\mu \in {\mathcal Q}_{\eta}$
and any  $z_{1},z_{2} \in {\mathbb T}^d$, $d^{(2)}(t \, ; \mu,z_{1},z_{2})$ solves the equation
\begin{equation}
\label{eq:kuramoto:d2}
\begin{split}
&\partial_{t} d^{(2)}(t\,;\mu,z_{1},z_{2}) - L_{m(t;\mu)} d^{(2)}(t \, ; \mu,z_{1},z_{2})
\\
&\hspace{5pt}- \partial_{x} \Bigl( d^{(1)}(t\,;\mu,z_{1})  \bigl( J \star 
d^{(1)}(t \, ; \mu,z_{2})  \bigr) + d^{(1)}(t \, ; \mu,z_{2}) \bigl( J \star d^{(1)}(t\, ; \mu,z_{1}) \bigr) 
\Bigr) = 0,
\end{split}
\end{equation}
with $d^{(2)}(0\, ; \mu,z_{1},z_{2}) = 0$. 
The two bounds in finite time follow from {\hergol} 
(noticing that the estimate for $d^{(1)}$ gives a bound 
for $\partial_x d^{(1)}$ in $(W^{2-\alpha,\infty}({\mathbb T}^d))'${, which is needed to estimate $\tilde d^{(2)}$, and that a similar estimate holds for $p'(\cdot ; \mu)$}).
In order to apply 
 (2) in Proposition 
\ref{prop:5:5}  {and} get bounds in long time, 
we let
\begin{equation*}
r(t) = \partial_{x} \Bigl( d^{(1)}(t \, ; \mu,z_{1})  \bigl( J \star 
d^{(1)}(t\, ; \mu,z_{2})  \bigr) + d^{(1)}(t \, ; \mu,z_{2}) \bigl( J \star d^{(1)}(t \, ; \mu,z_{1}) \bigr) 
\Bigr).
\end{equation*}
However, we cannot prove that 
$\| r(t) \|_{(1,\infty)'}$ decays exponentially 
fast, since 
\eqref{eq:tangent:proc:kuramoto:d}
just provides an exponential bound for 
$\tilde d^{(1)}(t\, ; \mu,z_{i})$, and not for $d^{(1)}(t\,;\mu,z_{i})$ (with $i=1,2$). 
Instead, we focus on 
\begin{equation}
\label{eq:bar:d2}
\bar{d}^{(2)}(t\, ; \mu,z_{1},z_{2}) := d^{(2)}(t\, ; \mu,z_{1},z_{2}) - 
q_\infty^{(1)}(\mu,z_{2}) \partial_{x} d^{(1)}(t\, ; \mu,z_{1}), 
\end{equation}
for $q_{\infty}^{(1)}(\mu,z)$ as in \eqref{eq:tangent:proc:kuramoto:d:def}. 
We easily see that 
$(\partial_{x} d^{(1)}(t\, ; \mu,z))_{t \geq 0}$ solves (in a weak sense) the equation
\begin{equation}
\label{eq:Kuramoto:partialxd1}
\begin{split}
&\partial_{t} \partial_{x}d^{(1)}(t\, ; \mu,z) - L_{m(t \, ; \cdot)} \partial_{x} d^{(1)}(t\, ; \mu,z)
\\
&\hspace{15pt} - \partial_{x} \Bigl( d^{(1)}(t\, ; \mu,z)  \bigl( J \star 
m'(t \, ; \mu)  \bigr) + m'(t\, ; \mu) \bigl( J \star d^{(1)}(t\, ; \mu,z) \bigr)   
\Bigr) = 0,
\end{split}
\end{equation}
in the space
$\cap_{T>0} L^{\infty}([0,T], (W^{2, \infty}(\bT^d))')
\cap_{T>1} L^{\infty}([1/T,T], (W^{2-2\alpha, \infty}(\bT^d))')$, 
with $-\partial^2_{x}(\delta_{z})$ as initial condition.
Choosing $z=z_{1}$
in 
\eqref{eq:Kuramoto:partialxd1},
multiplying 
by 
$q_{\infty}^{(1)}(\mu,z_{2})$
and then subtracting  to 
\eqref{eq:kuramoto:d2}, we obtain 
\begin{equation*}
\begin{split}
&\partial_{t} \bar d^{(2)}(t\, ; \mu,z_{1},z_{2}) 
- L_{m(t;\mu)} \bar d^{(2)}(t\, ; \mu,z_{1},z_{2})  -r(t) = 0, 
\end{split}
\end{equation*}
with 
\begin{equation}
\label{eq:r:m:d:tilde}
r(t)  = 
\partial_{x} \Bigl( d^{(1)}(t \, ; \mu,z_{1})   \bigl( J \star 
\tilde d^{(1)}(t\, ; \mu,z_{2})   \bigr) + \tilde d^{(1)}(t \, ; \mu,z_{2})  \bigl( J \star d^{(1)}(t\, ; \mu,z_{1})  \bigr)   
\Bigr).
\end{equation}
We first evaluate the $(W^{1,\infty}(\bT))'$ norm of $r(t)$. 
Using the fact that the convolution kernel $J$ is odd, we get, for any $\xi \in W^{1,\infty}(\bT)$, 
\begin{equation*}
\begin{split}
\langle r(t) , \xi \rangle &= 
-
\Bigl\langle 
d^{(1)}(t\, ; \mu,z_{1})  \bigl( J \star 
\tilde d^{(1)}(t\, ; \mu,z_{2})  \bigr) + \tilde d^{(1)}(t \, ;\mu,z_{2})  \bigl( J \star d^{(1)}(t \, ; \mu,z_{1}) \bigr) , 
\xi'
\Bigr\rangle
\\
&=
\Bigl\langle 
\tilde d^{(1)}(t\, ; \mu,z_{2}), J \star  \bigl( d^{(1)}(t\, ; \mu,z_{1})  \xi'  \bigr) 
\Bigr\rangle 
- 
\Bigl\langle \tilde d^{(1)}(t\, ; \mu,z_{2}) ,  \bigl( J \star d^{(1)}(t\, ; \mu,z_{1})  \bigr)    
\xi'
\Bigr\rangle. 
\end{split}
\end{equation*}
Invoking \eqref{eq:tangent:proc:kuramoto:d} with $\alpha=1$ (using in addition the smoothness of $J$), we then have 
\begin{equation*}
\| r(t) \|_{(1,\infty)'} \leq C e^{-\lambda t}, \qquad t \geq 1, 
\end{equation*}
where the values of $C$ and $\lambda$ are allowed to vary as long as they only depend on $\kappa$, $\alpha$, $\beta$ and $\eta$. 
By item (2) in Proposition 
\ref{prop:5:5}, 
but initiated from 
time $1/2$ (recalling that we have a bound for 
 {$\| \bar d^{(2)}(t \, ; \mu,z_{1},z_{2}) \|_{(1,\infty)'}$ at $t=1/2$})
and with $k=1$,
we deduce   
that there exists a constant $q_\infty^{(2)}(\mu,z_{1},z_{2})$ such that 
\begin{equation}
\label{eq:estimates:d2}
 \bigl\|
\tilde d^{(2)}(t \, ; \mu,z_{1},z_{2}) \bigr\|_{(0,\infty)'}
\leq C
e^{-\lambda t},
\qquad t \geq 1, 
\end{equation}
with 
$\tilde d^{(2)}(t\, ; \mu,z_{1},z_{2})
= 
 \bar d^{(2)}(t\, ; \mu,z_{1},z_{2})
 - 
q_\infty^{(2)}(\mu,z_{1},z_{2}) p'(t\, ; \mu)$.
This completes the proof of 
\eqref{eq:tangent:proc:kuramoto:d}.
\vskip 4pt
 
\textit{Second Step.}
{We now turn to the proof 
of 
\eqref{eq:tangent:proc:kuramoto:m}, which is quite similar to the first step.
By Proposition
\ref{recap theorem 4.5}, we know that, for any 
$\mu \in {\mathcal Q}_{\eta}$ and $\nu \in {\mathcal P}(\bT)$, 
$(m^{(1)}(t\, ; \mu,\nu))_{t \geq 0}$ solves the same equation  
\eqref{eq:Kuramoto:eq:d1}
but with $\nu-\mu$ as initial condition. 
Therefore, 
we have the bound for 
$m^{(1)}(t \, ; \mu,\nu)$
in \eqref{eq:tangent:proc:kuramoto:m}, using item (2) in 
Proposition 
\ref{prop:5:5}, with $k=0$.
We have the same for 
$\tilde m^{(1)}(t \, ; \mu,\nu)$, with an obvious definition for the latter. 

We now treat $(m^{(2)}(t\, ; \mu,\nu_{1},\nu_{2}))_{t \geq 0}$. For $t \in [0,1]$, the bound follows from {\hergol}. 
To address the case $t>1$, we write the analogue of 
\eqref{eq:Kuramoto:partialxd1}
but for $(\partial_{x} m^{(1)}(t\, ; \mu,\nu))_{t \geq 0}$ and the analogue of $r$ in 
\eqref{eq:r:m:d:tilde}. We have $\| r(t)\|_{(1,\infty)'} \leq C \exp(-\lambda t)$.
By  (2) in Proposition \ref{prop:5:5}, with $k=1$, 
we recover 
\eqref{eq:estimates:d2} but 
for 
$\tilde m^{(2)}(t \, ; \mu,\nu_{1},\nu_{2})$ (with an obvious definition for it).
We get a bound for 
$\|  m^{(2)}(t \, ; \mu,\nu_{1},\nu_{2}) \|_{(0,\infty)'}$ for $t \geq 1$.}
\end{proof}

In the rest of the subsection, ${\mathcal U}$ denotes the same functional as in 
\eqref{eq: defofflow}. We start with:
\begin{lemma}
\label{lem:4.11}
Let $\Phi : {\mathcal P}(\bT) \rightarrow {\mathbb R}$ be 
a rotation-invariant function that satisfies \emph{\hintphi{4}{3}} (see Definition $\ref{def:rotation:invariant:function})$. Then, for any  $\mu \in {\mathcal P}(\bT)$ and any  
$q \in (W^{1,\infty}(\bT))'$ with $\langle q, \one \rangle=0:$
\begin{equation}
\label{eq:2nd:order:psi:1}
\begin{split}
&\frac{\delta \Phi}{\delta m}(\mu)\bigl( \mu' \bigr) =0,
\qquad \textrm{and}
\qquad    \frac{\delta^2 \Phi}{\delta m^2} (\mu) \bigl( \mu' , q \bigr)
 +
\frac{\delta \Phi}{\delta m} (\mu)\bigl( q'\bigr) 
=0,
\end{split}
\end{equation}
where $\mu'$ and $q'$ are the derivatives, in the sense of distributions, of $\mu$ and $q$
(so that the above expressions are well-defined thanks to the regularity of $\Phi$).   
\end{lemma}

\begin{proof}
We start from the very definition of rotation-invariant function. It says that, for any 
$\psi \in \bT$, 
\begin{equation*}
\frac{\ud}{\ud \psi}_{\vert \psi=0} \Phi\bigl( \mu \circ \tau_{\psi}^{-1} \bigr) =0. 
\end{equation*}
The left-hand side writes in the form 
$- \langle [\delta\Phi/\delta m](\mu),\mu' \rangle$. We deduce the first identity in the statement. 

Assume now that $\mu$ has a (strictly) positive continuous density $\ud \mu /\ud x$ and consider an element $q \in L^\infty(\bT)$ such that 
$\langle q, \one \rangle=0$. Then, for $\varepsilon$ small enough, 
$\ud \mu/\ud x + \varepsilon q$ may be regarded as a density on $\bT$. With a slight abuse of notation, we 
write $\mu + \varepsilon q \in {\mathcal P}(\bT)$. Replacing $\mu$ by $\mu + \varepsilon q$ in the first identity in the statement and then taking the derivative with respect to $\varepsilon$ at $\varepsilon=0$, we deduce that 
the second identity in the statement holds at any pair $(\mu,q)$ satisfying the prescribed conditions. Using the
density of $L^\infty(\bT)$ in $(W^{1,\infty})'(\bT)$  
together with the fact that $\Phi$ satisfies {\hintphi{4}{3}}, the second identity also holds  at any pair $(\mu,q)$, with $\mu$ as before and 
$q \in (W^{1,\infty})'(\bT)$ such that $\langle q,\one \rangle=0$. 
Approximating (for the $1$-Wasserstein topology) any $\mu \in {\mathcal P}(\bT)$ by probability measures with 
a positive density and then invoking again the regularity properties of  
$\Phi$, we finally obtain the second identity in full generality. 
\end{proof}

We deduce the following important proposition.
\begin{proposition}
\label{corol:1}
Assume that 
$\Phi$ satisfies
\emph{\hintphi{4}{3}}.  
Then, for any $\alpha \in (0,1)$ and $\eta \in (0,1)$, there exist two positive constants $\lambda$ and $C$, with $\lambda$ only depending on $\kappa$, $\alpha$ and $\eta$, and $C$ only depending on $\kappa$, $\alpha$, $\eta$ and
the bounds in 
\emph{\hintphi{\alpha}{2}}, such that, 
 for any $\mu \in {\mathcal Q}_{\eta}$, 
 any $t > 0$ and 
 any $z_{1},z_{2} \in \bT$,  
\begin{align}
&\Bigl\vert \sld[\cU](t  , \mu)( z_1, z_2) \Big\vert
\leq C, \quad t \geq 0,
\label{eq:derivatives:proc:kuramoto:m}
\\
&\min(1,t^{1-\alpha/4}) \Bigl\vert \partial_{z_2} \partial_{z_1} \sld[\cU](t  , \mu)( z_1, z_2) \Big\vert 
\leq C e^{- \lambda t}, \quad t >0. 
\label{eq:derivatives:proc:kuramoto:d}
\end{align}
\end{proposition}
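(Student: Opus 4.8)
The plan is to combine the representation formula of Theorem \ref{diff second order L-deriv} with the tangent-process estimates of Proposition \ref{prop:tangent:proc:kuramoto}, carefully exploiting the rotation invariance of $\Phi$ through the two identities \eqref{eq:2nd:order:psi:1}. First I would recall from \eqref{eq: main rep} that, for $\mu \in {\mathcal Q}_{\eta}$,
\begin{equation*}
\sld[\cU](t,\mu)(z_1,z_2) = \sld[\Phi](m(t;\mu))\bigl( m^{(1)}(t;\mu,\delta_{z_1}), m^{(1)}(t;\mu,\delta_{z_2}) \bigr) + \ld[\Phi](m(t;\mu))\bigl( m^{(2)}(t;\mu,\delta_{z_1},\delta_{z_2}) \bigr).
\end{equation*}
The bound \eqref{eq:derivatives:proc:kuramoto:m} is then immediate: by \eqref{eq:tangent:proc:kuramoto:m} the distributions $m^{(1)}(t;\mu,\delta_{z_i})$ and $m^{(2)}(t;\mu,\delta_{z_1},\delta_{z_2})$ are bounded in $(W^{0,\infty}(\bT))'$, uniformly in $t\geq 0$, $z_1,z_2$ and $\mu \in {\mathcal Q}_{\eta}$, while \hintphi{\alpha}{2} (hence the finiteness in \eqref{eq:Int-phi-alpha-k}, which is implied by \hintphi{4}{3}) controls the corresponding multilinear forms on such distributions; the uniformity of $C$ in terms of only the \hintphi{\alpha}{2}-bounds follows exactly as in the proof of Theorem \ref{thm main result}.

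The substance is \eqref{eq:derivatives:proc:kuramoto:d}. I would start from Theorem \ref{diff second order L-deriv}, which gives
\begin{equation*}
\partial_{z_2}\partial_{z_1}\sld[\cU](t,\mu)(z_1,z_2) = \sld[\Phi](m(t;\mu))\bigl( d^{(1)}(t;\mu,z_1), d^{(1)}(t;\mu,z_2) \bigr) + \ld[\Phi](m(t;\mu))\bigl( d^{(2)}(t;\mu,z_1,z_2) \bigr),
\end{equation*}
and then substitute the decompositions $d^{(1)}(t;\mu,z) = \tilde d^{(1)}(t;\mu,z) + q_\infty^{(1)}(\mu,z)\,p'(t;\mu)$ and $d^{(2)}(t;\mu,z_1,z_2) = \tilde d^{(2)}(t;\mu,z_1,z_2) + q_\infty^{(1)}(\mu,z_2)\,\partial_x d^{(1)}(t;\mu,z_1) + q_\infty^{(2)}(\mu,z_1,z_2)\,p'(t;\mu)$ from \eqref{eq:tangent:proc:kuramoto:d:def}. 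Expanding multilinearly, the terms containing only the ``tilde'' parts decay like $e^{-\lambda t}$ (up to the integrable singularity $1\wedge t^{\alpha/2}$, respectively $1 \wedge t^{\beta/2}$) by \eqref{eq:tangent:proc:kuramoto:d} and the boundedness in \eqref{eq:Int-phi-alpha-k}; here I would choose the Sobolev exponents as in Theorem \ref{thm main result}, namely $1-\alpha/2$ in place of $\alpha$ for $d^{(1)}$ and $2-\alpha/2$ in place of $\beta$ for $d^{(2)}$, so that the products land in $(W^{\alpha/2,\infty}(\bT))'$ and the time singularity is $1\wedge t^{1-\alpha/4}$. The $q_\infty^{(i)}$ are bounded by $C$ uniformly in $z_1,z_2$ and $\mu \in {\mathcal Q}_\eta$ by the last assertion of Proposition \ref{prop:tangent:proc:kuramoto} (recall $\|D_z'\|_{(2,\infty)'}$ and $\|D_z''\|_{(3,\infty)'}$ are bounded).

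The main obstacle — and the place where rotation invariance enters — is the collection of terms that still involve $p'(t;\mu)$ or $\partial_x d^{(1)}(t;\mu,z_1)$, which a priori do \emph{not} decay. There are three such terms: $q_\infty^{(1)}(\mu,z_1) q_\infty^{(1)}(\mu,z_2)\,\sld[\Phi](m(t;\mu))(p'(t;\mu),p'(t;\mu))$, the two cross terms $q_\infty^{(1)}(\mu,z_i)\,\sld[\Phi](m(t;\mu))(\tilde d^{(1)}(t;\mu,z_{3-i}), p'(t;\mu))$ — wait, these last ones still carry a $\tilde d^{(1)}$ and hence decay — so in fact the genuinely dangerous contributions are $q_\infty^{(1)}(\mu,z_1)q_\infty^{(1)}(\mu,z_2)\sld[\Phi](m(t;\mu))(p'(t;\mu),p'(t;\mu))$, and from $d^{(2)}$ the pieces $q_\infty^{(1)}(\mu,z_2)\,\ld[\Phi](m(t;\mu))(\partial_x d^{(1)}(t;\mu,z_1))$ and $q_\infty^{(2)}(\mu,z_1,z_2)\,\ld[\Phi](m(t;\mu))(p'(t;\mu))$. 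I would now apply the second identity in \eqref{eq:2nd:order:psi:1} with $\mu$ replaced by $m(t;\mu)$: since $p(t;\mu)$ is a smooth positive density for $t>0$, $p'(t;\mu) = (m(t;\mu))'$, and taking $q = d^{(1)}(t;\mu,z_1)$ (which has zero mean and lies in $(W^{1,\infty}(\bT))'$) gives $\sld[\Phi](m(t;\mu))(p'(t;\mu), d^{(1)}(t;\mu,z_1)) = -\ld[\Phi](m(t;\mu))(\partial_x d^{(1)}(t;\mu,z_1))$. Decomposing $d^{(1)}(t;\mu,z_1) = \tilde d^{(1)}(t;\mu,z_1) + q_\infty^{(1)}(\mu,z_1) p'(t;\mu)$ on the left, this exactly relates the surviving $\ld[\Phi]$-term with $\partial_x d^{(1)}$ to the $\sld[\Phi]$-term with $(p'(t;\mu),p'(t;\mu))$ plus a decaying remainder; iterating once more with $q = p'(t;\mu)$ shows that $\sld[\Phi](m(t;\mu))(p'(t;\mu),p'(t;\mu)) = -\ld[\Phi](m(t;\mu))(\partial_x p'(t;\mu))$, and similarly the first identity in \eqref{eq:2nd:order:psi:1} kills $\ld[\Phi](m(t;\mu))(p'(t;\mu))$ outright. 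The upshot is that \emph{all} non-decaying contributions assemble into expressions of the form $\ld[\Phi](m(t;\mu))(p''(t;\mu))$ (and lower derivatives of $p$ composed with $\ld[\Phi]$), which vanish because $\ld[\Phi](m(t;\mu))(p''(t;\mu)) = \frac{d}{d\psi}_{|\psi=0}\bigl[\frac{d}{d\psi'}_{|\psi'=0}\Phi(m(t;\mu)\circ\tau_{\psi+\psi'}^{-1})\bigr] = 0$ by two applications of rotation invariance. After this cancellation only the terms already shown to decay survive, and collecting the bounds yields \eqref{eq:derivatives:proc:kuramoto:d}. The delicate bookkeeping — making sure every $p'$- or $\partial_x d^{(1)}$-carrying term is paired off correctly by \eqref{eq:2nd:order:psi:1} before any estimate is invoked — is where I expect the real work to lie; once the algebraic cancellation is in place the analytic estimates are routine given Proposition \ref{prop:tangent:proc:kuramoto}.
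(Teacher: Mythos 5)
Your overall strategy — decompose the tangent processes into ``tilde'' and non-tilde parts, then use the rotation-invariance identities \eqref{eq:2nd:order:psi:1} to make the non-decaying contributions disappear — is the right one, and it is the same strategy as the paper's. But there is a genuine error in the way you summarize the cancellation, and it propagates into an estimate that cannot be closed under the stated hypotheses.

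The problem is the final claim that $\ld[\Phi](m(t;\mu))(p''(t;\mu))$ vanishes by ``two applications of rotation invariance''. The second derivative $\tfrac{d^2}{d\psi^2}\big|_{\psi=0}\Phi\bigl(m(t;\mu)\circ\tau_\psi^{-1}\bigr)$ does vanish, but an easy computation shows that it equals $\sld[\Phi](m(t;\mu))(p',p') + \ld[\Phi](m(t;\mu))(p'')$, not $\ld[\Phi](m(t;\mu))(p'')$ alone. (Indeed that is precisely the second identity in \eqref{eq:2nd:order:psi:1} applied with $q=p'$.) Neither term vanishes in general; only their \emph{sum} does, so the right picture is a pairwise cancellation between the $\sld[\Phi](p',p')$-contribution coming from the first-derivative piece and the $\ld[\Phi](p'')$-contribution hiding inside $\ld[\Phi](\partial_x d^{(1)})$.

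Worse, if you actually carry out the ``iterate once more'' route — i.e.\ decompose $\partial_x d^{(1)}_1 = \partial_x\tilde d^{(1)}_1 + q^{(1)}_1 p''$ and separately replace each $\sld[\Phi](\cdot,p')$ by $-\ld[\Phi](\partial_x\,\cdot)$ — then after the $\ld[\Phi](p'')$ terms cancel you are left with a stray term of the form $q^{(1)}_1\ld[\Phi](m(t;\mu))\bigl(\partial_x\tilde d^{(1)}(t;\mu,z_2)\bigr)$. The extra spatial derivative pushes $\tilde d^{(1)}$ from $(W^{1-\alpha,\infty}(\bT))'$ into $(W^{2-\alpha,\infty}(\bT))'$, and pairing that against $\ld[\Phi]$ needs $\ld[\Phi]$ in $W^{2-\alpha,\infty}(\bT)$: this can only be controlled through the \hintphi{4}{3} bounds, whereas the proposition explicitly requires the constant $C$ to depend only on the \hintphi{\alpha}{2} bounds. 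So that branch of your proposal does not close.

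The version of your argument that \emph{does} work is the one you sketch first and then talk yourself out of: apply the second identity exactly once, to $\sld[\Phi](m(t;\mu))(p',d^{(1)}(t;\mu,z_1))$ with the \emph{undecomposed} $d^{(1)}$, so that it directly equals $-\ld[\Phi](m(t;\mu))(\partial_x d^{(1)}(t;\mu,z_1))$; this cancels the $q_\infty^{(1)}\,\ld[\Phi](\partial_x d^{(1)})$ term coming from the $d^{(2)}$ decomposition, while the first identity kills the $q_\infty^{(2)}p'$ piece. Do not touch the remaining $\sld[\Phi](\tilde d^{(1)},p')$ cross terms — they already decay because $\tilde d^{(1)}$ does and $p'$ is merely bounded — and never introduce $\ld[\Phi](\partial_x\tilde d^{(1)})$ or $\ld[\Phi](p'')$. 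That is exactly how the paper proceeds: it decomposes $d^{(1)}$ in only one slot of $\sld[\Phi]$, uses the second identity to trade the resulting $\sld[\Phi](p',d^{(1)})$ term for $-\ld[\Phi](\partial_x d^{(1)})$, absorbs the latter into $\bar d^{(2)}$, uses the first identity to discard $q_\infty^{(2)}p'$, and is left with $\sld[\Phi](\tilde d^{(1)},d^{(1)})+\ld[\Phi](\tilde d^{(2)})$, both of which are estimable through $(W^{\alpha/2,\infty})'$-pairings depending only on \hintphi{\alpha}{2}.
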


\begin{proof}
\textit{First Step.}
We start with the proof of 
\eqref{eq:derivatives:proc:kuramoto:m}.
It is a mere consequence of 
the representation formula in 
Proposition \ref{recap theorem 4.5}
and the bound
\eqref{eq:tangent:proc:kuramoto:m}
in the statement of Proposition 
\ref{prop:tangent:proc:kuramoto}. 
%
\vskip 4pt

\textit{Second Step.}
We turn to the proof of 
\eqref{eq:derivatives:proc:kuramoto:d}. 
We recall the following formula from Proposition \ref{diff second order L-deriv}:
\begin{equation*}
\begin{split}
\partial_{z_2} \partial_{z_1} \sld[\cU](t , \mu)(z_1, z_2)
 &=  \sld[\Phi]\bigl(m(t \, ; \mu)\bigr)\Big( d^{(1)}(t\, ; \mu, z_1),d^{(1)}(t\, ; \mu, z_2) \Big) 
  + \ld[\Phi]\bigl(m(t \, ; \mu)\bigr)\Big( d^{(2)}(t \, ; \mu, z_1,z_2)  \Big),
\end{split}
\end{equation*}
for $\mu \in {\mathcal P}(\bT)$, $t \geq 0$ and $z_{1},z_{2} \in \bT$. 
Using the same notation as in 
Proposition \ref{prop:tangent:proc:kuramoto}, we have
\begin{equation*}
\begin{split}
&\partial_{z_2} \partial_{z_1} \sld[\cU](t, \mu)(z_1, z_2)
=\sld[\Phi]\bigl(m(t \, ; \mu)\bigr)\Big( \tilde d^{(1)}(t \, ; \mu, z_1),  d^{(1)}(t \, ; \mu, z_2) \Big)
\\
&\hspace{5pt}
+ q_{\infty}^{(1)}(\mu,z_{2}) 
\sld[\Phi]\bigl(m(t \, ; \mu)\bigr)\Big( p'(t \, ;\mu),d^{(1)}(t \, ; \mu, z_2) \Big)
+ \ld[\Phi]\bigl(m(t \, ;\mu)\bigr)\Big( d^{(2)}(t \, ; \mu, z_1,z_2)  \Big).
\end{split}
\end{equation*}
With the same notation as in \eqref{eq:bar:d2} and 
\eqref{eq:Kuramoto:partialxd1}  {and thanks to the second identity in 
\eqref{eq:2nd:order:psi:1}}, we get 
\begin{equation*}
\begin{split}
\partial_{z_2} \partial_{z_1} \sld[\cU](t , \mu)(z_1, z_2)
&=\sld[\Phi]\bigl(m(t \, ; \mu)\bigr)\Big( \tilde d^{(1)}(t\, ; \mu, z_1),  d^{(1)}(t \, ; \mu, z_2)  \Big)
\\
&\hspace{15pt}
-   q_{\infty}^{(1)}(\mu,z_{2})
\ld[\Phi]\bigl(m(t \, ;\mu)\bigr) \Big( \partial_{x} d^{(1)}(t\, ; \mu, z_2)  \Big)
+ \ld[\Phi]\bigl(m(t\, ; \mu)\bigr)\Big( d^{(2)}(t\, ; \mu, z_1,z_2)  \Big)
\\
&=
\sld[\Phi]\bigl(m(t \, ; \mu)\bigr)\Big( \tilde d^{(1)}(t\, ; \mu, z_1) ,  d^{(1)}(t\, ; \mu, z_2)
 \Big)
+
\ld[\Phi]\bigl(m(t \, ; \mu)\bigr) \Big( \bar d^{(2)}(t\, ; \mu, z_1,z_2)  \Bigr).  
\end{split}
\end{equation*}
By the first identity in 
\eqref{eq:2nd:order:psi:1}, we can remove for free $q_{\infty}^{(2)}(\mu,z_{1},z_{2})p'(t \, ;\mu)$ in the second term on the last line. 
Using again the notation from the statement of Proposition 
\ref{prop:tangent:proc:kuramoto}, we obtain 
\begin{equation*}
\begin{split}
\partial_{z_2} \partial_{z_1} \sld[\cU](t,\mu)(z_1, z_2)
&=
\sld[\Phi]\bigl(m(t \, ; \mu)\bigr)\Big( \tilde d^{(1)}(t\, ;\mu, z_1) ,  d^{(1)}(t\, ; \mu, z_2)  \Big)
+
\ld[\Phi]\bigl(m(t \, ; \mu)\bigr) \Big( \tilde d^{(2)}(t\,;\mu, z_1,z_2)  \Bigr).  
\end{split}
\end{equation*}
The end of the proof follows  {from
\eqref{eq:tangent:proc:kuramoto:d}}
and 
{\hintphi{\alpha}{2}}  (see 
in particular, \eqref{eq:Int-phi-alpha-k}).
%
%
\end{proof}

\subsection{Semi-group generated by the empirical distribution}
\label{subse:kuramoto:weak}
We now address the weak error, as in the statement of Theorem 
 \ref{main:thm:kuramoto}. By the same regularisation argument as in 
 {the conclusion of the proof of Proposition} \ref{thm main result}, 
we can assume that $\Phi$ satisfies \hintphi{4}{3}.  
Following 
the statement of Proposition  
\ref{corol:1}, we  must then prove that the constant $C$ that we obtain  in the main inequality of 
Theorem \ref{main:thm:kuramoto} only depends on $\Phi$ through
the 
bounds 
in {\hintphi{\gamma}{2}}, for a fixed value of $\gamma$ as in the statement of 
Theorem \ref{main:thm:kuramoto}.

Throughout the proof, we make use of the
notation introduced in Subsection  \ref{subse:metastable}, letting: 
\begin{equation*}
\overline{\mathcal U}^N(t,\mu) = {\mathbb E} \bigl[ \Phi\bigl( \mu_{t}^N \bigr) \, \big\vert \, \mu^N_{0} = \mu \bigr],
\quad \mu \in {\mathcal P}_{N}(\bT).
\end{equation*}
By the Markov property of 
$(\mu^N_t)_{t \geq 0}$, 
we have
\begin{equation}
\label{eq:markov:muN}
{\mathbb E} \bigl[ \Phi\bigl( {\mu}_{t}^N \bigr) \vert {\mathcal F}_{s} \bigr]
= \overline{\mathcal U}^N \bigl( t-s,  \mu^N_{s} \bigr). 
\end{equation}

\vskip 4pt

We then proceed step by step. 
The first step is dictated by the analysis of the strong error in \cite{coppini2019long}
and consists of a preliminary form of the result up until time $\exp(N^{1/2})$, for 
$\mu$ close enough to ${\mathcal I}=\{p_{\psi}, \psi \in {\mathbb T}\}$. 

\begin{lemma}
\label{lem:kura:expN1/2}
There exist  $\delta >0$, only depending on $\kappa$,  
and a constant $C \geq 0$, only depending on $\delta$, $\kappa$
and the bounds for $\Phi$ in 
\emph{\hintphi{\gamma}{2}},
such that, 
for any $N \geq 1$
and  $\mu \in {\mathcal P}_{N}(\bT)$ 
with $\textrm{\rm dist}_{\| \cdot \|_{-1,2}}(\mu,{\mathcal I}) \leq \delta$,
\begin{equation*}
\begin{split}
\forall t \in \bigl[0,\exp(N^{1/2})\bigr], \quad \Bigl\vert 
\overline{\mathcal U}^N(t,\mu) 
 -
{\mathcal U}(t,\mu) \Bigr\vert \leq \frac{C}{N}, 
\end{split}
\end{equation*}
where $\textrm{\rm dist}_{\| \cdot \|_{-1,2}}(\mu,{\mathcal I})
= \inf_{\psi \in \bT} \| \mu - p_{\psi} \|_{-1,2}$.
Here, we may choose
$\delta$ 
such that 
$\{ \mu : 
\textrm{\rm dist}_{\| \cdot \|_{-1,2}}( \mu,{\mathcal I})
\leq \delta \}
\subset {\mathcal Q}_{1/2}$.
\end{lemma}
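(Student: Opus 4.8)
\textbf{Proof proposal for Lemma \ref{lem:kura:expN1/2}.}

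The plan is to run the master-equation expansion of Lemma \ref{lem:general:lemma} along the particle system, but localised to the region where the empirical distribution stays close to $\mathcal I$. The starting identity is \eqref{eq:errordecomp} adapted to the Markovian setting: since the initial law is $\mu$ (a Dirac on the configuration $(x_1,\dots,x_N)$ rather than an I.I.D.\ law), only the second term of \eqref{eq:errordecomp} survives, so that by \eqref{eq:second:term:main result intro formula},
\begin{equation*}
\overline{\mathcal U}^N(t,\mu) - \mathcal U(t,\mu) = \frac1N \int_0^t \mathbb E\biggl[ \int_{\bT} \Bigl( \partial_{z_2}\partial_{z_1} \sld[\cU](t-s,\mu^N_s)(z,z)\Bigr) \mu^N_s(\ud z) \,\Big|\, \mu^N_0 = \mu \biggr] \ud s,
\end{equation*}
where I use \eqref{eq:markov:muN} to identify the conditional expectation. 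First I would invoke Proposition \ref{corol:1}: for any $\mu' \in {\mathcal Q}_{\eta}$ (with $\eta = 1/2$), the integrand is bounded by $C(1\wedge(t-s)^{1-\alpha/4})^{-1}e^{-\lambda(t-s)}$, with $C$ depending on $\Phi$ only through the \hintphi{\alpha}{2}-bounds. Integrating this kernel over $s\in[0,t]$ gives a finite constant uniformly in $t$. The subtlety is that Proposition \ref{corol:1} only applies when $\mu^N_s \in {\mathcal Q}_{1/2}$, hence the need to control the excursions of $\mu^N$ outside a small neighbourhood of $\mathcal I$.

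The key step is therefore a stopping-time argument. Let $\tau_N$ be the first exit time of $\mu^N$ from the set $\{\mu' : \textrm{dist}_{\|\cdot\|_{-1,2}}(\mu',\mathcal I) \le 2\delta\}$, which (by the last sentence of the statement, taking $\delta$ small enough depending on $\kappa$) is contained in ${\mathcal Q}_{1/2}$. On $\{s < \tau_N \wedge t\}$ the bound from Proposition \ref{corol:1} applies directly, contributing $\le C/N$ after integration. On $\{\tau_N \le t\}$, I would use the crude uniform bound $|\overline{\mathcal U}^N - \mathcal U| \le 2\|\Phi\|_\infty$ together with a large-deviation estimate for $\tau_N$: this is exactly where the result of \cite{coppini2019long} enters — it guarantees that $\mathbb P(\tau_N \le \exp(N^{1/2})) \le C\exp(-cN^{1/2})$ (or a comparable sub-exponential bound), when started from $\textrm{dist}_{\|\cdot\|_{-1,2}}(\mu,\mathcal I)\le\delta$. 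Since $\exp(-cN^{1/2}) = o(1/N)$, this excursion term is negligible against $C/N$ for the relevant time range $t \le \exp(N^{1/2})$. Combining the two contributions yields the claimed $C/N$ bound.

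The main obstacle I anticipate is making the interface between the PDE-side estimates and the probabilistic exit-time estimate fully rigorous. Concretely: (i) one must check that the hitting time $\tau_N$ is well-defined and that the strong Markov property may be applied to re-centre the estimate after $\tau_N$ (or, alternatively, simply bound the post-$\tau_N$ contribution by $2\|\Phi\|_\infty\,\mathbb P(\tau_N \le t)$ without re-entering, which is cleaner); (ii) one must verify that the metric $\|\cdot\|_{-1,2}$ used in \cite{coppini2019long} is compatible with the neighbourhood of $\mathcal I$ on which Propositions \ref{prop:5:10}, \ref{prop:5:5} and \ref{prop:tangent:proc:kuramoto} (hence \ref{corol:1}) were proven — this requires a short interpolation/comparison between $\|\cdot\|_{-1,2}$ and the topology implicit in ${\mathcal Q}_\eta$, which should follow from the fact that closeness in $\|\cdot\|_{-1,2}$ forces the first Fourier coefficient to stay away from $0$; and (iii) one must ensure $\mu \in {\mathcal P}_N(\bT)$ is handled by the non-I.I.D.\ version of \eqref{eq:second:term:main result intro formula}, which is explicitly allowed by the remark following that formula. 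Points (i) and (iii) are routine; point (ii) is where a little genuine care is needed, but it is essentially a compactness-plus-Fourier argument.
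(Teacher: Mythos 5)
Your proposal is essentially the paper's argument, and you have correctly identified the two pillars: the sub-exponential exit-time estimate from \cite{coppini2019long} for the $\| \cdot \|_{-1,2}$-neighbourhood of ${\mathcal I}$, and the localised Proposition \ref{corol:1} bound on $\partial_{z_2}\partial_{z_1}\frac{\delta^2 {\mathcal U}}{\delta m^2}$ inside ${\mathcal Q}_{1/2}$. Two remarks on the implementation.

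First, the opening identity you write, namely the unlocalised version of \eqref{eq:second:term:main result intro formula} over the whole interval $[0,t]$, cannot be used directly: on the event $\{\tau_N \le s\}$ the integrand has no uniform decay, so splitting that time integral would leave an a-priori uncontrolled contribution of size $t = \exp(N^{1/2})$. You anticipate this in your point (i), and the "cleaner alternative" you flag there is precisely what the paper does: it writes
\begin{equation*}
\Phi(\mu^N_{t}) - {\mathcal U}(t,\mu) = \Bigl[\Phi(\mu^N_{t}) - {\mathcal U}\bigl(t - t\wedge\tau_N, \mu^N_{t\wedge\tau_N}\bigr)\Bigr] + \Bigl[{\mathcal U}\bigl(t - t\wedge\tau_N, \mu^N_{t\wedge\tau_N}\bigr) - {\mathcal U}(t, \mu)\Bigr],
\end{equation*}
so the first bracket vanishes identically on $\{\tau_N \ge t\}$ and is trivially bounded by $2\|\Phi\|_\infty$ otherwise, while the second bracket is the It\^o expansion over the \emph{stopped} interval $[0, t\wedge\tau_N]$ on which $\mu^N_s$ remains in ${\mathcal Q}_{1/2}$ and Proposition \ref{corol:1} applies throughout. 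So you should commit to that decomposition rather than to the unstopped time integral.

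Second, your recollection of the exit-time bound is slightly off but in a harmless direction: the paper takes $n=\exp(N^{1/2})$ in the iterated bound $[1-\exp(-CN)]^{n}$ of \cite[(98--100)]{coppini2019long} and only records the loose conclusion ${\mathbb P}(\tau_N \le \exp(N^{1/2})\,\vert\,\mu^N_0=\mu) \le C/N$, which already suffices since $2\|\Phi\|_\infty \cdot C/N = O(1/N)$. The genuine technicality your point (ii) gestures at is slightly different from what you describe: the conditioning in \cite[(98)]{coppini2019long} is stated in terms of $\|\mu^N_0 - p_{\textrm{proj}(\mu^N_0)}\|_{-1,2} \le \varepsilon/2$, so one must show (by continuity of the projection map $\textrm{proj}$ with respect to $W^{-1,2}(\bT)$ plus compactness of ${\mathcal I}$) that $\textrm{dist}_{\|\cdot\|_{-1,2}}(\mu,{\mathcal I}) \le \delta$ with $\delta$ small enough implies $\|\mu - p_{\textrm{proj}(\mu)}\|_{-1,2} \le \varepsilon/2$; the inclusion in ${\mathcal Q}_{1/2}$ is separate and is simply imposed in the statement of the lemma. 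Your points (i) and (iii) are correct as written.
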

\begin{proof}
The proof relies on some auxiliary results obtained in \cite{coppini2019long}. 
\vskip 4pt

\textit{First Step.} 
%
We apply \cite[(98-99)]{coppini2019long}. It says that there exist a time $T>0$, a  real $\varepsilon >0$ and a constant $C$, all independent of $N$ and the initial conditions, such that, for any integer $n \geq 1$, 
\begin{equation}
\label{eq:weak:error:coppini}
\begin{split}
&{\mathbb P}
\biggl(
\sup_{0 \le t \le n T}\textrm{\rm dist}_{\| \cdot \|_{-1,2}}(\mu^N_{t},{\mathcal I})
\leq \varepsilon
\, \big\vert \, 
\bigl\|\mu^N_{0} - p_{\textrm{proj}(\mu^N_{0})} \bigr\|_{-1,2} \leq \frac{\varepsilon}2
\biggr)
  \geq \Bigl[ 1 - \exp \bigl( - C N ) \Bigr]^n,
\end{split}
\end{equation}
where $\textrm{\rm proj} : {\mathcal P}(\bT) \rightarrow \bT$ is the projection mapping defined in
\cite[Lemma 2.8]{MR3689966}
and 
\cite[Lemma  3.5]{coppini2019long} (with the small difference\footnote{There is another difference. In 
\cite{coppini2019long,MR3689966}, the dual space is not $W^{-1,2}(\bT)$, but $H^{-1}_{0}=\{ f \in W^{1,2}(\bT) : \langle f, \one \rangle =0\}'$. Of course, any element of $W^{-1,2}(\bT)$ may be  projected onto an element of $H^{-1}_{0}$ by discarding its constant Fourier mode.}  that the function $\bT$ is defined here as ${\mathbb R}/{\mathbb Z}$, whilst it is defined as ${\mathbb R}/(2 \pi {\mathbb Z})$ in  \cite{coppini2019long,MR3689966}). 
Notice that the conditioning in the left-hand side 
is implicitly required
in \cite{coppini2019long} 
(see (84)). More importantly, there is no need to assume that $\mu_0^{N}$ 
is the $N$-sample of a common distribution
and the result holds for an arbitrary initial condition $\mu_{0}^N=\mu \in {\mathcal P}_{N}(\bT)$. 

On ${\mathcal I}$, the projection $\textrm{\rm proj}$ reduces to the trivial mapping $\textrm{\rm proj}(p_{\psi})=\psi$, $\psi \in \bT$. 
Therefore, if some probability measure $\nu \in {\mathcal P}(\bT)$ is close  
to ${\mathcal I}$ (for $\| \cdot \|_{-1,2}$), then it is close to some $p_{\psi} \in {\mathcal I}$ and, by continuity of 
$\textrm{\rm proj}$ with respect to $W^{-1,2}(\bT)$, $\textrm{\rm proj}(\nu)$ is close to $\psi$. In the end, 
 $\nu$ is close to $p_{\textrm{\rm proj}(\nu)}$. 
 This continuity result, combined with a standard compactness argument, may be formulated as follows. For the same $\varepsilon$ as above, we can find some $\delta_{\varepsilon}>0$ such that 
 $\textrm{\rm dist}_{\| \cdot \|_{-1,2}}(\nu,{\mathcal I}) 
 \leq \delta_{\varepsilon}$ implies 
 $\| \nu - p_{\textrm{\rm proj}(\nu)} \|_{-1,2} \leq \varepsilon/2$. 
Hence, 
taking $\mu \in {\mathcal P}_{N}(\bT)$ with 
$\textrm{\rm dist}_{\| \cdot \|_{-1,2}}(\mu,{\mathcal I}) \leq \delta_{\varepsilon}$, we get 
%
\begin{equation*}
{\mathbb P}
\biggl(
\sup_{0 \le t \le n T}\textrm{\rm dist}_{\| \cdot \|_{-1,2}}(\mu^N_{t},{\mathcal I})
\leq \varepsilon \, \Big\vert \,
\mu_{0}^N = \mu
\biggr) \geq \Bigl[ 1 - \exp \bigl( - C N ) \Bigr]^n.
\end{equation*}
Following 
\cite[(100)]{coppini2019long}, we deduce that, for $n= \exp(N^{1/2})$ and for a new value of $C$, 
\begin{equation*}
{\mathbb P}
\biggl(
\sup_{0 \le t \le n T}\textrm{\rm dist}_{\| \cdot \|_{-1,2}}(\mu^N_{t},{\mathcal I})
\leq \varepsilon
\, \Big\vert \,
\mu_{0}^N = \mu
\biggr) \geq 1 -\frac{C}{N}.
\end{equation*}
With $\tau_{N}
:= \inf \{ t \geq 0 : 
\textrm{\rm dist}_{\| \cdot \|_{-1,2}}( \mu^N_{t},{\mathcal I})
\geq \varepsilon \},$
this may be rewritten in the form 
\begin{equation}
\label{eq:Kuramoto:exit:time}
{\mathbb P}
\Bigl(   \tau_{N} \leq \exp(N^{1/2}) 
\, \big\vert \,
\mu_{0}^N = \mu
\Bigr) \leq \frac{C}{N}.
\end{equation}
\vskip 4pt

\textit{Second Step.} Following 
\eqref{eq:errordecomp}, we 
have the following expansion, 
which holds for any $t \geq 0$,
\begin{align}
\label{eq:expand:phi:u:stopping time}
&{\mathbb E} \bigl[ \Phi\bigl( \mu^N_{t}\bigr) 
  \vert \,
\mu_{0}^N = \mu
\bigr] -
   {\mathcal U}(t,\mu)
   \\ 
&=
{\mathbb E} \bigl[ \Phi\bigl( \mu^N_{t} \bigr)   \vert \,
\mu_{0}^N = \mu \bigr] -
{\mathbb E} \bigl[ {\mathcal U}\bigl( t-  t \wedge \tau_{N} , \mu^N_{t \wedge \tau_{N}}\bigr) 
  \vert \,
\mu_{0}^N = \mu
\bigr]
+  {\mathbb E} \Bigl[ 
{\mathcal U}\bigl( t-  t \wedge \tau_{N} , \mu^N_{t \wedge \tau_{N}}\bigr)
- {\mathcal U}\bigl(t,\mu^N_{0}\bigr)  \vert \,
\mu_{0}^N = \mu
\Bigr]. \nonumber
\end{align}
If we restrict ourselves to $t \in [0,\exp(N^{1/2})]$, then 
\eqref{eq:Kuramoto:exit:time}
says that the first difference in the right-hand side is bounded by $C/N$, by recalling that $\Phi$ and thus ${\mathcal U}$ are bounded 
and allowing $C$ to depend on $\| \Phi \|_{\infty}$. 

As for the second term in the right-hand side of \eqref{eq:expand:phi:u:stopping time}, we may follow
\eqref{eq:second:term:main result intro formula}, 
by expanding the 
term 
$({\mathcal U}( t-  s, \mu^N_{s}))_{s \geq 0}$ up until
time $t \wedge \tau_{N}$.
Using the fact that, up until time $\tau_{N}$,  
the $\| \cdot \|_{-1,2}$-distance between 
$\mu^N$ and ${\mathcal I}$
remains less than $\varepsilon$ and 
assuming w.l.o.g. that 
$\varepsilon$ is 
small enough so that 
$\{ \mu : 
\textrm{\rm dist}_{\| \cdot \|_{-1,2}}( \mu,{\mathcal I})
\leq \varepsilon  \}
\subset {\mathcal Q}_{1/2}$, we get from 
Proposition \ref{corol:1} that
this term is also bounded by 
$C/N$, with $C$ now depending on $\kappa$ and 
the bounds for $\Phi$   
in {\hintphi{\gamma}{2}}.
\end{proof}

The result is  extended to initial conditions 
$\mu \in {\mathcal P}_{N}(\bT) \cap {\mathcal Q}_{\eta}$, for any $\eta >0$, on a smaller time scale.

\begin{proposition}
\label{prop:5:11}
For any $\eta \in (0,1)$, there exists a constant $C$, only depending on $\eta$, $\kappa$ and
the bounds for $\Phi$   
in \emph{\hintphi{\gamma}{2}}, such that, for any $N \geq 1$ and 
$\mu \in {\mathcal Q}_{\eta} \cap {\mathcal P}_{N}(\bT)$, 
\begin{equation}
\label{eq:propagation:chaos:-3-00}
\forall t \in [0,\exp(N^{1/4})], \quad \bigl\vert \overline{\mathcal U}^N(t,\mu) - {\mathcal U}(t,\mu) \bigr\vert \leq \frac{C}{N}. 
\end{equation}
\end{proposition}

\begin{proof}
\textit{First step.}
Lemma 
\ref{lem:kura:expN1/2} says that, for $\delta$ as therein, the result holds  up until
time $\exp(N^{1/2})$ if
$\textrm{\rm dist}_{\| \cdot \|_{-1,2}}(\mu,{\mathcal I}) \leq \delta$. 
Then, for any $\mu \in {\mathcal Q}_{\eta} \cap {\mathcal P}_{N}(\bT)$, 
Proposition \ref{prop:5:10} says that  there exists a (fixed hence independent of $N$) time $T$, only depending on $\delta$, $\eta$ and $\kappa$, such that 
\begin{equation}
\label{eq:propagation:chaos:-1:-000}
\forall t \geq T, \quad \textrm{\rm dist}_{\| \cdot \|_{-1,2}}\bigl(m(t\, ; \mu),{\mathcal I}\bigr) \leq \frac{\delta}2. 
\end{equation}
Next, we claim that (the proof is given in the third step below)
\begin{equation}
\label{eq:propagation:chaos:-1}
{\mathbb P} 
\Bigl( 
\bigl\| m(T \, ; \mu) -  \mu^N_{T}
\bigr\|_{-1,2}
> \frac{\delta}2
\, \big\vert \,
\mu_{0}^N = \mu
\Bigr) 
 \leq \frac{C}{N},
\end{equation}
where $C$ 
only depends on $\eta$, $\kappa$ and 
the bounds for $\Phi$   
in  {\hintphi{\gamma}{2}}, from which we deduce that
\begin{equation}
\label{eq:propagation:chaos:-1-00}
{\mathbb P}
\Bigl( 
\textrm{\rm dist}_{\| \cdot \|_{-1,2}}\bigl(  \mu^N_{T},{\mathcal I}\bigr) > \delta 
\, \big\vert \,
\mu_{0}^N = \mu
\Bigr) \leq \frac{C}{N}.
\end{equation}
By 
\eqref{eq:markov:muN}, we have, for $t \geq T$,
\begin{equation*}
\overline{\mathcal U}^N(t,\mu) = {\mathbb E} \bigl[ \overline{\mathcal U}^N\bigl(t-T,  \mu^N_{T}\bigr)  \big\vert \,
\mu_{0}^N = \mu \bigr],
\end{equation*}
when $\mu^N_{0}=\mu$. 
Hence, 
by
\eqref{eq:propagation:chaos:-3-00}
(for 
$\textrm{\rm dist}_{\| \cdot \|_{-1,2}}(\mu,{\mathcal I}) \leq \delta$)
and
\eqref{eq:propagation:chaos:-1-00}, 
we get, 
for 
$t \in [T,\exp(N^{1/2})/2]$, 
\begin{equation*}
\begin{split}
&\Bigl\vert \overline{\mathcal U}^N(t,\mu) 
- 
{\mathbb E} \Bigl[ {\mathcal U}\bigl(t-T, \mu^N_{T}\bigr) {\mathbf 1}_{\{ 
\textrm{\rm dist}_{\| \cdot \|_{-1,2}}( \mu^N_{T},{\mathcal I}) \leq \delta
\}} \big\vert \,
\mu_{0}^N = \mu \Bigr]
\Bigr\vert 
\\
&\leq C {\mathbb P}
\Bigl( 
\textrm{\rm dist}_{\| \cdot \|_{-1}}\bigl(  \mu^N_{T},{\mathcal I}\bigr) > \delta 
\, 
\big\vert \,
\mu_{0}^N = \mu
\Bigr)
 + 
{\mathbb E} \Bigl[ \Bigl\vert  {\overline{\mathcal U}^N}\bigl(t-T,   \mu^N_{T}\bigr)
-   {{\mathcal U}}\bigl(t-T,   \mu^N_{T}\bigr)
\Bigr\vert {\mathbf 1}_{\{ 
\textrm{\rm dist}_{\| \cdot \|_{-1,2}}( \mu^N_{T},{\mathcal I} ) \leq \delta
\}}
\big\vert \,
\mu_{0}^N = \mu
\Bigr]
\\
&\leq \frac{C}{N},
\end{split}
\end{equation*}
where we used the fact that $\overline {\mathcal U}^N$ is bounded by $\| \Phi \|_{\infty}$. 
On the first line,  
we can replace the indicator function
(inside the  expectation)  
by $\Xi(\mu_{T}^N)$ for some function $\Xi : {\mathcal P}(\bT) \rightarrow [0,1]$ that is equal to $1$ on the ball
$\{ \nu \in {\mathcal P}(\bT) : \textrm{\rm dist}_{\| \cdot\|_{-1,2}}( \nu ,{\mathcal I} ) \leq \delta\}$.
By 
\eqref{eq:propagation:chaos:-1-00} again, we deduce that, for
$\mu \in {\mathcal Q}_{\eta} \cap {\mathcal P}_{N}(\bT)$, 
\begin{equation}
\label{eq:propagation:chaos:-1-01}
\Bigl\vert \overline{\mathcal U}^N(t,\mu) 
- 
{\mathbb E} \bigl[ {\mathcal U}\bigl(t-T, \mu^N_{T}\bigr) 
\Xi(\mu^N_{T})  \vert \,
\mu_{0}^N = \mu
\bigr]
\Bigr\vert 
\leq \frac{C}N. 
\end{equation}

\textit{Second Step.}
{We complete the proof of 
\eqref{eq:propagation:chaos:-3-00}, recalling that the latter is already known to hold 
on $[0,T]$ (since there is no need for any ergodic estimates in finite time)}. In order to proceed, assume for a while that we are given a function $\Xi : {\mathcal P}({\mathbb T}) \rightarrow [0,1]$ such that 
\begin{enumerate}
\item 
$\Xi$ matches 1 on  ${\mathcal Q}_{1/2}$ (the latter containing 
$\{ \nu \in {\mathcal P}(\bT) : \textrm{\rm dist}_{\| \cdot\|_{-1,2}}( \nu ,{\mathcal I} ) \leq \delta\}$) and $0$ outside   
${\mathcal Q}_{3/4}$;
\item 
$\Xi$ is a smooth functional of $\mu \in {\mathcal P}(\bT)$. 
\end{enumerate}

Then, since ${\mathcal Q}_{1/2}$ contains 
$\{ \nu \in {\mathcal P}(\bT) : \textrm{\rm dist}_{\| \cdot \|_{-1,2}}( \nu ,{\mathcal I} ) \leq \delta\}$, we
can apply 
\eqref{eq:propagation:chaos:-1-01}. 
Since $\Xi$ is zero outside 
${\mathcal Q}_{3/4}$, 
we know from Proposition
\ref{corol:1} that ${\mathcal U}(t-T,\cdot) \Xi(\cdot)$
satisfies 
\hintphi{\gamma}{2} with explicit bounds that 
are uniform 
with respect to $t \geq T$. 
We then apply 
the finite-horizon version  of 
\eqref{eq:propagation:chaos:-3-00}
to $\Phi(\cdot) = {\mathcal U}(t-T,\cdot) \Xi(\cdot)$. 
By 
\eqref{eq:propagation:chaos:-1-01}, we get, for any $t \geq T$
and 
$\mu \in {\mathcal Q}_{\eta} \cap {\mathcal P}_{N}(\bT)$,
\begin{equation*}
\Bigl\vert \overline{\mathcal U}^N(t,\mu) 
- 
 {\mathcal U}\bigl(t-T, m(T \, ; \mu)\bigr) 
\Xi\bigl(m(T \, ; \mu)\bigr) 
\Bigr\vert 
\leq \frac{C}N, 
\end{equation*}
for $C$ only depending
on
$\eta$, $\kappa$ and 
the bounds for $\Phi$   
in  {\hintphi{\gamma}{2}}. 
Noticing that 
${\mathcal U}(t-T, m(T \, ; \mu)) 
= {\mathcal U}(t , \mu)$ 
and that $\Xi(m(T \, ; \mu))=1$ for our choice of $T$ (see 
\eqref{eq:propagation:chaos:-1:-000}), we 
get the announced result.  

The function $\Xi$ is constructed as in  Proposition 
\ref{prop:choose:Phi:kuramoto}. We consider a smooth non-decreasing cut-off function $\varphi : [0,1] \rightarrow [0,1]$ that is equal to $1$ on 
$[0,1/2]$ and to $0$ on $[3/4,1]$. 
We then let $\Xi(\mu) = \varphi(\vert \mu^1 \vert)$. 
\vskip 4pt

\textit{Third Step.} 
We now prove
\eqref{eq:propagation:chaos:-1}.
{It is again a consequence of the finite time horizon of
\eqref{eq:propagation:chaos:-3-00}, 
but with a difference of choosing (temporarily) 
 the functional $\Phi$ in the definition of ${\mathcal U}$ as in  
 Proposition
\ref{prop:4:phi:norm:-d} with 
$(d+\alpha)/2=1$ and $\nu_{0}=\mu_{T}$ therein. 
The result then follows from Markov's inequality.}
\end{proof}

Our last step is to extend the previous result to times greater than $\exp(N^{1/4})$. The key idea is that, in long time, the empirical measure necessarily visits  the 
set ${\mathcal Q}_{\eta}$ quite often, for any $\eta \in (0,1)$. 
\begin{proposition}
\label{prop:uniform:N}
For any $\eta \in (0,1)$, there exists a constant $C$, only depending on $\eta$,  $\kappa$ and the bounds for $\Phi$   
in \emph{\hintphi{\gamma}{2}}, such that, for any $N \geq 1$ and 
$\mu \in {\mathcal Q}_{\eta} \cap {\mathcal P}_{N}(\bT)$, 
\begin{equation}
\label{eq:propagation:chaos:-3}
\forall t \geq 0, \quad \bigl\vert \overline{\mathcal U}^N(t,\mu) - {\mathcal U}(t,\mu) \bigr\vert \leq \frac{C}{N}. 
\end{equation}
\end{proposition}
\begin{proof}
By Proposition \ref{prop:5:11}, it suffices to prove \eqref{eq:propagation:chaos:-3} for $t \geq \exp(N^{1/4})$. 
For $T_{N} = \exp(N^{1/4})/2$, we call 
\begin{equation*}
\tau_{N} := \inf\biggl\{ s > 0 : \biggl\vert \int_{\mathbb T} e^{-i 2 \pi \theta}   \mu^N_{s + t -T_{N}}(\ud \theta) \biggr\vert \geq 
\eta
\biggr\}.
\end{equation*}
We prove in Lemma \ref{lem:exit:time:cos} that 
${\mathbb P} \bigl( \tau_{N}\geq N^{1/4} \bigr) \leq C/{N}$, 
for $C$ as in the statement and {for $\eta$ small enough}. 
By the strong version of the Markov property
\eqref{eq:markov:muN} (noticing that $N^{1/4} \leq \exp(N^{1/4})/4$ for $N$ large enough), 
\begin{equation*}
\overline {\mathcal U}^N(t,\mu) = {\mathbb E}\Bigl[ \overline {\mathcal U}^N \Bigl( T_{N} - T_{N} \wedge \tau_{N} , \mu^N_{t - T_{N}+T_{N}\wedge \tau_{N}} \Bigr) \big\vert \, 
\mu_{0}^N = \mu \Bigr], 
\end{equation*}
and then, {assuming without any loss of generality that $\eta$ is small enough}, 
\begin{equation*}
\begin{split}
&\Bigl\vert \overline {\mathcal U}^N(t,\mu) 
-
{\mathbb E}\Bigl[  \overline {\mathcal U}^N \Bigl( T_{N} - T_{N} \wedge \tau_{N} ,  \mu^N_{t - T_{N}+T_{N}\wedge \tau_{N}} \Bigr) {\mathbbm 1}_{\{ \tau_{N} < T_{N} / 2  \}} \big\vert \, 
\mu_{0}^N = \mu \Bigr] 
\Bigr\vert
\leq \frac{C}{N}. 
\end{split}
\end{equation*}
On the event $\{\tau_{N} < T_{N}/2\}$, 
$\mu^N_{t - T_{N}+T_{N}\wedge \tau_{N}}  \in {\mathcal Q}_{\eta}$. 
Therefore, by Proposition 
\ref{prop:5:11}, 
we 
can replace $\overline{\mathcal U}^N$ by ${\mathcal U}$
inside the expectation
and hence get 
\begin{equation}
\label{end:proof:main:statement:-0}
\begin{split}
&\Bigl\vert \overline {\mathcal U}^N(t,\mu) 
-
{\mathbb E}\Bigl[  {\mathcal U} \Bigl( T_{N} - T_{N} \wedge \tau_{N} ,\mu^N_{t - T_{N}+T_{N}\wedge \tau_{N}} \Bigr) {\mathbbm 1}_{\{ \tau_{N} < T_{N} /2  \}} 
\big\vert \, 
\mu_{0}^N = \mu
\Bigr] 
\Bigr\vert
\leq \frac{C}{N}. 
\end{split}
\end{equation}
By Proposition \ref{prop:5:10},
we know that, for $s \geq T_{N}/2$ and $\nu \in {\mathcal Q}_{\eta}$, 
\begin{equation}
\label{end:proof:main:statement}
\begin{split}
{\mathcal U}(s,\nu) =   \Phi\bigl( {\mathcal L}(X_{s} \vert X_{0} \sim \nu) \bigr) 
&= \Phi({\mathcal I}) + O \bigl( \frac1N \bigr)
=  {\mathcal U}(t,\mu) + O \bigl( \frac1N \bigr),
\end{split}
\end{equation}
with the Landau symbol ${\mathcal O}( \cdot)$ being independent of 
$\nu$, 
from which we
deduce that we can replace the expectation 
in 
\eqref{end:proof:main:statement:-0}
by ${\mathcal U}(t,\mu)$. This completes the proof. 
In the above, we used the slightly abusive notation 
${\mathcal L}(X_{s} \vert X_{0} \sim \nu)$
to denote the law of $X_{s}$ in \eqref{eq:MVSDE} when 
the law of $X_{0}$ is $\nu$. 
\end{proof}

\subsection{End of the proof of  Theorem \ref{main:thm:kuramoto}}
For an $N$-sample $(Y^{i,N}_{0})_{i=1,\cdots,N}$
with law ${\mu_{\text{init}}} \in {\mathcal Q}_{\eta}$, for some $\eta >0$, we let 
$\mu_{0}^N = (1/N) \sum_{i=1}^N \delta_{Y_{0}^{i,N}}$. 
\vskip 4pt

\begin{proof}[Proof of Theorem \ref{main:thm:kuramoto}]
\textit{First Step.}
We start with the following (quite standard) computation: 
\begin{equation}
\label{eq:LLN:H-1}
\begin{split}
{\mathbb E} \Bigl[ \|  {\mu_{\text{init}}} - \mu^N_{0} \|_{-1,2}^2 \Bigr]
&= \sum_{n \geq 0}
 \frac{1}{(1+n^2)} {\mathbb E}
 \biggl[ \biggl\vert \int_{\bT} e^{-\i 2\pi n \theta}   \bigl( {\mu_{\text{init}}}  - \mu^N_{0} \bigr) (\ud \theta) 
 \biggr\vert^2 \biggr] 
 \leq \frac{1}{N}
 \sum_{n \geq 0}
 \frac{1}{(1+n^2)} \leq \frac{c}{N},
\end{split}
\end{equation}
for some universal constant $c \geq 0$.
We deduce that, for any $\varrho >0$, 
\begin{equation*}
{\mathbb P} \Bigl( \bigl\|  {\mu_{\text{init}}} - \mu^N_{0}  \bigr\|_{-1,2} \geq \varrho \Bigr) 
\leq \frac{c}{\varrho^2 N}.
\end{equation*} 
Now, we choose $\varrho$ such that, for any two probability measures
$\nu_{1},\nu_{2} \in {\mathcal P}(\bT)$
with 
$\| \nu_{1} - \nu_{2} \|_{-1,2} \leq \varrho$, it holds that 
$\vert \nu_{1}^1 - \nu_{2}^1 \vert \leq \eta/2$ (we recall that 
$\nu_{1}^1$ and $\nu_{2}^1$ are the 1-Fourier modes of $\nu_{1}$
and $\nu_{2}$), from which we get that, for a constant $C$ depending on 
$\eta$, 
\begin{equation*}
{\mathbb P} \Bigl(  \mu^N_{0} {\not \in} {\mathcal Q}_{\eta/2}\Bigr) 
\leq \frac{C}{N}. 
\end{equation*} 
Therefore, by Proposition 
\ref{prop:uniform:N}, 
there exists a constant $C$, only depending on $\kappa$, $\eta$
and
the bounds for $\Phi$   
in  {\hintphi{\alpha}{2}}, such that
\begin{equation*}
\forall t \geq 0, \quad 
\Bigl\vert 
{\mathbb E} \bigl[ \overline{\mathcal U}^N\bigl(t, \mu_{0}^N \bigr) \bigr] - 
{\mathbb E} \bigl[ {\mathcal U}\bigl(t, \mu_{0}^N \bigr) 
{\mathbf 1}_{\{ \mu_{0}^N  \in {\mathcal Q}_{\eta/2}\} }
\bigr]
\Bigr\vert \leq \frac{C}N.
\end{equation*}
We deduce that it suffices to show that 
\begin{equation}
\label{eq:conclusion:kuramoto:main:model}
\Bigl\vert 
{\mathbb E} \Bigl[ 
\Bigl( 
{\mathcal U}(t,{\mu_{\text{init}}}) -  
 {\mathcal U}\bigl(t, \mu_{0}^N \bigr)
 \Bigr) 
{\mathbf 1}_{\{ \mu_{0}^N  \in {\mathcal Q}_{\eta/2}\} }
\Bigr] \Bigr\vert \leq \frac{C}{N}.
\end{equation}

\textit{Second Step.} In order to prove 
\eqref{eq:conclusion:kuramoto:main:model}, we may argue as in the second step of the proof of 
Proposition 
\ref{prop:5:11}.
Indeed, we can consider a smooth function $\Xi : {\mathcal P}({\mathbb T}) \rightarrow [0,1]$ such that 
$\Xi$ is 1 on the set ${\mathcal Q}_{\eta/2}$ and 0 outside the set  
${\mathcal Q}_{\eta/4}$. 
Then, instead of proving 
\eqref{eq:conclusion:kuramoto:main:model}, it suffices to show
that 
\begin{equation}
\label{eq:conclusion:kuramoto:main:model:2}
\Bigl\vert 
{\mathbb E} \Bigl[ 
{\mathcal U}(t,{\mu_{\text{init}}}) \Xi({\mu_{\text{init}}}) -  
 {\mathcal U}\bigl(t, \mu_{0}^N \bigr)
 \Xi \bigl( \mu_{0}^N \bigr)
 \Bigr] \Bigr\vert \leq \frac{C}{N}.
\end{equation}
Thanks to the cut-off function $\Xi$, 
the function $(t,\mu) \mapsto 
{\mathcal U}(t,\mu)\Xi(\mu)$ satisfies the conclusion 
of Corollary 
\ref{corol:1}, even though $\mu \not \in {\mathcal Q}_{\eta}$. 
This suffices to 
apply  
\eqref{eq:second:term:main result intro formula:2}
with ${\mathcal U}(t,{\mu_{\text{init}}}) \Xi({\mu_{\text{init}}})$ instead of ${\mathcal U}(t,{\mu_{\text{init}}})$ therein.
%
%
%
\end{proof}

The proof of Theorem \ref{main:thm:kuramoto} is hence completed provided that we prove
the following lemma, which we invoked in the proof of Lemma 
\ref{prop:uniform:N}:
\begin{lemma}
\label{lem:exit:time:cos}
There exist a constant $\eta \in (0,1)$ and constant $C$, both independent of $N$, such that, 
for any initial distribution $\mu \in {\mathcal P}(\bT)$
and any $t \geq 0$, the distribution of the stopping time
$\tau_{N} := \inf\{ s > 0 :  \vert \int_{\mathbb T} e^{- \i 2 \pi \theta}  \mu^N_{s + t }(\ud \theta)  \vert \geq 
\eta \}$
satisfies
${\mathbb P} \bigl( \tau_{N}\geq N^{1/4} \bigr) \leq C /N$. 
\end{lemma}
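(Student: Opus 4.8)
The plan is to reduce the statement, via the Markov property of the empirical measure process $(\mu^N_s)_{s\ge0}$ applied at time $t$, to a bound that is uniform over the starting configuration: it suffices to show that for every $\nu\in{\mathcal P}_N(\bT)$ and all $N$ large, $\bP(\tau_N\ge N^{1/4}\mid \mu^N_0=\nu)\le C/N$, where (after shifting time by $t$) $\tau_N=\inf\{s>0:\vert r_s\vert\ge\eta\}$ and $r_s:=\int_{\bT}e^{-\i 2\pi\theta}\,\mu^N_s(\ud\theta)=\tfrac1N\sum_{i=1}^N e^{-\i 2\pi Y^{i,N}_s}$ is the order parameter of the particle system; the case $\vert\nu^1\vert\ge\eta$ is trivial up to shrinking $\eta$. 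In fact I would prove the stronger bound $\bP(\tau_N\ge\Theta\log N\mid\mu^N_0=\nu)\le C/N$ for a constant $\Theta=\Theta(\kappa,\eta)$ chosen large; since $\Theta\log N\le N^{1/4}$ for $N$ large and the finitely many small $N$ are absorbed into $C$, averaging over the law of $\mu^N_t$ then gives the lemma.

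First I would write down the Itô dynamics of the order parameter. Applying Itô's formula to $e^{-\i 2\pi Y^{i,N}_s}$ with $b$ as in \eqref{eq:b:kuramoto} and averaging over $i$ gives
\begin{equation*}
\ud r_s=\bigl[2\pi^2(\kappa-1)r_s-2\pi^2\kappa\,s_2(s)\,\bar r_s\bigr]\,\ud s+\ud M_s,
\end{equation*}
where $s_2(s):=\int_{\bT}e^{-\i 4\pi\theta}\mu^N_s(\ud\theta)$ is the second Fourier mode, $\vert s_2(s)\vert\le1$, and $M$ is a complex martingale with $\ud\langle \Re M\rangle_s+\ud\langle \Im M\rangle_s=\tfrac{4\pi^2}{N}\ud s$; a parallel computation yields $\ud s_2(s)=[-8\pi^2 s_2(s)+O(\vert r_s\vert)]\,\ud s+\ud M'_s$ with $M'$ of quadratic variation rate $O(1/N)$. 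The key structural fact is that the linear rate $\lambda_1:=2\pi^2(\kappa-1)$ is \emph{positive} for $\kappa>1$: this is the unstable eigenvalue of $L_{\textrm{\rm Leb}_{\bT}}$ on the first Fourier mode, and it is what drives the escape.

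Next I would show that the second mode cannot obstruct this instability. On $\{s<\tau_N\}$ one has $\vert r_s\vert<\eta$, so the forcing in the $s_2$-equation is $O(\eta)$; since $s_2$ is damped at rate $8\pi^2$, after a fixed transient time $T_1=T_1(\eta,\kappa)$ one gets $\vert s_2(s)\vert\le C\eta$ on $[T_1,\tau_N\wedge\Theta\log N]$, on the event that the $M'$-martingale stays below $\eta$ on $[0,\Theta\log N]$; by Bernstein's inequality for continuous martingales that event fails with probability $\le\exp(-cN/\log N)\le1/N$ for $N$ large. On this good event the radial drift of $\vert r_s\vert$ is at least $(\lambda_1-C\eta)\vert r_s\vert=:\lambda'\vert r_s\vert$, strictly positive once $\eta$ is small in terms of $\kappa$; equivalently, setting $v_s:=\vert r_s\vert^2$, on $[T_1,\tau_N\wedge\Theta\log N]$ and on the good event,
\begin{equation*}
\ud v_s\ \ge\ 2\lambda'\, v_s\,\ud s+\tfrac{4\pi^2}{N}\,\ud s+\ud\tilde N_s,\qquad \ud\langle\tilde N\rangle_s\le\tfrac{C}{N}\,v_s\,\ud s .
\end{equation*}

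Finally — and this is the hard part — I would carry out the escape estimate. Comparing $v$ from below with the linear equation $\ud\tilde v_s=2\lambda'\tilde v_s\,\ud s+\tfrac{4\pi^2}{N}\ud s+\ud\tilde N_s$ and integrating, $r_s$ is, up to the transient and a martingale term of intensity $O(N^{-1/2})$, essentially $N^{-1/2}e^{\lambda' s}$ times a mean-zero quantity of variance $O(1)$; hence its standard deviation at time $\Theta\log N$ is of order $N^{\lambda'\Theta-1/2}$. For $\{\tau_N>\Theta\log N\}$ to hold this quantity must nonetheless remain below the \emph{fixed} threshold $\eta$, an event whose probability — by an anti-concentration (Gaussian small-ball, respectively bounded transition density) argument — is at most $CN^{1/2-\lambda'\Theta}$; taking $\Theta>3/(2\lambda')$ makes this $\le C/N$. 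Combining with the $1/N$ from the bad-martingale event and the Markov reduction completes the proof. The main obstacle is precisely here: the noise acts at scale $1/N$ in $v_s$ while its own fluctuations over a unit time interval are of the \emph{same} order, so the exit from the unstable uniform state is genuinely noise-driven and cannot be obtained from a crude positive-drift (second-moment) argument; one must instead track the exponential amplification of the martingale over the logarithmic window and bound the resulting small-ball probability uniformly over all empirical configurations $\nu\in{\mathcal P}_N(\bT)$, including the degenerate ones for which the noise acting on $r_s$ is initially one-dimensional.
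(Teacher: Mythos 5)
Your Fourier--It\^o computation and the structural picture (unstable first mode $r_s=\mu^{1,N}_s$ with rate $2\pi^2(\kappa-1)>0$, damped second mode $s_2=\mu^{2,N}_s$ whose influence is $O(\eta)$ on $\{s<\tau_N\}$) match the paper's first step, up to a cosmetic difference: the paper couples the two modes through the single Lyapunov functional $\Lambda^N_t=|\mu^{2,N}_t|^2-c^4|\mu^{1,N}_t|^2$ and shows $\Lambda^N_t$ decays exponentially, whereas you estimate $s_2$ separately by its own damping. Both routes give the same conclusion: after a transient $t_0(\varepsilon)$ and up to a martingale deviation event of probability $O(1/N)$, $v_s=|r_s|^2$ satisfies on $\{s<\tau_N\}$ a one-sided SDE bound with a positive linear drift plus a \emph{deterministic} forcing $c^{-1}/N$.

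Where the two arguments genuinely diverge is the escape estimate, and here your proposal has a real gap. You propose to treat $r_s$ as roughly $N^{-1/2}e^{\lambda' s}$ times an $O(1)$-variance quantity and then invoke an anti-concentration (small-ball) bound to say it cannot stay below $\eta$ over $[0,\Theta\log N]$. This is where the plan stalls, for exactly the reasons you flag at the end: the noise in $r_s$ is $\frac{\i 2\pi}{N}\sum_j e^{\i 2\pi Y^{j,N}_s}\ud W^j_s$, which is a continuous martingale whose quadratic variation is \emph{only upper-bounded} by $C/N$ (it can degenerate, e.g.\ become rank one when particles cluster); it is not Gaussian, and there is no uniform lower bound on it. Without such a lower bound there is no off-the-shelf small-ball estimate, and producing one uniformly over all empirical configurations $\nu\in\mathcal P_N(\bT)$ is a substantial piece of work that your sketch does not supply. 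In this sense the hardest claim in your proof --- ``an event whose probability is at most $CN^{1/2-\lambda'\Theta}$'' --- is asserted, not proved.

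The paper's proof avoids anti-concentration entirely by exploiting two things you already have in hand but do not use: the deterministic It\^o correction $\tfrac{c^{-1}}{N}\,\ud t$ in the lower bound for $\ud\,|\mu^{1,N}_t|^2$, and the a priori bound $|\mu^{1,N}_t|\le 1$. Lower-bounding $|\mu^{1,N}_t|^2$ via the exponential of the linear drift acting on this $c^{-1}/N$ forcing, taking expectations (the martingale drops out, the non-decreasing local-time term helps), and evaluating at $t=N^{1/4}$ gives
\begin{equation*}
1\ \ge\ \mathbb{E}\Bigl[\mathbf 1_{\{\tau_N>t_0,\ \sigma_N\ge N^{1/4}\}}\,\bigl\vert\mu^{1,N}_{\cdot}\bigr\vert^2\Bigr]\ \ge\ \frac{2}{N}\,\exp\Bigl(\tfrac{c^{-1}}{2}\bigl(N^{1/4}-t_0\bigr)\Bigr)\,\mathbb{P}\bigl(\tau_N>t_0,\ \sigma_N\ge N^{1/4}\bigr),
\end{equation*}
so the probability of staying trapped is in fact \emph{exponentially} small in $N^{1/4}$, not merely $O(1/N)$. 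This moment argument requires no lower bound on the noise and no Gaussian structure, and is why the paper can run the estimate over the long window $[0,N^{1/4}]$ instead of the logarithmic window you aim for. Your plan is not wrong in spirit --- if one could establish the uniform small-ball bound, it would give escape on a $\log N$ time scale, a stronger statement --- but as written it leaves the crux unproven, whereas the paper's route sidesteps it. If you want to keep your structure, the fix is to drop the small-ball step and instead take expectations of your $v$-inequality, push the $c^{-1}/N$ forcing through the exponential over $[t_0,N^{1/4}]$, and compare to the trivial bound $v_s\le 1$.
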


\begin{proof}
{Without any loss of generality, we can assume that 
$t=0$.}
\vskip 4pt

\textit{First Step.} We go back to the shape of  particle system \eqref{eq particles} with $b$ as in 
\eqref{eq:b:kuramoto}:
\begin{equation*}
\begin{split}
\ud  Y_{t}^{j,N} &=   - \frac{2 \pi \kappa}{N} \sum_{k=1}^N  \sin\Bigl( 2 \pi \bigl(  Y_{t}^{j,N} -   Y_{t}^{k,N} \bigr) \Bigr) 
\ud t +  \ud  W_{t}^j, \quad t \geq 0.
\end{split}
\end{equation*}
Let 
$E_{t}^{j,\ell,N} = \exp(\i 2 \pi   \ell Y_{t}^{j,N})$. 
Then, 
recalling the notation $\overline{z}$ for denoting the complex conjugate of a complex number $z \in {\mathbb C}$, we obtain that 
\begin{equation*}
\begin{split}
\ud 
E_{t}^{j,\ell,N}
&= 
 - \frac{2 \pi^2 \kappa \ell}{N}
E_{t}^{j,\ell,N}
\sum_{k=1}^N   
\Bigl( 
E_{t}^{j,1,N} \overline E_{t}^{k,1,N}
- 
\overline E_{t}^{j,1,N} E_{t}^{k,1,N} \Bigr) dt
- 2 \pi^2 
\ell^2
E_{t}^{j,\ell,N}
\ud t 
 +
\i 2   \pi   \ell  E_{t}^{j,\ell,N} \ud W_{t}^j
 \\
 &= 
 - 2 \pi^2 \kappa \ell
\Bigl(E_{t}^{j,\ell+1,N}
\mu_{t}^{1,N}
- 
\overline  E_{t}^{j,\ell-1,N}
  \overline \mu_{t}^{1,N}
 \Bigr) \ud t
- 2 \pi^2 
\ell^2
E_{t}^{j,\ell,N}
\ud t 
 +
 \i 2  \pi   \ell  E_{t}^{j,\ell,N} \ud W_{t}^j,
\end{split}
\end{equation*}
where 
$\mu_{t}^{\ell,N} = N^{-1} \sum_{j=1}^N \overline E_{t}^{j,\ell,N}$
is the $\ell$-Fourier mode of $\mu_{t}^N$. 
Taking the mean over $j \in \{1,\cdots,N\}$, we get 
\begin{equation*}
\begin{split}
\ud 
\overline \mu_{t}^{\ell,N}
&= 
 -2 \pi^2 \kappa \ell
\Bigl(
\overline \mu_{t}^{\ell+1,N}
 \mu_{t}^{1,N}
- 
\mu_{t}^{\ell-1,N}
  \overline \mu_{t}^{1,N}
 \Bigr) \ud t
- 2 \pi^2 
\ell^2
\overline \mu_{t}^{\ell,N}
\ud t 
 +
 \i 2   \pi   \ell  \frac1{N} \sum_{j=1}^N  E_{t}^{j,\ell,N} \ud W_{t}^j.
\end{split}
\end{equation*}
Choosing $\ell=1$ and recalling that $\kappa >1$, we  {can find} a constant $c>1$, only depending on $\kappa$, such that 
\begin{equation}
\label{eq:mut1}
\ud \bigl[ \vert 
\mu_{t}^{1,N} \vert^2
\bigr]
= \bigl( c^{-1} - c \vert \mu_{t}^{2,N}\vert \bigr) \vert \mu_{t}^{1,N} \vert^2  \ud t 
+ \frac{c^{-1}}{{N}} \ud t + \ud K_{t}^1 + \ud M_{t}^1,
\end{equation}
where 
$(K_{t}^1)_{t \geq 0}$
is a non-decreasing absolutely continuous process and 
$(M_{t}^1)_{t \geq 0}$ is a martingale satisfying 
$[\ud/\ud t] \langle M^1 \rangle_{t} \leq c/N$. 
Similarly, 
choosing $\ell=2$, we get
\begin{equation}
\label{eq:mut2}
\ud \bigl[ \vert 
\mu_{t}^{2,N} \vert^2
\bigr]
=
\bigl(c \vert 
\mu_{t}^{1,N} \vert
\vert 
\mu_{t}^{2,N} \vert
-
c^{-1} \vert 
\mu_{t}^{2,N} \vert^2
\bigr)
 \ud t 
 - \ud K_{t}^2
+ \frac{c}{ {N}} \ud t + \ud M_{t}^2,
\end{equation}
where 
$(K_{t}^2)_{t \geq 0}$
is a non-decreasing absolutely continuous  process and 
$(M_{t}^2)_{t \geq 0}$ is a martingale satisfying 
$[\ud/\ud t]  \langle M^2 \rangle_{t} \leq c/N$. 
We now let
\begin{equation}
\label{eq:LambdatN}
\Lambda_{t}^N = \vert 
\mu_{t}^{2,N} \vert^2 - c^4 \vert 
\mu_{t}^{1,N} \vert^2, \quad t \geq 0. 
\end{equation}
Using the expansions \eqref{eq:mut1}
and 
\eqref{eq:mut2}, we obtain
\begin{equation}
\label{eq:exp:Lambda:t:N}
\begin{split}
\ud \Lambda_{t}^N  &= 
\Bigl[ \bigl(c \vert 
\mu_{t}^{1,N} \vert
\vert 
\mu_{t}^{2,N} \vert
-
c^{-1} \vert 
\mu_{t}^{2,N} \vert^2
\bigr)
- c^4 \bigl( c^{-1} - c \vert \mu_{t}^{2,N}\vert \bigr) \vert \mu_{t}^{1,N} \vert^2 \Bigr]  \ud t 
- \frac{c^{3}- c}{{N}} \ud t 
  - \ud K_{t} + \ud M_{t},
\end{split}
\end{equation}
where $(K_{t})_{t \geq 0}$ is a non-decreasing absolutely continuous  process and 
$(M_{t})_{t \geq 0}$ 
is a martingale satisfying 
$(\ud / \ud t) \langle M \rangle_{t} \leq C(c)/N$, 
in which $C(c)$ is a constant that only depends on $c$. 
We then consider the same stopping time
$\tau_{N}$ 
as in 
the statement, but with $\eta =c^{-4}/4$.
As long as $t \leq \tau_{N}$, we have (notice that the term below is nothing but the first term in the expansion of 
$\ud \Lambda_{t}^N$)
\begin{equation*}
\begin{split}
& \bigl(c \vert 
\mu_{t}^{1,N} \vert
\vert 
\mu_{t}^{2,N} \vert
-
c^{-1} \vert 
\mu_{t}^{2,N} \vert^2
\bigr)
- c^4 \bigl( c^{-1} - c \vert \mu_{t}^{2,N}\vert \bigr) \vert \mu_{t}^{1,N} \vert^2
\\
&= 
- c^3 
\vert \mu_{t}^{1,N} \vert^2
- 
c^{-1} \vert 
\mu_{t}^{2,N} \vert^2
+ 
c \vert 
\mu_{t}^{1,N} \vert
\vert 
\mu_{t}^{2,N} \vert
+
c^5  \vert \mu_{t}^{2,N}\vert   \vert \mu_{t}^{1,N} \vert^2
\\
&\leq 
- c^3 
\vert \mu_{t}^{1,N} \vert^2
- 
c^{-1} \vert 
\mu_{t}^{2,N} \vert^2
+ 
\frac{5}4 c  \vert \mu_{t}^{1,N}\vert   \vert \mu_{t}^{2,N} \vert
\\
&= 
- \frac{3}{8} c^{-1}
\Bigl( 
\vert \mu_{t}^{2,N} \vert^2
+ c^4
\vert \mu_{t}^{1,N} \vert^2
\Bigr) 
- \frac{5}8
 \Bigl(
 c^{3/2} 
\vert \mu_{t}^{1,N} \vert
- 
 c^{-1/2} \vert \mu_{t}^{2,N} \vert \Bigr)^2
 \leq  - \frac{3}{8} c^{-1} \Lambda_{t}^N. 
\end{split}
\end{equation*}
By modifying the definition of 
$(K_{t})_{t \geq 0}$ in \eqref{eq:exp:Lambda:t:N} and by assuming  that $c^3 \geq 2c$, we then get 
\begin{equation*}
\begin{split}
\ud \Lambda_{t}^N  
= - \frac{3}{8} c^{-1} \Lambda_{t}^N
\ud t - \frac{c}{N} \ud t - \ud K_{t} + \ud M_{t}. 
\end{split}
\end{equation*}
Therefore,
\begin{equation}
\begin{split}
\Lambda_{t}^N 
&\leq 
\exp \Bigl( - \frac38 c^{-1} t \Bigr) 
\biggl[ \Lambda_{0}^N  + \int_{0}^t  
\exp \Bigl( \frac38 c^{-1} s \Bigr) \ud M_{s}
\biggr].
\end{split}
\label{eq:upper:bound:Lambda_t}
\end{equation}
We now observe that, for any integer $n \geq 0$, 
\begin{equation*}
\begin{split}
&\sup_{n \leq t \leq n+1}
\biggl[ 
\exp \Bigl( - \frac38 c^{-1} t \Bigr) 
\biggl\vert  \int_{0}^t  
\exp \Bigl( \frac38 c^{-1} s \Bigr) \ud M_{s}
\biggr\vert 
\biggr] 
 \leq  
\sup_{n \leq t \leq n+1}
\biggl[
\biggl\vert  \int_{0}^t  
\exp \Bigl( \frac38 c^{-1} (s-n) \Bigr) \ud M_{s}
\biggr\vert 
\biggr].
\end{split}
\end{equation*}
Accordingly, by Burkholder-Davis-Gundy inequalities, we deduce that, for any integer $p \geq 1$, 
there exists a constant $C_{p}(c)$, depending on $p$ and $c$, such that 
\begin{equation*}
\begin{split}
&{\mathbb E} \biggl[ 
\sup_{n \leq t \leq n+1}
\biggl[ 
\exp \Bigl( - \frac38 c^{-1} t \Bigr) 
\biggl\vert  \int_{0}^t  
\exp \Bigl( \frac38 c^{-1} s \Bigr) \ud M_{s}
\biggr\vert^p 
\biggr] \biggr] 
\\
&\leq \frac{C_{p}(c)}{N^{p/2}}
\biggl( \int_{n}^{n+1} 
\exp \Bigl( \frac34 c^{-1} \bigl(s-n) \Bigr) \ud s \biggr)^{p/2}
=
\frac{C_{p}(c)}{N^{p/2}} 
\biggl( \int_{0}^{1} 
\exp \Bigl( \frac34 c^{-1} s \Bigr) \ud s \biggr)^{p/2}. 
\end{split}
\end{equation*}
In turn, by Markov's inequality, we deduce that, for any $\varepsilon >0$, 
\begin{equation*}
{\mathbb P}
\biggl( \biggl\{ 
\sup_{n \leq t \leq n+1}
\biggl[ 
\exp \Bigl( - \frac38 c^{-1} t \Bigr) 
\biggl\vert  \int_{0}^t  
\exp \Bigl( \frac38 c^{-1} s \Bigr) \ud M_{s}
\biggr\vert 
\biggr]
\geq \varepsilon
\biggr\}
\biggr) \leq \frac{C_{p}(c)}{\varepsilon^p N^{p/2}}, 
\end{equation*}
for a new value of the constant $C_{p}(c)$, and then
\begin{equation*}
{\mathbb P}
\biggl( \bigcup_{n=0}^{\lfloor N^{1/4} \rfloor} \biggl\{ 
\sup_{n \leq t \leq n+1}
\biggl[ 
\exp \Bigl( - \frac38 c^{-1} t \Bigr) 
\biggl\vert  \int_{0}^t  
\exp \Bigl( \frac38 c^{-1} s \Bigr) \ud M_{s}
\biggr\vert
\biggr]
\geq \varepsilon
\biggr\}
\biggr) \leq \frac{C_{p}(c)}{\varepsilon^p N^{p/2-1/4}}, 
\end{equation*}
where $\lfloor N^{1/4} \rfloor$ denotes the floor of $N^{1/4}$. 
Choosing $p$ large enough and using a new value of the constant $C(c)$, we end up with 
\begin{equation*}
{\mathbb P}
\biggl(\biggl\{ 
\sup_{0 \leq t \leq N^{1/4}}
\biggl[ 
\exp \Bigl( - \frac38 c^{-1} t \Bigr) 
\biggl\vert  \int_{0}^t  
\exp \Bigl( \frac38 c^{-1} s \Bigr) \ud M_{s}
\biggr\vert 
\biggr]
\geq \varepsilon
\biggr\}
\biggr) \leq \frac{C(c)}{\varepsilon^p N}. 
\end{equation*}
The value of $p$ that appears in the right-hand side is hence fixed. 
This prompts us to introduce the event
\begin{equation*}
A_{N}(\varepsilon)
:=
\biggl\{ 
\sup_{0 \leq t \leq N^{1/4}}
\biggl[ 
\exp \Bigl( - \frac38 c^{-1} t \Bigr) 
\biggl\vert  \int_{0}^t  
\exp \Bigl( \frac38 c^{-1} s \Bigr) \ud M_{s}
\biggr\vert
\biggr]
\geq \varepsilon
\biggr\}. 
\end{equation*}
On the complementary of the latter event, we have, by 
\eqref{eq:upper:bound:Lambda_t},
\begin{equation*}
\begin{split}
\forall t \in \bigl[0,N^{1/4} \wedge \tau_{N}\bigr], \quad \Lambda_{t}^N &\leq 
\exp \Bigl( - \frac38 c^{-1} t \Bigr) 
 \Lambda_{0}^N + \varepsilon. 
\end{split}
\end{equation*}
Back to the definition of \eqref{eq:LambdatN}, this yields
\begin{equation*}
\forall t \in \bigl[0,N^{1/4} \wedge \tau_{N}\bigr], \quad
\vert 
\mu_{t}^{2,N} \vert^2 - c^4 \vert 
\mu_{t}^{1,N} \vert^2
\leq \exp \Bigl( - \frac38 c^{-1} t \Bigr) 
 \Lambda_{0}^N + \varepsilon,
 \end{equation*}
 that is, for all 
 $t \in [0,N^{1/4} \wedge \tau_{N}]$,
\begin{equation*}
\vert 
\mu_{t}^{2,N} \vert^2 
\leq \exp \Bigl( - \frac38 c^{-1} t \Bigr) 
 \Lambda_{0}^N + \varepsilon
 + c^4 \vert 
\mu_{t}^{1,N} \vert^2
\leq \exp \Bigl( - \frac38 c^{-1} t \Bigr)
+ \varepsilon
+  c^4 \vert 
\mu_{t}^{1,N} \vert^2,
 \end{equation*} 
 where we used the obvious inequality 
 $\Lambda_{0}^N \leq 1$. 
\vskip 4pt

\textit{Second Step.}  
 We thus introduce the following times. First, we call $t_{0}(\varepsilon)$ the smallest (deterministic) time such that 
 $\exp ( - (3/8) c^{-1} t_{0}(\varepsilon)) \leq \varepsilon$. Secondly, we let 
 \begin{equation*}
\begin{split}
&\sigma_{N}(\varepsilon) := \inf \bigl\{ t \geq {t_{0}(\varepsilon)} : \vert \mu^{1,N}_{t} \vert \geq \varepsilon c^{-4} \bigr\}.
\end{split}
\end{equation*}
Then, assuming that $\varepsilon \in (0,1/4)$ and 
recalling that $\eta=c^{-4}/4$ in the definition of $\tau_{N}$, we obviously have  {$\sigma_{N}(\varepsilon) \leq \tau_{N} $ if $t_{0}(\varepsilon) \leq \tau_N$}, which implies 
$\vert 
\mu_{t}^{2,N} \vert^2 
\leq
3 \varepsilon$  {(as a consequence of the first step)},
for all $t \in [t_{0}(\varepsilon),N^{1/4} \wedge \sigma_{N}(\varepsilon) \wedge \tau_{N} ]$, 
at least if the  latter interval is not empty.
 Subsequently, plugging the latter into 
\eqref{eq:mut1}, we obtain, for $t$ in the same interval, 
 \begin{equation*}
\ud \bigl[ \vert 
\mu_{t}^{1,N} \vert^2
\bigr]
= \bigl( c^{-1} - \sqrt{3} c \varepsilon^{1/2} \bigr) \vert \mu_{t}^{1,N} \vert^2  \ud t 
+ \frac{c^{-1}}{{N}} \ud t + \ud \widetilde K_{t}^1 + \ud M_{t}^1,
\end{equation*}
for a new non-decreasing absolutely continuous process 
$(\widetilde K_{t}^1)_{t \geq 0}$. 

So far, $\varepsilon$ has been a free parameter. Now, we can choose it such that 
$c^{-1} - \sqrt{3} c \varepsilon^{1/2} =c^{-1}/2$. For this given value of $\varepsilon$ (which is now frozen in terms of $c$), 
we get 
 \begin{equation*}
\ud \bigl[ \vert 
\mu_{t}^{1,N} \vert^2
\bigr]
=  \frac{c^{-1}}2 \vert \mu_{t}^{1,N} \vert^2  \ud t 
+ \frac{c^{-1}}{{N}} \ud t + \ud \widetilde K_{t}^1 + \ud M_{t}^1,
\end{equation*}
for all $t
\in [t_{0}(\varepsilon),N^{1/4} \wedge \sigma_{N}(\varepsilon)\wedge \tau_{N} ]$. 
Next, for $t$ in the same interval, 
 \begin{equation*}
 \vert 
\mu_{t}^{1,N} \vert^2
\geq 
\exp \Bigl(  \frac{c^{-1}}2 t \Bigr) 
\biggl[ \frac{c^{-1}}{{N}} \int_{t_{0}(\varepsilon)}^t
\exp \bigl( - \frac{c^{-1}}2 s \bigr) 
\ud s +
 \int_{t_{0}(\varepsilon)}^t
\exp \bigl( - \frac{c^{-1}}2 s \bigr) 
 \ud M_{s}^1 \biggr].
\end{equation*}
In particular, for all $t \geq 0$, 
 \begin{equation*}
 \begin{split}
&{\mathbb E} \Bigl[  
\one_{\{\tau_{N} > t_{0}(\varepsilon)\}}
\bigl\vert 
\mu_{t_{0}(\varepsilon) \vee (t \wedge N^{1/4} \wedge \sigma_{N}(\varepsilon))}^{1,N} \bigr\vert^2 
\Bigr]
 \geq 
 \frac{c^{-1}}{{N}}
 {\mathbb E}
\biggl[ \one_{\{\tau_{N} > t_{0}(\varepsilon)\}}
 \int_{t_{0}(\varepsilon)}^{t_{0}(\varepsilon) \vee (t\wedge N^{1/4} \wedge \sigma_{N}(\varepsilon))}
  \exp \Bigl( \frac{c^{-1}}2 (t-s) \Bigr) 
  \ud s
  \biggr].
  \end{split}  
\end{equation*}
Choosing $t=N^{1/4}$, we obtain that
 \begin{equation*}
 \begin{split}
1
&\geq \frac{2}{{N}}
    \exp \Bigl( \frac{c^{-1}}2 \bigl(N^{1/4} -t_{0}(\varepsilon) \bigr) \Bigr)
    {\mathbb P} \Bigl( \bigl\{ \sigma_{N}(\varepsilon) \geq N^{1/4},\tau_{N} > t_{0}(\varepsilon) \bigr\} \Bigr), 
  \end{split}  
\end{equation*}
at least if $N^{1/4} \geq t_{0}(\varepsilon)$, which yields
 \begin{equation*}
    {\mathbb P} \Bigl( \bigl\{ \sigma_{N}(\varepsilon) \geq N^{1/4}, 
    \tau_{N} > t_{0}(\varepsilon) \bigr\} \Bigr)
\leq \frac{N}{2}    \exp \Bigl( - \frac{c^{-1}}2 \bigl(N^{1/4} -t_{0}(\varepsilon) \bigr) \Bigr).  
\end{equation*}
Since $\tau_{N} \geq \sigma_{N}(\varepsilon)$ if $\tau_{N} > t_{0}(\varepsilon)$, with $\eta =c^{-4}/4$ in the definition of $\tau_{N}$, this completes the proof. 
\end{proof}

\end{section}       
        
\begin{section}{Appendix}
\label{se:appendix}

\subsection{Regularization of real-valued functions defined on the space of probability measures}

We state here 
a regularisation result
for a function $\Phi$ defined 
on the space of probability measures on $\bT^d$.
The 
 proof was introduced for the first time in the arXiv version v1 of this work, see 
 \cite{DelarueTse-arXiv}. The regularisation procedure has been then reexplained in deep in 
 \cite{cecchin2022weak}. For this reason, we have just decided to give the main statement 
but to omit the proof.  

\begin{theorem}
\label{thm:regul}
Let $\Phi : {\mathcal P}(\bT^d) \rightarrow \R$ 
satisfy 
\emph{\hintphi{\alpha}{2}} for $\alpha \in [0,1)$. 
Then there exists a sequence of smooth functions $(\Phi_{n})_{n \geq 1}$, 
satisfying 
\emph{\hintphi{4}{3}}
for each $n \geq 1$, 
and converging to 
$\Phi$, uniformly on $\bT^d$, such that 
the bounds satisfied by 
$(\Phi_{n})_{n \geq 1}$ in 
\emph{\hintphi{\alpha}{2}}
are uniform in $n \geq 1$. 
\end{theorem}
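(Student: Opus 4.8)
The plan is to regularise $\Phi$ by mollifying it ``in the Fourier variables'' of the measure argument. Fix a truncation level $M \geq 1$ and consider the finite-dimensional map sending a probability measure $\mu \in {\mathcal P}(\bT^d)$ to the finite family of its low-frequency Fourier coefficients $(\mu^{\boldsymbol n})_{|{\boldsymbol n}| \leq M}$. This family ranges in a compact convex subset $K_{M}$ of $\mathbb{C}^{\#\{|{\boldsymbol n}|\le M\}}$. The first step is to observe that $\Phi$ can be approximated, on ${\mathcal P}(\bT^d)$, by functions of the form $\mu \mapsto \Psi\bigl((\mu^{\boldsymbol n})_{|{\boldsymbol n}| \leq M}\bigr)$: indeed, because ${\mathcal W}_{1}$-convergence is equivalent to weak convergence, and the map $\mu \mapsto (\mu^{\boldsymbol n})_{|{\boldsymbol n}|\le M}$ is continuous, one can combine a standard partition-of-unity/Stone--Weierstrass argument on $K_{M}$ with the uniform continuity of $\Phi$ (itself a consequence of the global boundedness of $\delta \Phi/\delta m$) to get that, as $M \to \infty$, a suitable $\Psi_{M}$-based functional converges to $\Phi$ uniformly. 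More concretely, I would first replace $\Phi$ by $\Phi_{M}(\mu) := \Phi\bigl(P_{M}\mu\bigr)$ where $P_{M}\mu$ is a measure obtained by a fixed smoothing of $\mu$ at scale $M^{-1}$ (e.g. convolution on the torus with a smooth kernel whose Fourier coefficients vanish for $|{\boldsymbol n}| > M$ and are close to $1$ for small $|{\boldsymbol n}|$, then renormalised to a probability measure); the point is that $P_{M}\mu$ depends on $\mu$ only through $(\mu^{\boldsymbol n})_{|{\boldsymbol n}| \le M}$ in a smooth, affine-in-$\mu$ way, so $\delta \Phi_{M}/\delta m$ and $\delta^{2}\Phi_{M}/\delta m^{2}$ are given by explicit chain-rule formulas in terms of those of $\Phi$ composed with the smooth finite-dimensional substitution, hence inherit the same $L^{\infty}$ and $\alpha$-H\"older-in-space bounds (the substitution kernel being smooth, the spatial H\"older seminorms are not worsened, only multiplied by kernel-dependent constants; crucially these depend on $M$ but \emph{not} on the test arguments).

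The second, genuinely new step is to mollify in the measure variable itself so as to upgrade smoothness from ``$\delta^{2}/\delta m^{2}$ exists and is H\"older in space'' to ``$\delta^{4}/\delta m^{4}$ exists with smooth-in-space derivatives'', i.e. to reach \hintphi{4}{3}. Having reduced to a functional of the form $\mu \mapsto \Psi\bigl((\mu^{\boldsymbol n})_{|{\boldsymbol n}| \le M}\bigr)$ with $\Psi$ merely continuous on the compact convex body $K_{M}$, I would convolve $\Psi$ with a standard smooth mollifier $\rho_{\varepsilon}$ on the ambient finite-dimensional space, after extending $\Psi$ to a neighbourhood of $K_{M}$ (Tietze/McShane extension preserving the modulus of continuity). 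Set $\Phi_{n}(\mu) := (\Psi \ast \rho_{\varepsilon_{n}})\bigl((\mu^{\boldsymbol n})_{|{\boldsymbol n}|\le M_{n}}\bigr)$ with $\varepsilon_{n}\downarrow 0$ and $M_{n}\uparrow\infty$ chosen jointly. Since $(\mu^{\boldsymbol n})_{|{\boldsymbol n}|\le M}$ depends affinely on $\mu$, every linear functional derivative of $\Phi_{n}$ is a partial derivative of $\Psi\ast\rho_{\varepsilon_{n}}$ evaluated at the Fourier data, multiplied by fixed trigonometric monomials $e^{\pm \i 2\pi {\boldsymbol n}\cdot y_{j}}$ in the extra spatial variables; these are $C^{\infty}$ in $y_{1},\dots,y_{\ell}$, so $\Phi_{n}$ automatically satisfies \hintphi{4}{3} (for constants blowing up with $n$, which is permitted). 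Uniform convergence $\Phi_{n}\to\Phi$ follows from uniform continuity of $\Psi$ and of $\Phi$ with respect to the Fourier data (the latter again from boundedness of $\delta\Phi/\delta m$).

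The main obstacle — and the place where care is needed — is the final claim: the bounds for $(\Phi_{n})$ in \hintphi{\alpha}{2} must be \emph{uniform in $n$}, even though the ones in \hintphi{4}{3} are not. This is where the affine (linear) dependence of the Fourier coefficients on $\mu$ is essential and must be exploited exactly as in the treatment of linear interactions in Subsection~\ref{subse:examples}. The key identity is that $\dfrac{\delta^{2}\Phi_{n}}{\delta m^{2}}(\mu)(x,x')$ is, up to the normalisation \eqref{eq: normalisation linear functional deriatives}, a sum over $|{\boldsymbol n}|,|{\boldsymbol n}'|\le M_{n}$ of $\partial^{2}_{{\boldsymbol n},{\boldsymbol n}'}(\Psi\ast\rho_{\varepsilon_{n}})$ times $e^{\pm\i 2\pi {\boldsymbol n}\cdot x}e^{\pm\i 2\pi{\boldsymbol n}'\cdot x'}$; one must show that, tested against distributions $q_{1},q_{2}$ of unit $(W^{\alpha/2,\infty})'$-norm, this quantity is bounded independently of $n$. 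The honest way to do this is \emph{not} to differentiate $\Psi\ast\rho_{\varepsilon_{n}}$ but to rewrite $\Phi_{n}(\mu) = \int \Phi\bigl(\mu \ast k_{\varepsilon_{n}}^{(M_{n})}(\,\cdot - z\,) \text{-type perturbation}\bigr)\,\rho(z)\,\ud z$, i.e. realise the mollification as an \emph{average of $\Phi$ over a family of measure-valued reparametrisations of $\mu$}; then $\delta^{2}\Phi_{n}/\delta m^{2}$ is an average of $\delta^{2}\Phi/\delta m^{2}$ pulled back through those smooth affine reparametrisations, and the $\alpha/2$-H\"older-in-space bound \eqref{eq:Int-phi-alpha-k} for $\Phi$ is transported with a constant depending only on the (fixed) smoothness of the mollifier profile, not on $M_{n}$ or $\varepsilon_{n}$. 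Making this ``mollification = averaging over reparametrisations'' rewriting rigorous — in particular checking that the reparametrised objects are genuine probability measures (positivity after renormalisation) and that the pull-back of the H\"older seminorm does not degenerate as $n\to\infty$ — is the crux of the proof; everything else is bookkeeping with the chain rule and the normalisation convention \eqref{eq: normalisation linear functional deriatives}.
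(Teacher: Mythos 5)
Your proposal takes essentially the same route as the paper's proof: truncate the measure at Fourier scale $M$, regard the resulting functional as a function of finitely many Fourier modes, mollify that finite-dimensional function, and obtain the uniform second-derivative bound by rewriting the mollification as an average of $\Phi$ over measure-valued affine reparametrisations of $\mu$. The structure is right, and you correctly flag the crux: the reparametrised objects must remain genuine probability measures, with the mollifier support tuned accordingly.

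What you leave as ``bookkeeping'' is, however, precisely where the paper supplies the two ideas you cannot do without, and a generic band-limited kernel will not deliver them. First, a smooth kernel with Fourier support in $\{ |n_j| \le M \}$ is not nonnegative in general (the Dirichlet kernel is the prototype failure), and ``renormalising'' fixes the total mass but not the sign. The paper's device is the Fej\'er-type kernel from the Bochner--Herglotz theorem (Lemma~\ref{lem:Bochner-Herglotz}): $f_N(\theta;\mu)=N^{-d}\sum_{1\le n_j,n'_j\le N}\widehat\mu^{{\boldsymbol n}-{\boldsymbol n}'}e^{\i 2\pi({\boldsymbol n}-{\boldsymbol n}')\cdot\theta}$ is a manifestly nonnegative quadratic-form expression in the exponentials, is affine in $\mu$, and has Fourier coefficients $\widehat\mu^{\boldsymbol n}\prod_j(1-|n_j|/N)$. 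Second, even with a nonnegative smoothing, perturbing the finite tuple of Fourier coefficients away from the exact data of a probability measure may produce a signed function, so the mollification step is not licit as you state it. The paper fixes this with the $\varepsilon$-shift $f_{N,\varepsilon}(\theta;{\boldsymbol x}) := \varepsilon + (1-\varepsilon)\,{\mathfrak R}[\cdots]$, i.e.\ mixing with the uniform measure, which buys a positivity margin $\varepsilon$ and thus remains a density whenever the mollifier support $\eta_{\varepsilon,N}$ in the Fourier tuple is chosen small enough (depending on $N,\varepsilon$). With both in place, the uniform-in-$n$ bound for \hintphi{\alpha}{2} falls out because, by \eqref{eq:mollify:appendix:2}, $\delta\Phi_{N,\varepsilon}/\delta m(\mu)(\theta)$ is a convolution of $\delta\Phi/\delta m$ against the probability density $f_N(\cdot-\theta;\delta_0)$, which preserves the sup and H\"older seminorms with constant $1$ --- not merely ``a constant depending on the mollifier profile'' as you suggest, and this matters because that profile does vary with $n$.
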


\subsection{Marginal regularity of transition densities of large SDEs}
\begin{lemma}
\label{lem:annex:regularity:highd}
Consider a collection of bounded drifts $(b^i : [0,\infty) \times ({\mathbb T}^d)^n \rightarrow {\mathbb R}^d)_{i=1,\cdots,n}$, for two integers
$d$ and $n$, together with the solution ${\boldsymbol X}_t=(X_t^1,\cdots,X_t^n)_{t \geq 0}$ to the particle system
\begin{equation*}
\ud X_t^i = b^i(t,{\boldsymbol X}_t) \ud t + \ud W_t^i, \quad t \geq 0, 
\end{equation*}
for some deterministic initial condition ${\boldsymbol X}_0=(X_0^1,\cdots,X_0^n) \in ({\mathbb T}^d)^n$. For
any $t>0$, denote
 by $[{\mathbb T}^d]^n \ni {\boldsymbol x}=(x_1,\cdots,x_n) \mapsto p_t({\boldsymbol x})=p_t(x_1,\cdots,x_n)$ 
 the density of ${\boldsymbol X}_t$.

Consider a bounded measurable function $\varphi : {\mathbb T}^d \times {\mathcal P}({\mathbb T}^d) \ni (x,m) \mapsto \varphi(x,m) \in {\mathbb R}$
that 
satisfies the following two properties: 
$(i)$ 
for any $x \in {\mathbb T}^d$, 
the function 
$m \in {\mathcal P}({\mathbb T}^d) \mapsto \varphi(x,m)$ is differentiable w.r.t $m$;
$(ii)$ the function ${\mathbb T}^d \times {\mathcal P}({\mathbb T}^d) \times {\mathbb T}^d \ni (x,m,y) 
\mapsto 
[ \delta \varphi/\delta m](x,m,y)$ is bounded (so that 
$\varphi$ is Lipschitz continuous in $m$ w.r.t. $\textrm{\rm dist}_{\rm TV}$, uniformly in $x$).  

Then, for any ${\rho} \in (0,1)$, there exists a constant $c_{{\rho}}$ only depending on the parameters 
$d$, {$\rho$} and  the quantity 
$\max_{i=1,\cdots,n} 
\sup_{t \geq 0}
\sup_{m \in {\mathcal P}({\mathbb T}^d)} \| b^i(t,\cdot,m)\|_\infty$ such that, 
for all $y \in {\mathbb T}^d$
\begin{equation*}
\begin{split}
& \biggl\vert \int_{[{\mathbb T}^d]^n} 
\varphi  {\Bigl(}x_1+y,\bar \mu^{n {-(1)}}_{{\boldsymbol x}} {\Bigr)}
 \frac{p_t(x_1+y,\cdots,x_n)-p_t(x_1,\cdots,x_n)}{\vert y \vert^{ {\rho}}}
\ud x_1\cdots \ud x_n
\biggr\vert 
\\
&\leq 
c_\rho  
\Bigl( \frac1{1 \wedge t^{{\rho}/2}} + t \Bigr) 
\Bigl( 
\sup_{m \in {\mathcal P}({\mathbb T}^d)} \|\varphi(\cdot,m) \|_{ {0},\infty} 
+ 
\sup_{m \in {\mathcal P}({\mathbb T}^d)} \bigl\|\frac{\delta \varphi}{\delta m}(\cdot,m,\cdot) \|_{{0},\infty} 
\Bigr), 
\end{split}
\end{equation*}
where, in the above left-hand side, 
$\bar \mu^{n{-(1)}}_{,{\boldsymbol x}}$ is equal to $(n-1)^{-1} \sum_{i=2}^n \delta_{x_i}$.
\end{lemma}

\begin{proof}
For $z \in {\mathbb T}^d$ and $i \in \{1,\cdots,n\}$, 
we write $z e_i$ for the element of $[{\mathbb T}^d]^n$ whose coordinate $i$ is equal to 
$z$ and whose coordinates $j$, for $j \not = i$, are zero. 
Moreover, for a fixed $y \in {\mathbb T}^d$, with $y \not = 0$ and for a {function} $\varphi$ as in the statement, we let 
\begin{equation*}
\begin{split}
&\widetilde \varphi({\boldsymbol x}) := \varphi(x_1,\bar \mu^{n {-(1)}}_{{\boldsymbol x}}), \quad 
\Phi({\boldsymbol x})  {:=} \frac{\widetilde\varphi({{\boldsymbol x}}+ye_1) - \widetilde\varphi({{\boldsymbol x}})}{\vert y \vert^{ {\rho}}}, \quad {\boldsymbol x}=(x_1,\cdots,x_n) \in \bigl[ {\mathbb T}^d \bigr]^n,
\end{split}
\end{equation*}
For 
$i \in \{2,\cdots,n\}$ and $z \in {\mathbb T}^d$, we have 
\begin{equation}
\label{eq:annex:Phi:Psi}
\begin{split}
\Phi({\boldsymbol x}+ ze_i) - 
\Phi({\boldsymbol x}) 
&= \frac{ \widetilde \varphi({\boldsymbol x} +y e_1 + ze_i) -
\widetilde \varphi({\boldsymbol x} +y e_1)}{\vert y\vert^{ {\rho}}}
- 
 \frac{ \widetilde \varphi({\boldsymbol x}   + ze_i) -
\widetilde \varphi({\boldsymbol x}  )}{\vert y\vert^{ {\rho}}}
= 
\frac{\Psi({\boldsymbol x}   + ye_1,z) -
\Psi({\boldsymbol x}  ,z)}{\vert y\vert^{ {\rho}}},
\end{split}
\end{equation}
where
\begin{equation*}
\begin{split}
&\Psi({\boldsymbol x} ,z)
 =  \widetilde \varphi({\boldsymbol x}   + ze_i) -
\widetilde \varphi({\boldsymbol x}  ) 
 = \varphi\Bigl(x_1,\bar \mu^{n {-(1)}}_{{\boldsymbol x}}
+ 
\frac1{n-1} \bigl[ \delta_{x_i+z} 
-
\delta_{x_i} \bigr]
\Bigr)
-
\varphi(x_1,\bar \mu^{n {-(1)}}_{{\boldsymbol x}})
\\
&=
\int_0^1 
\frac{\ud \lambda}{n-1} 
\Bigl[ 
\frac{\delta \varphi}{\delta m}\Bigl(x_1,\bar \mu^{n {-(1)}}_{{\boldsymbol x}}
+ 
\frac{\lambda}{n-1} \bigl[ \delta_{x_i+z} 
-
 \delta_{x_i} \bigr]
\Bigr)(x_i+z)
-
\frac{\delta \varphi}{\delta m}\Bigl(x_1,\bar \mu^{n {-(1)}}_{{\boldsymbol x}}
+ 
\frac{\lambda}{n-1} \bigl[ \delta_{x_i+z} 
-
 \delta_{x_i} \bigr]
\Bigr)(x_i) \Bigr]. 
\end{split}
\end{equation*}
Next, we consider the PDE (in dimension $d \times n$)
\begin{equation*}
\partial_s u(s,{\boldsymbol x}) + \tfrac12 \Delta_{{\boldsymbol x}} u(s,{\boldsymbol x})  = 0, \quad \bigl(s,{\boldsymbol x}\bigr) \in [0,t] \times 
\bigl({\mathbb T}^d\bigr)^n \, ; 
\quad u(t,{\boldsymbol x}) = \Phi({\boldsymbol x}). 
\end{equation*}
Obviously, 
\begin{equation*}
u(s,{\boldsymbol x}) = \Phi * g^{(d \times n)}_{t-s}({\boldsymbol x}) 
= \frac1{\vert y \vert^{ {\rho}}} \bigl( \widetilde \varphi(\cdot + ye_1 ) 
 -  \widetilde \varphi(\cdot) \bigr) * g_{t-s}^{(d \times n)}({\boldsymbol x})= 
\frac1{\vert y \vert^{ {\rho}}} \bigl( \widetilde \varphi * g^{(d \times n)}_{t-s}\bigl({\boldsymbol x}+  {ye_1} \bigr) -  \widetilde \varphi * g_{t-s}^{(d \times n)}({\boldsymbol x}) \bigr), 
\end{equation*}
where $g_r^{(d \times n)}$ is the usual Gaussian kernel at time $r>0$ in dimension $d \times n$. And then, 
\begin{equation}
\label{eq:annex:gradient:x1}
\begin{split}
&\vert  u(s,{\boldsymbol x}) \vert \leq \frac{c_{ {\rho}}}{1 \wedge (t-s)^{ {\rho}/2}} 
\sup_{m \in {\mathcal P}({\mathbb T}^d)}
\bigl\| \varphi(\cdot,m) \bigr\|_{0,\infty}, \quad \vert \nabla_{x_1} u(s,{\boldsymbol x}) \vert \leq \frac{c_{{\rho}}}{1 \wedge (t-s)^{(1+ {\rho})/2}} 
\sup_{m \in {\mathcal P}({\mathbb T}^d)}
\bigl\| \varphi(\cdot,m) \bigr\|_{0,\infty},
\end{split}
\end{equation}
for a constant $c_{ {\rho}} \geq 0$ (only depending on $d$ and $ {\rho}$  {--and in particular, not depending on $n$--}). 
In order to provide a similar bound but for 
the gradients w.r.t. $(x_i)_{ {2 \leq i \leq n}}$, we  use 
\eqref{eq:annex:Phi:Psi}
which allows us to write
\begin{equation*}
\begin{split}
&u(s,{\boldsymbol x}+ze_i) 
-
u(s,{\boldsymbol x}) 
=  
\frac1{\vert y \vert^{ {\rho}}} \Psi(\cdot,z) *\Bigl(  g^{(d \times n)}_{t-s}\bigl({\boldsymbol x}+ ye_1\bigr) -   g_{t-s}^{(d \times n)}({\boldsymbol x}) \Bigr), 
\end{split}
\end{equation*}
which yields in turn: 
\begin{equation*}
\Bigl\vert \nabla_{x_i} \Bigl[ u(s,{\boldsymbol x}+ze_i)
- u(s,{\boldsymbol x}) 
\Bigr] \Bigr\vert
  \leq \frac{c_\gamma}{(n-1)[ 1 \wedge (t-s)^{(1+{\rho})/2}]} 
 \sup_{m \in {\mathcal P}({\mathbb T}^d)}
 \bigl\|\frac{\delta \varphi}{\delta m}(\cdot,m)(\cdot)\bigr\|_{ {0},\infty}, 
\end{equation*}
for $i \in \{ 2,\cdots,n\}$. 
Integrating in $z \in {\mathbb T}^d$,
we get 
\begin{equation}
\label{eq:annex:gradient:xi}
\Bigl\vert \nabla_{x_i} 
 u(s,{\boldsymbol x}) 
 \Bigr\vert
  \leq \frac{c_\rho}{(n-1) (t-s)^{(1+ {\rho})/2}} 
  \sup_{m \in {\mathcal P}({\mathbb T}^d)}
 \bigl\|\frac{\delta \varphi}{\delta m}(\cdot,m)(\cdot)\bigr\|_{ {0},\infty},  
\end{equation}
for $i \in \{ 2,\cdots,n\}$.
Now, we expand ${\mathbb E}[u(s,{\boldsymbol X}_s)]$ (w.r.t. $s$). We get 
\begin{equation*}
\begin{split}
{\mathbb E}\bigl[\Phi({\boldsymbol X}_t)\bigr] = u(0,{\boldsymbol X}_0) + \sum_{i=1}^n \int_0^t b^i(s,{\boldsymbol X}_s) \cdot \nabla_{x_i} u(s,{\boldsymbol X}_s) \ud s,
\end{split}
\end{equation*}
and 
then,  {using \eqref{eq:annex:gradient:x1} together with the boundedness of $b$}, we get 
\begin{equation*}
\Bigl\vert {\mathbb E}\bigl[\Phi({\boldsymbol X}_t)\bigr] 
\Bigr\vert \leq c_\rho 
\Bigl( \sup_{m \in {\mathcal P}({\mathbb T}^d)} \Bigl[ \| \varphi(\cdot,m) \|_{{0},\infty}
+
\bigl\| \frac{\delta \varphi}{\delta m}(\cdot,m)(\cdot) \bigr\|_{{0},\infty} \Bigr]
\Bigr)
  \Bigl( \frac{1}{1 \wedge t^{ {\rho}/2}}  +  t  \Bigr).
\end{equation*}
 {Using a change of variable to rewrite 
the above left-hand side in terms of the left-hand side appearing in the statement, 
we get the conclusion}. 
\end{proof}

 \section*{Acknowledgment}
 François Delarue acknowledges the financial support of the French ANR projet ANR-19-P3IA-0002 ``3IA C\^ote d'Azur - Nice - Interdisciplinary Institute for Artificial Intelligence'' (2019-22) and 
is now supported by the European Research Council (ERC) under the European Union’s Horizon 2020 research and innovation programme (ELISA project, Grant agreement No. 101054746).
{The research of Alvin Tse benefited from the support of the ``Chaire Risques Financiers'', Fondation du Risque.}

\end{section}


\small

\bibliographystyle{plain} 

\bibliography{regularity} 

\end{document}